\font \smallrm=cmr10 at 9pt
\font \smallsl=cmsl10 at 9pt
\newcounter{amoi}
\newtheorem{thm}{Theorem}[subsection]
\newtheorem{theorem}[thm]{Theorem}
\newtheorem{corollary}[thm]{Corollary}
\newtheorem{lemma}[thm]{Lemma}
\newtheorem{proposition}[thm]{Proposition}
\theoremstyle{definition}
\newtheorem{definition}[thm]{Definition}
\newtheorem{example}[thm]{Example}
\newtheorem{examples}[thm]{Examples}
\newtheorem{remark}[thm]{Remark}
\newtheorem{remarks}[thm]{Remarks}
\newtheorem{free text}[thm]{}
\newcommand{\N} {\mathbb{N}}
\newcommand{\Z} {\mathbb{Z}}
\newcommand{\bk} {\Bbbk}
\newcommand{\bG} {\mathbf{G}}
\newcommand{\cO}{\mathcal{O}}
\newcommand{\cL} {\mathcal{L}}
\newcommand{\cJ} {\mathcal{J}}
\newcommand{\cP} {\mathcal{P}}
\newcommand{\cR} {\mathcal{R}}
\newcommand{\fa} {\mathfrak{a}}
\newcommand{\fg} {\mathfrak{g}}
\newcommand{\fh} {\mathfrak{h}}
\newcommand{\fk} {\mathfrak{k}}
\newcommand{\zero} {{\mathbf{0}}}
\newcommand{\uno} {{\mathbf{1}}}
\newcommand{\Uuno} {\mathbf{1\hskip-3,4pt{}l}}
\newcommand{\alg} {\text{\rm (alg)}}
\newcommand{\aalg} {\text{\rm (a-alg)}}
\newcommand{\salg} {\text{\rm (salg)}}
\newcommand{\asalg} {\text{\rm (a-salg)}}
\newcommand{\Hsalg} {\text{\rm (H-salg)}}
\newcommand{\wkspsalg} {\text{\rm (wksp-salg)}}
\newcommand{\splsalg} {\text{\rm (spl-salg)}}
\newcommand{\stspsalg} {\text{\rm (stsp-salg)}}
\newcommand{\cexsalg} {\text{\rm (cex-salg)}}
\newcommand{\sets}{\text{\rm (sets)}}
\newcommand{\grps} {\text{\rm (groups)}}
\newcommand{\assch}{\text{\rm (assch)}}
\newcommand{\sgrps} {\text{\rm (sgroups)}}
\newcommand{\fsgrps} {\text{\rm (fsgroups)}}
\newcommand{\gsssgrps} {\text{\rm (gss-sgroups)}}
\newcommand{\gssfsgrps} {\text{\rm (gss-fsgroups)}}
 \newcommand{\lgssfsgrps} {\text{\rm (lgss-fsgroups)}}
\newcommand{\lie} {\text{\rm (Lie)}}
\newcommand{\slie} {\text{\rm (sLie)}}
\newcommand{\sHCp} {\text{\rm (sHCp)}}
\newcommand{\lsHCp} {\text{\rm (lsHCp)}}
\newcommand{\Ad}{\hbox{\sl Ad}}
\newcommand{\ad}{\hbox{\sl ad}}
\newcommand{\Hom}{\hbox{\sl Hom}}
\newcommand{\End}{\hbox{\sl End}}
\newcommand{\Der}{\hbox{\sl Der}}
\newcommand{\Lie}{\hbox{\sl Lie}}
\newcommand{\Ker}{\hbox{\sl Ker}}
\newcommand{\uspec}{\underline{\hbox{\sl Spec}\,}}
\newcommand{\rGL}{\mathrm{GL}}
\newcommand{\rgl}{\mathfrak{gl}}
\newcommand{\eps}{\underline{\epsilon}}
\begin{document}

{\ }

\vskip-41pt

   \centerline{\smallrm  {\smallsl Transactions of the American Mathematical Society\/}  (to appear)}


\vskip39pt   {\ }

\centerline{\Large \bf GLOBAL SPLITTINGS AND}
 \vskip9pt
\centerline{\Large \bf SUPER HARISH-CHANDRA PAIRS}
 \vskip9pt
\centerline{\Large \bf FOR AFFINE SUPERGROUPS}

\vskip26pt

\centerline{ Fabio GAVARINI }

\vskip9pt

\centerline{\it Dipartimento di Matematica, Universit\`a di Roma ``Tor Vergata'' } \centerline{\it via della ricerca scientifica 1  --- I-00133 Roma, Italy}

\centerline{{\footnotesize e-mail: gavarini@mat.uniroma2.it}}

\vskip61pt

\begin{abstract}
 \vskip7pt
 {\smallrm
   This paper dwells upon two aspects of affine supergroup theory, investigating the links among them.
                                     \par
   First, I discuss the ``splitting'' properties of affine supergroups, i.e.~special kinds of factorizations they may admit   --- either globally, or point-wise.  Almost everything should be more or less known, but seems to be not as clear in literature (to the author's knowledge) as it ought to.
                                     \par
   Second, I present a new contribution to the study of affine supergroups by means of super Harish-Chandra pairs (a method already introduced by Koszul, and later extended by other authors).  Namely, I provide a new functorial construction  $ \Psi $  which, with each super Harish-Chandra pair, associates an affine supergroup that is always  {\smallsl globally strongly split\/}  (in short,  {\smallsl gs-split\/})   --- thus setting a link with the first part of the paper.  One knows that there exists a natural functor  $ \Phi $  from affine supergroups to super Harish-Chandra pairs: then I show that the new functor  $ \Psi $   --- which goes the other way round ---   is indeed a quasi-inverse to  $ \Phi \, $,  provided we restrict our attention to the subcategory of affine supergroups that are  gs-split.  Therefore, (the restrictions of)  $ \Phi $  and $ \Psi $  are equivalences between the categories of gs-split affine supergroups and of super Harish-Chandra pairs.  Such a result was known in other contexts, such as the smooth differential or the complex analytic one, via different approaches  (see  \cite{koszul},  \cite{ms2}, \cite{cf}):  nevertheless, the novelty in the present paper lies in that I construct a  {\smallsl different\/}  functor  $ \Psi $  and thus extend the result to a much larger setup, with a totally different, more geometrical method.  In fact, this method (very concrete, indeed) is universal and characteristic-free: I present it here for the algebro-geometric setting, but actually it can be easily adapted to the frameworks of differential or complex-analytic supergeometry.
                                     \par
   The case of  {\smallsl linear\/}  supergroups is treated also as an intermediate, inspiring step.
                                     \par
   Some examples, applications and further generalizations are presented at the end of the paper.}
 \hskip-5pt
   \footnote{\ 2010 {\it MSC}\;: \, Primary 14M30, 14A22; Secondary 17B20.}
%
\end{abstract}

\vskip31pt

\tableofcontents

\vskip59pt

\section{Introduction}

\smallskip

   {\ } \;\;  The study of ``supergroups'' is a chapter of ``supergeometry'', i.e.\ geometry in a  $ \Z_2 $--graded  sense.  In particular, the relevant structure sheaves of (commutative) algebras sitting on top of the topological spaces one works with are replaced with sheaves of (commutative)  {\sl superalgebras}.

\smallskip

   Every superalgebra  $ A $  is built from (homogeneous) even and odd elements.  It is then natural   --- especially in the commutative case, when these elements can be thought of as ``functions'' on some superspace ---   to look for some ``separation of variables'' result for  $ A \, $,  in the form of a ``splitting'', i.e.\ a factorization of type  $ \, A = \overline{A} \otimes A' \, $  where  $ \, \overline{A} \, $  is a totally even subalgebra and  $ A' $  is a second algebra which encodes the ``odd part'' of  $ A \, $.  Actually, in the commutative case the best one can hope for is that  $ A' $  be an algebra freely generated by some subsets of odd elements in  $ A \, $,  hence  $ A' $  is a Grassmann (super)algebra, i.e.\ the ``polynomial (super)algebra'' on some set of odd variables.

\smallskip

   When coming to supergeometry, we deal with ``superspaces'' such as smooth or analytic supermanifolds (in the differential and complex holomorphic setup) or superschemes (in the algebro-geometric framework).  Any such superspace can be considered as a  {\sl classical\/}  (i.e.\ non-super) space   -- in the appropriate category ---   endowed with a suitable sheaf of commutative superalgebras.
                                                    \par
   A natural question then arises: can one parallelize this sheaf?  In other words, is it globally trivial, in some ``natural'' sense?  For superspaces (in any sense: differential, analytic, etc.) the answer in general is in the negative: indeed, counterexamples do exist.  Instead, if we restrict to  {\sl supergroups\/}  then the answer in most cases is  {\sl positive}.  Indeed, this is the case for real Lie supergroups (see  \cite{be},  \cite{ma},  \cite{ccf})  and for complex analytic supergroups (see  \cite{vis}  and  \cite{cf});  in the algebro-geometric setting, the best result I am aware of is by Masuoka (see \cite{ms1}),  who proved that for all affine supergroups over fields of characteristic different from 2 the answer still is positive.
                                                    \par
   It might be worth minding the analogy with the situation of the tangent bundle on a classical space: for a generic space (manifold, complex analytic variety or scheme) in general it is  {\sl not\/}  parallelizable; for  {\sl groups\/}  instead (real Lie groups, complex analytic Lie groups and group-schemes) it is known to be parallelizable.  This might lead us to expect, from scratch, that a similar result occur with supergroups and their structure sheaf   --- although this is nothing but a sheer analogy.
                                                    \par
   Note that in the  {\sl affine\/}  case having a parallelization of the structure sheaf on a superspace  $ X $  amounts to having a ``splitting'' of its superalgebra of global sections  $ \cO(X) \, $:  this sets a link with the previously mentioned theme of splitting (commutative) superalgebras, and also leads us to saying that  $ X $  has a ``global splitting'', or it is globally split, whenever its structure sheaf is parallelizable.
                                                    \par
   On the other hand, one can study any supergroup  $ G \, $,  like any superspace, via its functor of points: then, for each commutative superalgebra  $ A $  one has the group  $ G(A) $  of  $ A $--points  of  $ G \, $.  Such a group may have remarkable ``splittings'' (in group-theoretical sense) on its own; this kind of ``pointwise splitting'' is often considered in literature (e.g.\ in Boseck's papers  \cite{bos1},  \cite{bos2},  \cite{bos3}),  but must not be confused with the notion of ``global splitting''.

\smallskip

   Roughly speaking, a parallelized ``supersheaf''  $ \mathcal{S} $  over a superspace  $ X $  is ``encoded'' by a pair  $ \, \big(\mathcal{S}_\zero \, , \text{\sl S}_{x_{{}_0}} \big) \, $  where  $ \mathcal{S}_\zero $  is the ``even part'' of  $ \mathcal{S} \, $  and  $ \text{\sl S}_{x_{{}_0}} \! $  is the fiber of  $ \mathcal{S} $  over some point  $ x_0 \, $;  as  $ \mathcal{S}_\zero $  is encoded in the classical (i.e.\ non-super) space  $ X_\zero $  underlying  $ X \, $,  one can also use the pair  $ \, \big(X_\zero \, , \text{\sl S}_{x_{{}_0}} \big) \, $  instead.  When  $ \, X = G \, $  is a supergroup, we can take  $ x_0 $  to be the identity element in the (classical) group  $ \, G_\zero \, $  and approximate  $ \text{\sl S}_{x_{{}_0}} \! $  with the cotangent space at  $ G_\zero $  in that point; we can also replace this cotangent space with its dual, i.e.\ the
 \hbox{\sl tangent Lie superalgebra  $ \, \fg := \Lie\,(G) \, $  of  $ G \, $}.
                                                    \par
   This leads us to another   --- tightly related ---   way of formulating the problem, namely inquiring whether it is possible (via a ``parallelization'' of the structure sheaf, etc.) to describe a supergroup  $ G $  in terms of the pair  $ \, \big(G_\zero\,,\,\fg\,\big) \, $   which is naturally associated with it.  Indeed, this is the core of the problem of studying supergroups via ``super Harish-Chandra pairs'', as I now explain.

\smallskip

   The notion of ``super Harish-Chandra pair'' (a terminology first found in  \cite{dm}),  or just sHCp in the sequel, was first introduced in the real differential setup, but naturally adapts to the complex analytic or the algebro-geometric context (see, e.g.,  \cite{vis}  and  \cite{cf}).  Whatever the setup, a sHCp is a pair  $ (G_+,\fg) $  made of a classical group (real Lie, complex analytic, etc.) and a Lie superalgebra obeying natural compatibility constraints.  Indeed, the definition itself is tailored in a such a way that there exists a natural functor  $ \Phi $  from the category of supergroups to the category of sHCp's which associates with each supergroup  $ G $  its sHCp  $ \big(G_{\!\text{\it ev}} \, , \Lie\,(G)\big) \, $  made of the ``classical subgroup'' and the ``tangent Lie superalgebra'' of  $ G \, $.  The question is: can one recover a supergroup out of its associated sHCp\,?  In other words, does there exist any functor  $ \Psi $  from sHCp's to supergroups which be a quasi-inverse for  $ \Phi \, $?  And if the answer is positive, how much explicit such a functor is\,?
                                                     \par
   In the real differential framework   --- i.e.\ for real Lie supergroups and real smooth sHCp's ---   Kostant proved  (see  \cite{kostant},  and also  \cite{koszul})  that  $ \Phi $  is an equivalence i.e.~one has a quasi-inverse for it.
                                                     \par
   Besides, Vishnyakova  (see  \cite{vis})  fixed both the real smooth and the complex analytic cases.
                                                     \par
   As to the  {\sl algebraic\/}  setup, more recently Carmeli and Fioresi (see  \cite{cf})  proved the same result for  {\sl algebraic affine supergroup schemes\/}  (and the corresponding category of sHCp's) over a ground ring  $ \bk $  that is an algebraically closed field of characteristic zero.  Indeed, their method   --- which extends Vishnyakova's idea, so applies to the real smooth and complex analytic setup too ---   provides an explicit construction of a quasi-inverse functor  $ \Psi $  for  $ \Phi \, $.  This was improved by Masuoka (in  \cite{ms2}),  who only required that  $ \bk $  be a field whose characteristic is not 2, and applied his result to a characteristic-free study of affine supergroup schemes.  Later on (see \cite{mas-shi}),  Masuoka and Shibata further extended Koszul's method up to work on every commutative ring, via an algebraic version of the notion of sHCp   --- devised to treat the matter with Hopf (super)algebra techniques.
                                                     \par
   In the second part of this paper I present a new solution to these problems, providing explicitly a new functor  $ \Psi $  (different from those by other authors), which does the job; in particular, I also show that any positive answer is possible if and only if we restrict our attention to those (affine) supergroups which are globally strongly split   --- thus setting a link with the first part of the paper.

\smallskip

   The above mentioned construction of the functor  $ \Psi $  is made in the setup (and with the language) of algebraic supergeometry.  Nevertheless, it is worth stressing that one can easily reformulate everything in the setup (and with the language) of real differential supergeometry or complex analytic supergeometry: in other words, the method presented here also applies,  {\it mutatis mutandis},  to real or complex Lie supergroups (which, as we mentioned, are known to be all globally split).

\smallskip

   The paper is organized as follows.  First (Sec.~2) we establish the language and notations we need.  Then (Sec.~3) we treat the notions of ``splittings'' for superalgebras, Hopf superalgebras, superschemes and supergroups; in particular, we present some results about global splittings of supergroups and about their ``local'' splittings, i.e.\ splittings on  $ A $--points.  Finally (Sec.~4), we study the relation between supergroups and super Harish-Candra pairs, and the construction of a functor  $ \Psi $  which is quasi-inverse to the natural one  $ \Phi $  associating a sHCp with any supergroup.

\vskip35pt

   \centerline{\bf Acknowledgements}
 \vskip7pt
   \centerline{ The author thanks A.~D'Andrea, M.~Duflo and R.~Fioresi for their priceless suggestions, }
   \centerline{ and most of all, in particular, A.~Masuoka for his many valuable comments and remarks. }

\bigskip

\section{Preliminaries}  \label{preliminaries}

\smallskip

   {\ } \;\;   In this section I introduce some preliminaries of (affine) supergeometry.  Classical references for that are  \cite{dm},  \cite{ma}  and  \cite{vsv},  but I shall mainly rely on  \cite{ccf}.
                                                      \par
   All over the paper,  $ \bk $  will be a commutative, unital ring.

\medskip

 \subsection{Superalgebras, superspaces, supergroups}  \label{first_preliminaries} {\ }

\smallskip

   This subsection is devoted to fix terminology and notation for some basic notions.

\smallskip

\begin{free text}  \label{superalgebras}
 {\bf Supermodules and superalgebras.}  A  {\it  $ \bk $--supermodule\/}  is by definition a  $ \bk $--module  $ V $  endowed with a  $ \Z_2 $--grading,  say  $ \, V = V_\zero \oplus V_\uno \, $,  where  $ \, \Z_2 = \{\zero,\uno\} \, $  is the group with two elements.  The  $ \bk $--submodule  $ V_\zero $  and its elements are called  {\it even},  while  $ V_\uno $  and its elements  {\it odd}.  By  $ \, |x| $  or  $ p(x) $  $(\in \Z_2) \, $  we denote the  {\sl parity\/}  of any homogeneous element, defined by the condition  $ \, x \in V_{|x|} \, $.

\vskip4pt

   We call  {\it  $ \bk $--superalgebra\/}  any associative, unital  $ \bk $--algebra  $ A $  which is  $ \Z_2 $--graded  (as a  $ \bk $--algebra):  so  $ A $  has a  $ \Z_2 $--splitting  $ \, A = A_\zero \oplus A_\uno \, $,  and  $ \, A_{\mathbf{a}} \, A_{\mathbf{b}} \subseteq A_{\mathbf{a}+\mathbf{b}} \; $.  All  $ \bk $--superalgebras  form a category, whose morphisms are all those in the category of  $ \bk $--algebras  that preserve the unit and the  $ \Z_2 $--grading.  A  {\it Hopf  superalgebra\/}  over  $ \bk $  is a Hopf algebra  $ \, H = H_\zero \oplus H_\uno \, $  in the category of  $ \bk $--super-algebras, where the multiplication in a tensor product  $ \, H' \mathop{\otimes}\limits_\bk H'' \, $  is given by  $ \; \big( h'_1 \otimes h''_1 \big) \cdot \big( h'_2 \otimes h''_2 \big) := (-1)^{|h''_1|\,|h'_2|} \big( h'_1 \cdot h'_2 \big) \otimes \big( h''_1 \cdot h''_2 \big) \; $.  Morphisms among Hopf superalgebras are then the obvious ones.
                                                                \par
   In the following, if  $ H $  is any Hopf superalgebra with counit  $ \epsilon $  we shall write  $ \, H^+ := \Ker\,(\epsilon) \, $.

\smallskip

   A superalgebra  $ A $  is said to be  {\it commutative\/}  iff  $ \; x \, y = (-1)^{|x|\,|y|} y \, x \; $  for all homogeneous  $ \, x $,  $ y \in A \, $  and  $ \; z^2 = 0 \; $  for all odd  $ \, z \in A_\uno \, $.  We denote  by $ \salg $  the category of commutative  $ \bk $--superalgebras;  if necessary, we shall stress the role of  $ \bk $  by writing  $ \salg_\bk \, $.  A Hopf superalgebra is said to be commutative if it is such as a superalgebra, and we denote  by  $ \Hsalg_\bk \, $,  or simply  $ \Hsalg \, $,  the category of commutative Hopf  $ \bk $--superalgebras.  We shall also denote by  $ \alg_\bk $   --- or simply  $ \alg $  ---   the category of (associative) commutative unital  $ \bk $--algebras.
                                                                \par
   For  $ \, A \in \salg_\bk \, $,  $ \, n \in \N \, $,  we call  $ A_\uno^{\,[n]} $  the  $ A_\zero \, $--submodule  of  $ A $  spanned by all products  $ \, \vartheta_1 \cdots \vartheta_n \, $  with  $ \, \vartheta_i \in A_\uno \, $  for all  $ i \, $,  and then  $ A_\uno^{(n)} $  and  $ A_\uno^{\,n} $  respectively the unital  $ \bk $--subalgebra  and the ideal of  $ A $  generated by  $ A_\uno^{\,[n]} \, $.  Similarly we consider  $ \, H_\uno^{\,[n]} $,  $ \, H_\uno^{(n)} $,  $ \, H_\uno^{\,n} \, $  for  $ \, H \in \Hsalg_\bk \, $.

\vskip4pt

   We need also to consider the following constructions.  Given  $ \, A = A_\zero \oplus A_\uno \in \salg_\bk \, $,  let  $ \, J_A := (A_\uno) \, $  be the ideal of  $ A $  generated by  $ A_\uno \, $:  then  $ \, J_A = A_\uno^{[2]} \oplus A_\uno \, $,  and  $ \, \overline{A} := A \big/ J_A \, $  is a commutative superalgebra which is  {\sl totally even},  i.e.~$ \, \overline{A} \in \alg_\bk \, $;  moreover, there is an obvious isomorphism  $ \, \overline{A} := A \big/ (A_\uno) \cong A_\zero \big/ A_\uno^{[2]} \, $.  Also, the construction of  $ \overline{A} $  is functorial in  $ A \, $; and similarly for the constructions of  $ A_\zero $  and of  $ A_\uno^{(n)} \, $.  This yields functors
 $ \, \overline{(\ )} \, , \, {(\ )}_\zero : \salg_\bk \! \longrightarrow \alg_\bk \, $  and  $ \, {(\ )}_\uno^{(n)} \! : \salg_\bk \! \longrightarrow \salg_\bk \, $
 respectively defined on objects by  $ \, A \! \mapsto \overline{A} \, $,  $ \, A \! \mapsto A_\zero \, $,  $ \, A \! \mapsto A_\uno^{(n)} \, $  ($ \, n \in \N \, $)\,.
                                                                \par
   On the other hand, there is an obvious functor  $ \, \cJ_{\alg_\bk}^{\salg_\bk} : \alg_\bk \longrightarrow \salg_\bk \, $  given by taking any commutative  $ \bk $--algebra  as a totally even superalgebra; both  $ \overline{(\ )} $  and  $ {(\ )}_\zero $  are retractions of  $ \cJ_{\alg_\bk}^{\salg_\bk} \, $.
\end{free text}

\smallskip

   We shall now introduce the  {\it affine superschemes},  which by definition are representable functors from  $ \salg $  to the category  $ \sets $  of all sets:

\vskip11pt

\begin{definition}  \label{aff-spec}
  For any  $ \, R \in \salg_\bk \, $,  we call  {\sl spectrum of}  $ R \, $,  denoted  $ \uspec(R) $  or also  $ h_R \, $,  the representable functor  $ \; \uspec(R) = h_R : \salg_\bk \longrightarrow \sets \; $  associated with  $ R \, $.  Explicitly,  $ h_R $  is given on objects by  $ \; h_R(A) := \Hom_{\salg_\bk} \big( R \, , A \big) \; $  and on arrows by  $ \; h_R(f)(\phi) := f \circ \phi \; $.  All such spectra are also called  {\it affine  $ \bk $--superschemes}.  Any affine superscheme is said to be  {\it algebraic\/}  if its representing (commutative) superalgebra is finitely generated.
                                                          \par
   When  $ h_R $  is actually a functor from  $ \salg_\bk $  to  $ \grps $,  the category of groups, we say that  $ h_R $  is a  {\it (affine) group  $ \bk $--superscheme},  in short a  {\it (affine)  $ \bk $--supergroup\/};  indeed, this is equivalent to the fact that  $ \, R \, $  be a (commutative)  {\sl Hopf superalgebra},  i.e.~$\, R \in \Hsalg_\bk \, $.  In other words, the (affine) group superschemes are nothing but the functors from  $ \salg_\bk $  to  $ \grps $  which are representable.  Any affine  $ \bk $--supergroup is algebraic if it is such as an affine $ \bk $--superscheme,  i.e.~its representing (Hopf)  $ \bk $--superalgebra  is of finite type.
                                                          \par
   All affine  $ \bk $--superschemes  form a category, with suitably defined morphisms, denoted by  $ \assch_\bk $  which is isomorphic to the category  $ \salg_\bk^\circ \, $  opposite to  $ \salg_\bk \, $: an isomorphism  $ \; \salg_\bk^\circ \,{\buildrel \cong \over \longrightarrow}\, \assch_\bk $  \; is given on objects by  $ \, R \mapsto h_R \, $,  and we denote its inverse  $ \; \assch_\bk \,{\buildrel \cong \over \longrightarrow}\, \salg_\bk^\circ \; $  by  $ \, X \mapsto \cO(X) \; $.  Similarly, all affine  $ \bk $--supergroups  form a category, denoted by  $ \sgrps_\bk \, $,  isomorphic to the category  $ \Hsalg_\bk^\circ \, $  opposite to  $ \Hsalg_\bk \, $:  explicit isomorphisms are given (with same notation) by restrictions of the previous ones between  $ \salg_\bk^\circ \, $  and  $ \assch_\bk \, $  respectively.
                                                          \par
   More in general, we call respectively  {\it superset\/  $ \bk $--functor\/}  and  {\it supergroup\/  $ \bk $--functor\/}  (possibly dropping the  ``\,$ \bk $--\,'')  any functor  $ \, X \! : \salg_\bk \! \longrightarrow \sets \, $
 \hbox{and any functor  $ \, G : \salg_\bk \! \longrightarrow \grps \, $.
    $ \diamondsuit $}
%
\end{definition}

\vskip5pt

\begin{example}
   The  {\sl affine superspace\/}  $ \, \mathbb{A}_\bk^{p|q} \, $,  denoted  $ \, \bk^{p|q} \, $  too, is defined (for  $ \, p \, $,  $ q \in \N \, $)  as  $ \; \mathbb{A}_\bk^{p|q} := \uspec \big( \bk[x_1,\dots,x_p] \mathop{\otimes}\limits_\bk \bk[\xi_1 \dots \xi_q] \big) \; $  where  $ \bk[\xi_1 \dots \xi_q] $  is the exterior (or ``Grassmann'') algebra generated by  {\sl odd\/} variables  $ \xi_1 $,  $ \dots $,  $ \xi_q \, $,  and  $ \, \bk[x_1,\dots,x_p] \, $  the polynomial algebra in  $ p $  commuting variables.  The superdimension of  $ \mathbb{A}_\bk^{p|q} $  is easily seen to be  $ \, p|q \, $.
 \hskip13pt \hfill  $ \blacklozenge $
\end{example}

\vskip6pt

\begin{remark}
   More in general, one can consider the broader notions of (not necessarily affine) superscheme and supergroup, still defined over  $ \salg_\bk \, $   --- see  \cite{ccf}  for more details.  In the present work, however, we do not need to consider such more general notions.
\end{remark}
%
%
%
%
%

\vskip4pt

   The next examples turn out to be very important in the sequel.

\vskip7pt

\begin{examples}  \label{exs-supvecs}  {\ }
 \vskip2pt
   {\it (a)} \,  Let  $ V $  be a free  $ \bk $--supermodule,  that is a  $ \bk $--supermodule  for which both  $ V_\zero $  and  $ V_\uno $  are free as  $ \bk $--modules.  For any superalgebra  $ A $  we define  $ \; V(A) \, := \, {(A \otimes V)}_\zero \, = \, A_\zero \otimes V_\zero \oplus A_\uno \otimes V_\uno \; $.  This is a representable functor in the category of superalgebras, whose representing object is the  $ \bk $--superalgebra  of polynomial functions on  $ V \, $.  Hence  $ V $  can be seen as an affine  $ \bk $--superscheme.
 \vskip2pt
   {\it (b)} \,  {\sl  $ \rGL(V) $  as an affine algebraic supergroup}.  Let  $ V $  be a  $ \bk $--supermodule  which is free and whose rank, i.e.~the pair  $ \, \text{\sl rk}(V) := \big( \text{\sl rk}(V_\zero), \text{\sl rk}(V_\uno) \big) \, $,  is finite,  i.e.~$ \, \text{\sl rk}(V_\zero), \text{\sl rk}(V_\uno) \in \N \, $.  For any  $ \bk $--superalgebra  $ A \, $,  let  $ \, \rGL(V)(A) := \rGL\big(V(A)\big) \, $  be the set of isomorphisms  $ \; V(A) \longrightarrow V(A) \; $.  If we fix a homogeneous basis for  $ V $  and we set  $ \, p := \text{\sl rk}(V_\zero) \, $,  $ \, q := \text{\sl rk}(V_\uno) \in \N \, $,  we have  $ \, V \cong \bk^{p|q} \; $:  then we also denote $ \, \rGL(V) \, $  with  $ \, \rGL_{p|q} \, $.  Now,  $ \rGL_{p|q}(A) $  is the group of invertible  $ (p\,,q) $--block  matrices   --- whose size is  $ (p+q) $  ---   with diagonal block entries in  $ A_\zero $  and off-diagonal block entries in $ A_\uno \, $.  It is known that the functor  $ \rGL(V) $  is representable, so  $ \rGL(V) $  is indeed an affine  $ \bk $--supergroup,  and it is also algebraic; see (e.g.),  \cite{vsv}, Ch.~3,  for further details.   \hfill  $ \blacklozenge $
\end{examples}

\medskip

\begin{definition}  \label{nat-funcs}  {\ }
 For any superset  $ \bk $--functor  $ \, X : \salg_\bk \longrightarrow \sets \, $,  we respectively set
  $$  \overline{X} := X \circ \cJ_{\alg_\bk}^{\salg_\bk} \circ \overline{(\ )} \;\; ,  \quad  X_\zero := X \circ \cJ_{\alg_\bk}^{\salg_\bk} \circ {(\ )}_\zero \;\; ,  \quad  X_\uno^{(n)} := X \circ {(\ )}_\uno^{(n)}  $$
to denote its composition with the functor
 $ \, \overline{(\ )} : \salg_\bk \longrightarrow \alg_\bk \, $,  $ \, {(\ )}_\zero : \salg_\bk
\longrightarrow \alg_\bk \, $  and  $ \, {(\ )}_\uno^{(n)} : \salg_\bk \longrightarrow \alg_\bk \; $
 ($ n \in \N $),  followed, in the first two cases, by
 $ \, \cJ_{\alg_\bk}^{\salg_\bk} : \alg_\bk \longrightarrow \salg_\bk \, $   --- see  \S \ref{superalgebras}.  Similar notation applies when  $ \, X = \, G \, $  is in fact a supergroup  $ \bk $--functor.
 \hfill   $ \diamondsuit $
\end{definition}

\medskip

  \subsection{Lie superalgebras}  \label{Lie-superalgebras} {\ }

\smallskip

   The notion of Lie superalgebra over a field is well known: in particular, it is entirely satisfactory when the characteristic of the ground field  $ \bk $  is neither 2 nor 3.  However, it is not as well satisfactory   --- in the standard formulation ---   when that characteristic is either 2 or 3.  This motivates one to introduce the following modified formulation, whose main feature is to describe a ``correct'' notion of Lie superalgebras as given by the standard notion enriched with an additional piece of structure, namely sort of a  ``$ 2 $--mapping''  that is a close analogue to the  $ p $--mapping  in a  $ p $--restricted  Lie algebra over a field of characteristic  $ p > 0 \, $.

\medskip

\begin{definition}  \label{def-Lie-salg}
 Let  $ \, \fg = \fg_\zero \oplus \fg_\uno \, $  be a  $ \bk $--supermodule.  We say that  $ \fg $  is a  {\sl Lie superalgebra\/}  if we have a  {\it (Lie super)bracket\/}  $ \; [\,\cdot\, , \cdot\, ] : \fg \times \fg \longrightarrow \fg \, $,  $ \; (x,y) \mapsto [x,y] \, $,  \, and a  {\it  $ 2 $--operation\/}  $ \; {(\,\cdot\,)}^{\langle 2 \rangle} : \fg_\uno \longrightarrow \fg_\zero \, $,  $ \; z \mapsto z^{\langle 2 \rangle} \, $,  which satisfy the following properties (for all  $ \, x , y \in \fg_\zero \cup \fg_\uno \, $,  $ \, w \in \fg_\zero \, $,  $ \, z, z_1, z_2 \in \fg_\uno $):
 \vskip5pt
 \hskip-15pt
   {\it (a)}  \qquad  $ [\,\cdot\, , \cdot\, ] \, $  is  $ \bk $--bilinear,
\qquad  $ [w,w] \; = \; 0  \quad ,  \qquad  \big[z,[z,z]\big] \; = \; 0  \quad $;
 \vskip5pt
 \hskip-15pt
   {\it (b)}  $ \qquad  [x,y] \, + \, {(-1)}^{|x| \, |y|}[y,x] \; = \; 0  \qquad\; $  {\sl (anti-symmetry)}\,;
 \vskip7pt
 \hskip-15pt
   {\it (c)}
 $ \quad  {(-\!1)}^{|x| \, |z|} [x,  [y,z]] + {(-\!1)}^{|y| \, |x|} [y , [z,x]] \, + \, {(-\!1)}^{|z| \, |y|} [z , [x,y]] \, = \, 0 \;\, $
 \vskip3pt
   \centerline{\sl (Jacobi identity);}
 \vskip6pt
 \hskip-15pt
   {\it (d)}  \qquad  $ {(\,\cdot\,)}^{\langle 2 \rangle} \, $  is  $ \bk $--quadratic, i.e.~$ \,\;  {(c\,z)}^{\langle 2 \rangle} = \, c^2 \, z^{\langle 2 \rangle} \;\; $  for all  $ \, c \in \bk \; $;
 \vskip6pt
 \hskip-15pt
   {\it (e)}  \qquad  $ {(z_1 \! + z_2)}^{\langle 2 \rangle}  \, = \;  z_1^{\langle 2 \rangle} + \, [z_1,z_2] \, + \, z_2^{\langle 2 \rangle} \quad $;
 \vskip6pt
 \hskip-15pt
   {\it (f)}  \qquad  $ \big[ z^{\langle 2 \rangle}, x \big]  \, = \;  \big[ z \, , [z,x] \big] \quad $.
 \vskip9pt
   All Lie  $ \bk $--superalgebras  form a category, denoted  $ \slie_\bk \, $,  whose morphisms are the  $ \bk $--linear,  graded maps preserving the bracket and the  $ 2 $--operation.
 \hfill   $ \diamondsuit $
\end{definition}

\smallskip

\begin{remark}
 The conditions in  Definition \ref{def-Lie-salg}  are somewhat redundant, and in some cases may be simplified: for instance, condition  {\it (e)\/}  yields  $ \; [z_1,z_2] \, = \, {(z_1 \! + z_2)}^{\langle 2 \rangle} - z_1^{\langle 2 \rangle} - z_2^{\langle 2 \rangle} \; $  so one could use this as a  {\sl definition\/}  of the Lie bracket on  $ \, \fg_\uno \times \fg_\uno \, $  in terms of the 2-operation.  Conversely, when  $ 2 $  is invertible in  $ \bk $  the  $ 2 $--operation  is recovered from the Lie bracket, via condition  {\it (e)},  as  $ \; z^{\langle 2 \rangle} = 2^{-1} \, [z,z] \; $.
\end{remark}

\smallskip

\begin{example}  \label{def-End(V)}
  Let  $ \, V = V_\zero \oplus V_\uno \, $  be a  {\sl free\/}  $ \bk $--supermodule,  and consider  $ \End(V) \, $,  the endomorphisms of  $ V $  as an ordinary  $ \bk $--module.  This is again a free  $ \bk $--supermo\-dule,  $ \; \End(V) = \End(V)_\zero \oplus \End(V)_\uno \, $,  \, where $ \End(V)_\zero $  are the morphisms which preserve the parity, while  $ \End(V)_\uno $  are the morphisms which reverse the parity.  If  $ V $  has finite rank, and we choose a basis for  $ V $  of homogeneous elements (writing first the even ones), then  $ \End(V)_\zero $  is the set of all diagonal block matrices, while  $ \End(V)_\uno $  is the set of all off-diagonal block matrices.  Thus  $ \End(V) $  is a Lie  $ \bk $--superalgebra  with bracket
 $ \,\; [A,B] \, := \, A B - {(-1)}^{|A||B|} \, B A \;\, $  for all homogeneous  $ \, A, B \in \End(V) \;\, $
and  $ 2 $--operation  $ \,\; C^{\langle 2 \rangle} := C \, C \;\, $
 for all odd  $ C \, $.
                                                           \par
   The standard example is $ \, V := \bk^{p|q} = \bk^p \oplus \bk^q \, $,  with  $ \, V_\zero := \bk^p \, $  and  $ \, V_\uno := \bk^q \, $.  In this case we also write  $ \; \End\big(\bk^{m|n}\big) \! := \End(V) \; $  or  $ \; \rgl_{\,p\,|q} := \End(V) \; $.   \hfill  $ \blacklozenge $
\end{example}

\medskip

\begin{free text}  \label{Lie-salg_funct}
 {\bf Functorial presentation of Lie superalgebras.}  \ Let  $ \salg_\bk $  be the category of commutative  $ \bk $--superalgebras  (see section  \ref{first_preliminaries})  and  $ \lie_\bk $  the category of Lie  $ \bk $--algebras.  Any Lie  $ \bk $--superalgebra  $ \, \fg \in \slie_\bk \, $  yields a functor
 $ \; \cL_\fg : \salg_\bk \longrightarrow \lie_\bk \; $,  \,
 which is given on objects by
 $ \; \cL_\fg(A) \, := \, \big( A \otimes \fg \,\big)_\zero \, = \, A_\zero \otimes \fg_\zero \,
\oplus \, A_\uno \otimes \fg_\uno \; $,  \, for all  $ \, A \in \salg_\bk \, $:
 indeed,  $ \, A \otimes \fg \, $  is a Lie superalgebra (in a suitable sense, on the  $ \bk $--superalgebra  $ A $)  on its own, its Lie bracket being defined canonically via sign rules by
 $ \; \big[\, a \otimes  X \, , \, a' \otimes X' \,\big] \, :=
\, {(-1)}^{|X|\,|a'|} \, a\,a' \otimes \big[X,X'\big] \; $,
 and  $ \cL_\fg(A) $  is its even part, hence it is a Lie algebra (everything is trivial to verify: see \cite{bmpz},  or  \cite{ccf},  Proposition 11.2.5, for details).  In particular, this applies to the Lie superalgebra  $ \, \fg := \End(V) \, $,  where  $ V $  is any free  $ \bk $--supermodule.  Note also that  $ \rGL(V) $  --- see  Example \ref{exs-supvecs}{\it (b)}  ---   is then a subfunctor of  $ \cL_{\text{\sl End}(V)} \, $.

\vskip5pt

   This ``functorial presentation'' of Lie superalgebras can be adapted to representations too.  Indeed, let  $ V $  be a  $ \fg $--module, for a Lie superalgebra  $ \fg \, $:  by definition,  $ V $  is a  $ \bk $--supermodule, and we have a Lie superalgebra morphism  $ \; \phi : \fg \longrightarrow \End(V) \; $  (the representation map).  Now, scalar extension induces a morphism  $ \; \text{\sl id}_A \otimes \phi : A \otimes \fg \longrightarrow A \otimes \End(V) \; $  for each  $ \, A \in \salg_\bk \, $,  whose restriction to the even part gives a morphism  $ \; \big( A \otimes \fg \,\big)_\zero \longrightarrow \big( A \otimes \End(V) \big)_\zero \; $,  that is a morphism  $ \; \cL_\fg(A) \longrightarrow \cL_{\text{\sl End}(V)}(A) \; $  in  $ \lie_\bk \, $.  The whole construction is natural in  $ A \, $,  hence it induces a natural transformation of functors  $ \, \cL_\fg \longrightarrow \cL_{\text{\sl End}(V)} \, $.

\vskip5pt

   In the sequel, we shall call  {\it quasi-representable\/}  any functor  $ \; \cL : \salg_\bk \! \longrightarrow \lie_\bk \; $  for which there exists a Lie  $ \bk $--superalgebra  $ \fg $  such that  $ \, \cL = \cL_\fg \; $.  Any such functor is even  {\sl representable\/}  (in the usual sense) as soon as the  $ \bk $--module  $ \fg $  is finitely generated projective: indeed, in this case  $ \fg $  is a  $ \bk $--direct  summand  of a finite rank free  $ \bk $--supermodule,  say  $ \mathfrak{f} = \fg \oplus \mathfrak{h} \, $,  thus  $ \mathfrak{f}^* \cong \fg^* \oplus \mathfrak{h}^* \, $  and  $ \cL_\fg $  is then represented by the commutative  $ \bk $--superalgebra  generated by  $ \fg^* $  inside  $ S(\mathfrak{f}^*) \; $.

\vskip5pt

   Finally, note that all this has a natural, non-super counterpart which is obtained by letting ``Lie algebras'' replace ``Lie superalgebras'' all over the place.
\end{free text}

\medskip

  \subsection{The tangent Lie superalgebra of a supergroup}  \label{Lie(G)} {\ }

\smallskip

   We now quickly recall how to associate a Lie superalgebra with a supergroup scheme.  Further details can be found in  \cite{ccf}, \S\S 11.2--5.

\smallskip

   Let  $ \, A \in \salg \, $  and let  $ \, A[\varepsilon] := A[x]\big/\big(x^2\big) \, $  be the  {\sl superalgebra  of dual numbers\/}  over  $ A \, $,  in which  $ \, \varepsilon := x \! \mod \! \big(x^2\big) \, $  is taken to be  {\it even}.  Then  $ \, A[\varepsilon] = A \oplus A \varepsilon \, $,  and there are two natural morphisms
 $ \; i_{{}_A} : A \longrightarrow A[\varepsilon] \, $,  $ \, a \;{\buildrel {\,i_{{}_{A_{\,}}}} \over \mapsto}\; a \, $,  and
 $ \; p_{{}_A} : A[\varepsilon] \longrightarrow A \, $,  $ \, \big( a + a'\varepsilon \big) \;{\buildrel {\,p_{{}_{A_{\,}}}} \over \mapsto}\; a \; $,
 \; such that  $ \; p_{{}_A} \! \circ i_{{}_A} = \, {\mathrm{id}}_A \; $.

\medskip

\begin{definition} \label{tangent_Lie_superalgebra}
   Given a supergroup  $ \bk $--functor  $ \, G : \salg_\bk \! \longrightarrow \! \grps \, $,  let  $ \; G(p_A) : G (A(\varepsilon)) \! \longrightarrow \! G(A) \; $  be the morphism associated with  $ \; p_A : A[\varepsilon] \! \longrightarrow \! A \; $.  There then exists a unique functor  $ \; \Lie(G) : \salg_\bk \!\longrightarrow \!\sets \; $  given on objects by  $ \; \Lie(G)(A) := \Ker\,\big(G(p)_A\big) \; $.
   \hfill $ \diamondsuit $
\end{definition}

\medskip

   The key fact is that when  $ G $  is a supergroup  $ \Lie(G) $  is a Lie algebra valued functor, i.e.~a functor  $ \; \Lie(G) : \salg_\bk \longrightarrow \lie_\bk \; $:  this is by no means evident, since the very definition only assures that that functor is group-valued.  In fact, stating that  $ \Lie(G) $  is actually Lie algebra valued requires a non-trivial proof (like in the classical case): we refer for this to  \cite{ccf},  Ch.\ 11 (with the few adaptations needed for the present setup), and restrict ourselves to quickly sketching here the main steps.
 \vskip5pt
   The Lie structure on any object  $ \, \Lie(G)(A) \, $  is introduced as follows.  First, define the  {\it adjoint action\/}  of  $ G $  on  $ \Lie(G) $  as given, for every  $ \, A \in \salg_\bk \, $,  by
 \vskip4pt
   \centerline{ $ \Ad : G(A)  \longrightarrow  \rGL\big(\Lie(G)(A)\big) \quad ,  \qquad  \Ad(g)(x) \, := \, G (i)(g) \cdot x \cdot {\big(G (i)(g)\big)}^{-1} $ }
 \vskip5pt
\noindent
 for all  $ \, g \in G(A) \, $,  $ \, x \in \Lie(G)(A) \, $.  Second, define the  {\it adjoint morphism\/}  $ \ad $  as
 \vskip4pt
   \centerline{ $ \ad \, := \, \Lie(\Ad) : \Lie(G) \longrightarrow \Lie(\rGL(\Lie(G))) := \End(\Lie(G)) $ }
\vskip5pt
\noindent
 and finally define  $ \; [x,y] := \ad(x)(y) \; $  for all  $ \, x,y \in \Lie(G)(A) \, $.  Then we have the following:

\medskip

\begin{proposition}  \label{Lie-funct_Lie(G)}
 Given  $ \, G \in \sgrps_\bk \, $,  let  $ \, \omega_e(G)  := {\mathcal{O}(G)}^+ \big/ {\big( {\mathcal{O}(G)}^+ \big)}^2 $  and  $ \, \fg := T_e(G) = {\omega_e(G)}^* = \Hom_\bk\big(\omega_e(G),\bk\big) \, $  be the  {\sl cotangent}  and  {\sl tangent}  supermodule to  $ G $  at the unit  $ \, e \in G \, $.
 \vskip5pt
   (a) \,  $ \Lie(G) \, $  with the bracket  $ \, [\,\cdot\, , \cdot\, ] \, $  above yields a Lie algebra valued functor
 \vskip3pt
   \centerline{ $ \, \Lie(G) : \salg_\bk \! \relbar\joinrel\relbar\joinrel\longrightarrow \lie_\bk $ }
 \vskip5pt
   (b) \, if  $ \Lie(G) $  is quasi-representable, namely it is of the form  $ \, \Lie(G) = \cL_{\mathfrak{p}} \, $  (see \S \ref{Lie-salg_funct}),  then  $ \mathfrak{p} $  identifies with  $ \fg $  and the latter is endowed with a canonical structure of Lie  $ \bk $--superalgebra.
 \vskip5pt
   (c) \,  $ \, \Lie(G) \, $  is quasi-representable if and only if  $ \, \omega_e(G) $  is finitely generated projective  (over  $ \bk $).  When this is the case,  $ \Lie(G) $  is actually representable.
\end{proposition}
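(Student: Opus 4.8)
The backbone of the whole proof is an explicit description of the functor $\Lie(G)$. Write $H := \cO(G)$, with counit $\epsilon$ and augmentation ideal $H^+$. Since $\varepsilon$ is even and $\varepsilon^2 = 0$, any point $\phi \in \Ker\big(G(p_A)\big) \subseteq \Hom_{\salg_\bk}\!\big(H, A[\varepsilon]\big)$ decomposes uniquely as $\phi = u_A\!\circ\epsilon + \partial_\phi\,\varepsilon$, where $u_A : \bk \to A$ is the unit and $\partial_\phi : H \to A$ is a grade-preserving $\epsilon$-derivation, that is $\partial_\phi(xy) = \epsilon(x)\,\partial_\phi(y) + \partial_\phi(x)\,\epsilon(y)$. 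Such a $\partial_\phi$ annihilates $\bk$ and $(H^+)^2$, hence factors through $\omega_e(G) = H^+\big/(H^+)^2$, and every grade-preserving $\bk$-linear map $\omega_e(G) \to A$ arises this way. This gives a natural isomorphism $\Lie(G)(A) \cong \Hom^{\mathrm{gr}}_\bk\!\big(\omega_e(G), A\big)$ that I shall use in all three parts.

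For part (a), I would first determine the $\bk$-module structure on each $\Lie(G)(A) = \Ker\big(G(p_A)\big)$, which a priori is only a group. Computing the convolution of $\phi_i = u_A\epsilon + \partial_i\varepsilon$ ($i=1,2$) as $m_{A[\varepsilon]}\circ(\phi_1\otimes\phi_2)\circ\Delta$ and using the counit axioms together with $\varepsilon^2 = 0$ yields $\phi_1\cdot\phi_2 = u_A\epsilon + (\partial_1+\partial_2)\,\varepsilon$; thus the group is abelian, its law being the addition of derivations, while rescaling $\varepsilon \mapsto \lambda\varepsilon$ ($\lambda\in\bk$) provides the scalar action. Then $\Ad$ and $\ad := \Lie(\Ad)$ are formed as in the statement and the bracket is $[x,y] := \ad(x)(y)$. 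The only non-formal point is that this bracket be $\bk$-bilinear, antisymmetric and satisfy the Jacobi identity, so that $\Lie(G)$ indeed lands in $\lie_\bk$: I would prove this exactly as in the classical theory, transcribing \cite{ccf}, Ch.~11, with the signs dictated by the $\Z_2$-grading and taking care that no element of $\bk$ is ever inverted, so that the arguments remain valid over an arbitrary $\bk$.

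For part (b), assume $\Lie(G) = \cL_\mathfrak{p}$, so that $(A\otimes\mathfrak{p})_\zero \cong \Hom^{\mathrm{gr}}_\bk\!\big(\omega_e(G), A\big)$ naturally in $A$. I would recover the supermodule $\mathfrak{p}$ by evaluating on test objects: at $A = \bk$ both sides equal $\mathfrak{p}_\zero$, respectively $\big(\omega_e(G)_\zero\big)^* = \fg_\zero$; at $A = \bk[\theta]$, the Grassmann algebra on a single odd generator $\theta$, comparing the $\theta$-components gives $\mathfrak{p}_\uno \cong \big(\omega_e(G)_\uno\big)^* = \fg_\uno$. Naturality makes these compatible, yielding a supermodule isomorphism $\mathfrak{p} \cong \omega_e(G)^* = \fg$; transporting the bracket and the $2$-operation carried by $\mathfrak{p}$ then equips $\fg$ with a Lie $\bk$-superalgebra structure. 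One subtlety to address is that the ordinary-Lie-algebra-valued functor $\cL_\mathfrak{p}$ already determines the full superbracket (the value $[z,z]$ for odd $z$ being read off over $\bk[\theta_1,\theta_2]$, where $\theta_1\theta_2\neq0$), whereas the $2$-operation must be pinned down by the intrinsic squaring of odd tangent vectors in $G$; checking that this produces an object of $\slie_\bk$ in the sense of Definition \ref{def-Lie-salg} is again part of what I would import from \cite{ccf}.

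For part (c), the implication ``finitely generated projective $\Rightarrow$ quasi-representable'' is immediate from the backbone: if $\omega_e(G)$ is finitely generated projective then so is $\fg = \omega_e(G)^*$, the canonical map $A\otimes\fg \to \Hom_\bk\!\big(\omega_e(G), A\big)$ is an isomorphism, and taking even parts gives $\Hom^{\mathrm{gr}}_\bk\!\big(\omega_e(G), A\big) \cong (A\otimes\fg)_\zero = \cL_\fg(A)$, so $\Lie(G) = \cL_\fg$; representability then follows from \S\ref{Lie-salg_funct}. The converse is the main obstacle. I would isolate the purely module-theoretic statement that a natural isomorphism $\Hom_\bk(M,A) \cong A\otimes N$ of functors on commutative $\bk$-algebras forces $M$ to be finitely generated projective with $N\cong M^*$: reading off $N\cong M^*$ at $A=\bk$, one checks that the given isomorphism agrees, up to an automorphism of $N$, with the canonical evaluation $A\otimes M^* \to \Hom_\bk(M,A)$, whose invertibility for all $A$ is precisely dualizability of $M$, i.e.\ finite generation and projectivity. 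Applying this to the even and odd parts of $M = \omega_e(G)$ finishes the proof.
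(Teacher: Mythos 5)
Your backbone identification $\,\Lie(G)(A)\cong\Hom_\bk^{\mathrm{gr}}\big(\omega_e(G),A\big)\,$, your part (a), and the ``if'' half of (c) all run parallel to the paper. The genuine gap is in the converse of (c), inside your module-theoretic lemma, at the step ``one checks that the given isomorphism agrees, up to an automorphism of $N$, with the canonical evaluation $A\otimes M^*\to\Hom_\bk(M,A)$''. This cannot be checked from the data you have: the isomorphism is natural only with respect to morphisms of (super)algebras, and there is no algebra map $\bk\to A$ sending $1\mapsto a$ for a general element $a$, so naturality pins the transformation down only on elements of the form $1\otimes n$; the components are not assumed $A_\zero$-linear, so you cannot propagate from there. (At the level of functors on \emph{modules} the statement is true: a $\bk$-linear transformation $T\otimes V\to\Hom_\bk(M,V)$ natural in module maps is forced to be evaluation against its $V=\bk$ component, since $t\otimes v$ is the image of $t\otimes 1$ under $1\mapsto v$ --- but module-level naturality is exactly what you are not given.) The missing idea, which is the central device of the paper's proof (suggested by Masuoka), is to \emph{manufacture} module-level naturality by evaluating the natural isomorphism on square-zero central extensions $\bk\oplus M$: module maps $M\to M'$ induce superalgebra maps $\bk\oplus M\to\bk\oplus M'$, and extracting the $M$-components yields $\Hom_\bk\big({(\omega_e(G))}_\zero,M\big)\cong\fg_\zero\otimes_\bk M$ naturally in $M$ (and likewise for the odd parts). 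Once this is secured one need not even identify the isomorphism with the canonical one: projectivity follows because $\fg_\zero\otimes_\bk(-)$ preserves surjections, and finite generation follows by factoring a splitting $\sigma$ of a free cover $F\twoheadrightarrow{(\omega_e(G))}_\zero$ through a finite free summand $F'\subseteq F$, using that the element of $\fg_\zero\otimes F$ corresponding to $\sigma$ lies in $\fg_\zero\otimes F'$. Note also the parity bookkeeping your last sentence elides: ``applying this to the even and odd parts'' is not a literal operation on a functor defined on $\salg_\bk$; one must feed in $\bk\oplus M$ with \emph{both} $\Z_2$-gradings ($M$ placed in degree $\zero$, then in degree $\uno$), which is precisely why the paper introduces the two test superalgebras $M_+$ and $M_-$.

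Two smaller caveats on (b). First, ``the intrinsic squaring of odd tangent vectors'' does not literally make sense: an odd $\epsilon$-derivation $\partial:\cO(G)\to\bk$ cannot be composed with itself. One must either take a convolution square or, as the paper does, pass to the realization $\fg''=\Der_\bk^{\,\ell}(\cO(G))$ by left-invariant superderivations, where $Z\mapsto Z\circ Z$ is well defined, lands in $\fg''_\zero$, and is then transported back to $\fg$. Second, this $2$-operation is exactly the point the paper states is \emph{not} covered by \cite{ccf} --- that reference yields conditions (a)-(b)-(c) of Definition \ref{def-Lie-salg}, while (d)-(e)-(f) are supplied by the paper itself --- so deferring the $\slie_\bk$ verification wholesale to \cite{ccf} would not discharge the obligation. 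These points are repairable, but as written the converse of (c) and the $2$-operation step are not yet proofs.
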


\begin{proof}
  Claim  {\it (a)},  i.e.~the fact that  $ \Lie(G) $  with the bracket  $ [\,\cdot\, , \cdot\, ] $  considered above be a Lie algebra valued functor, is a well known fact: cf.\  \cite{ccf},  \S 11.4 (for instance) for further details.
 \vskip3pt
   As to claim  {\it (b)},  it is also standard (cf.\  \cite{ccf},  \S 11.2) that if  $ \, \Lie(G) = \cL_{\mathfrak{p}} \, $,  then  $ \mathfrak{p} $  necessarily identifies with  $ \, \fg := T_e(G) \, $,  and then the existence of a ``Lie structure'' on  $ \, \Lie(G) = \cL_\fg \, $  endows  $ \, \mathfrak{p} = \fg \, $  with a structure of ``Lie  $ \bk $--superalgebra''  in the usual ``weak sense'': i.e.,  $ \fg $  has a Lie super\-bracket for which conditions  {\it (a)-(b)-(c)\/}  in  Definition \ref{def-Lie-salg}  are fulfilled.  In addition, one has similar, canonical identifications  $ \, \Lie(G) = \cL_{\fg'} \, $  and  $ \, \Lie(G) = \cL_{\fg''} \, $  where  $ \, \fg' := \Der_\bk(\cO(G),\bk) \, $  is the $ \bk $--superalgebra  of  $ \bk $--valued  superderivations of  $ \cO(G) $  and  $ \, \fg'' := \Der_\bk^{\,\ell}(\cO(G)) \, $  is the $ \bk $--superalgebra  of left-invariant superderivations of  $ \cO(G) $  into itself.  Also, both  $ \fg' $  and  $ \fg'' $  bear structures of Lie  $ \bk $--superalgebras  which are isomorphic to that of  $ \fg $  (yielding the Lie algebra structure on each  $ \, \Lie(G)(A) \, $,  for  $ \, A \in \salg_\bk \, $)   --- see e.g.\  \cite{ccf}, \S\S 11.3--6;  there  $ G $  is assumed to be algebraic, but the arguments (taken from classical sources, such as  \cite{dg}, Ch.~II, \S 4)  only require our assumption in  {\it (b)}.
                                                         \par
   What we still need to fix is that, under the assumption in  {\it (b)},  $ \, \fg := T_e(G) \, $  is also endowed with a  $ 2 $--operation  such that  $ \fg $  is a Lie $ \bk $--superalgebra  in the sense of Definition \ref{def-Lie-salg}.  Actually, I introduce such a  $ 2 $--operation  on  $ \fg'' $  and then I use the previous isomorphism(s) to ``transfer'' such a structure onto  $ \fg $  (and onto  $ \fg' $)  as well.  Indeed, the Lie bracket in  $ \, \fg'' := \Der_\bk^{\,\ell}(\cO(G)) \, $  is given by  $ \, [X,Y] = X \!\circ Y - {(-1)}^{|X|\,|Y|} \, Y \!\circ X \, $;  in addition, looking  $ \, Z \in \fg_\uno \, $  as an (odd) left-invariant superderivation of  $ \cO(G) $  one sees at once that  $ \, Z^{\,2} = Z \circ Z \, $  is an even left-invariant superderivation, i.e.~$ \, Z^{\,2} \in \fg_\zero \; $.  Then  $ \, \fg_\uno \!\longrightarrow \fg_\zero \, , \, Z \mapsto Z^{\langle 2 \rangle} := Z^{\,2} \, $,  is well defined and yields a  $ 2 $--operation  in  $ \fg $  that along with  $ [\,\cdot\, , \cdot\, ] $  makes it into a Lie  $ \bk $--superalgebra  as desired (i.e.~in the sense of  Definition \ref{def-Lie-salg}).
 \vskip3pt
   For claim  {\it (c)\/}  the ``if\,'' part is well-known again: if  $ \omega_e(G) $  is finitely generated projective then the same holds true for  $ \, \fg = {\omega_e(G)}^* \, $,  hence (see  \S \ref{Lie-salg_funct})  the functor  $ \, \Lie(G) = \cL_\fg \, $  is representable.  As to the ``only if\,'' part, here is a proof (kindly suggested to the author by prof.\ Masuoka).
                                                         \par
   First, by definition of ``quasi-representable'' (see  \S \ref{Lie-salg_funct})  and by claim {\it (b)\/}  above we have that  $ \text{\sl Lie}(G) $  is quasi-representable if and only if there exist isomorphisms (natural in  $ \, R \in \salg_\Bbbk \, $)
  $$  \text{\sl Lie}(G)(R)  \, \cong \,  \mathcal{L}_\fg(R) := {(\fg \otimes_\Bbbk R)}_\zero = (\fg_\zero \otimes_\Bbbk R_\zero) \oplus (\fg_\uno \otimes_\Bbbk R_\uno)   \eqno (2.1)   $$
   \indent   On the other hand, definitions give  $ \; \text{\sl Lie}(G)(R) \, \cong \, \text{\sl Hom}_{\text{(smod)}_\Bbbk}\big(\, \omega_e(G) \, , R \, \big) \; $   ---
where  $ \, \text{(smod)}_\Bbbk \, $  denotes the category of  $ \Bbbk $--supermodules  ---   so that
  $$  \text{Hom}_{\text{(smod)}_\Bbbk}\big(\, \omega_e(G) \, , R \, \big)  \; \cong \;  \text{Hom}_{\text{(mod)}_\Bbbk}\big( {(\,\omega_e(G))}_\zero \, , R_\zero \, \big) \oplus \text{Hom}_{\text{(mod)}_\Bbbk}\big( {(\,\omega_e(G))}_\uno \, , R_\uno \, \big)  $$
where now  $ \, \text{(mod)}_\Bbbk \, $  denotes the category of  $ \Bbbk $--modules.  Thus (2.1) above reads (for all  $ R \, $, etc.)
  $$  \text{Hom}_{\text{(mod)}_\Bbbk}\!\big( {(\omega_e(G))}_\zero \, , R_\zero \big) \oplus \text{Hom}_{\text{(mod)}_\Bbbk}\!\big( {(\omega_e(G))}_\uno \, , R_\uno \big)  \, \cong \,  (\fg_\zero \otimes_{\Bbbk\!} R_\zero) \oplus (\fg_\uno \otimes_{\Bbbk\!} R_\uno)  \hskip7pt   \eqno (2.2)  $$
   \indent   Given  $ \, M \in \text{(mod)}_\Bbbk \, $  we associate with it a couple of (super)commutative  $ \Bbbk $--superalgebras  $ M_+ $  and  $ M_- $  defined as follows.  As  $ \Bbbk $--algebras  they both are the central extension of  $ \Bbbk $  by  $ M \, $  (that is  $ \, M_+ := \Bbbk \oplus M =: M_- \, $  with  $ \; m'\,m'' = 0 \; $  for  $ \, m', m'' \in M \, $),  {\sl but the  $ \Z_2 $--grading  is different},  namely
  $$  {(M_+)}_\zero \, := \Bbbk \oplus M \; ,  \quad  {(M_+)}_\uno \, := \{0\} \; ,  \quad \qquad  {(M_-)}_\zero \, := \Bbbk \; ,  \quad  {(M_-)}_\uno \, := M  $$
   \indent   Now assume that  $ \text{\sl Lie}(G) $  is quasi-representable, hence (2.2) holds true.  For  $ \, R := M_+ \, $  this gives
  $$  \text{Hom}_{\text{(mod)}_\Bbbk}\big( {(\,\omega_e(G))}_\zero \, , R_\zero \, \big)  \, = \,  \text{Hom}_{\text{(mod)}_\Bbbk}\big( {(\,\omega_e(G))}_\zero \, , \Bbbk \, \big) \oplus \, \text{Hom}_{\text{(mod)}_\Bbbk}\big( {(\,\omega_e(G))}_\zero \, , M \, \big)  $$
and
  $ \; \fg_\zero \otimes_\Bbbk R_\zero = \fg_\zero \otimes_\Bbbk (\Bbbk \oplus M) = \big( \fg_\zero \otimes_\Bbbk \Bbbk \big) \oplus \big( \fg_\zero \otimes_\Bbbk M \big) \; $,  \,
whereas
  $ \; \text{Hom}_{\text{(mod)}_\Bbbk}\big( {(\,\omega_e(G))}_\uno \, , R_\uno \, \big) = \{0\} \; $,  $ \; \fg_\uno \otimes_\Bbbk R_\uno = \{0\} \; $.
Therefore condition (2.2) reads
  $$  \text{Hom}_{\text{(mod)}_\Bbbk}\big( {(\,\omega_e(G))}_\zero \, , \Bbbk \, \big) \oplus \, \text{Hom}_{\text{(mod)}_\Bbbk}\big( {(\,\omega_e(G))}_\zero \, , M \, \big)  \, \cong \,  \big( \fg_\zero \otimes_\Bbbk \Bbbk \big) \oplus \big( \fg_\zero \otimes_\Bbbk M \big)  $$
and eventually (as  $ \; \text{Hom}_{\text{(mod)}_\Bbbk}\big( {(\,\omega_e(G))}_\zero \, , \Bbbk \, \big) \, = \, {\big({(\,\omega_e(G))}_\zero\big)}^* \, = \, {\big({(\,\omega_e(G))}^*\big)}_\zero \, = \, \fg_\zero \, = \, \fg_\zero \otimes_\Bbbk \Bbbk \; $)
  $$  \text{Hom}_{\text{(mod)}_\Bbbk}\big( {(\,\omega_e(G))}_\zero \, , M \, \big)  \,\; \cong \;\,  \fg_\zero \otimes_\Bbbk M   \eqno (2.3)  $$
This last condition is natural in  $ M \, $:  this together with the fact that the functor  $ \, M \mapsto \fg \otimes_\Bbbk M \, $  preserves surjections, implies that  $ {\big( \omega_e(G) \big)}_\zero $  is  $ \Bbbk $--projective.
  \vskip3pt
   For  $ \, R := M_- \, $  we can repeat the same argument.  We find
  $$  \text{Hom}_{\text{(mod)}_\Bbbk}\big( {(\,\omega_e(G))}_\zero \, , R_\zero \, \big)  \, = \,  \text{Hom}_{\text{(mod)}_\Bbbk}\big( {(\,\omega_e(G))}_\zero \, , \Bbbk \, \big)  $$
and
  $ \,\; \text{Hom}_{\text{(mod)}_\Bbbk}\big( {(\,\omega_e(G))}_\uno \, , R_\uno \, \big) = \text{Hom}_{\text{(mod)}_\Bbbk}\big( {(\,\omega_e(G))}_\uno \, , M \, \big) \; $,  \,
while
  $ \; \fg_\zero \otimes_\Bbbk R_\zero = \fg_\zero \otimes_\Bbbk \Bbbk \cong \fg_\zero \; $,  $ \; \fg_\uno \otimes_\Bbbk R_\uno = \fg_\uno \otimes_\Bbbk M \; $.
Thus condition (2.2) now reads
  $$  \text{Hom}_{\text{(mod)}_\Bbbk}\big( {(\,\omega_e(G))}_\zero \, , \Bbbk \, \big) \oplus \, \text{Hom}_{\text{(mod)}_\Bbbk}\big( {(\,\omega_e(G))}_\uno \, , M \, \big)  \, \cong \,  \big( \fg_\zero \otimes_\Bbbk \Bbbk \big) \oplus \big( \fg_\uno \otimes_\Bbbk M \big)  $$
and then eventually (like before)
  $$  \text{Hom}_{\text{(mod)}_\Bbbk}\big( {(\,\omega_e(G))}_\uno \, , M \, \big)  \,\; \cong \;\,  \fg_\uno \otimes_\Bbbk M   \eqno (2.4)  $$
As (2.4) is natural in  $ M \, $,  we can now argue like above to infer that  $ {\big( \omega_e(G) \big)}_\uno $  is  $ \Bbbk $--projective.
                                                                     \par
   The outcome is that  $ \, \omega_e(G) = {\big( \omega_e(G) \big)}_\zero \oplus {\big( \omega_e(G) \big)}_\uno \, $  is  $ \Bbbk $--projective,  q.e.d.
  \vskip3pt
   Let now  $ \, \pi : F = \oplus_{i \in I} \Bbbk \relbar\joinrel\relbar\joinrel\twoheadrightarrow {\big( \omega_e(G) \big)}_\zero \, $  be a  $ \Bbbk $--linear  surjection from some free  $ \Bbbk $--module  $ \, F = \oplus_{i \in I} \, $  of rank  $ |I| $  onto $ {\big( \omega_e(G) \big)}_\zero \, $.  By projectivity of  $ {\big( \omega_e(G) \big)}_\zero $  there exists a splitting  $ \, \sigma : {\big( \omega_e(G) \big)}_\zero \lhook\joinrel\relbar\joinrel\relbar\joinrel\rightarrow F \, $  of  $ \pi \, $.  Then  $ \, \sigma \in \text{Hom}_{\text{(mod)}_\Bbbk}\big(\! {(\,\omega_e(G))}_\zero \, , F \, \big) \, \cong \, \fg_\zero \otimes_\Bbbk F \, \cong \, \fg_\zero \otimes_\Bbbk \big(\! \oplus_{i \in I} \Bbbk \big) \, $   --- by (2.3) with  $ \, M := F \, $  ---   hence there exists some  {\sl finite\/}  index subset  $ \, J \subseteq I \, $  such that  $ \sigma $  actually belongs to  $ \,  \fg_\zero \otimes_\Bbbk \big(\! \oplus_{i \in J} \Bbbk \big) \, $,  which means that the image of  $ \sigma $  is contained in  $ \, F' := \oplus_{i \in J} \Bbbk \, $.  But then the restriction of  $ \pi $  to  $ F' $  is still surjective, hence  $ {\big( \omega_e(G) \big)}_\zero $  is finitely generated.
                                                                     \par
   An entirely similar analysis shows that  $ {\big( \omega_e(G) \big)}_\zero $  is finitely generated as well.
\end{proof}

\medskip

   In the following we are interested in affine  $ \bk $--supergroups  of a specific class, characterized in terms of  $ \Lie(G) \, $,  as the following definition (not very restrictive, indeed) specifies:

%
%
%

\medskip

%
%

\begin{definition}  \label{def_fine-sgroups}
 We call  {\it fine\/}  any affine  $ \bk $--supergroup  $ \, G \in \sgrps_\bk \, $  whose associated functor  $ \Lie(G) $  is quasi-representable, say  $ \, \Lie(G) = \cL_\fg \, $,  for some Lie  $ \bk $--superalgebra  $ \fg $  whose odd part  $ \fg_\uno $  is  {\sl free\/}  of  {\sl finite rank\/}  as a  $ \bk $--supermodule.  We denote by  $ \fsgrps_\bk $  the full subcategory of  $ \sgrps_\bk $  whose objects are all  {\sl fine\/}  $ \bk $--supergroups.
 \hfill   $ \diamondsuit $
\end{definition}

%
%
%

\bigskip

\section{Splittings}  \label{splittings}

\smallskip

   {\ } \;\;   In this section we consider the notion of ``global splitting''   --- roughly, a ``separation of variables'' property ---   for superalgebras, Hopf superalgebras, (affine) superschemes and supergroups.  We shall see that if  $ \bk $  is a field then all (affine)  $ \bk $--supergroups  do admit ``global splittings'': this is essentially due to a result by Masuoka on the splitting of commutative Hopf superalgebras over a field.
                                              \par
   We shall also introduce some other (easy, yet interesting) ``splitting results'' for the  $ A $--points  of a  $ \bk $--supergroup  when  $ A $  ranges in special subcategories of  $ \salg_\bk \, $.

\bigskip

 \subsection{Augmentations and split superalgebras}  \label{split-salg} {\ }

\smallskip

   In the following, we shall think of  $ \bk $  as being a totally even superalgebra, i.e.~we identify  $ \bk $  with  $ \cJ_{\alg_\bk}^{\salg_\bk}(\bk) \, $   --- see  \S \ref{superalgebras}.

\vskip9pt

\begin{free text}  \label{aug/spl/salg}
 {\bf Augmentations for superalgebras and related constructions.}  For any superalgebra  $ \, A \in \salg_\bk \, $,  we call  {\it augmentation\/}  of  $ A $  any morphism of  $ \, \bk $--superalge\-bras  $ \, \eps : A \longrightarrow \bk \, $.  We denote by  $ \asalg_\bk $  the category of ``augmented (commutative) superalgebras'': its objects are pairs  $ \big(\, A \, , \, \eps \,\big) $  where  $ \, A \in \salg_\bk \, $  and  $ \eps $  is an augmentation of  $ A \, $,  and its morphisms  $ \; \big(\, A' \, , \, \eps' \,\big) \longrightarrow \big(\, A'' \, , \, \eps'' \,\big) \; $  are given by morphisms  $ \phi : A' \longrightarrow A'' \, $  in  $ \salg_\bk $  such that  $ \, \eps'' \circ \phi = \eps \, $.  We also identify  $ \, \big( A \, , \, \eps \,\big) \cong A \, $.

\vskip7pt

   Given  $ \, \big(\, A \, , \, \eps \,\big) \in \asalg_\bk \, $  one has  $ \, \Ker\,(\eps\,) = A_\zero^+ \oplus A_\uno \, $  where  $ \, A_\zero^+ := \Ker\,(\eps\,) \cap A_\zero \, $.  Define  $ \; W^{\!A} := A_\uno \big/ A_\zero^+ A_\uno \, $  and let  $ \, \bigwedge W^{\!A} \, $  be the exterior  $ \bk $--algebra  of  $ W^{\!A} \, $;  recall also that  $ A_\uno^{(1)} $  is the unital  $ \bk $--subalgebra  of  $ A $  generated by  $ A_\uno $  (see \S~\ref{superalgebras}).  Then  $ \, \overline{A} := A \big/ J_A \, $,  $ A_\uno^{(1)} $  and  $ \bigwedge W^{\!A} $  all inherit from  $ \big(\, A \, , \, \eps \,\big) $  a natural structure of augmented $ \bk $--superalgebra.
                                                 \par
   It follows also that both  $ \, \overline{A} \otimes_\bk A_\uno^{(1)} \, $  and  $ \, \overline{A} \otimes_\bk \bigwedge W^{\!A} \, $  have a natural structure of commutative, unital, augmented  $ \bk $--superalgebra,  i.e.~$ \; \overline{A} \otimes_\bk A_\uno^{(1)} , \, \overline{A} \otimes_\bk \bigwedge W^{\!A} \, \in \asalg_\bk \;\, $.

\medskip

\begin{definition}  \label{wkspl-salg}
 Given any  $ \, A \in \asalg_\bk \, $,  we say that it is  {\it weakly split\/}  if there exists a section  $ \, \sigma_{\!{}_A} : \overline{A} \lhook\joinrel\relbar\joinrel\rightarrow A \, $  of the projection  $ \, \pi_{\!{}_A} : A \relbar\joinrel\twoheadrightarrow \overline{A} \, $   --- both being meant as morphisms in  $ \, \asalg_\bk \, $.
   All pairs  $ \big( A \, , \sigma_{\!{}_A} \big) $  as above form a category, denoted by  $ \, \wkspsalg_\bk \, $,  or just  $ \wkspsalg \, $,  where morphisms are all those in  $ \asalg_\bk $
   \hbox{which are compatible (in the obvious sense) with the sections.
 \hfill   $ \diamondsuit $}
%
\end{definition}

\smallskip

\begin{remark}
 Given  $ \, \big( A \, , \sigma_{\!{}_A} \big) \in \wkspsalg_\bk \, $,  write  $ \, \overline{A} = \sigma_{\!{}_A}\big(\,\overline{A}\,\big) \subseteq A \, $.  Then the multiplication map  $ m_{\!{}_A} $  in  $ A $  yields an  $ \overline{A} $--linear  projection  $ \, \overline{A} \otimes_\bk A_\uno^{(1)} {\buildrel {m'_A} \over {\relbar\joinrel\relbar\joinrel\twoheadrightarrow}}\, A \; $.
\end{remark}

\medskip

   The above remark shows that any weakly split superalgebra  $ A $  can be recovered as a quotient   --- in the category of $ \overline{A} $--modules,  via the multiplication map ---   of  $ \, \overline{A} \otimes_\bk A_\uno^{(1)} \, $.  This invites us to consider those cases when this description is ``optimal'', which leads naturally to next definition:
  \eject

\begin{definition}  \label{def-stsplit-salg} {\ }
 \vskip5pt
   {\it (a)}  Any  $ \, \big( A \, , \sigma_{\!{}_A} \big) \in \wkspsalg_\bk \, $  is said to be  {\it split\/}  if the natural  $ \overline{A} $--linear  morphism (see above)  $ \, \overline{A} \otimes_\bk A_\uno^{(1)} {\buildrel {m'_A} \over {\relbar\joinrel\relbar\joinrel\twoheadrightarrow}}\, A \, $  induced by multiplication in  $ A $  is an isomorphism in  $ \wkspsalg_\bk \, $.
 We denote by  $ \, \splsalg_\bk \, $  the full subcategory of  $ \wkspsalg_\bk $  of all split  $ \bk $--superalgebras.
 \vskip5pt
   {\it (b)}  Any augmented superalgebra  $ \, \big( A \, , \, \eps \,\big) \in \asalg_\bk \, $  is said to be  {\it strongly split\/}  if there exists an isomorphism  $ \, \zeta :  A \,{\buildrel \cong \over {\lhook\joinrel\relbar\joinrel\relbar\joinrel\twoheadrightarrow}}\, \overline{A} \otimes_\bk \bigwedge W^{\!A} \; $  in  $ \asalg_\bk \, $.  We denote by  $ \, \stspsalg_\bk \, $  the full subcategory of  $ \asalg_\bk $  given by all strongly split  $ \bk $--superalgebras.
 \hfill   $ \diamondsuit $
\end{definition}

\medskip

   We introduce now another special subclass of (commutative) superalgebras.
 \vskip5pt
   Let  $ \, \mathcal{A} \in \alg_\bk \, $  and let  $ M $  be an  $ \mathcal{A} $--module.  Then  $ \, A_{\mathcal{A},M} := \mathcal{A} \oplus M \, $  has a natural structure of unital, commutative  $ \bk $--superalgebras  defined as follows: the  $ \Z_2 $--splitting  is given by
  $  \; {\big( A_{\mathcal{A},M} \big)}_\zero := \mathcal{A} \, $,  $ \; {\big( A_{\mathcal{A},M} \big)}_\uno := M \, $, \, and the  $ \bk $--algebra  structure is the unique one such that  $ \mathcal{A} $  is a  $ \bk $--subalgebra,  $ \, M \cdot M := \{0\} \, $  and  $ \, \alpha \cdot m := \alpha.m =: m \cdot \alpha \, $  for all  $ \, \alpha \in \mathcal{A} \, $,  $ \, m \in M \, $,  where  $ \, \alpha.m \, $  is given by the  $ \mathcal{A} $--action  on  $ M \, $.  In a formula,
  $$  (\alpha + m) \cdot \big( \alpha' \! + m' \big)  \; := \;  \alpha \, \alpha' \, + \big( \alpha.m' \! + \alpha'.m \big)   \eqno \forall \;\; \alpha, \alpha' \in \mathcal{A} \, , \; m, m' \in M  \quad  $$
   \indent   By construction, the  $ \bk $--superalgebra  $ \, A := A_{\mathcal{A},M} \, $  has the property that  $ \, A_\uno^{\,2} = \{0\} \, $.  Conversely, let  $ \, A \in \salg_\bk \, $  be such that  $ \, A_\uno^{\,2} = \{0\} \, $:  then  $ A $  is of the previous form, namely  $ \, A := A_{\mathcal{A},M} \, $  for  $ \, \mathcal{A} := A_\zero \, $  and  $ \, M := A_\uno \, $.

\smallskip

\begin{definition}  \label{cext-salg}
 We call  {\it augmented central extension  (\/$ \bk $--superalgebra)},  or simply  {\it central extension},  any  $ \, \big( A \, , \epsilon \big) \in \asalg_\bk \, $  which (as a superalgebra) is of the form  $ \, A = A_{\mathcal{A},M} \, $  as above with  $ \, \epsilon(M) = 0 \, $   --- in other words, such that  $ \, A_\uno^{\,2} = \{0\} \, $  and  $ \, \epsilon\big(A_\uno\big) = \{0\} \, $.  We denote by  $ \, \cexsalg_\bk \, $  the full subcategory of  $ \salg_\bk $  whose objects are all the central extension  $ \bk $--superalgebras.
 \hfill   $ \diamondsuit $
\end{definition}

\smallskip

   Next (easy) result shows the links between these special subcategories of  $ \salg_\bk \, $:
\end{free text}

\smallskip

\begin{proposition}  \label{wsp->spl / cex->spl}  {\ }
 \vskip2pt
   (a) \,  The category  $ \stspsalg_\bk $  identifies in a natural way with a subcategory of  $ \splsalg_\bk \, $,  and similarly  $ \splsalg_\bk $  identifies with a subcategory of  $ \wkspsalg_\bk \, $.  In other words, all  {\sl strongly split}  $ \bk $--superalgebras  are
 {\sl split},  and all  {\sl split}  $ \bk $--superalge\-bras  are  {\sl weakly split}.
 \vskip3pt
   (b) \,  The category  $ \cexsalg_\bk $  identifies in a natural way with a subcategory of  $ \splsalg_\bk \, $:  i.e., all  {\sl central extension}  $ \bk $--superalgebras  are (naturally)  {\sl split}.
\end{proposition}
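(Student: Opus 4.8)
The plan is to handle each claimed containment of subcategories by manufacturing, from the data defining membership in the source class, precisely the data required by the target class, and then checking that this assignment is functorial, so that one really gets an inclusion of (sub)categories rather than a mere map on objects. One half of (a) costs nothing: $\splsalg_\bk$ was \emph{defined} to be a full subcategory of $\wkspsalg_\bk$, so ``split $\Rightarrow$ weakly split'' is a tautology. Thus the substance lies in ``strongly split $\Rightarrow$ split'' for (a) and ``central extension $\Rightarrow$ split'' for (b). In both cases the natural multiplication map $m'_A \colon \overline{A} \otimes_\bk A_\uno^{(1)} \longrightarrow A$ is already known (from the Remark before Definition~\ref{def-stsplit-salg}) to be a surjection respecting the sections, so the entire task reduces to exhibiting a section $\sigma_A$ and then proving that $m'_A$ is \emph{injective}.

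For ``strongly split $\Rightarrow$ split'' I would begin from the given isomorphism $\zeta \colon A \longrightarrow \overline{A} \otimes_\bk \bigwedge W^{\!A}$ in $\asalg_\bk$. The target carries the tautological section $\bar a \mapsto \bar a \otimes 1$ of its projection onto $\overline{A}$; I would transport this along $\zeta^{-1}$ to a section $\sigma_A$ of $\pi_A$, the legitimacy of which rests on $\zeta$ being a morphism of \emph{augmented} superalgebras together with the functoriality of $\overline{(\ )}$, so that $\zeta$ intertwines the two projections onto $\overline{A}$. This places $(A,\sigma_A)$ in $\wkspsalg_\bk$. I would then transport the split condition itself through $\zeta$ and reduce to the model $B = \overline{A} \otimes_\bk \bigwedge W^{\!A}$, where the point is to identify the odd-generated subalgebra entering $m'_B$ and to check that $m'_B$ is bijective; naturality of $\zeta$ and of all the constructions then promotes this to the desired subcategory inclusion.

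For ``central extension $\Rightarrow$ split'' the hypothesis $A_\uno^{\,2} = \{0\}$ does most of the work with no choices to make. It forces $A_\uno^{[2]} = \{0\}$, hence $J_A = A_\uno$ and $\overline{A} = A_\zero = \mathcal{A}$, so that the inclusion $A_\zero \hookrightarrow A$ is itself a canonical section of $\pi_A$ and $A$ is weakly split. The same square-zero condition collapses the odd-generated subalgebra, since there are no nonzero products of two odd elements, so that $A_\uno^{(1)} = \bk\cdot 1 \oplus A_\uno$ is as simple as possible. I would then examine the very concrete multiplication map $\overline{A} \otimes_\bk A_\uno^{(1)} \longrightarrow A$ on the decomposition $A = A_{\mathcal{A},M} = \mathcal{A} \oplus M$, establish its injectivity, and conclude by functoriality in the pair $(\mathcal{A},M)$.

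The single genuinely delicate point, in both parts, is the injectivity of $m'_A$: the sections are immediate, surjectivity is free, and all naturality statements are routine bookkeeping. Consequently the heart of the argument is to pin down the odd-generated subalgebra $A_\uno^{(1)}$ sharply enough --- through the exterior model $\bigwedge W^{\!A}$ in the strongly split case, and through the vanishing of odd products in the central-extension case --- that the canonical multiplication surjection is seen to have trivial kernel. I expect that this identification of $A_\uno^{(1)}$, and the verification that no relations are lost in passing through $m'_A$, is exactly where the explicit structural hypotheses must be used with care.
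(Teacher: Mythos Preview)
Your proposal is correct and follows essentially the same route as the paper: identify $A_\uno^{(1)}$ concretely (as $\bk \otimes_\bk \bigwedge W^{\!A}$ in the strongly split case, as $\bk \oplus A_\uno$ in the central-extension case) and then read off that the multiplication map $\overline{A} \otimes_\bk A_\uno^{(1)} \to A$ is an isomorphism. The paper is terser --- it simply asserts these identifications ``by the very definitions'' rather than phrasing the crux as injectivity of $m'_A$ --- and, curiously, it still writes out an explicit section for ``split $\Rightarrow$ weakly split'' even though (as you observe) that implication is already built into Definition~\ref{def-stsplit-salg}(a).
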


\begin{proof}
 {\it (a)} \,  Given  $ \, A \in \stspsalg_\bk \, $,  by the isomorphism  $ \; \zeta : A \cong \overline{A} \otimes_\bk \bigwedge W^{\!A} \; $
one has  $ \, A_\uno = \overline{A} \otimes_\bk \Big( \mathop\oplus\limits_{\text{odd} \!\ s} {\big( W^{\!A} \big)}^{\wedge s} \Big) \, $  hence  $ \, A_\uno^{(1)} = \bk \oplus \Big(\, \overline{A} \otimes_\bk {\big( \bigwedge W^{\!A} \big)}^+ \Big) \, $,  and then an easy calculation shows that  $ \, A \in \splsalg_\bk \, $,  as expected.  Moreover, if  $ \, A \in \splsalg_\bk \, $  then the monomorphism  $ \; \overline{A} \lhook\joinrel\longrightarrow \overline{A} \otimes_\bk  A_\uno^{(1)} \, \big(\, \overline{a} \mapsto \overline{a} \otimes 1 \big) \, $  composed with the isomorphism  $ \, \overline{A} \otimes_\bk  A_\uno^{(1)} \lhook\joinrel\relbar\joinrel\twoheadrightarrow A \, $  (inverse to the one given by definition) yields a section  $ \, \overline{A} \lhook\joinrel\longrightarrow A \, $  of  $ \, \pi_{\!{}_A} : A \relbar\joinrel\twoheadrightarrow \overline{A} \, $,  so that  $ \, A \in \wkspsalg_\bk \, $,  \, q.e.d.
 \vskip5pt
   {\it (b)} \,  Let  $ \, A = A_\zero \oplus A_\uno \in \cexsalg_\bk \, $,  so that  $ \, A_\uno^{\,2} = \{0\} \, $.  Then definitions give  $ \, J_A := \big( A_\uno \big) = A_\uno^{\,2} \oplus A_\uno = A_\uno \, $  hence  $ \, \overline{A} := A \big/ J_A = A \big/ A_\uno = A_\zero \, $,  and  $ \, A_\uno^{(1)} = \bk \oplus A_\uno \, $.  Thus  $ \, A = A_\zero \oplus A_\uno = \overline{A} \oplus A_\uno \cong \overline{A} \otimes_\bk A_\uno^{(1)} \, $,  so that  $ \, A \in \splsalg_\bk \; $,  \, q.e.d.
\end{proof}

\medskip

\begin{free text}  \label{aug/spl/Hsalg}
 {\bf Strongly split Hopf superalgebras.}  Any commutative Hopf  $ \bk $--superalge\-bra,  say  $ \, H \in \Hsalg_\bk \, $,  is naturally augmented, in the sense of  \S \ref{aug/spl/salg},  its augmentation being the counit: so  $ \Hsalg_\bk $  naturally identifies with a subcategory of  $ \asalg_\bk \, $,  and all constructions therein make sense for Hopf algebras.  In addition, there are now some extra features.

\medskip

   First, in the Hopf setup  $ \, J_H := \big( H_\uno \big) \, $  is in fact a  {\it Hopf ideal\/}  of  $ H $.  Therefore,  $ \, \overline{H} := H \big/ J_H \, $  {\it is a classical (i.e.~super but  {\sl totally even})  commutative Hopf algebra}.
                                                                         \par
   Second, the coproduct of  $ H $  induces also a structure of super left  $ \overline{H} $--comodule  on  $ H \, $  (via the projection  $ \, H \relbar\joinrel\twoheadrightarrow \overline{H} \; $),  such that  {\it  $ H $  is a counital super left  $ \overline{H} $--comodule  $ \bk $--algebra}.
                                                                         \par
   Third, letting  $ \, \epsilon : H \longrightarrow \bk \, $  be the counit map of  $ H $,  let  $ \, H^+ := \Ker\,(\epsilon) \, $,  $ \, H_\zero^+ := H_\zero \cap H^+ \, $,  $ \, W^H := H_\uno \big/ H_\zero^+ H_\uno \, $  and  $ \, \bigwedge W^H \, $  as in  \S \ref{aug/spl/salg}.

\smallskip

   As in  \S \ref{aug/spl/salg},  $ \; \overline{H} \otimes \bigwedge W^H \; $  has
%
%
 a natural structure of a
commutative superalgebra, endowed with a counit map; moreover, the coproduct of  $ \overline{H} $  induces on  $ \; \overline{H} \otimes \bigwedge W^H \; $  a super left  $ \overline{H} $--comodule  structure, so that  {\it  $ \; \overline{H} \otimes \bigwedge W^H \, $  is a super counital left  $ \overline{H} $--comodule  $ \bk $--algebra}.  The notion of  ``split'' (commutative) Hopf superalgebra   --- introduced by Masuoka ---   then reads as follows:
\end{free text}

\vskip4pt

\begin{definition}  \label{stsplit-Hsalg}
 Any  $ \, H \in \Hsalg_\bk \, $  is said to be  {\it strongly split\/}  if  $ W^H $  is  $ \bk $--free  and there is an isomorphism  $ \, \zeta : H \,{\buildrel \cong \over {\lhook\joinrel\relbar\joinrel\relbar\joinrel\twoheadrightarrow}}\, \overline{H} \otimes_\bk \bigwedge \! W^H \; $  of super counital left  $ \overline{H} $--comodule  $ \bk $--algebras.
 \hfill   $ \diamondsuit $
\end{definition}

\smallskip

\begin{remark}  \label{barH-coaction_W}
 The right coadjoint coaction of  $ H $  canonically induces a right  $ \overline{H} $--coaction  onto  $ H \, $;  it is easy to see that this induces a  $ \overline{H} $--coaction  onto  $ W^H \, $,  hence on  $ \bigwedge W^H $  too.
                                                                       \par
   Now assume  $ H $  is strongly split and  $ \, \zeta :  H \,{\buildrel \cong \over {\lhook\joinrel\relbar\joinrel\relbar\joinrel\twoheadrightarrow}}\, \overline{H} \otimes_\bk \bigwedge \! W^H \; $  is a splitting map as in  Definition \ref{stsplit-Hsalg},  we can endow then  $ \, \overline{H} \otimes_\bk \bigwedge \! W^H \, $  with the push-forward (via  $ \zeta \, $)  of the coproduct of  $ H \, $:  thus  $ \, H_\sigma := \overline{H} \otimes_\bk \bigwedge \! W^H \, $  itself is a Hopf superalgebra (isomorphic to  $ H $)  such that  $ \, \overline{H_\sigma} \cong \overline{H} \, $  and  $ \, W^{\!H_\sigma} \! \cong W^H \, $  in a canonical way.  Now, the right coadjoint coaction of  $ H_\sigma $  induces again a right coaction of  $ \overline{H_\sigma} $  onto  $ \bigwedge W^{\!H_\sigma} $  (as above): it is then immediate to see that   --- via the identifications  $ \, \overline{H_\sigma} \cong \overline{H} \, $  and  $ \, \bigwedge W^{\!H_\sigma} \! \cong \bigwedge W^H $  ---   {\sl this coaction is the same as the one of  $ \, \overline{H} $  on  $  \bigwedge W^H \, $}.

\end{remark}

\smallskip

   The following result, due to Masuoka, ensures that  $ H $  {\sl is\/}  strongly split (yet he omits the ``strong'') when  {\sl the ground ring  $ \bk $
  is a field with
%
%
 $ \, \text{\sl char}(\bk) \not= 2 \, $}  (see also  \cite{bos1},  Theorem 1):

\medskip

\begin{theorem}  \label{Masuoka}
 {\sl  (cf.~\cite{ms1},  Theorem 4.5)}
                                         \par
   If\/  $ \bk $  is a field with  $ \, \text{\sl char}(\bk) \not= 2 \, $,  then each commutative Hopf\/  $ \bk $--superalgebra  is strongly split.
\end{theorem}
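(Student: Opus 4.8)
Since $\bk$ is a field, the first requirement comes for free: the quotient $W^H = H_\uno / H_\zero^+ H_\uno$ is a $\bk$-vector space, hence automatically $\bk$-free. So the whole content lies in producing the isomorphism $\zeta : H \,\xrightarrow{\ \cong\ }\, \overline{H} \otimes_\bk \bigwedge W^H$ of super counital left $\overline{H}$-comodule $\bk$-algebras. The plan is to treat $H$ as a left $\overline{H}$-comodule $\bk$-algebra via $(\pi \otimes \mathrm{id}) \circ \Delta$, where $\pi : H \twoheadrightarrow \overline{H}$ is the canonical Hopf projection, and to exhibit $H$ as the trivial (untwisted) extension of $\overline{H}$ by its algebra of coinvariants, which I will identify with $\bigwedge W^H$.

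First I would fix a $\bk$-basis of $W^H$ and lift it to a family of odd elements $\{\omega_i\}_{i \in I} \subseteq H_\uno$. Supercommutativity together with $\mathrm{char}(\bk) \neq 2$ forces $\omega_i^2 = 0$ and $\omega_i \omega_j = - \omega_j \omega_i$, so that $\bar\omega_i \mapsto \omega_i$ defines a morphism of superalgebras $\iota : \bigwedge W^H \to H$. Next I would analyse the $J_H$-adic filtration $H \supseteq J_H \supseteq J_H^2 \supseteq \cdots$ (with $J_H = (H_\uno)$ the Hopf ideal from \S\ref{aug/spl/Hsalg}): one checks that the associated graded superalgebra is canonically $\mathrm{gr}_{J_H} H \cong \overline{H} \otimes_\bk \bigwedge W^H$, compatibly with the induced $\overline{H}$-coaction (here $\overline{H} = J_H^0/J_H^1$, $\ \overline{H}\otimes_\bk W^H \cong J_H/J_H^2$, and so on). This already gives the desired object at the level of associated graded; the task is to lift it to an honest, coaction-compatible splitting of $H$ itself.

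To produce $\zeta$ I would build it as a convolution $\zeta = (\pi \otimes \phi) \circ \Delta$, where $\phi : H \to \bigwedge W^H$ is an algebra retraction sending $\omega_i \mapsto \bar\omega_i$. Such a map is automatically left $\overline{H}$-colinear, and provided $\phi$ is an algebra map, $\zeta$ is a morphism of comodule $\bk$-algebras; bijectivity then follows by checking that $\zeta$ respects the $J_H$-adic filtrations and induces the canonical isomorphism on associated graded (whence $\zeta$ is an isomorphism, after reducing to the finitely generated case and passing to the colimit). Equivalently, and perhaps more robustly, I would identify the coinvariants $B := H^{\mathrm{co}\,\overline{H}}$, show $B \cong \bigwedge W^H$ as superalgebras, prove that $H$ is $\bk$-free (hence faithfully flat) over $\overline{H}$, and deduce that the canonical map $H \to \overline{H} \otimes_\bk B$ is an isomorphism of comodule algebras.

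The genuine obstacle is the construction of the retraction $\phi$ (equivalently, the identification $B \cong \bigwedge W^H$ and the untwisting of the extension): $\phi$ cannot simply annihilate $H_\zero^+$, since products $\omega_i \omega_j$ lie in $H_\uno^{[2]} \subseteq H_\zero^+$ yet must be sent to $\bar\omega_i \wedge \bar\omega_j \neq 0$. Thus $\phi$ requires a choice of even section $\overline{H} \hookrightarrow H_\zero$ complementing $H_\uno^{[2]}$, together with its multiplicative extension over the $\omega_i$, and then a verification --- precisely the point where $\mathrm{char}(\bk) \neq 2$ is indispensable --- that the coproduct does not obstruct colinearity, i.e. that the $\overline{H}$-extension is cleft with trivial cocycle. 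This cleaving argument is the heart of Masuoka's theorem, and over a field of characteristic $\neq 2$ the obstruction, living in a suitable Hopf-algebra cohomology of the connected (conilpotent) exterior part $\bigwedge W^H$, vanishes.
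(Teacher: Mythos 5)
You should know at the outset that the paper contains no proof of Theorem \ref{Masuoka}: it is imported wholesale from Masuoka (\cite{ms1}, Theorem 4.5), so your argument has to stand entirely on its own --- and as written it does not, because its two pivotal steps are asserted rather than proved. First, the claim that ``one checks'' $\mathrm{gr}_{J_H} H \cong \overline{H}\otimes_\bk \bigwedge W^H$ is not a formal verification: for a general augmented commutative superalgebra the canonical surjection $\overline{A}\otimes_\bk\bigwedge W^A \twoheadrightarrow \mathrm{gr}_{J_A}A$ can have a kernel --- e.g.\ for $\mathbb{G}_{2;u}$ of \S\,\ref{ex-split}\,\textit{(c)} the even relation $\vartheta\,\chi = 0$ kills the graded class of $\bar{\vartheta}\wedge\bar{\chi}$ --- so the coproduct must enter essentially at this point, and you never say how. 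Second, and far more seriously, the construction of the retraction $\phi$ --- equivalently, the triviality of the cleft extension --- is exactly the content of the theorem, and you dispose of it by asserting that an unnamed obstruction in an unspecified cohomology of $\bigwedge W^H$ vanishes, conceding yourself that ``this cleaving argument is the heart of Masuoka's theorem''. That is a restatement of the theorem, not a proof of it; note in particular that the one hypothesis, $\mathrm{char}(\bk)\neq 2$, is never actually deployed anywhere in your text (your only invocation, $\omega_i^2=0$, is already built into the paper's definition of commutativity in \S\,\ref{superalgebras}, in every characteristic). Two lesser symptoms: the bijectivity argument ``isomorphism on $\mathrm{gr}$, then reduce to the finitely generated case and pass to colimits'' presupposes a separated $J_H$-adic filtration and compatibility of $W^H$ with those reductions, neither of which is addressed, although the theorem carries no finiteness hypotheses; and ``$H$ is faithfully flat over $\overline{H}$'' is not meaningful as stated, since $\overline{H}$ is a quotient of $H$, not a subalgebra --- flatness should be asserted over the coinvariants $B$.

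If you want to see how the missing step is actually carried out, the closest material in the paper is Lemma \ref{split-isom-alg} (quoted from \cite{mas-shi}, Lemma 4.27): the splitting there has precisely your ansatz $\psi_{{}_{X}} = (\pi\otimes\rho_{{}_{X}})\circ\Delta$, but the retraction $\rho_{{}_{X}}$ is built explicitly and cohomology-free, as the composite $\iota_{{}_{X}}^{\,*}\circ\kappa$ of the canonical Hopf pairing $\kappa : A \to {U(\fg)}^*$ with the dual of the supercoalgebra section $\iota_{{}_{X}}:\bigwedge\fg_\uno\to U(\fg)$, $x_1\wedge\cdots\wedge x_n \mapsto x_1\cdots x_n$. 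In other words, the choice you struggle to make inside $H$ (an even section complementing $H_\uno^{\,[2]}$, a multiplicative extension over the $\omega_i$, and a cocycle to untwist) is made instead on the dual side, where the factorization $U(\fg)\cong U(\fg_\zero)\otimes_\bk\bigwedge\fg_\uno$ of (4.7) provides it for free, and bijectivity becomes a filtration computation --- at least when $W^H$ is finite free; the general case of Theorem \ref{Masuoka} is handled in \cite{ms1} by purely Hopf-algebraic means. Your outline is a reasonable map of the difficulties, but closing it requires precisely this (or Masuoka's own) construction, not a vanishing claim.
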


\smallskip

\begin{free text}  \label{ex-split}
 {\bf Examples and counterexamples.}
 \vskip5pt
   {\it (a)} \,  Consider on  $ \, \bk\big[\, \underline{x} \, , \underline{\xi} \,\big] := \bk\big[\,x_1,\dots,x_n,\xi_1,\dots,\xi_m\big] \in \salg_\bk \, $  the standard augmenta\-tion given by  $ \, \epsilon(x_i) := 0 \, $,  $ \, \epsilon(\xi_j) := 0 \, $.  Then  $ \, \big( \bk\big[\, \underline{x} \, , \underline{\xi} \,\big] \, , \, \epsilon \,\big) \in \stspsalg_\bk \, $.
                                                         \par
   If in addition we consider on  $ \, \bk\big[\, \underline{x} \, , \underline{\xi} \,\big] \, $  the (standard) Hopf superalgebra structure given by  $ \, \Delta(x_i) := x_i \otimes 1 + 1 \otimes x_i \, $,  $ \, \Delta(\xi_j) := \xi_j \otimes 1 + 1 \otimes \xi_j \, $,  $ \, \epsilon(x_i) := 0 \, $,  $ \, \epsilon(\xi_j) := 0 \, $,  $ \, S(x_i) := -x_i \, $,  $ \, S(\xi_j) := -\xi_j \, $,  then  $ \bk\big[\, \underline{x} \, , \underline{\xi} \,\big] $  is even a strongly split Hopf superalgebra.
 \vskip5pt
   {\it (b)} \,  For any invertible  $ \, u \in \bk^\star \, $,  let  $ \; \mathbb{F}_{2;u} := \bk\big[\,x,y,\xi,\eta\big] \Big/ \! \big( x\,y + \xi\,\eta - u \big) \; $  with the unique augmentation  $ \epsilon $  given by  $ \, \epsilon(x) = 1 \, $,  $ \, \epsilon(y) = u \, $  and  $ \, \epsilon(\xi) = \epsilon(\eta) = 0 \,  $.  Then we have  $ \, \overline{\,\mathbb{F}_{2;u}} \cong \bk\big[\,t\,,t^{-1}\big] \, $  (via  $ \, \overline{x} \mapsto t \, $,  $ \, \overline{y} \mapsto u \, t^{-1} \, $),  and moreover there exists a section  $ \, \sigma_{\!{}_{\mathbb{F}_{2;u}}} \!\! : \overline{\,\mathbb{F}_{2;u}} \lhook\joinrel\relbar\joinrel\rightarrow \mathbb{F}_{2;u} \, $  of the projection  $ \, \pi_{\!{}_{\mathbb{F}_{2;u}}} \! : \mathbb{F}_{2;u} \relbar\joinrel\twoheadrightarrow \overline{\,\mathbb{F}_{2;u}} \, $  given by  $ \, \sigma_{\!{}_{\mathbb{F}_{2;u}}}\!(t) := x \, $,  $ \, \sigma_{\!{}_{\mathbb{F}_{2;u}}}\!\big(t^{-1}\big) := u^{-1} \, y \, (1+\xi\,\eta) \, $;  thus  $ \, \mathbb{F}_{2;u} \in \wkspsalg_\bk \, $.  Furthermore, we have  $ \, {\big( \mathbb{F}_{2;u} \big)}_\uno \cong \bk\big[\,\xi,\eta\big] \, $  and one easily sees that  $ \; \mathbb{F}_{2;u} \, \cong \, \sigma_{\!{}_{\mathbb{F}_{2;u}}}\!\big( \overline{\,\mathbb{F}_{2;u}} \,\big) \otimes_\bk \bk\big[\,\xi,\eta\big] \; $  so that  $ \, \mathbb{F}_{2;u} \in \stspsalg_\bk \, $.
 \vskip4pt
   {\it (c)} \,  Again for any  $ \, u \in \bk^\star \, $,  let  $ \; \mathbb{G}_{2;u} := \bk\big[\,x,y,\xi,\eta,\vartheta,\chi\big] \Big/ \! \big( x\,y + \xi\,\eta - u \, , \, \vartheta \, \chi \big) \; $  with the unique augmentation  $ \epsilon $  given by  $ \, \epsilon(x) = 1 \, $,  $ \, \epsilon(y) = u \, $  and  $ \, \epsilon(\xi) = \epsilon(\eta) = \epsilon(\vartheta) = \epsilon(\chi) = 0 \,  $.  Then acting like in  {\it (c)\/}  one finds that  $ \, \mathbb{G}_{2;u} \in \splsalg_\bk \, $  but  $ \, \mathbb{G}_{2;u} \not\in \stspsalg_\bk \, $.
 \vskip4pt
   {\it (d)} \,  Let  $ \; \mathbb{F}_{2;0} := \bk\big[\,x,y,\xi,\eta\big] \Big/ \! \big( x\,y + \xi\,\eta \big) \; $  with the unique augmentation  $ \epsilon $  given by  $ \, \epsilon(x) = \epsilon(y) = \epsilon(\xi) = \epsilon(\eta) = 0 \,  $.  Again like in  {\it (c)\/}  one finds easily that  $ \, \mathbb{F}_{2;0} \not\in \wkspsalg_\bk \, $.
 \vskip4pt
   {\it (e)} \,  Let  $ \; \mathbb{E}_{n,m} := \bk\big[\,x_1,\dots,x_n,\xi_1,\dots,\xi_m\big] \Big/ \! \big( {\{\, \xi_j \, \xi_\ell \,\}}_{j, \ell = 1, \dots, m} \big) \; $  with the obvious augmentation induced by that of  $ \bk\big[\, \underline{x} \, , \underline{\xi} \,\big] $  in  {\it (a)\/}  above.  Then $ \, \mathbb{E}_{n,m} \in \cexsalg_\bk \, $.
 \vskip7pt
   {\it (f)  ---  ``$ \underline{\text{Super numbers}} $''  on classical algebras:} \,  Let  $ \; \mathcal{A} \in \alg_\bk \, $  be a commutative  $ \bk $--algebra.  If  $ \eta $  is a formal  {\sl odd\/}  variable, then  $ \, \mathcal{A}[\eta] := \mathcal{A} \otimes_\bk \bk[\eta] \, $  can naturally be thought of as an object in  $ \salg_\bk \, $;  if we assume  $ \mathcal{A} $  to have some augmentation, and extend it to  $ \mathcal{A}[\eta] $  by setting  $ \, \epsilon(\eta) := 0 \, $,  then clearly  $ \mathcal{A}[\eta] $  is even a(n augmented) central extension.  Letting  $ \aalg_\bk $  denote the category of commutative  $ \bk $--algebras  with augmentation, all this yields a faithful functor  $ \, \aalg_\bk \longrightarrow \cexsalg_\bk \, $.
\end{free text}

\bigskip

 \subsection{Global splittings of superschemes and supergroups}  \label{glob-split_ssch-sgrp} {\ }

\smallskip

   We begin with a general discussion about ``global splittings'' for affine superschemes; later on we shall look at the case of affine supergroups.

\medskip

   We resume notation as in  \S \ref{superalgebras}.
   Let  $ \, X \in \assch_\bk \, $  be an affine  $ \bk $--superscheme, and  $ \, R := \cO(X) \, $  the commutative (unital)  $ \bk $--superalgebra  representing it; then let  $ \; J_R := \big( R_\uno \big) = R_\uno^{\,[2]} \oplus A_\uno \; $  and  $ \; \overline{R} := R \big/ J_R = R_\zero \big/ R_\uno^{\,[2]} \, $.

\medskip

\begin{definition}  \label{def-ass-class-scheme}
 The affine superscheme  $ \, X_{\text{\it ev}} := h_{\overline{R}}
 \, $  represented by  $ \, \overline{R} = \overline{\cO(X)} \, $,  \, i.e.\ such that  $ \, \cO(X_{\text{\it ev}}) = \overline{\cO(X)} \, $,  is called  {\it the classical scheme associated with  $ X \, $}.
 \hfill   $ \diamondsuit $
\end{definition}

\medskip

   The terminology just introduced is motivated by the following result:

\medskip

\begin{proposition}  \label{ass-class-scheme}
 For any  $ \, X \in \assch_\bk \, $,  we have:
 \vskip4pt
  \! (a)  $ \, X_{\text{\it ev}} = X_\zero \, $  (see \S \ref{superalgebras}),  and it can be thought of as a representable functor from  $ \alg_\bk $  to  $ \sets \, $,  i.e.~as a classical   --- that is, ``non super'' ---   affine  $ \bk $--scheme;
 \vskip4pt
  \! (b) \,  the  $ \bk $--superscheme  $ \, X_{\text{\it ev}} = \! X_\zero \, $  identifies with a closed supersubscheme of  $ X \, $;  moreover, every closed supersubscheme\/  $ X $  which is  {\sl classical}  is a closed subscheme of  $ \, X_{\text{\it ev}} = X_\zero \; $.
\end{proposition}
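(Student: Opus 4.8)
The plan is to reduce the entire statement to the universal property of the quotient superalgebra $\overline{R} := R\big/J_R$, where $R := \cO(X)$ and $J_R := \big(R_\uno\big)$, and then to translate the resulting ideal-theoretic facts into scheme language through the isomorphism $R \mapsto h_R$ between $\salg_\bk^\circ$ and $\assch_\bk\,$. The one elementary input used throughout is that a graded $\bk$-algebra morphism $f : R \to B$ into a \emph{totally even} superalgebra $B$ must satisfy $f(R_\uno) \subseteq B_\uno = \{0\}$, whence $f$ kills the whole ideal $J_R = (R_\uno)$ and factors uniquely through the projection $\pi_R : R \relbar\joinrel\twoheadrightarrow \overline{R}\,$.

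For part (a) I would compute the $A$-points of the two functors and produce a natural bijection. By Definition \ref{nat-funcs} one has $X_\zero(A) = \Hom_{\salg_\bk}\big(R\,,\,\cJ_{\alg_\bk}^{\salg_\bk}(A_\zero)\big)$, and since the target is totally even the remark above identifies this with $\Hom_{\salg_\bk}\big(\overline{R}\,,\,\cJ_{\alg_\bk}^{\salg_\bk}(A_\zero)\big) = \Hom_{\alg_\bk}\big(\overline{R}\,,\,A_\zero\big)$. On the other hand $\overline{R}$ is itself totally even, so every morphism $\overline{R} \to A$ in $\salg_\bk$ lands in $A_\zero\,$, giving $X_{\text{\it ev}}(A) = h_{\overline{R}}(A) = \Hom_{\alg_\bk}\big(\overline{R}\,,\,A_\zero\big)$ as well. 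Both identifications are visibly natural in $A\,$, so $X_{\text{\it ev}} = X_\zero\,$. The last clause is then automatic: as $\overline{R} \in \alg_\bk\,$, the assignment $\mathcal{A} \mapsto \Hom_{\alg_\bk}\big(\overline{R}\,,\,\mathcal{A}\big)$ is a representable functor $\alg_\bk \to \sets\,$, i.e.\ a classical affine $\bk$-scheme, and the computation just made exhibits $X_{\text{\it ev}}$ as its precomposition with $(\ )_\zero\,$.

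For part (b), the first assertion is immediate: $\pi_R : R \relbar\joinrel\twoheadrightarrow \overline{R}$ is a surjection in $\salg_\bk\,$, which under $R \mapsto h_R$ corresponds to a closed immersion $X_{\text{\it ev}} = h_{\overline{R}} \lhook\joinrel\longrightarrow h_R = X\,$, exhibiting $X_{\text{\it ev}}$ as a closed supersubscheme of $X\,$. For the second assertion, let $X'$ be a classical closed supersubscheme of $X\,$. It corresponds to a quotient $R \relbar\joinrel\twoheadrightarrow R' := \cO(X')\,$, i.e.\ to the superideal $I := \Ker\big(R \relbar\joinrel\twoheadrightarrow R'\big)\,$; that $X'$ is classical means $R'$ is totally even, i.e.\ $R'_\uno = \{0\}\,$, forcing $R_\uno \subseteq I$ and hence $J_R = (R_\uno) \subseteq I\,$. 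By the universal property of the quotient, $R \relbar\joinrel\twoheadrightarrow R'$ factors as $R \relbar\joinrel\twoheadrightarrow \overline{R} \relbar\joinrel\twoheadrightarrow R'$ with the second arrow surjective; dualizing, the closed immersion $X' \lhook\joinrel\longrightarrow X$ factors through $X_{\text{\it ev}} = h_{\overline{R}}$ via a closed immersion $X' \lhook\joinrel\longrightarrow X_{\text{\it ev}}\,$, as claimed.

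I expect no genuine obstacle: the argument is purely formal. The only points requiring care are the faithful use of the correspondence between closed supersubschemes of $h_R$ and superideals (equivalently, quotient superalgebras) of $R\,$, and the bookkeeping remark that a graded morphism into a totally even target annihilates both $R_\uno$ and the ideal $J_R$ it generates --- which is exactly what makes $J_R$ the smallest superideal $I$ with $(R/I)_\uno = \{0\}\,$, and hence makes $X_{\text{\it ev}}$ the largest classical closed subscheme of $X\,$.
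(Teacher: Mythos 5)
Your proposal is correct and follows essentially the same route as the paper's own proof: both parts rest on the universal property of $\,\overline{R} := R \big/ J_R \,$ (a graded morphism into a totally even target kills $J_R$ and factors through $\overline{R}\,$) combined with the anti-equivalence $\, R \mapsto h_R \,$ between $\salg_\bk^\circ$ and $\assch_\bk\,$. The only difference is cosmetic: where the paper invokes the definition of closed supersubscheme and says ``by construction,'' you spell out the superideal bookkeeping ($R_\uno \subseteq I$ forces $J_R \subseteq I\,$) explicitly.
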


\begin{proof}
 {\it (a)}  Let  $ \, R = \cO(X) \in \salg_\bk \, $,  so that  $ \, X = h_R \, $  and  $ \, X_{\text{\it ev}} := h_{\overline{R}} \; $.  For any  $ \, A \in \salg_\bk \, $  we have  $ \, X_{\text{\it ev}}(A) = h_{\overline{R}}(A) = \Hom_{\salg_\bk}\big(\,\overline{R} \, , A \,\big) = \Hom_{\alg_\bk}\big(\,\overline{R} \, , A_\zero \big) \, $  because  $ \overline{R} $  is totally even.  In addition,  $ \, \Hom_{\alg_\bk}\big(\,\overline{R} \, , A_\zero \big) = \Hom_{\salg_\bk}\big(\, R \, , A_\zero \big) = h_R(A_\zero) = X(A_\zero) = X_\zero(A) \, $,  in that, letting  $ \, \pi : R \relbar\joinrel\twoheadrightarrow \overline{R} \; $,  \, for every  $ \, \varphi \in \Hom_{\alg_\bk}\big(\,\overline{R} \, , A_\zero \big) \, $  we have  $ \, \varphi \circ \pi \in \Hom_{\alg_\bk}\big(\, R \, , A_\zero \big) \, $,  and conversely every  $ \, \phi \in \Hom_{\alg_\bk}\big(\, R \, , A_\zero \big) \, $  kills  $ J_R $  and thus it factors through  $ \overline{R} \; $.  Therefore  $ X_{\text{\it ev}} $  and  $ X_\zero $  coincide on objects, and similarly they do on morphisms.
 \vskip4pt
   {\it (b)} \,  The identification  $ \, X_{\text{\it ev}} = \! X_\zero \, $  is a sheer consequence of the (well known) definition of  {\sl closed supersubscheme},  see for instance  \cite{ccf},  \S 10.1.  Moreover, let  $ Y $  be any (closed) supersubgroup of  $ X $  which is also classical: then the commutative superalgebra  $ \cO(Y) $  representing  $ Y $  is a quotient of  $ \, R := \cO(G) \, $,  and it is totally even, i.e.~it is a (commutative) algebra.  Now, any quotient superalgebra of  $ R $  which is totally even is actually a quotient of  $ \, \overline{R} =: \cO(X_{\text{\it ev}}) = \cO(X_\zero) \, $,  \,\! by construction; applying this to  $ \cO(Y) \, $  we get  $ \, Y \subseteq X_{\text{\it ev}} = \! X_\zero \, $  as a closed subscheme.
\end{proof}

\medskip

   As a consequence of the previous Proposition,  {\sl we shall hereafter denote the classical scheme associated with  $ X $  by  $ X_\zero $  rather than  $ X_{\text{\it ev}} \; $}.
 \vskip7pt
   We can now introduce the notion of ``global splitting'' for an affine superscheme:

\medskip

\begin{definition}  \label{glob-split_sscheme}
 Let  $ \, X \in \assch_\bk \, $  be an affine  $ \bk $--superscheme.   We say that  $ X $  is  {\it globally split\/}  (or  ``{\it g-split\/}'')  if there is a superscheme isomorphim  $ \; X \cong \, X_\zero \times X_{\text{\it odd}} \; $  for some totally odd affine  $ \bk $--superscheme  $  X_{\text{\it odd}} $  having only one  $ \bk $--point.  In addition, we say that  $ X $  is  {\it globally strongly split\/}  (or  ``{\it gs-split\/}'')  if  $ \, X_{\text{\it odd}} = \mathbb{A}_\bk^{\text{\it odd}} \, $  is indeed a (totally odd) affine  $ \bk $--superspace.
 \hfill   $ \diamondsuit $
\end{definition}

\medskip

   The following algebraic characterization is entirely straightforward:

\medskip

\begin{proposition}  \label{char_glob-spl-ssch}
 Let  $ \, X \in \assch_\bk \, $.  Then  $ X $  is respectively  {\it globally split\/}  (=``{\it g-split\/}'')  or  {\it globally strongly split\/}  (=``{\it gs-split\/}'')  if and only if  $ \, \cO(X) \, $  is split or strongly split.
%
\end{proposition}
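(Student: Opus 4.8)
The plan is to transport the entire statement across the contravariant equivalence $\assch_\bk \cong \salg_\bk^\circ$ of Definition~\ref{aff-spec}, turning a geometric factorisation of $X$ into a tensor factorisation of $R := \cO(X)$. The one structural ingredient I need is that this equivalence carries categorical products of affine superschemes to coproducts (tensor products) of commutative superalgebras, i.e.\ $\cO(Y \times Z) \cong \cO(Y) \otimes_\bk \cO(Z)$. This is immediate from Yoneda: for every $A \in \salg_\bk$ one has $h_{R'}(A) \times h_{R''}(A) = \Hom_{\salg_\bk}(R',A) \times \Hom_{\salg_\bk}(R'',A) = \Hom_{\salg_\bk}(R' \otimes_\bk R'', A)$ by the universal property of $\otimes_\bk$ in $\salg_\bk$, so $h_{R'} \times h_{R''} \cong h_{R' \otimes_\bk R''}$. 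Combined with the identification $\cO(X_\zero) = \overline{R}$ from Proposition~\ref{ass-class-scheme}, this reduces everything to recognising the coordinate superalgebra of the odd factor.

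First I would pin down which superalgebras can occur as $\cO(X_{\text{\it odd}})$. I claim that an affine $\bk$-superscheme $X_{\text{\it odd}}$ is totally odd with exactly one $\bk$-point precisely when $B := \cO(X_{\text{\it odd}})$ satisfies $\overline{B} = \bk$. Indeed the $\bk$-points of $X_{\text{\it odd}}$ are the augmentations $B \to \bk$, which all factor through $\overline{B}$, so having a single point forces $\overline{B} = \bk$; conversely $\overline{B} = \bk$ means $B_\zero = \bk \oplus B_\uno^{[2]}$, whence $B = \bk \oplus B_\uno^{[2]} \oplus B_\uno = B_\uno^{(1)}$ is generated by its odd part, i.e.\ $X_{\text{\it odd}}$ is totally odd. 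With this in hand, $X$ being g-split, namely $X \cong X_\zero \times X_{\text{\it odd}}$, becomes (via the product–tensor dictionary) the existence of an isomorphism $R \cong \overline{R} \otimes_\bk B$ with $\overline{B} = \bk$. The projection $X \to X_\zero$ of the decomposition corresponds to $\overline{R} \hookrightarrow \overline{R} \otimes_\bk B$, $\bar a \mapsto \bar a \otimes 1$, which is a section $\sigma_{\!R}$ of $\pi_{\!R} : R \twoheadrightarrow \overline{R}$ exhibiting $R$ as weakly split, and the factorisation $R \cong \overline{R} \otimes_\bk B$ is then exactly the assertion that $(R,\sigma_{\!R})$ is \emph{split} in the sense of Definition~\ref{def-stsplit-salg}(a). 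Reading the same chain backwards yields the converse.

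For the strong version I would simply specialise the odd factor. Here gs-split means $X_{\text{\it odd}} = \mathbb{A}_\bk^{\text{\it odd}} = \mathbb{A}_\bk^{0|q}$, whose coordinate superalgebra is the Grassmann algebra $\bk[\xi_1 \dots \xi_q] = \bigwedge(\bk^q)$ on a \emph{free} odd module. Thus $X$ gs-split translates into $R \cong \overline{R} \otimes_\bk \bigwedge(\bk^q)$, and it remains to match this with strong splitness $R \cong \overline{R} \otimes_\bk \bigwedge W^{\!R}$ of Definition~\ref{def-stsplit-salg}(b). The point needing care is that the intrinsic invariant $W^{\!R} = R_\uno \big/ R_\zero^+ R_\uno$ really recovers the free module $\bk^q$: computing $W^{\!R}$ for $R = \overline{R} \otimes_\bk \bigwedge(\bk^q)$ (with the product augmentation) collapses all higher exterior powers and the $R_\zero^+$-factor, leaving $W^{\!R} \cong \bk^q$, free of rank $q$, which is precisely the freeness demanded in the definition of strongly split. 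Conversely a strong splitting is a g-splitting whose odd factor is the superspace $\uspec\big(\bigwedge W^{\!R}\big)$.

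I expect the only genuine obstacle to be the bookkeeping in these two identifications of the odd factor — concretely, verifying that the geometric conditions \emph{totally odd} and \emph{one $\bk$-point} correspond exactly to $\overline{B} = \bk$ (equivalently $B = B_\uno^{(1)}$), and that $W^{\!R}$ is both computed correctly and free in the gs-split case. Everything else is the formal product–tensor dictionary of the first paragraph together with Proposition~\ref{ass-class-scheme} and the definitions of $\splsalg_\bk$ and $\stspsalg_\bk$.
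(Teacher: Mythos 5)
The paper states this proposition without proof (it is introduced with ``the following algebraic characterization is entirely straightforward''), so there is no argument of the paper's to compare against: your proposal supplies exactly the dictionary the paper has in mind --- products of affine superschemes correspond to tensor products via Yoneda, $ \, \cO(X_\zero) = \overline{\cO(X)} \, $ by Proposition \ref{ass-class-scheme}, and the odd factor is recognised algebraically --- and it is correct in substance.

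One local step, however, is invalid as you justify it. In your characterisation of the odd factor, the clause ``having a single point forces $ \, \overline{B} = \bk \, $'' is false on its own: over a field the purely even algebra $ \, B = \bk[x]\big/\big(x^2\big) \, $ has exactly one $ \bk $--point, yet $ \, \overline{B} = B \neq \bk \, $. You must use the totally-odd hypothesis here, not just the point count: if $ B $ is generated by its odd part, then every product of at least one odd element lies in $ J_B \, $, so $ \, \overline{B} = B \big/ J_B \, $ is a quotient $ \, \bk \big/ \mathfrak{a} \, $ of $ \bk \, $, and the augmentation provided by the (unique) $ \bk $--point then forces $ \, \mathfrak{a} = 0 \, $. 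The repair is two lines, but the implication as written would fail. Your converse direction is fine once one adds the small observation that $ \, b \, \vartheta_1 \vartheta_2 = (b \, \vartheta_1) \vartheta_2 \, $ for $ \, b \in B_\zero \, $, so the $ B_\zero $--span $ B_\uno^{[2]} $ coincides with the plain $ \bk $--span of double products and hence does lie in $ B_\uno^{(1)} \, $.

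Beyond that, be aware of two frictions you inherit from the paper's definitions rather than create, and which you partly gloss. (i) Read verbatim, Definition \ref{def-stsplit-salg}(a) makes your ``is then exactly the assertion that $ (R,\sigma_{\!R}) $ is split'' problematic: for $ \, R \cong \overline{R} \otimes_\bk B \, $ the subalgebra $ R_\uno^{(1)} $ generated by $ \, R_\uno = \overline{R} \otimes B_\uno \, $ already contains all $ \overline{R} $--multiples of odd elements, so $ \, R_\uno^{(1)} = \bk \cdot 1 + \overline{R} \otimes J_B \, $, strictly larger than $ \, 1 \otimes B \, $, and the multiplication map $ \, \overline{R} \otimes_\bk R_\uno^{(1)} \longrightarrow R \, $ then has a kernel in general (already for $ \, \bk[t] \otimes_\bk \bk[\xi] \, $). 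The equivalence holds under the reading in which the odd tensor factor is $ \, 1 \otimes B \, $ --- the same reading the paper itself employs in proving Proposition \ref{wsp->spl / cex->spl}(a), so this is clearly the intended sense, but it is worth saying so explicitly. (ii) Freeness of $ W^{\!A} $ is demanded in the Hopf Definition \ref{stsplit-Hsalg}, \emph{not} in Definition \ref{def-stsplit-salg}(b); for the direction ``strongly split $ \Rightarrow $ gs-split'' you genuinely need it (a projective non-free $ W^{\!R} $ would give $ \, \uspec\big(\bigwedge W^{\!R}\big) \not\cong \mathbb{A}_\bk^{0|q} \, $), so your parenthetical that freeness is ``demanded in the definition'' should instead say it must be read into it. Relatedly, all the splitting notions live in $ \asalg_\bk \, $, so the augmentations you use --- on $ B $ from its unique $ \bk $--point, and the ``product augmentation'' in the $ W^{\!R} $ computation --- tacitly assume $ X $ (equivalently $ \overline{R} \, $) has a $ \bk $--point, an assumption the proposition's statement likewise leaves implicit.
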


\medskip

   Proposition \ref{ass-class-scheme}  of course applies also to every (affine) supergroup  $ G \, $,  as a superscheme itself.  In addition, we have the following (more or less well-known) result:

\medskip

\begin{proposition}  \label{ass-class-grscheme}
 Let  $ G $  be an affine supergroup, over a ring\/  $ \bk \, $,  and let  $ \, H := \cO(G) \, $  be its representing (commutative Hopf)  $ \bk $--superalgebra.  Then every closed supersubgroup of\/  $ G $  which is  {\sl classical}  is a closed subgroup of  $ \, G_\zero = G_{\text{\it ev}} \; $.
\end{proposition}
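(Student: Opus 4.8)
The plan is to reduce everything to the scheme-level statement in Proposition~\ref{ass-class-scheme}(b), which I may apply directly by regarding $G$ as an affine superscheme, and then to upgrade the resulting closed embedding of superschemes to one of supergroups. First I would recall from \S\ref{aug/spl/Hsalg} that $J_H := (H_\uno)$ is a Hopf ideal of $H$, so that $\overline{H} = H/J_H$ is a (classical) commutative Hopf algebra and the projection $\pi : H \twoheadrightarrow \overline{H}$ is a morphism of Hopf superalgebras; dually this exhibits $G_\zero = G_{\text{\it ev}}$, represented by $\overline{H} = \cO(G_\zero)$, as a closed supersubgroup of $G$.

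Next, given a classical closed supersubgroup $Y$ of $G$, I would forget the group structure and view $Y$ as a classical closed supersubscheme of $G$; Proposition~\ref{ass-class-scheme}(b) then immediately yields a closed embedding $Y \hookrightarrow G_\zero$ of superschemes. The remaining task is to see that this embedding respects the group structures. For this I would pass to representing algebras: let $\pi_Y : H \twoheadrightarrow \cO(Y)$ be the surjection of Hopf superalgebras corresponding to the closed embedding $Y \hookrightarrow G$. Since $Y$ is classical, $\cO(Y)$ is totally even, so $\pi_Y(H_\uno) \subseteq \cO(Y)_\uno = \{0\}$; hence $\pi_Y$ annihilates $H_\uno$ and therefore the ideal $J_H$ it generates, and so factors uniquely as $\pi_Y = \overline{\pi}_Y \circ \pi$ through a surjection $\overline{\pi}_Y : \overline{H} \twoheadrightarrow \cO(Y)$.

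The key point — and the only step requiring a little care — is that $\overline{\pi}_Y$ is itself a morphism of Hopf superalgebras, not merely of superalgebras: this follows because $\pi$ is a surjective Hopf morphism through which the Hopf morphism $\pi_Y$ factors, so compatibility of $\overline{\pi}_Y$ with coproduct, counit and antipode is forced by the universal property of the quotient. Dualizing $\overline{\pi}_Y$ then gives a closed embedding of supergroups $Y \hookrightarrow G_\zero$, which is precisely the assertion. I do not expect any serious obstacle here; the whole argument is essentially the group-theoretic shadow of Proposition~\ref{ass-class-scheme}(b), the only genuine content beyond it being the observation that the scheme-theoretic factorization automatically carries the Hopf (hence group) structure.
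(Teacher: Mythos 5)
Your proposal is correct and follows essentially the same route as the paper: the paper's proof likewise observes that $\cO(\mathbf{K})$ is a totally even quotient Hopf superalgebra of $H$, hence the quotient map kills $H_\uno$ and so the Hopf ideal $J_H$, and therefore factors through $\overline{H} = \cO(G_\zero)$. Your extra details --- the preliminary appeal to Proposition~\ref{ass-class-scheme}(b) and the explicit verification that $\overline{\pi}_Y$ inherits the Hopf structure (forced by surjectivity of $\pi$, since $J_H$ is a Hopf ideal) --- merely make explicit what the paper compresses into ``by construction.''
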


\begin{proof}
 If  $ \mathbf{K} $  is any (closed) supersubgroup of\/  $ G $  which is also classical, then the commutative Hopf superalgebra  $ \cO(\mathbf{K}) $  representing  $ \mathbf{K} $  is a quotient of  $ \, H := \cO(G) \, $,  and it is totally even, i.e.~it is a (commutative) Hopf algebra.  Now, any quotient Hopf superalgebra of  $ H $  which is totally even is actually a quotient of  $ \, \overline{H} := \cO(G_\zero) \, $,  \,\! by construction; applying this to  $ \cO(\mathbf{K}) \, $  we get  $ \, \mathbf{K} \leq G_\zero \, $.
\end{proof}

\bigskip

   In sight of Masuoka's factorization result for commutative Hopf superalgebras (over fields) in  Theorem \ref{Masuoka}  the notion of ``globally strongly split'' (or  ``gs-split'') for supergroups deserves to be made more precise than in the superscheme setting.

\smallskip

   Let  $ G $  be an (affine)  $ \bk $--supergroup,  and let  $ \, H := \cO(G) \, $  be the supercommutative Hopf  $ \bk $--superalgebra  representing it.  The coproduct map  $ \; \cO(G) =: H \! \longrightarrow \! H \otimes H = \cO(G) \otimes \cO(G) = \cO(G \times G) \; $  corresponds to the multiplication map  $ \; G \! \times \! G \! \longrightarrow \! G \; $.  Similarly, the quotient map  $ \; \pi : H := \cO(G) \! \relbar\joinrel\twoheadrightarrow \cO(G_\zero) = \overline{H} \; $  corresponds to a cano\-nical embedding  $ \, j : G_{\text{\it ev}\!} = G_\zero \! \lhook\joinrel\longrightarrow G \, $.  Via this, the  $ \overline{H} $--coaction
  $$  \cO(G) \, =: \, H \, \longrightarrow \, \overline{H} \otimes H \, = \, \cO(G_\zero) \otimes \cO(G) \, = \, \cO(G_\zero \times G)  $$
%
%
corresponds to a left action  $ \; G_\zero \times G \longrightarrow G \; $  of  $ \, G_\zero $  onto  $ G \, $,  given by restriction of the (left) action of  $ G $  onto  $ G $  by left multiplication: so  $ G $  is a left  $ G_\zero $--scheme.  In addition,  $ G $  has a special point, the unit element   --- corresponding to the counit map for  $ \, H := \cO(G) \, $  ---   so that  $ G $  itself can be thought of as a  {\it pointed superscheme}.

\smallskip

   On the other hand, if  $ \mathbb{A}_\bk^{0|d_-} $  is any totally odd affine  $ \bk $--superspace  (with $ \, d_- \in \N \, $),  then the direct product  $ \, G_\zero \times \mathbb{A}_\bk^{0|d_-} \, $  has a left  $ G_\zero $--action,  given by left multiplication in  $ G_\zero \, $:  this makes  $ \, G_\zero \times \mathbb{A}_\bk^{0|d_-} \, $  into a left  $ G_\zero $--scheme.  In addition,  $ G_\zero $  is also a pointed superscheme, whose special point is the unit element   --- corresponding to the counit map for  $ \, \overline{H} = \cO(G_\zero) \, $.  But also  $ \mathbb{A}_\bk^{0|d_-} $  is a pointed superscheme, the special point being the zero of  $ \mathbb{A}_\bk^{0|d_-} $  as a free supermodule   --- this corresponds again to the counit map of the Hopf superalgebra repre\-senting  $ \mathbb{A}_\bk^{0|d_-} \, $.  Therefore,  {\sl the direct product  $ \, \bG_\zero \times \mathbb{A}_\bk^{0|d_-} \, $  is a}  {\it pointed superscheme\/}  as well.

\medskip

   All this lead us to strengthen the notion of ``globally split'' concerning supergroups:

\medskip

\begin{definition}  \label{gl-str-split_sgroup-Def}
 Let  $ G $  be an affine  $ \bk $--supergroup.  Assume that there exists a closed subsupercheme  $ G_\uno $  of  $ G $  such that
 \vskip5pt
   {\it (a)}\,  $ \; 1_{{}_G} \! \in G_\uno \, $,  hence we look at  $ G_\uno $  as a  {\sl pointed superscheme\/};
 \vskip0pt
   {\it (b)}\,  the product in  $ G $  restricts to an isomorphism  $ \; G_\zero \times G_\uno \;{\buildrel \cong \over {\lhook\joinrel\relbar\joinrel\relbar\joinrel\twoheadrightarrow}}\; G \; $  of pointed left  $ G_\zero $--superschemes;
 \vskip3pt
   {\it (c)}\,  $ G_\uno $  is (isomorphic to) a totally odd affine superscheme  $ \mathbb{A}_\bk^{0|d_-} $,  as a pointed superscheme.
 \vskip5pt
\noindent
 When all this holds, we say that  $ G $  is  {\it globally strongly split},  or in short that it is  {\it gs-split}.
 \vskip3pt
   We shall then denote by  $ \gsssgrps_\bk $  and  $ \gssfsgrps_\bk \, $,  respectively, the full subcategories of  $ \sgrps_\bk $  and  $ \fsgrps_\bk \, $,  respectively, whose objects are all supergroups and all fine supergroups, respectively, over  $ \bk $  which in addition are globally strongly split.
 \hfill   $ \diamondsuit $
\end{definition}

\smallskip

\begin{remark}  \label{Lie-repr_->_(gs-split_->_fine)}
 Let  $ G $  be an affine  $ \bk $--supergroup  for which  $ \Lie(G) $  is quasi-represen\-table.  Then if  $ G $  is globally strongly split, it is also clearly  {\sl fine}   --- in the sense of  Definition \ref{def_fine-sgroups}.
\end{remark}

\medskip

   The main facts to take into account at this stage are the following:

\medskip

\begin{theorem}  \label{gl-str-split_sgroup-Th}
 Let  $ G $  be an affine supergroup, defined over a ring\/  $ \bk \, $,  and let  $ \, H := \cO(G) \, $  be its representing (commutative Hopf)  $ \bk $--superalgebra.  Then  $ G $  is globally strongly split if and only if the Hopf superalgebra  $ \cO(G) $  is strongly split.  In particular, if\/  $ \bk $  is a  {\sl field}  whose characteristic is not 2, then  $ G $  is globally strongly split.
\end{theorem}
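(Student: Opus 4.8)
The plan is to establish the two-way implication between $G$ being globally strongly split (in the supergroup sense of Definition \ref{gl-str-split_sgroup-Def}) and $\cO(G) = H$ being strongly split as a Hopf superalgebra (Definition \ref{stsplit-Hsalg}), and then to deduce the field case directly from Masuoka's Theorem \ref{Masuoka}. The heart of the matter is that the defining data on the two sides are mutually dual under the contravariant equivalence $\sgrps_\bk \cong \Hsalg_\bk^\circ$ recorded in Definition \ref{aff-spec}; so the proof is essentially a careful translation of a geometric statement about pointed left $G_\zero$-superschemes into a Hopf-algebraic statement about super counital left $\overline{H}$-comodule algebras.

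For the implication that strongly split $\cO(G)$ forces $G$ to be gs-split, I would start from an isomorphism $\zeta : H \xrightarrow{\,\cong\,} \overline{H} \otimes_\bk \bigwedge W^H$ of super counital left $\overline{H}$-comodule $\bk$-algebras, as supplied by Definition \ref{stsplit-Hsalg}. First I would identify the totally odd superscheme $G_\uno$: since $W^H$ is $\bk$-free, say of rank $d_-$, the exterior algebra $\bigwedge W^H$ is exactly the representing Hopf superalgebra of $\mathbb{A}_\bk^{0|d_-}$, so I set $G_\uno := h_{\bigwedge W^H}$ and view it as a closed subsuperscheme of $G$ via the algebra surjection $H \twoheadrightarrow \bigwedge W^H$ induced by $\zeta$ followed by $\overline{H} \otimes \bigwedge W^H \twoheadrightarrow \bk \otimes \bigwedge W^H$. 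The counit of $\bigwedge W^H$ (sending all of $W^H$ to zero) is precisely the special point $1_G \in G_\uno$, giving condition \emph{(a)}, while condition \emph{(c)} is immediate from $G_\uno \cong \mathbb{A}_\bk^{0|d_-}$. The key point is condition \emph{(b)}: the tensor-product decomposition $H \cong \overline{H} \otimes_\bk \bigwedge W^H$ dualizes to the direct-product decomposition $G_\zero \times G_\uno \cong G$, and the fact that $\zeta$ respects the left $\overline{H}$-comodule structure is exactly what makes this isomorphism one of \emph{left $G_\zero$-superschemes}; that the comodule structure on the target is the standard one (coproduct of $\overline{H}$ on the left factor) is what Remark \ref{barH-coaction_W} was set up to guarantee, and pointedness is preserved because $\zeta$ intertwines the counits. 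Conversely, from a gs-split structure on $G$ I would read off $\cO(G_\uno) \cong \bigwedge W^H$ and use the $G_\zero$-equivariant product isomorphism of \emph{(b)} to manufacture the required comodule-algebra isomorphism $\zeta$, running the same dictionary backwards.

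The final assertion follows at once: when $\bk$ is a field of characteristic $\neq 2$, Theorem \ref{Masuoka} tells us that $\cO(G)$ is strongly split as a Hopf superalgebra, and the equivalence just proved then yields that $G$ is globally strongly split. The main obstacle I anticipate is not any single hard computation but rather the bookkeeping required to verify that the geometric conditions \emph{(a)}--\emph{(c)} correspond \emph{exactly}, and not merely up to some looser notion, to the comodule-algebra isomorphism in Definition \ref{stsplit-Hsalg}; in particular one must check that the left $G_\zero$-action appearing in \emph{(b)} is genuinely the restriction of left multiplication (so that it dualizes to the coaction built from the coproduct of $\overline{H}$) rather than some other $G_\zero$-action, and that the identification $\cO(G_\uno) \cong \bigwedge W^H$ is compatible with the induced coactions. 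This is precisely where Remark \ref{barH-coaction_W} does the decisive work, so I would lean on it heavily at that step.
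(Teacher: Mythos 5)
Your proposal is correct and follows essentially the same route as the paper: both directions proceed by dualizing the data of Definition \ref{gl-str-split_sgroup-Def} against Definition \ref{stsplit-Hsalg} under the equivalence $\sgrps_\bk \cong \Hsalg_\bk^\circ$, the field case is handed off to Theorem \ref{Masuoka}, and your identification of the delicate point --- that the isomorphism $G_\zero \times G_\uno \cong G$ must be realized by restriction of the group product, which the left $\overline{H}$-comodule covariance of $\zeta$ (plus pointedness) guarantees --- is exactly the ``only non-trivial point'' that the paper then verifies by an explicit pointwise convolution computation in Sweedler notation. One small remark: the decisive ingredient at that step is the comodule-algebra condition built into Definition \ref{stsplit-Hsalg} itself, whereas Remark \ref{barH-coaction_W} (on the coadjoint coaction) chiefly serves the subsequent Corollary on adjoint stability of $G_\uno$; this misattribution does not affect the soundness of your plan.
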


\begin{proof}
 It is clear that the very last part of the statement is a sheer consequence of Masuoka's  Theorem \ref{Masuoka}.  We just need to prove the rest.

\vskip7pt

   In one direction, the proof is obvious.  Indeed, assume that  $ G $  is  globally strongly split, so  $ \, G = G_\zero \cdot G_\uno \cong G_\zero \times G_\uno \cong G_\zero \times \mathbb{A}_\bk^{0|d_-} \, $  (see  Definition \ref{gl-str-split_sgroup-Def}):  then we have isomorphisms
  $$  \cO(G)  \; \cong \;  \cO\big( G_\zero \times G_\uno \big)  \; \cong \;  \cO(G_\zero) \otimes_\bk \cO(G_\uno)  \; \cong \;  \cO(G_\zero) \otimes_\bk \cO\big(\mathbb{A}_\bk^{0|d_-}\big)  $$
of  $ \bk $--algebras.  Moreover, the fact that  $ \, G = G_\zero \cdot G_\uno \cong G_\zero \times G_\uno \, $  as  {\sl pointed\/}  superschemes implies that the resulting isomorphism  $ \; \cO(G) \, \cong \, \cO(G_\zero) \otimes_\bk \cO\big(\mathbb{A}_\bk^{0|d_-}\big) \; $  is also  {\sl counital}.   Finally, as the isomorphism  $ \, G_\zero \times \mathbb{A}_\bk^{0|d_-} \cong G_\zero \cdot G_\uno = G \, $  is one of  $ G_\zero $--superspaces,  one sees in addition that the isomorphism  $ \; \cO(G) \, \cong \, \cO(G_\zero) \otimes_\bk \cO\big(\mathbb{A}_\bk^{0|d_-}\big) \; $  is also left  $ \cO(G_\zero) $--{\sl coinvariant}.
 \vskip5pt
   The converse step almost entirely follows from definitions (along with  Remark \ref{barH-coaction_W})  and Masuoka's result  (Theorem \ref{Masuoka}).  Indeed, assume that the Hopf superalgebra  $ \, H := \cO(G) \, $  is strongly split, i.e.~there exists an isomorphism  $ \; \cO(G) =: H \,{\buildrel \cong \over {\lhook\joinrel\relbar\joinrel\relbar\joinrel\twoheadrightarrow}}\, \overline{H} \otimes_\bk \bigwedge \! W^H \; $  of counital  $ \overline{H} $--comodule  superalgebras, with  $ \, \overline{H} = \overline{\cO(G)} = \cO(G_0) \, $:  using this isomorphism we can identify both  $ \, \overline{H} = \overline{\cO(G)} = \cO(G_0) \, $  and   $ \, \bigwedge \! W^H \, $  with subalgebras of  $ \, \cO(G) = H \, $  whose product is all of  $ H $  itself.  Taking superspectra, this yields an isomorphism  $ \; G_\zero \times G_\uno \, \cong \, G \; $   --- as pointed superschemes with left  $ G_\zero $--action  ---   for some closed subsuperscheme  $ G_\uno $  in  $ G $  such that  $ \, \cO(G_\uno) = \bigwedge \! W^H \, $.  To put it in down-to-earth terms, we look pointwise: if  $ \, A \! \in \! \salg_\bk \, $  one has
  $$  \displaylines{
   G(A) \, := \, \Hom_{\salg_\bk}\big( H \, , A \,\big) \;\; ,  \qquad  G_\zero(A) \, := \, \Hom_{\salg_\bk}\big(\, \overline{H} \, , A \,\big)  \cr
   G_\uno(A) \, := \, \Hom_{\salg_\bk}\big(\, {\textstyle \bigwedge} W^H , A \,\big)  }  $$
and the isomorphism then is given by  ($ m_{{}_A} \! $  being the multiplication in  $ A \, $)
  $$  G_\zero(A) \times G_\uno(A) \longrightarrow G(A) \; ,  \qquad  (\varphi_0\,,\,\varphi_1) \, \mapsto \, m_{{}_A} \! \circ (\varphi_0 \otimes \varphi_1)  $$
in particular,  $ G_\zero(A) $  and  $ G_\uno(A) $  as subsets of  $ G(A) $  are characterized as
  $$  \displaylines{
   G_\zero(A)  \; := \;  \big\{\, \phi \in G(A) \;\big|\; \phi\big( 1 \otimes \omega \big) = \epsilon_{{}_{\scriptscriptstyle \bigwedge \! W^H}}(\omega) \;\; \forall \; \omega \in {\textstyle \bigwedge} W^H \;\big\}  \cr
   G_\uno(A)  \; := \;  \big\{\, \phi \in G(A) \;\big|\; \phi\big(\, \overline{h} \otimes 1 \big) = \epsilon_{{}_{\scriptscriptstyle \overline{H}}}\big(\,\overline{h}\,\big) \,\; \forall \; \overline{h} \in \overline{H} \;\big\}  } $$
   \indent   Thus, the only non-trivial point which is left out is that this isomorphism actually is realized through restriction of the product in  $ G \, $:  we check it now pointwise.
 \vskip3pt
   The standard embedding  $ \; \sigma_{\!{}_H} : \cO(G_\zero) = \overline{\cO(G)} = \overline{H} \lhook\joinrel\relbar\joinrel\relbar\joinrel\rightarrow \overline{H} \otimes_\bk \bigwedge \! W^H \cong H =: \cO(G) \; $  induces a natural supergroup morphism  $ \, G \relbar\joinrel\twoheadrightarrow G_\zero \, $,  which is a retraction of the embedding  $ \, G_\zero \lhook\joinrel\relbar\joinrel\rightarrow G \, $  because  $ \sigma_{\!{}_H} $  itself is a section of the canonical projection  $ \, \pi_{\!{}_H} \! : H \! \relbar\joinrel\twoheadrightarrow \overline{H} \, $.  Putting it in down-to-earth terms, for  $ \, A \in \salg_\bk \, $  the above mentioned retraction reads
  $$  G(A) := \Hom_{\salg_\bk}\big( \cO(G) , A \big) \! \twoheadrightarrow G_\zero(A) := \Hom_{\salg_\bk}\big( \cO(G_\zero) , A \big) \; ,  \;\  g \mapsto g_0 := g \circ \sigma  $$
for all  $ \, g \in G(A) \, $.  In other words, if  $ \, g \in G(A) \, $  then  $ \, g_0 := g \circ \sigma \, $  can be seen as the restriction of  $ g $  (which is a morphism from  $ \, H := \cO(G) \, $  to  $ A $)  to  $ \, \overline{H} = \overline{\cO(G)} \, $  embedded in  $ \, H = \cO(G) \, $  via  $ \sigma \, $.  Notice also that  $ \, g_0 := g \circ \sigma \, $  belongs to  $ G_\zero(A) \, $,  and when we embed the latter (canonically) into  $ G(A) $  the image of  $ g_0 $  is  $ \, g_0 \circ \pi_{{}_{\!H}} \, $:  in other words, when thinking of  $ g_0 $  as an element of  $ G(A) $  we realize that we are actually taking  $ \, \hat{g}_0 := g_0 \circ \pi_{{}_{\!H}} \, $.
                                                          \par
   Now for each  $ \, g \in G(A) \, $  consider  $ \, g_0 := g \circ \sigma \in G_\zero(A) \, $  and  $ \, \hat{g}_0 := g_0 \circ \pi_{{}_{\!H}} \in G(A) \, $  as above; then define  $ \; g_1 := \hat{g}_0^{\,-1} \! \cdot g \; \big(\! \in G(A) \big) \, $.  By construction we have  $ \, g = \hat{g}_0 \cdot g_1 \, $   --- a product inside  $ G(A) $  ---   so we are only left to prove that actually  $ \; g_1 \, \in \, G_\uno(A) = \Hom_{\salg_\bk}\big( \bigwedge \! W^H , A \,\big) \; $.  To this end, we recall that the product in  $ \, G(A) = \Hom_{\salg_\bk}\big( H , A \big) \, $  is given by  {\sl convolution}
  $$  \quad   g' \cdot g'' \, := \,  m_{{}_{\!A}} \! \circ \big(\, g' \otimes g'' \big) \circ \Delta_{{}_H}   \qquad  \forall \;\; g' , g'' \in G(A) = \Hom_{\salg_\bk}\big( H , A \big)   \eqno (3.1)  $$
   By the characterization of  $ G_\uno(A) $  given above we have that  $ \, g_1 \in G_\uno(A) \, $  if and only if
  $$  g_1\big(\, \overline{h} \otimes 1 \big)  \, = \,  1_{{}_{G(A)}}\big(\,\overline{h} \otimes 1\big)  \qquad  \text{for all}  \quad  \overline{h} \in \overline{H}   \eqno (3.2)  $$
so this is our goal.  Definitions along with (3.1) give
  $$  g_1  \, := \,  \hat{g}_0^{\,-1} \! \cdot g  \, = \,  m_{{}_{\!A}} \! \circ \big(\, \hat{g}_0^{\,-1} \! \otimes g \big) \circ \Delta_{{}_H}  = \,  m_{{}_{\!A}} \! \circ \big(\, g_0^{-1} \! \otimes g \big) \circ \big(\, \pi_{{}_{\!H}} \otimes {\text{\sl id}}_{{}_H} \big) \circ \Delta_{{}_H}   \eqno (3.3)  $$
Recall that, by assumption, the isomorphism  $ \; H \,{\buildrel \cong \over {\lhook\joinrel\relbar\joinrel\relbar\joinrel\twoheadrightarrow}}\, \overline{H} \otimes_\bk \bigwedge \! W^H \; $  of counital superalgebras is also left  $ \overline{H} $--covariant:  this means that
  $$  \big( \big(\, \pi_{{}_{\!H}} \otimes {\text{\sl id}}_{{}_H} \big) \circ \Delta_{{}_H} \big) \big(\, \overline{h} \otimes 1 \big)  \, = \,  \Big( \Delta_{{}_{\overline{H}}} \otimes {\text{\sl id}}_{{}_{\scriptscriptstyle \bigwedge \! W^H}} \Big)\big(\, \overline{h} \otimes 1 \big)  \, = \,  \overline{h}_{(1)} \otimes \overline{h}_{(2)} \otimes 1  \qquad  \forall \;\; \overline{h} \in \overline{H}  $$
hence (3.3) and the identity  $ \, \overline{h}_{(2)} \otimes 1 = \sigma\big(\, \overline{h}_{(2)} \big) \, $  together give
  $$  \displaylines{
   g_1\big(\, \overline{h} \otimes 1 \big)  \; = \;  \big( m_{{}_{\!A}} \! \circ \big(\, g_0^{-1} \! \otimes g \big) \circ \big(\, \pi_{{}_{\!H}} \otimes {\text{\sl id}}_{{}_{\!H}} \big) \circ \Delta_{{}_H} \big)\big(\, \overline{h} \otimes 1 \big)  \; =   \hfill  \cr
   = \;  g_0^{-1}\big(\,\overline{h}_{(1)}\big) \; g\big(\, \overline{h}_{(2)} \otimes 1 \big)  \; = \;  g_0^{-1}\big(\overline{h}_{(1)}\!\big) \, (g \circ \sigma)\big( \overline{h}_{(2)} \!\big)  \; = \;  g_0^{-1}\big(\overline{h}_{(1)}\!\big) \; g_0\big(\overline{h}_{(2)}\!\big)  \; =  \cr
   \hfill   = \;  \big( g_0^{-1} \! \cdot g_0 \big)\big(\,\overline{h}\,\big)  \; = \;  1_{{}_{G_\zero(A)}}\!\big(\,\overline{h}\,\big)  \; = \;  1_{{}_{G(A)}}\!\big(\, \overline{h} \otimes \! 1 \big)  }  $$
for all  $ \, \overline{h} \in \overline{H} \, $,  so that (3.2) is proved.
\end{proof}

\smallskip

   The above characterization of gs-split supergroups yields an interesting consequence:

\smallskip

\begin{corollary}
 Let  $ G $  be a globally strongly split supergroup.  Then (with notation of  Definition \ref{gl-str-split_sgroup-Def})  $ G_\uno $  is stable by the adjoint  $ G_\zero $--action.
\end{corollary}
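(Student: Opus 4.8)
The plan is to recast the statement in the language of (co)actions and then to read it off from the structure already recorded in Remark \ref{barH-coaction_W}. Write $ H := \cO(G) $, $ \overline{H} = \cO(G_\zero) $, and use the strong splitting $ H \cong \overline{H} \otimes_\bk \bigwedge W^H $ produced by Theorem \ref{gl-str-split_sgroup-Th} to identify $ \cO(G_\uno) = \bigwedge W^H $ as the quotient of $ H $ by the ideal generated by $ \sigma_{\!H}(\overline{H}^+) $. The adjoint action of $ G $ on itself is the conjugation morphism $ G \times G \to G $, whose comorphism is the right coadjoint coaction $ \mathrm{ad}^* : H \to H \otimes H $, $ h \mapsto \sum h_{(2)} \otimes S(h_{(1)})\,h_{(3)} $; restricting the acting factor to the closed subgroup $ G_\zero \hookrightarrow G $ (that is, composing with $ \mathrm{id} \otimes \pi_{\!H} $, where $ \pi_{\!H} : H \twoheadrightarrow \overline{H} $ is a Hopf map) yields the right $ \overline{H} $--coaction $ \beta : H \to H \otimes \overline{H} $ that encodes the adjoint action of $ G_\zero $ on $ G $. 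In these terms, the assertion that $ G_\uno $ is $ \Ad(G_\zero) $--stable means exactly that $ \beta $ descends through the projection $ H \twoheadrightarrow \bigwedge W^H = \cO(G_\uno) $, i.e.\ that it induces a coaction $ \delta : \bigwedge W^H \to \bigwedge W^H \otimes \overline{H} $.

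This descent is precisely what Remark \ref{barH-coaction_W} provides: the right coadjoint coaction of $ H $ induces a right $ \overline{H} $--coaction on $ W^H $, hence on $ \bigwedge W^H = \cO(G_\uno) $, and, computed in the split model $ H_\sigma = \overline{H} \otimes \bigwedge W^H $, this is the very coaction $ \delta $. Applying $ \uspec $, the coaction $ \delta $ is the comorphism of a morphism $ G_\zero \times G_\uno \to G_\uno $ whose dual, being induced by the coadjoint coaction of $ H $, is the restriction of the conjugation morphism $ G \times G \to G $. Concretely this says that $ \Ad(g_0) $ carries $ G_\uno(A) $ into itself for every $ A \in \salg_\bk $ and every $ g_0 \in G_\zero(A) $, which is the claim. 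Note that this is genuinely an action on a \emph{subscheme}, not conjugation inside a subgroup, since $ G_\uno $ need not be a subgroup of $ G $.

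The only real point --- and the one hidden inside Remark \ref{barH-coaction_W} --- is that the coadjoint coaction respects the odd directions cut out by $ W^H $. I expect this to be the main obstacle, because a direct functor-of-points computation of $ \bigl(\hat{g}_0 \, x \, \hat{g}_0^{-1}\bigr)\bigl(\sigma_{\!H}(\overline{h})\bigr) $ through the convolution formula (3.1) stalls: the splitting is only a \emph{left} $ \overline{H} $--comodule isomorphism, so one controls $ (\pi_{\!H} \otimes \mathrm{id})\,\Delta_H $ but not the symmetric projection needed to evaluate the third tensor leg $ S(h_{(1)})\,h_{(3)} $. The clean way around this is to argue with the intrinsic coadjoint coaction rather than with the one-sided section, for then the parity bookkeeping becomes transparent: when $ h $ is odd, the middle leg $ h_{(2)} $ of $ \Delta_H^{(2)}(h) $ must remain odd whenever $ \pi_{\!H}\bigl(S(h_{(1)})\,h_{(3)}\bigr) \neq 0 $, so $ \beta $ preserves $ H_\uno $ and passes to $ W^H $. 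Once this is in hand the corollary is immediate; as a sanity check, for $ G = \rGL_{1|1} $ conjugating the off-diagonal factor by a diagonal element merely rescales the two odd entries and indeed lands back inside $ G_\uno $.
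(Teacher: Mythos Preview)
Your proposal is correct and follows essentially the same approach as the paper: both arguments reduce the claim to the Hopf-algebraic statement that the right coadjoint $\overline{H}$--coaction on $H$ descends to $\bigwedge W^H = \cO(G_\uno)$, invoke Remark~\ref{barH-coaction_W} for this, and then pass to spectra. The paper's proof is two lines; your version spells out the dictionary between the adjoint action and the coadjoint coaction and adds a sketch of the parity argument underlying Remark~\ref{barH-coaction_W}, but the logical skeleton is identical.
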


\begin{proof}
 Applying  Remark \ref{barH-coaction_W}  to  $ \, H := \cO(G) \, $  we find that  $ \, \bigwedge W^H = \cO(G_\uno) \, $  is stable by the right coadjoint  $ \cO(G_\zero) $--coaction.  But at the superscheme level this implies exactly that  $ G_\uno $  is stable by the adjoint  $ G_\zero $--action,  q.e.d.
\end{proof}

\smallskip

\begin{remark}
 To be complete, we mention that in Boseck's approach (see  \cite{bos1, bos3})  each  {\sl affine algebraic supergroup\/}  is assumed to be ``globally strongly split'' (in our sense)  {\sl by definition}.
\end{remark}

\smallskip

\begin{free text}  \label{consist-splitt.s}
 {\bf Consistent splittings for embeddings of gs-split supergroups.}  If we consider a (closed) supersubgroup within a supergroup, and both are gs-split, we can ask whether there exist ``splittings'' of both supergroups which are ``consistent'' with each other, by which we mean that they are ``compatible'', in some natural sense, with the embedding of the first supergroup inside the second one. We shall now make this rough idea more precise and find some significant results.
 \vskip7pt
   Let us start with an affine, fine supergroup  $ G $  over  $ \bk \, $,  with  $ \, \fg = \text{\sl Lie}(G) \, $.  Set
%
%
 $ \; A \, := \, \cO(G) \; $,  $ \; \overline{A} \, := \, A \Big/ \big( A_\uno \big) \; $,  $ \; W^A \, := \, A_\uno \Big/ A_\zero^+ A_\uno \; $.
 Recall that  $ \, \overline{A} = \overline{\cO(G)} = \cO\big( G_{\text{\it ev}} \big) \, $,  the commutative Hopf algebra which represents the affine group  $ G_{\text{\it ev}} $  associated with  $ G \, $,  and  $ \, {\big( W^A \big)}^* = \fg_\uno \; $.  By assumption  $ G $  is fine, so  $ \fg_\uno $  is  $ \bk $--finite  free.
%
%
 Recall also that  $ \fg $  is endowed with a  $ 2 $--operation
%
%
 and
the universal enveloping (super)algebra  $ U(\fg) $  involves the relations  $ \, v^2 = v^{\langle 2 \rangle} \, $,  for  $ \, v \in \fg_\uno \, $.  Just as in  \cite{mas-shi},  Lemma 4.24, there exists a canonical Hopf pairing  $ \; \langle\ \,,\ \rangle : U(\fg) \times A \longrightarrow \bk \; $  which gives rise to the superalgebra map
%
%
 $ \; \kappa : A \relbar\joinrel\longrightarrow {U(\fg)}^* \; $  defined by  $ \; \kappa(a) \, := \, \langle\ \,, a \rangle \; $.
                                                                \par
   Choose a totally ordered  $ \bk $--free basis  $ X $  of  $ \fg_\uno \, $,  and define a unit-preserving super-coalgebra map  $ \; \iota_{{}_{\scriptstyle X}} : \wedge \fg_\uno \longrightarrow U(\fg) \; $  by
%
%
 $ \; \iota_{{}_{\scriptstyle X}}(x_1 \wedge \cdots \wedge x_n) \, := \, x_1 \cdots x_n \; $
for  $ \, n \geq 0 \, $  and  $ \, x_1 < \cdots < x_n \, $  in  $ X \, $.  Define also  $ \; \rho_{{}_{\scriptstyle X}} : A \longrightarrow \wedge W^A \; $  to be the composition
%
%
 $ \; \rho_{{}_{\scriptstyle X}} \, : \, A \,{\buildrel \kappa \over {\relbar\joinrel\longrightarrow}}\, {U(\fg)}^* \,{\buildrel {\iota_{{}_X}^{\,*}} \over {\relbar\joinrel\longrightarrow}}\, {\big(\! \wedge \fg_\uno \,\big)}^* \,{\buildrel \cong \over {\relbar\joinrel\longrightarrow}}\, \wedge W^A \; $
where the last arrow denotes the canonical isomorphism; this  $ \rho_{{}_{\scriptstyle X}} $  is a counit-preserving superalgebra map: see  \cite{ms1},  page 301.  Now, assume that the Hopf superalgebra  $ A $  is split (or equivalently  $ G $  is gs-split): thus there exists a counit-preserving isomorphism  $ \; A \,{\buildrel \cong \over {\relbar\joinrel\longrightarrow}}\, \overline{A} \otimes_\bk \wedge W^A \; $  of left  $ A $--comodule  superalgebras.  The following result proves that one can choose a particular such splitting:
\end{free text}

\vskip8pt

\begin{lemma}  \label{split-isom-alg}
 {\sl (Masuoka)}  The map
%
%
 $ \; \psi_{{}_{\scriptstyle X}} : \, A \,{\relbar\joinrel\relbar\joinrel\longrightarrow}\, \overline{A} \otimes_\bk \wedge W^A  \; $,  $ \; a \mapsto \psi_{{}_{\scriptstyle X}}(a) := \overline{a_{(1)}} \otimes \rho_{{}_{\scriptstyle X}}\big(a_{(2)}\big) \; $,
where  $ \, a \mapsto \overline{a} \, $  denotes the natural projection  $ \, A \relbar\joinrel\twoheadrightarrow \overline{A} \, $,  is bijective.  In fact, it is a counit-preserving isomorphism of left  $ \overline{A} $--comodule  superalgebras.
\end{lemma}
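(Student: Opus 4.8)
The plan is to first dispatch the three structural assertions and then concentrate all the real work on bijectivity. Observe that $\psi_{{}_{\scriptstyle X}} = (\pi_{\!{}_A} \otimes \rho_{{}_{\scriptstyle X}}) \circ \Delta_{{}_A}$, where $\pi_{\!{}_A} : A \twoheadrightarrow \overline{A}$ is the natural projection. Since $\Delta_{{}_A}$ is a morphism of superalgebras and both $\pi_{\!{}_A}$ and $\rho_{{}_{\scriptstyle X}}$ are counit-preserving superalgebra maps, the composite $\psi_{{}_{\scriptstyle X}}$ is a superalgebra map as well. Counit-preservation then follows from the counit axiom $\sum \epsilon(a_{(1)}) \epsilon(a_{(2)}) = \epsilon(a)$ together with the fact that $\pi_{\!{}_A}$ and $\rho_{{}_{\scriptstyle X}}$ each preserve counits; and the left $\overline{A}$-colinearity is a one-line Sweedler computation using coassociativity and the fact that $\pi_{\!{}_A}$ is a coalgebra map (both sides equal $\overline{a_{(1)}} \otimes \overline{a_{(2)}} \otimes \rho_{{}_{\scriptstyle X}}(a_{(3)})$). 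I would also record here the identity $(\epsilon_{\overline{A}} \otimes \mathrm{id}) \circ \psi_{{}_{\scriptstyle X}} = \rho_{{}_{\scriptstyle X}}$, which governs the bijectivity argument.

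For bijectivity I would exploit the hypothesis. Since $G$ is gs-split, Theorem \ref{gl-str-split_sgroup-Th} gives a counit-preserving isomorphism $A \cong \overline{A} \otimes_\bk \wedge W^A$ of left $\overline{A}$-comodule superalgebras; in particular $A$ carries the finite, exhaustive $J$-adic filtration by powers of the Hopf ideal $J_A := (A_\uno)$, with $J_A^{\,d+1} = 0$ for $d := \mathrm{rk}_\bk(W^A)$ (here $W^A$ is $\bk$-finite free because $G$ is fine). The same holds for $B := \overline{A} \otimes_\bk \wedge W^A$ with $J_B := (B_\uno) = \overline{A} \otimes \wedge^{\ge 1} W^A$, and the associated graded of either filtration is identified with $\overline{A} \otimes \wedge W^A$. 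Because $\psi_{{}_{\scriptstyle X}}$ is a parity-preserving algebra map it satisfies $\psi_{{}_{\scriptstyle X}}(J_A^{\,n}) \subseteq J_B^{\,n}$, hence is filtered; as both filtrations are finite, it suffices to prove that $\mathrm{gr}(\psi_{{}_{\scriptstyle X}})$ is an isomorphism.

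Next I would compute the associated graded. In degree $0$ one gets $\mathrm{gr}_0(\psi_{{}_{\scriptstyle X}}) = \mathrm{id}_{\overline{A}}$, since modulo $J_B$ the map sends $a \mapsto \overline{a_{(1)}}\, \epsilon(a_{(2)}) = \overline{a}$ by counit-preservation of $\rho_{{}_{\scriptstyle X}}$. As $\mathrm{gr}_{J}A$ and $\mathrm{gr}_{J}B$ are exterior algebras over $\overline{A}$ generated in degree $1$ and $\mathrm{gr}(\psi_{{}_{\scriptstyle X}})$ is an $\overline{A}$-algebra map, it is enough to check that the degree-$1$ component $\mathrm{gr}_1(\psi_{{}_{\scriptstyle X}}) : J_A/J_A^{\,2} \to \overline{A} \otimes W^A$ is an isomorphism; the higher degrees are then its exterior powers, hence automatically isomorphisms on a finite-rank free module. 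By $\overline{A}$-linearity this degree-$1$ statement reduces to showing that the map $W^A \to W^A$ induced by $\rho_{{}_{\scriptstyle X}}$ on the space of odd generators is the identity.

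This last point is the crux, and it is exactly where the construction of $\rho_{{}_{\scriptstyle X}}$ out of the canonical Hopf pairing enters. By definition the degree-$1$ part of $\rho_{{}_{\scriptstyle X}}(a) \in {(\wedge \fg_\uno)}^* \cong \wedge W^A$ is the functional $x \mapsto \langle \iota_{{}_{\scriptstyle X}}(x), a \rangle = \langle x, a \rangle$ on $\fg_\uno$, so I must verify that, under the identification $\fg_\uno = {(W^A)}^*$, this functional is precisely the class $\overline{a} \in W^A = A_\uno \big/ A_\zero^+ A_\uno$ of an odd $a$. I expect this to be the main obstacle: it requires knowing that the Hopf pairing $\langle\ ,\ \rangle : U(\fg) \times A \to \bk$ is normalized so that primitive odd elements of $\fg$ pair with $A_\uno$ exactly via the canonical cotangent identification, and that $\iota_{{}_{\scriptstyle X}}$ is a unit-preserving coalgebra section, so that its linear part is the inclusion $\fg_\uno \hookrightarrow U(\fg)$. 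Granting this normalization --- which is the content of the pairing's construction in \cite{mas-shi}, Lemma 4.24, and \cite{ms1}, p.~301 --- the induced map on $W^A$ is the identity, $\mathrm{gr}(\psi_{{}_{\scriptstyle X}})$ is an isomorphism, and therefore so is $\psi_{{}_{\scriptstyle X}}$. That $\psi_{{}_{\scriptstyle X}}$ is an isomorphism of counit-preserving left $\overline{A}$-comodule superalgebras then follows by combining this with the formal properties established at the outset.
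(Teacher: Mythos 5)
Your proposal is correct, but it takes a genuinely different route from the paper's.  The paper disposes of this lemma in three lines: it invokes Masuoka--Shibata's Lemma 4.27 (noting it holds over an arbitrary ring as long as $ W^A $ is $ \bk $--finite free) and merely verifies its single hypothesis, namely that $ \rho_{{}_{\scriptstyle X}} $ followed by the projection $ \wedge W^A \twoheadrightarrow W^A $ coincides with the canonical composite $ A \twoheadrightarrow A^+ \big/ (A^+)^2 \twoheadrightarrow \big( A^+ \big/ (A^+)^2 \big)_\uno = W^A \, $.  That hypothesis is exactly the ``normalization'' you isolate as your crux --- your pairing-theoretic formulation (that under $ \fg_\uno = {(W^A)}^* $ the linear functional $ \langle\,\cdot\,, a \rangle $ is the class of an odd $ a $ in $ W^A $) is the same statement read through $ \kappa $ and $ \iota_{{}_{\scriptstyle X}} $ --- and both you and the paper ultimately refer it back to the constructions in \cite{mas-shi}, Lemma 4.24, and \cite{ms1}, p.~301.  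What you do differently is to replace the cited bijectivity lemma by a self-contained filtration argument: the standing splitness assumption of \S\ref{consist-splitt.s} gives $ J_A^{\,d+1} = 0 $ and identifies $ \mathrm{gr}_{J} A $ with $ \overline{A} \otimes_\bk \wedge W^A $; since $ \psi_{{}_{\scriptstyle X}} $ is a filtered superalgebra map with $ \mathrm{gr}_0 $ the identity and $ \mathrm{gr}_1 $ the identity (precisely by the normalization, together with the observation --- implicit in your reduction, and checkable via Lemma \ref{Delta-phi} --- that the correction terms of $ \Delta $ die in $ \overline{A} $), all higher graded components are exterior powers of $ \mathrm{gr}_1 $, whence $ \mathrm{gr}(\psi_{{}_{\scriptstyle X}}) $ and then $ \psi_{{}_{\scriptstyle X}} $ are bijective.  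Your route buys transparency: it shows exactly where the finiteness of $ W^A $ enters (nilpotency of $ J_A $) and is self-contained modulo the pairing normalization; the structural assertions (superalgebra map, counit, left $ \overline{A} $--colinearity) you handle by routine Sweedler calculus, which is unobjectionable.  The paper's route buys brevity and a slight economy of hypotheses: the citation does not visibly consume the pre-existing splitting of $ A $, whereas your associated-graded identification does --- harmless here, since splitness is assumed just before the lemma, but it means your argument could not be recycled to \emph{establish} splitness, while the cited lemma has independent standing.
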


\begin{proof}
 The claim follows from  \cite{mas-shi}, Lemma 4.27;  note in particular that the cited Lemma actually holds over an arbitrary ring  $ \bk \, $  (moreover,  $ \overline{A} $  may not be finitely generated, as long as  $ W^A $  is  $ \bk $--finite  free: see the proof of  Theorem A.10 in  \cite{mas-shi}).  Since the map  $ \rho_{{}_{\scriptstyle X}} \, $,  composed with the natural projection  $ \; \wedge W^A \relbar\joinrel\relbar\joinrel\twoheadrightarrow\, W^A \, $,  coincides with the natural composite
\vskip-7pt
  $$  A \, = \, \bk \oplus A^+ \relbar\joinrel\relbar\joinrel\twoheadrightarrow\, A^+ \Big/ {(A^+)}^2 \relbar\joinrel\relbar\joinrel\twoheadrightarrow\, {\Big( A^+ \Big/ {(A^+)}^2 \Big)}_\uno \, = \, W^A  $$
\vskip-2pt
\noindent
 we conclude that  $ \rho_{{}_{\scriptstyle X}} $  satisfies the assumption required by the cited lemma.
\end{proof}

\vskip6pt

   We can now state and prove the promised result about the existence of ``consistent'' splittings for a closed embedding between gs-split supergroups:

\vskip10pt

\begin{theorem}  \label{cons-splittings}
 ({\sl Gavarini and Masuoka})\,  Let  $ H $  and  $ K $  be (affine, fine) supergroups over  $ \bk \, $,  with  $ H $  being a closed subsupergroup of  $ \, K \, $.  Setting  $ \, \fh := \Lie\,(H) \, $,  $ \, \fk := \Lie\,(K) \, $,  assume that:
 \vskip2pt
%
%
 \quad   (a)\,  the quotient  $ \bk $--supermodule  $ \, \mathfrak{k}_\uno \big/ \mathfrak{h}_\uno $  is free;
 \vskip1pt
 \quad   (b)\,  $ H $  and  $ K $  are globally strongly split.
 \vskip3pt
   Then, for a given splitting  $ \, H = H_\zero \times H_\uno \, $  of  $ \; H \, $,  there exists a similar splitting  $ \, K = K_\zero \times K_\uno \, $  of  $ \, K $  which is  {\sl consistent}  with that of  $ \, H \, $,  that is one has  $ \, H_\zero \subseteq K_\zero \, $,  $ \, H_\uno \subseteq K_\uno \, $  and the diagram
\vskip-7pt
  $$  \xymatrix{
   \; H_\zero \times_{\phantom{.}} \! H_\uno  \ar@{_{(}->}[d] \ar[rr]^{\ \hskip11pt \cong}_{\ \hskip13pt \mu_{{}_H}}  &  &  \hskip5pt H_{\phantom{.}}  \ar@{^{(}->}[d]  \\
   \;  K_\zero \times K_\uno  \ar[rr]^{\ \hskip13pt \cong}_{\ \hskip15pt \mu_{{}_K}}  &  &  \hskip3pt  K  }  $$
\vskip-3pt
\noindent
 ($ \, \mu_{{}_H} \! $  being an isomorphism of pointed affine  $ H_\zero $--superschemes,  and similarly  $ \mu_{{}_K} $) is commutative.
\end{theorem}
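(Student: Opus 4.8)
The plan is to work entirely at the level of the representing Hopf superalgebras and to reduce the statement to a lifting problem for a single superalgebra map, which I then solve by a convolution argument. Write $A := \cO(K)$ and $B := \cO(H)$; the closed embedding $H \hookrightarrow K$ corresponds to a surjection $\pi : A \twoheadrightarrow B$ of commutative Hopf superalgebras, which induces surjections $\overline{A} \twoheadrightarrow \overline{B}$ and $W\pi : W^A \twoheadrightarrow W^B$ (hence $q := \wedge(W\pi) : \wedge W^A \twoheadrightarrow \wedge W^B$). By the proof of Lemma \ref{split-isom-alg} (i.e.\ the machinery of \cite{mas-shi} on which it rests), a splitting of $K$ is the same datum as a \emph{good retraction}, namely a counit-preserving superalgebra map $r : A \to \wedge W^A$ whose composite with $\wedge W^A \twoheadrightarrow W^A$ is the canonical projection $p_A : A \to W^A$; the associated splitting is $a \mapsto \overline{a_{(1)}} \otimes r(a_{(2)})$, and likewise for $H$. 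Under this dictionary a splitting of $K$ is \emph{consistent} with a given splitting $r_H$ of $H$ precisely when $q \circ r_K = r_H \circ \pi$. The inclusion $H_\zero \subseteq K_\zero$ needs no argument, being furnished by Proposition \ref{ass-class-grscheme}; so the whole content is to produce a good retraction $r_K$ lifting $r_H$ in this sense.

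First I would use the two hypotheses to assemble the ingredients. Since $(W^A)^* = \fk_\uno$ and $(W^B)^* = \fh_\uno$, the surjection $W\pi$ is dual to the inclusion $\fh_\uno \hookrightarrow \fk_\uno$; hypothesis (a) makes $\fk_\uno / \fh_\uno$ free, so this inclusion splits and dually we obtain a $\bk$-linear section $\jmath : W^B \hookrightarrow W^A$ of $W\pi$, together with the complement $C := \ker(W\pi)$, giving $W^A = \jmath(W^B) \oplus C$ with $C$ free of finite rank. Extending exteriorly yields an algebra section $\Jmath : \wedge W^B \hookrightarrow \wedge W^A$ of $q$ (so $q \circ \Jmath = \mathrm{id}$) and the algebra projection $\mathrm{pr}_{\wedge C} : \wedge W^A \twoheadrightarrow \wedge C$ with inclusion $\iota_{{}_C} : \wedge C \hookrightarrow \wedge W^A$. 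Hypothesis (b) gives that $K$ is gs-split, hence a good retraction $\rho : A \to \wedge W^A$ exists; set $\rho_{{}_C} := \mathrm{pr}_{\wedge C} \circ \rho$.

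The key step is to glue $r_H$ (carrying the $\jmath(W^B)$-directions, where consistency is needed) with $\rho$ (carrying the complementary $C$-directions) by convolution. Both $\Jmath \circ r_H \circ \pi$ and $\iota_{{}_C} \circ \rho_{{}_C}$ are unital morphisms in $\salg_\bk$ from $A$ to $\wedge W^A$, i.e.\ elements of $\Hom_{\salg_\bk}(A, \wedge W^A)$, which is a \emph{group} under convolution because $K$ is a supergroup. I therefore set
\[
  r_K \, := \, \big(\Jmath \circ r_H \circ \pi\big) \star \big(\iota_{{}_C} \circ \rho_{{}_C}\big) \ \in \ \Hom_{\salg_\bk}\big(A, \wedge W^A\big) ,
\]
which is automatically a counit-preserving superalgebra map. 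Applying the group homomorphism $q_* $ induced by $q$, and using $q \circ \Jmath = \mathrm{id}$ together with $q \circ \iota_{{}_C} = u \circ \epsilon$ (as $W\pi$ kills $C$), the second factor maps to the convolution unit and one gets $q \circ r_K = r_H \circ \pi$, exactly the consistency condition. That $r_K$ is a good retraction follows by passing to the square-zero quotient $\wedge W^A \twoheadrightarrow \bk \oplus W^A$, under which convolution becomes addition on the $W^A$-component: the linear part of $\Jmath \circ r_H \circ \pi$ is $(\jmath \circ W\pi) \circ p_A$, that of $\iota_{{}_C}\circ\rho_{{}_C}$ is $(\mathrm{id} - \jmath \circ W\pi) \circ p_A$, and the two add up to $p_A$.

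Finally I would translate back. The good retraction $r_K$ defines, via Lemma \ref{split-isom-alg}, a splitting $\mu_{{}_K} : K_\zero \times K_\uno \to K$ with $\cO(K_\uno) = \wedge W^A$; the identity $q \circ r_K = r_H \circ \pi$ says precisely that the two splitting isomorphisms intertwine $\pi$ with $\overline{\pi} \otimes q$, and taking $\uspec$ turns this into the asserted commutative square, with $H_\uno \hookrightarrow K_\uno$ the closed embedding dual to $q : \wedge W^A \twoheadrightarrow \wedge W^B$. I expect the main obstacle to be the two verifications hidden inside the convolution step: that the convolution of the two superalgebra maps is again a morphism in $\salg_\bk$ — where super-commutativity of $\wedge W^A$ and the Koszul signs in the group law of $\Hom_{\salg_\bk}(A,\wedge W^A)$ enter — and the additivity of linear parts under convolution; everything else is bookkeeping with the functoriality of $\overline{(\ )}$, $W$ and $q$.
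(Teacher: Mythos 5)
Your proof is correct in substance, but it takes a genuinely different route from the paper's. The paper uses hypothesis {\it (a)} the same way (to split $\,\fk_\uno = \fh_\uno \oplus \mathfrak{q}\,$ with $\mathfrak{q}$ free), but then makes everything explicit: it chooses an ordered basis $X_\fh$ of $\fh_\uno$, extends it to an ordered basis $X_\fk$ of $\fk_\uno\,$, and takes for {\sl both} supergroups the canonical splittings $\psi_{X_\fh}$, $\psi_{X_\fk}$ of Lemma \ref{split-isom-alg}, built from the Hopf pairing $\,U(\fg) \times \cO(G) \to \bk\,$ and the coalgebra sections $\,\iota_{{}_X} : \wedge\,\fg_\uno \to U(\fg)\,$ given by ordered monomials; consistency then drops out of the commutative square relating $\iota_{{}_{X_\fh}}$ and $\iota_{{}_{X_\fk}}$, which holds precisely because $X_\fh$ is an ordered subset of $X_\fk\,$. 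You instead use Lemma \ref{split-isom-alg} (i.e.\ \cite{mas-shi}, Lemma 4.27) as a black box converting good retractions into splittings, and manufacture $r_K$ by convolution-gluing; your two checks are the right ones and both go through: post-composition with the algebra map $q$ is a homomorphism for convolution, and supercommutativity of $\wedge W^A$ makes $\Hom_{\salg_\bk}\!\big(A, \wedge W^A\big) = K\big(\wedge W^A\big)$ a genuine group, so the convolution of superalgebra maps is again one. Your route buys two things: it starts from an {\sl arbitrary} given splitting of $H$, as the statement literally demands (the paper's proof, as written, constructs the $H$-splitting itself, in the special form $\psi_{X_\fh}$), and it needs no ordered bases, only the module section $\jmath\,$; the paper's route buys fully explicit formulas and a one-line consistency check, with no convolution bookkeeping. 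One point to nail down beyond the verifications you already flag: an abstract splitting $\zeta_H$ of $\cO(H)$ yields $\,r_H := (\epsilon \otimes \mathrm{id}) \circ \zeta_H\,$ whose linear part is in general $p_B$ twisted by an automorphism $\lambda$ of $W^B$, not $p_B$ itself, so before gluing you must normalize $r_H$ by $\wedge\lambda^{-1}$ --- harmless, since this only changes the coordinatization $\,H_\uno \cong \mathbb{A}_\bk^{0|d}\,$, not the subscheme $H_\uno$ nor the factorization $\,H = H_\zero \times H_\uno\,$ --- after which Lemma 4.27 applies verbatim and the rest of your argument is sound.
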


\begin{proof}
 Set  $ \, A := \cO(K) \, $  and  $ \, B := \cO(H) \, $.  By assumption  $ \fk_\uno $  and  $ \fh_\uno $  are  $ \bk $--finite  free, so the same holds true for their linear dual  $ \, A_\uno \big/ A_\zero^+ A_\uno = \fk_\uno^{\,*} \, $  and  $ \, B_\uno \big/ B_\zero^+ B_\uno = \fh_\uno^{\,*} \, $.  Moreover,  {\it (a)\/}  ensures that there exists a free  $ \bk $--subsupermodule  $ \mathfrak{q} $  of  $ \, \mathfrak{k}_\uno $  such that  $ \, \mathfrak{k}_\uno = \mathfrak{h}_\uno \oplus \mathfrak{q} \, $.  Finally, let  $ K_\uno $  and  $ H_\uno $  be the pointed (affine algebraic) superschemes represented by  $ \, \wedge \big( \fg_\uno^{\,*} \big) \, $  and  $ \, \wedge \big( \fh_\uno^{\,*} \big) \, $  respectively.
%
%
 Since  $ H $  is a closed subsupergroup of  $ K \, $,  it follows that  $ \, H_\zero = H_{\text{\it ev}} \, $  can be seen as a closed (classical) subgroup of  $  \, K_\zero = K_{\text{\it ev}} \, $;  similarly,  $ H_\uno $  can be seen as a closed pointed subsuperscheme of  $ K_\uno \, $.
                                                             \par
   If  $ X_\fk $  is an ordered basis of  $ \fk_\uno $  and  $ X_\fh $  is one of  $ \fh_\uno $  we can construct splitting isomorphisms  $ \psi_{{}_{\scriptstyle{X_\fk}}} $  and  $ \psi_{{}_{\scriptstyle{X_\fh}}} $  as in  Lemma \ref{split-isom-alg} for  $ \, A' := \cO(K) \, $  and  $ \, A'' := \cO(H) \, $  respectively.  Then moving backwards from Hopf superalgebras to supergroups we find splitting maps  $ \, \mu_{{}_K} : K_\zero \times K_\uno \,{\buildrel \cong \over {\relbar\joinrel\longrightarrow}}\, K \, $  and  $ \, \mu_{{}_H} : H_\zero \times H_\uno \,{\buildrel \cong \over {\relbar\joinrel\longrightarrow}}\, H \, $  as in  \S \ref{consist-splitt.s}  for  $ H $  and  $ K \, $,  with  $ \, \cO\big(_{\,}\mu_{{}_K}\big) = \psi_{{}_{\scriptstyle{X_\fk}}} \, $,  $ \, \cO\big(_{\,}\mu_{{}_H}\big) = \psi_{{}_{\scriptstyle{X_\fh}}} \, $.
                                                             \par
  Now, by assumption the natural injection  $ \, \fh_\uno \lhook\joinrel\longrightarrow \fk_\uno \, $  is  $ \bk $--linearly  split, as  $ \, \fk_\uno = \fh_\uno \oplus \mathfrak{q} \, $;  moreover,  $ \mathfrak{q} $ is  $ \bk $--free:  therefore we can choose  $ X_\fh $  and  $ X_\fk $  as above so that the former is an ordered subset of the latter.  For such a choice, the diagram
  $$  \xymatrix{
   \, \wedge\,\fh_\uno  \ar@{_{(}->}[d] \ar[rr]^{\iota_{{}_{X_\fh}} \hskip1pt}  &  &  \hskip5pt U(\fh)_{\phantom{|}}  \ar@{^{(}->}[d]  \\
   \,  \wedge\,\fk_\uno  \ar[rr]_{\iota_{{}_{X_\fk}} \hskip1pt}  &  &  \hskip3pt U(\fk)  }  $$
is commutative, which in turn implies that the diagram
  $$  \xymatrix{
   \overline{\cO(H)} \otimes_\bk \wedge \big( \fh_\uno^{\,*} \big)  \ar@{<<-}[d] \,  &  &  \ar[ll]_{\hskip31pt \psi_{{}_{X_\fh}}}  \hskip3pt \cO(H)_{\phantom{|}}  \ar@{<<-}[d]  \\
   \overline{\cO(K)} \otimes_\bk \wedge \big( \fk_\uno^{\,*} \big) \,  &  &  \ar[ll]^{\hskip31pt \psi_{{}_{X_\fk}}} \hskip3pt \cO(^{\phantom{\textstyle |}}\hskip-4pt K)  }  $$
is commutative too.  When turning back from superalgebras to superschemes, this implies that the diagram in the statement is commutative as well.
\end{proof}

\vskip11pt

\begin{remark}\,  The assumption  {\it (a)\/}  in  Theorem \ref{cons-splittings}  obviously holds true when  $ \bk $  is a PID, or   --- more in general ---   whenever every finitely generated projective  $ \bk $--module  is free.
\end{remark}

\medskip

 \subsection{Splittings on  $ A $--points}  \label{split_on_points} {\ }

\vskip-5pt

   {\ } \;\;   In this subsection we dwell upon some special splittings of supergroups which arise when we take their  $ A $--points  for some special superalgebras  $ A \, $,  i.e.~when we restrict them   --- as functors ---   on special subcategories of  $ \salg_\bk \, $.

\smallskip

\begin{definition}  \label{def-Ker(pi)}
 Let  $ \, G : \salg_\bk \longrightarrow \sgrps \, $  be a supergroup  $ \bk $--functor.  Then there exists a unique, well defined normal subgroup  $ \bk $--functor of  $ G $,  denoted  $ \, {\Ker(\pi)}_G \, $  and given on objects by  $ \, {\Ker(\pi)}_G(A) := \Ker\big(G(\pi_{\!{}_A})\big) \, $  for every  $ \, A \in \wkspsalg_\bk \, $.
 \hfill   $ \diamondsuit $
\end{definition}

\smallskip

   In general, in the study of a supergroup functor  $ G $  the normal subgroup functor  $ {\Ker(\pi)}_G $  is not of great use.  But restricting to  {\sl weakly split superalgebras},  next result shows that it splits into a semidirect product, in which  $ {\Ker(\pi)}_G $  is the normal factor.

\vskip9pt

\begin{proposition}  \label{split_G-wksplsalg}
 Let  $ G $  be a  $ \bk $--supergroup  functor and  $ \, A \in \wkspsalg_\bk \, $.
 \vskip1pt
   {\it (a)}  The group  $ \, G(A) \, $  splits into a semidirect product
 $ \; G(A) \, = \, \overline{G}(A) \ltimes {\Ker(\pi)}_G(A) \; $
 with (see  Definition \ref{nat-funcs})  $ \, \overline{G}(A) := G\big(\,\overline{A}\,\big) \, $  and  $ \, {\Ker(\pi)}_G(A) := \Ker\big(G(\pi_{\!{}_A})\big) \, $.  Therefore, denoting by  $ \dot{F} $  the restriction to  $ \wkspsalg_\bk $  of any superfunctor  $ F $,  we have that  $ \, \dot{G} : \wkspsalg_\bk \!\longrightarrow \grps \, $  splits into a semidirect product
 $ \,\; \dot{G} \, = \, \dot{\overline{G}} \ltimes \! \dot{\Ker(\pi)}_G \;\, $.
 \vskip1pt
   {\it (b)}  The group  $ \, G_\zero(A) \, $  splits into a semidirect product
 $ \; G_\zero(A) \, = \, \overline{G}(A) \ltimes {\Ker(\pi)}_{G_\zero}(A) \; $
 with (see  Definition \ref{nat-funcs})  $ \, {\Ker(\pi)}_{G,\zero}(A) := {\Ker(\pi)}_G(A) \,\cap\, G_\zero(A) \, $.  Thus the  $ \bk $--supergroup  functor  $ \, \dot{G}_\zero : \wkspsalg_\bk \!\longrightarrow \grps \, $  splits into a semidirect product
 $ \,\; \dot{G}_\zero \, = \, \dot{\overline{G}} \ltimes \! \dot{\Ker(\pi)}_{G_\zero} \;\, $.
\end{proposition}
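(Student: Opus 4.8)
The plan is to reduce the whole statement to a single elementary fact of group theory: whenever a group homomorphism $f : \Gamma \to Q$ admits a section $s : Q \to \Gamma$ (so that $f \circ s = \mathrm{id}_Q$), the group $\Gamma$ is the internal semidirect product $s(Q) \ltimes \Ker(f)$, where $\Ker(f)$ is normal and $s$ is injective. I would produce such a split epimorphism of groups by simply applying the functor $G$ to the algebra-level splitting that weak-splitness of $A$ provides.

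First I would use that, since $A \in \wkspsalg_\bk$, there is a section $\sigma_A : \overline{A} \hookrightarrow A$ in $\asalg_\bk$ of the canonical projection $\pi_A : A \twoheadrightarrow \overline{A}$, i.e.\ $\pi_A \circ \sigma_A = \mathrm{id}_{\overline{A}}$. Applying the covariant functor $G$ gives group homomorphisms $G(\pi_A) : G(A) \to \overline{G}(A)$ and $G(\sigma_A) : \overline{G}(A) \to G(A)$ with $G(\pi_A) \circ G(\sigma_A) = \mathrm{id}_{\overline{G}(A)}$, recalling that $\overline{G}(A) = G(\overline{A})$ by Definition \ref{nat-funcs}. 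Thus ${\Ker(\pi)}_G(A) = \Ker(G(\pi_A))$ is normal in $G(A)$, and the group-theoretic fact yields $G(A) = G(\sigma_A)(\overline{G}(A)) \ltimes {\Ker(\pi)}_G(A)$; since $G(\sigma_A)$ is injective, its image is identified with $\overline{G}(A)$, which is exactly part (a) read pointwise.

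Next, for part (b) I would observe that $\sigma_A$, being a superalgebra morphism whose source $\overline{A}$ is totally even, has image inside $A_\zero$, so it factors as $\sigma_A = \iota_A \circ \sigma_A^\zero$ through the canonical inclusion $\iota_A : A_\zero \hookrightarrow A$; the resulting map $\sigma_A^\zero : \overline{A} \to A_\zero$ is then a section of the restricted projection $\pi_A^\zero := \pi_A \circ \iota_A : A_\zero \twoheadrightarrow \overline{A}$. Applying $G$ and repeating the same argument to $G(\pi_A^\zero) : G_\zero(A) \to \overline{G}(A)$ gives $G_\zero(A) = \overline{G}(A) \ltimes \Ker(G(\pi_A^\zero))$. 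To match the notation of the statement I would then identify $\Ker(G(\pi_A^\zero))$, which is the $\Ker(\pi)$-functor of $G_\zero$, with ${\Ker(\pi)}_{G,\zero}(A) = {\Ker(\pi)}_G(A) \cap G_\zero(A)$: the map $G(\iota_A)$ is injective (post-composition with a monomorphism of superalgebras), so $G_\zero(A)$ embeds in $G(A)$, and the relation $\pi_A^\zero = \pi_A \circ \iota_A$ shows that an element of $G_\zero(A)$ lies in $\Ker(G(\pi_A^\zero))$ precisely when its image in $G(A)$ lies in $\Ker(G(\pi_A))$.

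Finally, I would promote the pointwise statements to the functorial ones by checking naturality in $A$. By definition a morphism in $\wkspsalg_\bk$ is compatible with the chosen sections, meaning $\phi \circ \sigma_A = \sigma_B \circ \overline{\phi}$ for $\phi : A \to B$; applying $G$ shows the sections $G(\sigma_A)$ assemble into a natural transformation, so the decompositions are natural and yield $\dot{G} = \dot{\overline{G}} \ltimes \dot{\Ker(\pi)}_G$ and $\dot{G}_\zero = \dot{\overline{G}} \ltimes \dot{\Ker(\pi)}_{G_\zero}$. I do not anticipate any genuine obstacle: the argument is a formal transport of an algebra splitting through $G$ together with one line of elementary group theory, and the only points needing a little care are the factorization of $\sigma_A$ through $A_\zero$ and the bookkeeping identification of the normal factor in (b).
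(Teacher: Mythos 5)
Your proof is correct and takes essentially the same route as the paper's: the paper likewise applies $G$ to the splitting $\, \pi_{\!{}_A} \circ \sigma_{\!{}_A} = \mathrm{id}_{\overline{A}} \,$ and invokes the elementary fact that a split epimorphism of groups yields a semidirect product $\, \text{\sl Im} \ltimes \Ker \,$, then handles {\it (b)} by "replacing $G$ with $G_\zero$". Your explicit factorization $\, \sigma_{\!{}_A} = \iota_A \circ \sigma_A^{\zero} \,$ through $A_\zero$ and the identification of the kernel in {\it (b)} merely spell out what that replacement amounts to, so the two arguments coincide.
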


\begin{proof}
 As  $ \, A \in \wkspsalg_\bk \, $,  we have  a projection  $ \, \pi_{\!{}_A} \! : A \relbar\joinrel\twoheadrightarrow \overline{A} \, $  with section  $ \, \sigma_{\!{}_A} \! : \overline{A} \lhook\joinrel\relbar\joinrel\rightarrow A \, $  within  $ \salg_\bk \, $.  Applying  $ G $  we get that  $ \, G(\sigma_{\!{}_A}) : G\big(\,\overline{A}\,\big) \! \relbar\joinrel\rightarrow G(A) \, $  is a section of  $ \, G(\pi_{\!{}_A}) : G(A) \relbar\joinrel\rightarrow G\big(\,\overline{A}\,\big) \, $,  so  $ G(\sigma_{\!{}_A}) $  is a monomorphism and  $ G(\pi_{\!{}_A}) $  an epimorphism.  In turn, this yields then a semidirect product factorization of  $ G(A) \, $,  namely  $ \; G(A) = \text{\sl Im}\big(G(\sigma_{\!{}_A})\big) \ltimes \Ker\big(G(\pi_{\!{}_A})\big) \; $.  Looking at definitions one finds  $ \,\text{\sl Im}\big(G(\sigma_{\!{}_A})\big) \cong G\big(\,\overline{A}\,\big) \, $  whence claim  {\it (a)\/}  follows.
                                                                \par
   As to claim  {\it (b)},  one can repeat the previous argument: just replace  $ G $  with  $ \, G_\zero $   --- such that  $ \, A \mapsto G(A_\zero) \, $  ---   and  $ {\Ker(\pi)}_G $  with  $ {\Ker(\pi)}_{G_\zero} $  wherever they occur.
\end{proof}

\vskip2pt

   The previous result reads better when applied to  {\sl split\/}  superalgebras.

\vskip9pt

\begin{proposition}  \label{split_G-stsplsalg}
 Let  $ G $  be a  $ \bk $--supergroup  functor  and  $ \, A \in \splsalg_\bk \, $.
 \vskip1pt
   {\it (a)}  The group  $ \, G(A) \, $  splits into a semidirect product
 $ \; G(A) \, = \, \overline{G}(A) \ltimes G_\uno^{(1)}(A) \; $
 with (see  Definition \ref{nat-funcs})  $ \, \overline{G}(A) := G\big(\,\overline{A}\,\big) \, $  and  $ \, G_\uno^{(1)}(A) := G\big(A_\uno^{(1)}\big) \, $.  Therefore, denoting by  $ \hat{F} $  the restriction to  $ \splsalg_\bk $  of any superfunctor  $ F $,  we have that  $ \, \hat{G} : \splsalg_\bk \!\longrightarrow \grps \, $  splits into a semidirect product
 $ \,\; \hat{G} \, = \, \hat{\overline{G}} \ltimes \! {\hat{G}}_\uno^{(1)} \;\, $.
 \vskip1pt
   {\it (b)}  The group  $ \, G_\zero(A) \, $  splits into a semidirect product
 $ \; G_\zero(A) \, = \, \overline{G}(A) \ltimes G_\uno^{(2)}(A) \; $
 with (see  Definition \ref{nat-funcs})  $ \, G_\uno^{(2)}(A) := G\big(A_\uno^{(2)}\big) \, $.  Thus, with notation as in (a),  $ \, \hat{G}_\zero : \splsalg_\bk \!\longrightarrow \grps \, $  splits into a semidirect product
 $ \,\; \hat{G}_\zero \, = \, \hat{\overline{G}} \ltimes \! {\hat{G}}_\uno^{(2)} \;\, $.
\end{proposition}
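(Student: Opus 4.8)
The plan is to derive both statements directly from Proposition~\ref{split_G-wksplsalg}, turning the present result into a matter of \emph{identifying the normal factor}. Indeed every split superalgebra is weakly split (Proposition~\ref{wsp->spl / cex->spl}), so for $A\in\splsalg_\bk$ I already have the two semidirect factorizations $G(A)=\overline{G}(A)\ltimes\Ker(\pi)_G(A)$ and $G_\zero(A)=\overline{G}(A)\ltimes\Ker(\pi)_{G_\zero}(A)$, with $\overline{G}(A)$ embedded through the section $G(\sigma_{\!{}_A})$. Thus the entire content of the statement is to show $\Ker(\pi)_G(A)=G_\uno^{(1)}(A)$ for part~(a), and $\Ker(\pi)_{G_\zero}(A)=G_\uno^{(2)}(A)$ for part~(b).

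For part~(a) I would use the defining property of splitness, namely that multiplication yields an isomorphism $\overline{A}\otimes_\bk A_\uno^{(1)}\cong A$ (Definition~\ref{def-stsplit-salg}(a)). Under this isomorphism the section $\sigma_{\!{}_A}$ becomes $\overline{A}\cong\overline{A}\otimes 1$, the inclusion $\iota\colon A_\uno^{(1)}\hookrightarrow A$ becomes $1\otimes A_\uno^{(1)}$, and the projection $\pi_{\!{}_A}$ is carried to $\mathrm{id}_{\overline{A}}\otimes\,\epsilon$. Applying $G$ and reading off $\Ker\big(G(\pi_{\!{}_A})\big)$ through this tensor factorization is what identifies the abstract normal factor with $G(A_\uno^{(1)})=:G_\uno^{(1)}(A)$; substituting into Proposition~\ref{split_G-wksplsalg}(a) gives $G(A)=\overline{G}(A)\ltimes G_\uno^{(1)}(A)$.

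For part~(b) I would first take even parts in the split decomposition: since $\overline{A}$ is totally even, $A_\zero=\overline{A}\otimes_\bk\big(A_\uno^{(1)}\big)_\zero$, and $\big(A_\uno^{(1)}\big)_\zero=A_\uno^{(2)}$, because the even part of the subalgebra generated by $A_\uno$ is exactly the subalgebra generated by the pairwise products $A_\uno^{[2]}$. Hence the totally even algebra $A_\zero$ is again split, now with $A_\uno^{(2)}$ playing the role of $A_\uno^{(1)}$. Rerunning the argument of~(a) for $\pi_{\!{}_{A_\zero}}$ and the inclusion $A_\uno^{(2)}\hookrightarrow A_\zero$ identifies $\Ker(\pi)_{G_\zero}(A)$ with $G(A_\uno^{(2)})=G_\uno^{(2)}(A)$, and Proposition~\ref{split_G-wksplsalg}(b) then yields $G_\zero(A)=\overline{G}(A)\ltimes G_\uno^{(2)}(A)$. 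In both parts the functorial reformulations $\hat{G}=\hat{\overline{G}}\ltimes\hat{G}_\uno^{(1)}$ and $\hat{G}_\zero=\hat{\overline{G}}\ltimes\hat{G}_\uno^{(2)}$ follow immediately, since the multiplication isomorphism together with $\sigma_{\!{}_A}$, $\iota$ and $\pi_{\!{}_A}$ are all natural in $A$ over the subcategory $\splsalg_\bk$.

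The step I expect to be the real work is precisely the identification of the normal factor with $G(A_\uno^{(1)})$ (resp.\ $G(A_\uno^{(2)})$): Proposition~\ref{split_G-wksplsalg} delivers that factor only as the abstract kernel $\Ker\big(G(\pi_{\!{}_A})\big)$, so one must check that passing through the inclusion $A_\uno^{(1)}\hookrightarrow A$ matches $G(A_\uno^{(1)})$ with this kernel. This is exactly where the split hypothesis $A\cong\overline{A}\otimes_\bk A_\uno^{(1)}$ is indispensable — weak splitting alone does not suffice — and where care is needed so that the common unit of the two tensor factors makes the counit of $A_\uno^{(1)}$ agree with the restriction of $\pi_{\!{}_A}$ along the base point. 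Once that compatibility is settled, the rest is just a transport of the standard split-extension bookkeeping already performed in Proposition~\ref{split_G-wksplsalg}.
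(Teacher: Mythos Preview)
Your approach is exactly the paper's: invoke Proposition~\ref{split_G-wksplsalg} via the implication ``split $\Rightarrow$ weakly split'' (Proposition~\ref{wsp->spl / cex->spl}), then use the tensor decomposition $A\cong\overline{A}\otimes_\bk A_\uno^{(1)}$ to identify $\Ker(\pi)_G(A)$ with $G\big(A_\uno^{(1)}\big)$ through the natural embedding $A_\uno^{(1)}\hookrightarrow A$. The paper's own proof is even terser than yours on the identification step you single out as ``the real work'' --- it simply asserts that this equality holds ``directly from definitions'' and then reads off both (a) and (b).
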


\begin{proof}
 From the natural embedding  $ \, A_\uno^{(1)} \lhook\joinrel\longrightarrow \overline{A} \otimes_\bk A_\uno^{(1)} = A \, $  in  $ \wkspsalg_\bk $  we get a group morphism  $ \, G_\uno^{(1)}(A) := G\big(A_\uno^{(1)}\big) \longrightarrow G(A) \, $.  Directly from definitions, one finds that the latter too is an embedding and moreover  $ \, {\Ker(\pi)}_G(A) = G_\uno^{(1)}(A) \, $  for  $ \, A \in \splsalg_\bk \, $;  then claims  {\it (a)\/}  and  {\it (b)\/}  follow from this and  Proposition \ref{split_G-wksplsalg}  right above.
\end{proof}

\vskip2pt

   Next result still improves the previous one when we restrict to  {\sl central extension\/}  algebras:

\vskip9pt

\begin{proposition}  \label{split_G-cexsalg}
 Let notation be as in  Proposition \ref{split_G-wksplsalg}.  Let  $ \, G \in \sgrps_\bk \, $  be a  $ \bk $--supergroup  and  $ \, A \in \cexsalg_\bk \, $.  Then  $ \, G(A) \, $  splits into a semidirect product  $ \; G(A) \, = \, \overline{G}(A) \ltimes G_\uno^{(1)}(A) \; $  with  $ \, \overline{G}(A) = G_\zero(A) \, $.  Thus, letting  $ \check{F} $  be the restriction to  $ \cexsalg_\bk $  of any superfunctor  $ F \, $,  we have that  $ \, \check{G} : \cexsalg_\bk \!\longrightarrow \grps \, $  splits into a semidirect product  $ \,\; \check{G} \, = \, \overline{G} \ltimes \! {\check{G}}_\uno^{(1)} \, = \, \check{G}_\zero \ltimes \! {\check{G}}_\uno^{(1)} \;\, $.
\end{proposition}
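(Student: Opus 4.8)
The plan is to treat this as an immediate specialization of the two preceding propositions, the only genuinely new ingredient being the central-extension identity $\, \overline{A} = A_\zero \,$.

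First I would invoke Proposition \ref{wsp->spl / cex->spl}{\it (b)}, which asserts that every central extension superalgebra is split; hence $\, A \in \cexsalg_\bk \,$ entails $\, A \in \splsalg_\bk \,$. This places us squarely in the hypotheses of Proposition \ref{split_G-stsplsalg}{\it (a)}, whose conclusion is precisely the semidirect product factorization $\, G(A) = \overline{G}(A) \ltimes G_\uno^{(1)}(A) \,$, with $\, \overline{G}(A) = G\big(\overline{A}\big) \,$ and $\, G_\uno^{(1)}(A) = G\big(A_\uno^{(1)}\big) \,$. Thus the group-theoretic heart of the statement comes for free.

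The second, and only new, step is to upgrade $\, \overline{G}(A) \,$ to $\, G_\zero(A) \,$. Here I would exploit the defining property of a central extension: since $\, A_\uno^{\,2} = \{0\} \,$ one also has $\, A_\uno^{\,[2]} = \{0\} \,$, so the ideal $\, J_A = \big(A_\uno\big) = A_\uno^{\,[2]} \oplus A_\uno \,$ collapses to $\, J_A = A_\uno \,$, whence $\, \overline{A} = A\big/J_A = A_\zero \,$ --- exactly the computation already carried out in the proof of Proposition \ref{wsp->spl / cex->spl}{\it (b)}. Feeding this through Definition \ref{nat-funcs} gives $\; \overline{G}(A) = G\big(\cJ_{\alg_\bk}^{\salg_\bk}(\overline{A})\big) = G\big(\cJ_{\alg_\bk}^{\salg_\bk}(A_\zero)\big) = G_\zero(A) \;$, as required.

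Finally, the functorial assertion $\, \check{G} = \overline{G} \ltimes \check{G}_\uno^{(1)} = \check{G}_\zero \ltimes \check{G}_\uno^{(1)} \,$ needs nothing beyond naturality: the projection $ \pi_{\!{}_A} $, its section, and the multiplication embedding are all natural in $ A $, so the pointwise factorizations assemble into a factorization of functors on $ \cexsalg_\bk $, while the identity $\, \overline{A} = A_\zero \,$ holding uniformly over the whole subcategory yields $\, \overline{G} = \check{G}_\zero \,$ as restricted functors. I anticipate no real obstacle: the argument is pure assembly plus the one-line central-extension simplification. The single point meriting care is that $\, \overline{A} = A_\zero \,$ holds as an honest equality of $\bk$-algebras (not merely up to isomorphism), so that $ \overline{G} $ and $ \check{G}_\zero $ coincide on the nose rather than just naturally.
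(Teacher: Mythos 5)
Your proposal is correct and takes essentially the same route as the paper: the paper's entire proof is the one-line observation that $\,\overline{A} = A_\zero\,$ for $\,A \in \cexsalg_\bk\,$, ``whence everything follows'' --- that is, it tacitly delegates to Proposition \ref{wsp->spl / cex->spl}\,{\it (b)} and Proposition \ref{split_G-stsplsalg}, precisely the two results you invoke explicitly. Your write-up merely spells out the assembly (and the canonical identification $\,\overline{A} = A_\zero\,$) that the paper leaves implicit.
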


\begin{proof}
 As  $ \, A \in \cexsalg_\bk \, $  we have  $ \, \overline{A} = A_\zero \, $,  whence everything follows.
\end{proof}

\medskip

 \subsection{Examples and applications}  \label{exs-appls} {\ }

\smallskip

   We provide some examples to illustrate the previously explained ideas.  Besides their intrinsic interest, these will also be useful in the sequel.

\medskip

\begin{free text}
 {\bf Supergroups on ``super-numbers'' as (classical) groups of ``super-points''.}  Let  $ \, \mathcal{A} \in \alg_\bk \, $  be a commutative  $ \bk $--algebra.  Like in  Example \ref{ex-split}{\it (c)},  we consider the associated central extension  $ \, \mathcal{A}[\eta] = \mathcal{A} \oplus \mathcal{A}\,\eta \in \cexsalg_\bk\, $.  Loosely inspired by the similar construction of ``dual numbers''   --- either in the non-super or the super framework ---   we call its elements ``super  $ \mathcal{A} $--numbers'',  thinking at those in  $ \mathcal{A} $  itself as  ``{\sl even\/}  super-numbers'' and those in  $ \, \mathcal{A}\,\eta \, $  as  ``{\sl odd\/}  super-numbers''.  Now, as  $ \, \mathcal{A}[\eta] \in \cexsalg_\bk \, $  we have  $ \, {\big( \mathcal{A}[\eta] \big)}_\zero = \mathcal{A} \, $,  $ \, {\big( \mathcal{A}[\eta] \big)}_\uno^{(1)} = \bk \oplus \mathcal{A}\,\eta \, $.  Then for any  $ \, G \in \sgrps_\bk \, $  Proposition \ref{split_G-cexsalg}  above yields
  $$  G\big(\mathcal{A}[\eta]\big)  \,\; = \;\,  G_\zero(\mathcal{A}) \, \ltimes G_\uno^{(1)\!}\big(\, \bk \oplus \mathcal{A} \, \eta \big)   \eqno (3.4)  $$
   Now, the left hand factor  $ G_\zero(\mathcal{A}) $  of  $ G\big(\mathcal{A}[\eta]\big) $  is the group of  $ \mathcal{A} $--points  of the classical (= non-super) affine group-scheme  $ G_\zero \, $,  hence its elements are nothing but classical (= non-super) points of a classical group-scheme.  For this reason, we suggest to think of these as being  ``{\sl even\/}  $ \mathcal{A} $--superpoints'' of  $ G \, $,  and similarly to think of the elements of the right hand factor  $ G_\uno^{(1)\!}\big(\, \bk \oplus \mathcal{A} \, \eta \big) $  of  $ G\big(\mathcal{A}[\eta]\big) $  as being  ``{\sl odd\/}  $ \mathcal{A} $--superpoints''  of  $ G \, $.
 \vskip4pt
   Now assume in addition that  $ G $  is strongly split, say  $ \, G = G_{\text{\it ev}} \times G_{\text{\it odd}} = G_\zero \times G_{\text{\it odd}} \, $  with  $ \, G_{\text{\it odd}} \cong \mathbb{A}_\bk^{\text{\it odd}} \, $  (notation of  Subsection \ref{glob-split_ssch-sgrp}).  Then we have
  $$  G_\uno^{(1)\!}\big(\, \bk \oplus \mathcal{A} \, \eta \big)  \; = \;  G_{\text{\it odd}}\big(\, \bk \oplus \mathcal{A} \, \eta \big)  \; = \;  \mathbb{A}_\bk^{\text{\it odd}}\big(\, \bk \oplus \mathcal{A} \, \eta \big)  \; = \;  \mathcal{A}^{\,0\,|I}  \, = \;  \mathcal{A}^I   \eqno (3.5)  $$
where  $ \, 0\,|I \, $  is the (possibly infinite) super-dimension of  $ \mathbb{A}_\bk^{\text{\it odd}} $  and $ \mathcal{A}^I $  is taken with odd parity.  Thus  $ \, G_\uno^{(1)\!}\big(\, \bk \oplus \mathcal{A} \, \eta \big) = \mathcal{A}^I \, $  identifies with the set of  $ \mathcal{A} $--points  of the classical (= totally even, or non-super) affine scheme  $ \, \mathcal{A}^I \equiv \mathcal{A}^{I|\,0} \, $.  Therefore, by (3.4) and (3.5) together we conclude that computing the  $ \bk $--superscheme  $ G $  on the central extensions given by ``super-numbers'' on classical algebras   --- e.g., on  $ \mathcal{A}[\eta] \, $,  say ---   is the same as computing the (classical!)  $ \bk $--scheme  $ \, G_\zero \times \mathbb{A}_\bk^I \, $  on classical algebras   --- namely on  $ \mathcal{A} \, $,  say.
\end{free text}

\smallskip

\begin{free text}  \label{examples-sgroups_Grassmann}
 {\bf Splittings on ``Grassmann-points''.}  Let  $ G $  be a  $ \bk $--supergroup,  and  $ \, \Lambda = \bk \big[ {\{\xi_i\}}_{i \in I} \big] \, $  any Grassmann algebra, possibly infinite-dimensional.  Obviously  $ \, \Lambda \in \stspsalg_\bk \, $   --- for a unique, canonical augmentation ---   hence we have a splitting of the group  $ G(\Lambda) $  of  $ \Lambda $--points  of  $ G $  as in  Proposition \ref{split_G-stsplsalg}{\it (a)}.  In particular, this is exactly the splitting mentioned by Boseck in  \cite{bos1}, \S 2,  where indeed only  $ \Lambda $--points  of supergroups are considered.
\end{free text}

\smallskip

\begin{free text}  \label{examples-linear_sgroups}
 {\bf Global splittings of general linear supergroups.}  Let  $ \, \rGL(V) \, $  be a  {\sl linear supergroup\/}  as in Example \ref{exs-supvecs}{\it (b)},  defined over some ground ring  $ \bk \, $.  Letting  $ \; p\,|\,q := \text{\sl rk}(V_\zero) \,\big| \text{\sl rk}(V_\uno) \; $  be the (finite, by assumption) superdimension of  $ V $,  we shall also write  $ \, \rGL_{p|q} := \rGL(V) \, $.  In particular, this means that each element of  $ \, \rGL_{p|q}(A) := \big(\rGL(V)\big)(A) \, $  can be written as a block matrix
 $ \, \displaystyle \bigg(\! {{{\;\;\hskip0,3pt a \;|\; \beta \;\,} \over {\,\; \gamma \;|\; d \;\,}}} \!\bigg)_{\phantom|} $
where  $ a \, $,  $ \beta \, $,  $ \gamma \, $,  $ d \, $  are matrices of size  $ p \times p \, $,  $ p \times q \, $,  $ q \times p \, $,  $ q \times q \, $  respectively, and whose entries respectively belong to  $ \, A_\zero \, $,  $ \, A_\uno \, $,  $ \, A_\uno $  and  $ \, A_\zero \, $.
                                                                \par
   {\sl The condition that such a block matrix in  $ \rgl_{p|q}(A) $  belong to  $ \rGL_{p|q}(A) $   --- i.e., that it be invertible ---   amounts to  $ a $ and  $ d $  being invertible on their own\/}  (see  \cite{ccf},  \S 1.5).
                                                                \par
   Note also that  $ {\big( \rGL_{p|q} \big)}_{\text{\it \!ev}}(A) $  has a neat description: it is the subgroup of all those block matrices for which (in the previous notation)  $ \, \beta = 0 = \gamma \, $.
                                                                \par
   We shall now show that  {\sl  $ \rGL(V) $  is strongly split\/}:  note that this does not depend on the nature of the ground ring  $ \bk $   --- in particular, we do not need it to be a field.
 \vskip5pt
   Define  $ \; {\big( \rGL_{p|q} \big)}_{\text{\it \!odd}_{\phantom{!}}} \! := I + {\big( \rgl_{p|q} \big)}_\uno \; $,  where  $ \, I := I_{p+q} \, $  is the identity (block) matrix of size  $ \, (p+q) \times (p+q) \, $.  This is clearly a totally odd affine superspace, which is stable by the adjoint action of  $ {\big( \rGL_{p|q} \big)}_{\text{\it \!ev}} $   --- both being considered embedded inside  $ \rGL_{p|q} \, $.  Now, a direct check shows that any
 $ \; \displaystyle \bigg(\! {{{\;\; a \;|\; \beta \;\,} \over {\,\; \gamma \;|\; d \;\,}}} \!\bigg)^{\phantom{\big|}\!} \! \in \, \rGL_{p|q}(A) \; $
admits a unique factorization, w.~r.~to the matrix product, as
 \vskip-1pt
  $$  \displaystyle \bigg(\! {{{\;\; a \;|\; \beta \;\,} \over {\,\; \gamma \;|\; d \;\,}}} \!\bigg)  \,\; = \;\,  \bigg(\! {{{\,\; a \;|\; 0 \;\,} \over {\,\; 0 \;|\; d \;\,}}} \!\bigg) \cdot \bigg(\! {{{\,\; \hskip9pt I_p \hskip8pt \;|\; a^{-1} \beta \;\,} \over {\,\; d^{-1} \gamma \;|\; \hskip6pt I_q \hskip6pt \;\,}}} \!\bigg)  $$
 \vskip5pt
\noindent
 This provides a map  $ \; \rGL_{p|q}(A) \longrightarrow {\big( \rGL_{p|q} \big)}_{\text{\it \!ev}}(A) \times {\big( \rGL_{p|q} \big)}_{\text{\it \!odd}_{\phantom{|}}}(A) \; $  which, for  $ A $  ranging in  $ \salg_\bk \, $,  eventually provides a global splitting as we were looking for.
 \vskip5pt
   Instead of the above geometric approach, one can follow an algebraic one.  For that, one simply has to notice that
  $$  \displaylines{
   \quad   \cO\big(\rGL_{p|q}\big)  \,\; = \;\,  \bk\Big[ \big\{ x'_{i,j} \, , \, x''_{h,k} \, , \, \xi'_{i,k} \, , \, \xi''_{h,j} \big\}_{i,j=1,\dots,p;}^{h,k=1,\dots,q;} \, , \, {\text{\sl det}(X')}^{-1} \, , \, {\text{\sl det}(X'')}^{-1} \Big]  \,\; =   \hfill  \cr
   \hfill   = \;\,  \bk\Big[ \big\{ x'_{i,j} \, , \, x''_{h,k} \big\}_{i,j=1,\dots,p;}^{h,k=1,\dots,q;} \, , \, {\text{\sl det}(X')}^{-1} , \, {\text{\sl det}(X'')}^{-1} \Big] \,\otimes_\bk\, \bk\Big[ \big\{ \xi'_{i,k} \, , \, \xi''_{h,j} \big\}_{i,j=1,\dots,p;}^{h,k=1,\dots,q;} \Big]  }  $$
with  $ \, X' := {\big( x'_{i,j} \big)}_{i,j=1,\dots,p;} \, $,  $ \, X'' := {\big( x''_{h,k} \big)}_{h,k=1,\dots,q;} \, $,  is strongly split as a Hopf superalgebra.
 \vskip9pt
   In any case, looking in detail we find that we have proved the following

\medskip

\begin{theorem}  \label{gl-str-split_GL(V)}
 Every general linear  $ \bk $--supergroup  $ \, \rGL(V) := \rGL_{p|q} \, $  is globally strongly split.
\end{theorem}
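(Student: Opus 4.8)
The plan is to reduce, via Theorem \ref{gl-str-split_sgroup-Th}, to exhibiting a closed subsuperscheme satisfying conditions \textit{(a)--(c)} of Definition \ref{gl-str-split_sgroup-Def}, producing the requisite factorization explicitly on $A$-points and then checking that it is functorial. First I set $ G_\zero := {\big(\rGL_{p|q}\big)}_{\text{\it ev}} $, the block-diagonal subgroup (matrices with $ \beta = 0 = \gamma $), and I take as candidate odd factor $ G_\uno := I + {\big(\rgl_{p|q}\big)}_\uno $, i.e.\ the block matrices with $ I_p $, $ I_q $ on the diagonal and arbitrary odd off-diagonal blocks. The latter is visibly a totally odd affine superspace of superdimension $ 0 \,|\, 2pq $, represented by the Grassmann algebra on the odd coordinate functions $ \big\{ \xi'_{i,k} \, , \, \xi''_{h,j} \big\} $, and it contains the unit matrix, so it is naturally pointed; likewise $ G_\uno $ is stable under the adjoint $ G_\zero $-action, as noted in the free text preceding the statement.

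The heart of the argument is the unique factorization: for each $ A \in \salg_\bk $ and each invertible block matrix one writes
$$\bigg(\! {{{\;\; a \;|\; \beta \;\,} \over {\,\; \gamma \;|\; d \;\,}}} \!\bigg) \; = \; \bigg(\! {{{\,\; a \;|\; 0 \;\,} \over {\,\; 0 \;|\; d \;\,}}} \!\bigg) \cdot \bigg(\! {{{\,\; \hskip9pt I_p \hskip8pt \;|\; a^{-1} \beta \;\,} \over {\,\; d^{-1} \gamma \;|\; \hskip6pt I_q \hskip6pt \;\,}}} \!\bigg).$$
The crucial input here is the invertibility criterion for block matrices (see \cite{ccf}, \S 1.5): membership in $ \rGL_{p|q}(A) $ forces $ a $ and $ d $ to be invertible on their own, so that $ a^{-1}\beta $ and $ d^{-1}\gamma $ are well-defined \emph{odd} matrices and the right-hand factor genuinely lies in $ G_\uno(A) $. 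A direct matrix computation then shows that the two factors lie respectively in $ G_\zero(A) $ and $ G_\uno(A) $, and that the decomposition is unique, yielding a bijection $ G_\zero(A) \times G_\uno(A) \longrightarrow \rGL_{p|q}(A) $ for every $ A $.

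What remains — and where the genuine content lies — is promoting this pointwise bijection to an isomorphism of \emph{pointed left $ G_\zero $-superschemes}, as demanded by Definition \ref{gl-str-split_sgroup-Def}. I would verify naturality in $ A $ (the assignment $ (a,\beta,\gamma,d) \mapsto (a,d;\,a^{-1}\beta,\,d^{-1}\gamma) $ is given by polynomial, hence functorial, formulas in the matrix entries and the inverse-determinants), that it carries the unit to $ (I,I) $, and that it is left $ G_\zero $-equivariant, since left multiplication by a block-diagonal matrix alters only the even factor. Alternatively — and this is the cleaner route to the \emph{strongly} split conclusion — I would argue algebraically, confirming that the displayed tensor factorization of $ \cO\big(\rGL_{p|q}\big) $ into its even part and the Grassmann algebra $ \bk\big[\,\xi'_{i,k},\xi''_{h,j}\big] $ is an isomorphism of super left $ \overline{H} $-comodule $ \bk $-algebras, whence strong splitting follows at once from Definition \ref{stsplit-Hsalg} and Theorem \ref{gl-str-split_sgroup-Th}. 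The main obstacle is not the matrix identity itself (routine) but the bookkeeping needed to confirm that the splitting is simultaneously counit-preserving and left $ \cO(G_\zero) $-coinvariant; once that is checked, Theorem \ref{gl-str-split_sgroup-Th} delivers the claim uniformly over any base ring $ \bk $, with no hypothesis that it be a field.
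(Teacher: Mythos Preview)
Your proposal is correct and follows essentially the same approach as the paper: the paper also defines $G_\uno := I + {\big(\rgl_{p|q}\big)}_\uno$, writes down the identical block-matrix factorization (relying on the same invertibility criterion for $a$ and $d$), and then offers the same algebraic alternative via the tensor decomposition of $\cO\big(\rGL_{p|q}\big)$ into its even part and the Grassmann algebra on the $\xi'_{i,k},\,\xi''_{h,j}$. If anything, you are slightly more explicit than the paper about the bookkeeping (naturality, counit-preservation, left $\cO(G_\zero)$-coinvariance), which the paper leaves implicit.
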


\end{free text}

\bigskip

\section{Supergroups and super Harish-Chandra pairs}  \label{alsgrps-sHCp}

\smallskip

   {\ } \;\;   Whether in a differential, analytic, or algebraic geometrical framework, with any given supergroup  $ G $  one can always associate, in a functorial way, its  {\sl super Harish-Chandra pair\/}  (or sHCp in short),  namely the pair  $ (G_\zero,\fg) $  formed by the classical (even)  $ G_\zero $  subgroup and the tangent Lie superalgebra  $ \, \fg := \Lie\,(G) \, $  of  $ G $  itself.  The key question is whether one can come back, and in the positive case what kind of (functorial) recipes one can explicitly provide to  reconstruct the original supergroup out of its sHCp.  In this section I present my own solutions to these problems, showing in particular that a positive answer is possible if and only if we restrict our attention to those (affine) supergroups which are globally strongly split   --- so fixing a link
   \hbox{with the first half of the paper.}
                                                       \par
   At first strike I shall deal with the linear case, i.e.\ with supergroups and sHCp's which are linearized.  This is presented as a sheer source of inspiration, after which I treat the general case, which indeed might as well dealt with independently.

\medskip

 \subsection{Super Harish-Chandra pairs}  \label{sHCp} {\ }

\smallskip

\begin{free text}  \label{superHCpairs}
 {\bf Super Harish-Chandra pairs.}  We introduce now the notion of  {\it super Harish-Chandra pair},  indeed a well known one.  Typically, it is considered in the framework of real or complex analytic super Lie groups (see  \cite{koszul}  and \cite{vis}  respectively): here instead we consider the corresponding version adapted to the setup of algebraic supergroups in algebraic supergeometry  (cf.~\cite{cf}, \S 3).
  \eject

\begin{definition}  \label{def-sHCp}
 We call  {\it super Harish-Chandra pair}  (={\it sHCp})  over  $ \bk $  any pair  $ \, (G_+ \, , \, \fg) \, $  such that
 \vskip4pt
   {\it (a)}  $ \quad G_+ $  is an affine  $ \bk $--group-scheme,  $ \,\; \fg \in \slie_\bk \; $,  \, and \,  $ \fg_\uno $  is a finite rank free  $ \bk $--module;
 \vskip3pt
   {\it (b)}  $ \quad \Lie\,(G_+) $  is quasi-representable and  $ \; \Lie(G_+) = \fg_\zero \; $;
 \vskip3pt
   {\it (c)}  there is a  $ G_+ $--action  on  $ \fg $  by automorphisms, denoted  $ \, \Ad : G_+ \longrightarrow \text{\sl Aut}(\fg) \, $,  such that its restriction to  $ \fg_\zero $  is the adjoint action of  $ G_+ $  on  $ \Lie(G_+) = \fg_\zero \, $  and the differential of this action is the Lie bracket of  $ \fg $  restricted to  $ \; \Lie(G_+) \times \fg \, = \, \fg_\zero \times \fg \; $.
 \vskip5pt
   All super Harish-Chandra pairs over  $ \bk $  form the objects of a category, denoted  $ \sHCp_\bk \, $.  The morphisms in  $ \sHCp_\bk $  are all pairs  $ \; (\Omega_+,\omega) : \big( G'_+ \, , \, \fg' \big) \longrightarrow \big( G''_+ \, , \, \fg'' \big) \; $  of a morphism  $ \, \Omega_+ : G'_+ \longrightarrow G''_+ \, $  of  $ \bk $--group  schemes and a morphism  $ \, \omega : \fg' \longrightarrow \fg'' \, $  in  $ \slie_\bk \, $  which are compatible with the additional sHCp structure, that is to say
  $$  \indent \text{\it (d)} \hskip27pt   \omega{\big|}_{\fg_\zero} \, = \, d\Omega_+  \quad ,  \qquad  \Ad\big(\Omega_+(g)\big) \circ \omega \, = \, \omega \circ \Ad(g)  \quad \;\forall \;\, g \in G_+   \hskip27pt   \eqno \diamondsuit  $$
\end{definition}

\medskip

   There is a natural, well-known way to attach a sHCp to any supergroup, which indeed motivates the very notion of sHCp.  In the present context   --- letting  $ \fsgrps_\bk $  be the category of  {\sl fine\/}  $ \bk $--supergroups,  see  Definition \ref{def_fine-sgroups}  ---   it reads as follows:

\medskip

\begin{proposition}  \label{sgrps-->sHCp}
 There exists a functor  $ \; \Phi : \fsgrps_\bk \longrightarrow \sHCp_\bk \; $  given on objects by  $ \; \Phi : G \mapsto \Phi(G) := \big(\, G_\zero \, , \Lie(G) \big) \, $,  and on morphisms by  $ \; \Phi : \varphi \mapsto \Phi(\varphi) := \big(\, \varphi{\big|}_{G_\zero} , \Lie(\varphi) \big) \, $.
\end{proposition}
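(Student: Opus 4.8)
The plan is to verify in turn that $\Phi$ is well defined on objects, well defined on morphisms, and respects identities and composition.

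\emph{Well-definedness on objects.} Given a fine supergroup $G$, I must check that $\big( G_\zero , \Lie(G) \big)$ fulfils conditions (a)--(c) of Definition \ref{def-sHCp}. Condition (a) is nearly immediate: $G_\zero$ is an affine $\bk$--group-scheme by Proposition \ref{ass-class-scheme}(a) (sitting inside $G$ as a closed subgroup, by Proposition \ref{ass-class-grscheme}); the functor $\fg := \Lie(G)$ carries a canonical structure of Lie $\bk$--superalgebra by Proposition \ref{Lie-funct_Lie(G)}(b), and its odd part $\fg_\uno$ is free of finite rank precisely because $G$ is fine (Definition \ref{def_fine-sgroups}). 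For condition (b) the crux is the identification $\Lie(G_\zero) = \fg_\zero$. Writing $H := \cO(G)$, I would use the cotangent description $\fg = {\omega_e(G)}^*$ together with $\overline{H} = H\big/\big( H_\uno \big) = \cO(G_\zero)$: a direct computation of the two sides gives $\omega_e(G)_\zero \cong H_\zero^+\big/\big((H_\zero^+)^2 + H_\uno^{[2]}\big) \cong \omega_e(G_\zero)$, and dualizing yields $\fg_\zero = {(\omega_e(G)_\zero)}^* \cong {\omega_e(G_\zero)}^* = \Lie(G_\zero)$. Quasi-representability of $\Lie(G_\zero)$ then follows from Proposition \ref{Lie-funct_Lie(G)}(c), since $\omega_e(G_\zero)$, being a direct summand of the finitely generated projective module $\omega_e(G)$, is itself finitely generated projective.

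\emph{Condition (c)} is essentially built into the construction of the Lie superbracket recalled in \S\ref{Lie(G)}. The adjoint action $\Ad : G(A) \to \rGL\big(\Lie(G)(A)\big)$ is already available there; restricting it along $G_\zero \hookrightarrow G$ produces the required $G_\zero$--action on $\fg$ by automorphisms. Its restriction to $\fg_\zero$ is, by construction, the classical adjoint action of $G_\zero$ on $\Lie(G_\zero) = \fg_\zero$, while its differential is exactly $\ad = \Lie(\Ad)$, which is precisely how the bracket $[x,y] = \ad(x)(y)$ was defined; hence the differential coincides with the bracket restricted to $\fg_\zero \times \fg$, as demanded.

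\emph{Well-definedness on morphisms and functoriality.} A morphism $\varphi : G' \to G''$ corresponds to a Hopf superalgebra map $\cO(\varphi) : \cO(G'') \to \cO(G')$, which carries the odd-generated Hopf ideal of $\cO(G'')$ into that of $\cO(G')$ and so descends to a map $\overline{\cO(G'')} \to \overline{\cO(G')}$; this descent is exactly $\cO\big( \varphi\big|_{G_\zero} \big)$, so $\varphi\big|_{G_\zero} : G'_\zero \to G''_\zero$ is a well-defined group-scheme morphism, while $\Lie(\varphi)$ is a morphism in $\slie_\bk$ by functoriality of $\Lie$. Condition (d) follows from naturality: under the identification of (b) one has $\Lie(\varphi)\big|_{\fg'_\zero} = \Lie\big( \varphi\big|_{G_\zero} \big) = d\big( \varphi\big|_{G_\zero} \big)$, and the equivariance $\Ad\big( \varphi|_{G_\zero}(g) \big) \circ \Lie(\varphi) = \Lie(\varphi) \circ \Ad(g)$ simply expresses that $\varphi$ intertwines the two adjoint actions. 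Finally, $\Phi(\mathrm{id}_G) = \mathrm{id}_{\Phi(G)}$ and $\Phi(\psi \circ \varphi) = \Phi(\psi) \circ \Phi(\varphi)$ are immediate from the functoriality of $(\,\cdot\,)_\zero$ and of $\Lie$.

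\emph{Main obstacle.} Apart from the identification $\Lie(G_\zero) = \fg_\zero$ of condition (b), everything reduces either to a direct appeal to the results of \S\ref{Lie(G)} or to a naturality argument. I therefore expect the genuine work to lie in that cotangent computation and, above all, in checking that the resulting isomorphism $\fg_\zero \cong \Lie(G_\zero)$ is compatible with the bracket and the adjoint action — so that condition (c) is really satisfied through this identification, and not merely abstractly.
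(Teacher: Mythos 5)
Your proposal is correct, but note that the paper offers no proof of this proposition at all: it is introduced with the remark that attaching a sHCp to a supergroup is ``natural, well-known'', and the statement is left as a definition-check for the reader. Your write-up supplies exactly the verification the paper omits, and the key computation is right: from $H^+ = H_\zero^+ \oplus H_\uno$ one gets $(H^+)^2 = \big((H_\zero^+)^2 + H_\uno^{[2]}\big) \oplus H_\zero^+ H_\uno$, whence ${\omega_e(G)}_\zero \cong H_\zero^+\big/\big((H_\zero^+)^2 + H_\uno^{[2]}\big) \cong \omega_e(G_\zero)$ (and, as a bonus, ${\omega_e(G)}_\uno \cong W^H$, consistent with the identity ${\big(W^A\big)}^* = \fg_\uno$ the paper uses later in \S\ref{consist-splitt.s}); your deduction of quasi-representability of $\Lie(G_\zero)$ from Proposition \ref{Lie-funct_Lie(G)}(c), via the fact that ${\omega_e(G)}_\zero$ is a direct summand of the finitely generated projective module $\omega_e(G)$, is likewise sound, as is your correct diagnosis that this identification (and its compatibility with bracket and adjoint action) is where the real content lies. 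Two small points you gloss over: morphisms and automorphisms in $\slie_\bk$ must preserve the $2$--operation as well as the bracket (Definition \ref{def-Lie-salg}), so you should check that $\Ad(g)$ and $\Lie(\varphi)$ respect $(\,\cdot\,)^{\langle 2\rangle}$ --- both follow easily from the realization of $\fg$ as left-invariant superderivations with $Z^{\langle 2\rangle} = Z \circ Z$ used in the proof of Proposition \ref{Lie-funct_Lie(G)}(b); and condition (c) asks for an algebraic action $\Ad : G_+ \to \text{Aut}(\fg)$, so one should observe that the pointwise conjugation action is functorial in $A$ and lands in $\cL_{\text{Aut}}$-type automorphisms, which again is routine but worth a sentence.
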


\medskip

\begin{free text}  \label{inv-prob}
 {\bf The inversion problem for  $ \Phi \, $.}  The main question about the functor  $ \; \Phi : \fsgrps_\bk \!\longrightarrow \sHCp_\bk \; $  is whether it is an equivalence.  In down-to-earth terms, this amounts to asking: can one associate (backwards) a supergroup to any given sHCp, and can one reconstruct any supergroup from its associated sHCp (and conversely)?  In order to answer this question, one looks for a quasi-inverse (i.e., ``inverse up to isomorphism'') functor to  $ \Phi \, $.
                                                                       \par
   In the present, algebraic framework, a solution was given by Masuoka (see  \cite{ms2})  with the assumption that  $ \bk $  be any field of characteristic different from 2, using purely Hopf algebraic techniques.  A weaker result is due to Carmeli and Fioresi (see  \cite{cf}),  who apply Koszul's original method (cf.\  \cite{koszul})  to the context where the ground ring  $ \bk $  be a field of characteristic zero; the same approach was recently extended to  {\sl any\/}  commutative ring  $ \bk $  by Masuoka and Shibata in  \cite{mas-shi}.
                                                                       \par
   In the next two subsections, I present yet another, totally general solution.
\end{free text}

\medskip

 \subsection{The converse functor: linear case}  \label{conv-funct_lin} {\ }

\smallskip

   In this subsection I present my own approach to solve the inversion problem explained in  \S \ref{inv-prob}  above, with a (functorial) geometrical method.  The first approach that I follow is a representation-theoretical one: the basic ingredient to work with is a sHCp together with a faithful representation, which means that I restrict myself to  {\sl linear\/}  sHCp's and linear supergroups.  Later on, I adapt this construction to the general framework of all super Harish-Chandra pairs and all fine supergroups.
 \vskip5pt
   To start with, we define the notions of ``linear'' supergroups and super Harish-Chandra pairs:

\medskip

\begin{definition}  \label{def-lsgrps-lsHCp}  {\ }
 \vskip3pt
   {\it (a)} \,  We call {\it linear gs-split fine supergroup\/}  over  $ \bk $  any pair  $ (G,V) $  where  $ \, G \in \gssfsgrps_\bk \, $,  $ V $  is a finite rank faithful  $ G $--module  (that is,  $ V $  is a free  $ \bk $--supermodule  of finite rank such that  $ G $  embeds into  $ \rGL(V) $  as a closed $ \bk $--supersub\-group),  and
 $ \, {\rgl(V)}_\uno \Big/ \fg_\uno \, $  is  $ \bk $--free   --- or, what is the same,  $ \, {\rgl(V)}_\uno = \fg_\uno \! \oplus \mathfrak{q} \, $  for some  $ \bk $--free  submodule  $ \mathfrak{q} $  of  $ {\rgl(V)}_\uno \, $.
  \eject
   We denote by  $ \, \lgssfsgrps_\bk \, $  the category whose objects are linear supergroups over  $ \bk $  and whose morphisms  $ \, \big(G',V'\big) \longrightarrow \big(G'',V''\big) \, $  are given by pairs  $ \, \big( \varphi_g \, , \varPhi_v \big) \, $  where  $ \, \varphi_g : G' \relbar\joinrel\relbar\joinrel\longrightarrow G'' \, $  and  $ \, \varPhi_v : \rGL\big(V'\big) \relbar\joinrel\relbar\joinrel\longrightarrow \rGL\big(V''\big) \, $  are morphisms of supergroups which obey an obvious compatibility constraint (namely,  $ \varphi_g $  is induced by  $ \varPhi_v $  via restriction).
 \vskip1pt
   {\it (b)} \,  We call {\it linear super Harish-Chandra pair\/}  (over  $ \bk $)  any pair   $ \big( (G_+,\fg) \, , V \,\big) $  where  $ \, (G_+,\fg) \in \text{(sHCp)}_\bk \, $,
 the  $ V $  is a finite rank faithful  {\it  $ (G_+,\fg) $--module}   --- this means, by definition, that  $ V $  is a free  $ \bk $--supermodule  of finite rank with representation monomorphisms  $ \, \boldsymbol{r}_{\!+} \! : G_+ \lhook\joinrel\longrightarrow \rGL(V) \, $,  as  $ \bk $--supergroups  (in particular,  $ G_+ $  is closed in  $ \rGL(V) \, $),  and  $ \, \rho: \fg \lhook\joinrel\longrightarrow \rgl(V) \, $,  as Lie  $ \bk $--superalgebras,  such that  $ \,\; \rho{\big|}_{\fg_\zero} = \, d\boldsymbol{r}_{\!+} \; $,  $ \,\; \text{Ad}\big(\boldsymbol{r}_{\!+}(g)\big) \circ \rho \, = \, \rho \circ \text{Ad}(g) \;\, \big(\, \forall \; g \in G_+ \,\big) \, $  ---   and identifying  $ \fg_\uno $  with  $ \rho(\fg_\uno) \, $,
 we have that  $ \, {\rgl(V)}_\uno \Big/ \fg_\uno \, $  is  $ \bk $--free, or  $ \, {\rgl(V)}_\uno = \fg_\uno \oplus \mathfrak{q} \, $  for some finite  $ \bk $--free  submodule  $ \mathfrak{q} \, $.
                                                                     \par
   We denote by  $ \, \text{\rm (lsHCp)}_\bk \, $  the category whose objects are linear super Harish-Chandra pairs over  $ \bk $  and whose morphisms  $ \, \big( \big( G'_+ , \fg' \big) \, , V' \big) \relbar\joinrel\relbar\joinrel\longrightarrow \big( \big( G''_+ , \fg'' \big) \, , V'' \big) \, $  are given by pairs  $ \, \big( \phi_g \, , \Phi_v \big) \, $  where  $ \, \phi_g : \big( G'_+ , \fg' \big) \relbar\joinrel\relbar\joinrel\longrightarrow \big( G''_+ , \fg'' \big) \, $  is a morphism of super H-C pairs   --- i.e.\ in  $ \, \text{(sHCp)}_\bk \, $  ---   $ \, \Phi_v : \rGL\big(V'\big) \relbar\joinrel\relbar\joinrel\longrightarrow \rGL\big(V''\big) \, $  is a morphism of supergroups, and a natural (obvious) compatibility constraint linking  $ \phi_g $  and  $ \Phi_v $  holds.
 \hfill   $ \diamondsuit $
\end{definition}

\smallskip

\begin{remark}
 It is worth recalling that the constraint for a supergroup to be linear is not that restrictive: indeed, it is well known that any (finite dimensional) affine supergroup   $ G $  is linearizable   --- i.e., can be embedded inside some  $ \rGL(V) $  ---   if its ground ring  $ \bk $  is a field.  Even more, the same is true   --- essentially by the same arguments ---   also when  $ \bk $  is only a PID, under the additional assumption that  $ \cO(G) $  be free as a  $ \bk $--module.
\end{remark}

\smallskip

   It is easy to see from definitions that the functor  $ \; \Phi : \fsgrps_\bk \longrightarrow \sHCp_\bk \; $  considered in  Proposition \ref{sgrps-->sHCp}  above naturally induces a similar functor among the ``associated linear'' categories.  The precise claim reads as follows:

\medskip

\begin{proposition}  \label{lsgrps-->lsHCp}
 There is a unique functor  $ \; \Phi_\ell : \lgssfsgrps_\bk \longrightarrow \lsHCp_\bk \; $  which is given on objects by  $ \; \Phi_\ell : (G,V) \mapsto \Phi_\ell\big((G,V)\big) := \big( \big(\, G_\zero \, , \Lie\,(G) \big) \, , V \,\big) \; $.
\end{proposition}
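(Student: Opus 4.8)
The plan is to realize $\Phi_\ell$ as the functor $\Phi$ of Proposition \ref{sgrps-->sHCp} with the representation datum $V$ simply carried along, and then to check that this prescription indeed lands in $\lsHCp_\bk$ and is forced. On objects, given $(G,V) \in \lgssfsgrps_\bk$, I set $\Phi_\ell\big((G,V)\big) := \big( \Phi(G), V \big) = \big( (G_\zero, \Lie(G)), V \big)$. By Proposition \ref{sgrps-->sHCp} the pair $\big( G_\zero, \Lie(G) \big)$ is already a super Harish-Chandra pair, so what remains is to exhibit the linear structure required by Definition \ref{def-lsgrps-lsHCp}(b). The faithful $G$-module structure on $V$ is a closed embedding $\boldsymbol{r} : G \hookrightarrow \rGL(V)$; composing with the closed embedding $G_\zero \hookrightarrow G$ gives $\boldsymbol{r}_{\!+} := \boldsymbol{r}\big|_{G_\zero} : G_\zero \hookrightarrow \rGL(V)$, while applying $\Lie$ gives $\rho := \Lie(\boldsymbol{r}) : \fg = \Lie(G) \hookrightarrow \rgl(V) = \Lie\big(\rGL(V)\big)$, injective since $\boldsymbol{r}$ is a closed embedding.

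Next I verify the two compatibility conditions of Definition \ref{def-lsgrps-lsHCp}(b). The identity $\rho\big|_{\fg_\zero} = d\boldsymbol{r}_{\!+}$ follows from functoriality of $\Lie$ together with the identification $\Lie(G_\zero) = \fg_\zero$, where $\fg = \Lie(G)$, which is exactly the datum packaged into the sHCp $\Phi(G)$ by axiom (b) of Definition \ref{def-sHCp}. The equivariance $\Ad\big(\boldsymbol{r}_{\!+}(g)\big) \circ \rho = \rho \circ \Ad(g)$ is the naturality of the adjoint action already invoked to construct $\Phi(G)$. Finally, the requirement that $\rgl(V)_\uno \big/ \fg_\uno$ be $\bk$-free is literally the same condition that appears in the definition of $\lgssfsgrps_\bk$, so it holds automatically; this is precisely the point at which the two definitions have been matched so as to make the construction go through.

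On morphisms, a morphism $\big( \varphi_g, \Phi_v \big) : (G',V') \longrightarrow (G'',V'')$ in $\lgssfsgrps_\bk$ is sent to $\Phi_\ell\big( \varphi_g, \Phi_v \big) := \big( \Phi(\varphi_g), \Phi_v \big)$, where $\Phi(\varphi_g) = \big( \varphi_g\big|_{G'_\zero}, \Lie(\varphi_g) \big)$ is a morphism of sHCp's by Proposition \ref{sgrps-->sHCp}. The compatibility constraint linking $\Phi(\varphi_g)$ and $\Phi_v$ is obtained from the one satisfied by $\big( \varphi_g, \Phi_v \big)$ by restricting to $G'_\zero$ and applying $\Lie$, so $\Phi_\ell\big( \varphi_g, \Phi_v \big)$ is a morphism in $\lsHCp_\bk$. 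Preservation of identities and of composition is inherited from $\Phi$, the $V$-component being left untouched, so $\Phi_\ell$ is a functor. Uniqueness is then immediate: once the value on objects is prescribed, the second component of a morphism must be the given $\Phi_v$ and the first must be $\Phi(\varphi_g)$, so the functor is completely determined. The only step that calls for genuine, if routine, care is the verification of $\rho\big|_{\fg_\zero} = d\boldsymbol{r}_{\!+}$, which rests on the identification $\Lie(G_\zero) = \fg_\zero$; everything else is bookkeeping inherited from Proposition \ref{sgrps-->sHCp}.
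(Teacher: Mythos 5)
Your proposal is correct and follows exactly the route the paper intends: the paper offers no written proof of this Proposition, dismissing it with ``It is easy to see from definitions'' that $\Phi$ of Proposition \ref{sgrps-->sHCp} induces $\Phi_\ell$, and your write-up is precisely that routine verification (restrict the embedding to $G_\zero\,$, apply $\Lie\,$, observe the freeness condition on $\,{\rgl(V)}_\uno \big/ \fg_\uno\,$ is built into both definitions, and handle morphisms componentwise). The only caveat, inherited from the paper's own informality, is that ``uniqueness'' is really uniqueness of the \emph{obvious} extension to morphisms forced by the compatibility constraint of Definition \ref{def-lsgrps-lsHCp}, which your last paragraph addresses at the same level of rigor as the source.
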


\end{free text}

\smallskip

   We can now undertake the construction of a quasi-inverse functor to  $ \Phi_\ell \; $.

\medskip

\begin{free text}
 {\bf The functor  $ \; \Psi_\ell : \lsHCp_\bk \longrightarrow \lgssfsgrps_\bk \; $.}  Let us consider a linear sHCp over  $ \bk \, $,  say  $ \, \big( \big( G_+, \fg \big) \, , V \,\big) \in \lsHCp_\bk \, $.  As  $ G_+ $  embeds into  $ \rGL(V) \, $,  we identify  $ G_+ $  itself with its (closed) image inside  $ \rGL(V) \, $;  similarly, we identify  $ \fg $  with its image inside  $ \rgl(V) \, $.  The very definition of linear sHCp then tells us that the pair given by these two images do form a linear sHCp on its own.
%
%
%
 \vskip5pt
   We can now introduce the following definition:
\end{free text}

\smallskip

\begin{definition}  \label{def G_- / G_P - lin}
 Let  $ \, \cP := \big(\! \big( G_+ \, , \fg \big) \, , V \big) \in \lsHCp_\bk \; $.  Let  $ \, 1_{{}_V} \! \in \rgl(V) \, $  be the identity en\-domorphism, fix in  $ \, \fg_\uno \, $  (which is finite free) a  $ \bk $--basis  $ \, {\big\{ Y_i \big\}}_{i \in I} \, $   --- for some finite index set  $ I $  ---   and fix also a total order in  $ I \, $.  For all  $ \, A \in \salg_\bk \, $  consider in  $ \, \rGL(V)(A) \, $  the set  $ \; \big( 1_{{}_V} \! + A_\uno \, Y_i \big) \, := \, \big\{ (\,1_{{}_V} \! + \eta \, Y_i) \,\big|\, \eta \in A_\uno \big\} \; $  for all  $ \, i \in I \, $.  Then set
 \vskip-5pt
  $$  G_-^{\scriptscriptstyle \,<}(A)  \, := \,  {\textstyle \prod\limits_{i \in I}^\rightarrow} \big( 1_{{}_V} \! + A_\uno \, Y_i \big)  \, = \,  \bigg\{\, {\textstyle \prod\limits_{i \in I}^\rightarrow} \big( 1_{{}_V} \! + \eta_i \, Y_i \big) \;\bigg|\; \eta_i \in A_\uno \;\, \forall \; i \in I \,\bigg\}  $$
 \vskip-5pt
\noindent
 where  $ \, \prod\limits_{i \in I}^\rightarrow \, $  denotes an  {\sl ordered product\/}  (with respect to the fixed total order in  $ I \, $),  and
 \vskip-3pt
  $$  G_{{}_{\!\cP}}(A)  \; := \;  \Big\langle\, G_+(A) \;{\textstyle \bigcup}\; G_-^{\scriptscriptstyle \,<}(A) \,\Big\rangle  \; := \;  \Big\langle\, G_+(A) \;{\textstyle \bigcup}\; \big(\, {\textstyle \bigcup_{i \in I}} (\, 1_{{}_V} \! + A_\uno \, Y_i \,) \,\big) \,\Big\rangle  $$
 \vskip-1pt
\noindent
 the subgroup of  $ \rGL(V)(A) $  generated by the subset  $ \, G_+(A) \,\cup\, G_-^{\scriptscriptstyle \,<}(A) \, $,  or by  $ G_+(A) $  and the  $ \big( 1_{{}_V} \! + A_\uno \, Y_i \big) $'s   --- with  $ \, G_+(A) := G_+(A_\zero) \, $  by abuse of notation.
                                                             \par
   Finally, we denote by  $ \; G_-^{\scriptscriptstyle \,<} : \salg_\bk \!\relbar\joinrel\longrightarrow \sets \; $  and  $ \; G_{{}_{\!\cP}} : \salg_\bk \!\relbar\joinrel\longrightarrow \grps \; $  the  $ \bk $--functor  and the  $ \bk $--supergroup  functor defined by  $ \; A \mapsto G_-^{\scriptscriptstyle \,<}(A) \; $  and  $ \; A \mapsto G_{{}_{\!\cP}}(A) \; $   --- by the above recipes ---   on objects and in the obvious way on morphisms.
 \vskip4pt
   {\sl N.B.:}  by definition  $ G_-^{\scriptscriptstyle \,<} $  depends on the choice of the  $ \bk $--basis  $ \, {\big\{ Y_i \big\}}_{i \in I} \, $  of  $ \fg_\uno \, $.  On the other hand, we shall presently see that  $ G_{{}_{\!\cP}} $  instead is independent of such a choice.
 \hfill   $ \diamondsuit $
\end{definition}

\smallskip

\begin{lemma}  \label{tang-group}
 Let  $ \, G \in \sgrps_\bk \, $  be a\/  $ \bk $--supergroup  such that  $ \, \Lie\,(G) \, $  is quasi-re\-presentable, say  $ \, \Lie\,(G) = \cL_\fg \, $  for some  $ \, \fg \in \slie_\bk \, $.
   Let  $ \, A \in \salg_\bk \, $,  $ \, \eta, \eta', \eta'' \in A_\uno \, $,  $ \, c \in A_\zero \, $  such that  $ \, c^2 = 0 \, $,  $ \, Y, Y' \! \in \fg_\uno \, $,  $ \, X \! \in \fg_\zero \, $  and  $ \, g_0 \in G_0(A) \, $.  Then we have (notation of  Definition \ref{def-Lie-salg})
 \vskip5pt
   (a) \;  $ \, \big( 1 + \, c \, X \big) \in G_\zero(A) \; , \;  \big( 1 + \, \eta \, Y \big) \in G(A)  \;\, $;  \; in particular  $ \; \big( 1 + \, \eta \, \eta' \, [Y,Y'\,] \big) \in G_\zero(A) \; $;
 \vskip4pt
   (b) \;  $ \,\;\; (1 + \eta \, Y) \, g_0 \; = \; g_0 \, \big( 1 + \eta \, \Ad\big(g_0^{-1}\big)(Y) \big)  \quad $;
 \vskip4pt
   (c) \;  $ \,\;\; \big( 1 + \eta' \, Y' \big) \, \big( 1 + \eta'' \, Y'' \big) \; = \; \big( 1 + \eta'' \, \eta' \, [Y',Y''\,] \big) \, \big( 1 + \eta'' \, Y'' \big) \, \big( 1 + \eta' \, Y' \big)  \quad $;
 \vskip4pt
   (d) \;  $ \,\;\; \big( 1 + \eta \, Y' \big) \, \big( 1 + \eta \, Y'' \big)  \; = \;  \big( 1 + \eta \, (Y'+Y'') \big)  \; = \;  \big( 1 + \eta \, Y'' \big) \, \big( 1 + \eta \, Y' \big)  \quad $;
 \vskip4pt
   (e) \;  $ \,\;\; \big( 1 + \eta' \, Y \big) \, \big( 1 + \eta'' \, Y \big)  \; = \;  \big( 1 + \eta'' \, \eta' \, Y^{\langle 2 \rangle} \big) \, \big( 1 + (\eta'+\eta'') \, Y \big)  \quad $;
 \vskip4pt
   (f) \;  $ \,\;\; (1 \, + \, \eta \, Y) \, (1 \, + \, \eta' \eta'' X)  \; = \;  (1 \, + \, \eta' \eta'' X) \, \big( 1 \, + \, \eta \, \eta' \eta'' \, [Y,X] \big) \, (1 \, + \, \eta \, Y)  \; = $
                                                              \hfill{\ }\break
   \indent \hskip-1pt   \phantom{(f) \;  $ \,\;\; (1 \, + \, \eta \, Y) \, (1 \, + \, \eta' \eta'' X) $}  $ \; = \;  (1 \, + \, \eta' \eta'' X) \, (1 \, + \, \eta \, Y) \, \big( 1 \, + \, \eta \, \eta' \eta'' \, [Y,X] \big)  \quad $.
 \vskip4pt
   (g) \,  Let  $ \, (h,k) := h\,k\,h^{-1}k^{-1} $  be the commutator of elements  $ h $  and  $ k $  in a group.  Then
  $$  \displaylines{
   \big( \big( 1 + \eta \, Y \big) , \big( 1 + \eta' \, Y' \big) \big)  =  \big( 1 + \eta' \, \eta \, [Y,Y'\,] \big) \; ,  \;\;  \big( \big( 1 + \eta \, Y \big) , \big( 1 + \eta \, Y' \big) \big)  =  \big( 1 + \eta \, (Y+Y') \big)  \cr
   \big( \big( 1 + \eta' \, Y \big) \, , \big( 1 + \eta'' \, Y \big) \big) \, = \, {\big( 1 + \eta'' \, \eta' \, Y^{\langle 2 \rangle} \big)}^2 \, = \, \big( 1 + \eta'' \, \eta' \, 2 \, Y^{\langle 2 \rangle} \big) \, = \, \big( 1 + \eta'' \, \eta' \, [Y,Y] \big)  }  $$

(N.B.: taking the rightmost term in the last identity, the latter is a special case of the first).
\end{lemma}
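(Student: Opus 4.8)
The plan is to realize each of the elements $1 + \eta\,Y$ and $1 + c\,X$ as an explicit $\bk$--superalgebra homomorphism $\cO(G) \to A$, and then to reduce every multiplicative identity to a computation in the convolution group $\Hom_{\salg_\bk}\big(\cO(G),A\big) = G(A)$, reading off the ``second order'' contributions through the bracket and the $2$--operation of $\, \fg = \omega_e(G)^* \, $.

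\textbf{Setup and part (a).} Since $\Lie(G) = \cL_\fg$ I identify $\fg$ with $\omega_e(G)^* = T_e(G)$, so that $Y \in \fg_\uno$ is an odd point-superderivation of $\cO(G)$ at the unit and $X \in \fg_\zero$ an even one. For $\eta \in A_\uno$ I define $\, 1+\eta\,Y : \cO(G) \to A \,$ by $\, f \mapsto \epsilon(f) + \eta\,\langle Y,f\rangle \,$, and for $c \in A_\zero$ with $c^2 = 0$ I set $\, 1+c\,X : f \mapsto \epsilon(f) + c\,\langle X,f\rangle \,$. A direct check shows these are superalgebra maps: in verifying $\phi(fg) = \phi(f)\,\phi(g)$ the quadratic term dies because $\eta^2 = 0$ (resp. $c^2 = 0$), and the Leibniz sign appearing in $\langle Y,fg\rangle$ is harmless since the signed summand carries a factor $\epsilon(\text{odd}) = 0$. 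Hence $1+\eta\,Y,\ 1+c\,X \in G(A)$; moreover $1+c\,X$ factors through $A_\zero$ (as $\langle X,\cdot\rangle$ kills odd elements and $c \in A_\zero$), so $1+c\,X \in G_\zero(A) = G(A_\zero)$. Finally $[Y,Y'] \in \fg_\zero$ and $(\eta\,\eta')^2 = 0$, so $1+\eta\,\eta'\,[Y,Y']$ is a special case of $1+c\,X$ and lies in $G_\zero(A)$; this proves (a).

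\textbf{Main engine, and (b)--(g).} For the multiplicative identities I expand the convolution product $\, g'\cdot g'' = m_A \circ (g'\otimes g'') \circ \Delta_{\cO(G)} \,$ of formula (3.1). Writing $\Delta f = f_{(1)} \otimes f_{(2)}$, the product of two factors of type $1+\eta\,Y$ produces a first order part $\, \eta\,\langle Y,\cdot\rangle + \eta'\,\langle Y',\cdot\rangle \,$ together with a single second order term $\, \eta\,\eta'\sum \langle Y,f_{(1)}\rangle\,\langle Y',f_{(2)}\rangle \,$, all higher terms vanishing by nilpotency of the odd scalars. The key point is that the convolution square and the convolution supercommutator of the functionals $\langle Y,\cdot\rangle$ and $\langle Y',\cdot\rangle$ reproduce, up to the expected Koszul sign, exactly $\langle Y^{\langle 2\rangle},\cdot\rangle$ and $\langle [Y,Y'],\cdot\rangle$ --- these being the very structures transported from $\, \fg'' = \Der_\bk^{\,\ell}(\cO(G)) \,$ to $\, \fg = \omega_e(G)^* \,$ in Proposition \ref{Lie-funct_Lie(G)}. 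Comparing the resulting homomorphisms $\cO(G) \to A$ on an arbitrary $f$ then yields (c) (reordering two distinct odd directions introduces the factor $1+\eta''\eta'\,[Y',Y'']$), (d) (for a common $\eta$ the second order term vanishes, so $Y \mapsto 1+\eta\,Y$ is additive and commutative), and (e) (for a common $Y$ the second order term is governed by $Y^{\langle 2\rangle}$, whence the factor $1+\eta''\eta'\,Y^{\langle 2\rangle}$). Identity (b) is read off the definition of $\Ad$: conjugation of the tangent direction $Y$ by $g_0 \in G_\zero(A)$ sends $1+\eta\,Y$ to $1+\eta\,\Ad\big(g_0^{-1}\big)(Y)$, which rearranges to (b); and (f) follows by combining (b) with the infinitesimal relation $\ad(X) = [X,-]$ after expanding $1+\eta'\eta''X$ to first order. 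Finally (g) is obtained by rewriting (c), (d), (e) as commutator identities, the last equality invoking $[Y,Y] = 2\,Y^{\langle 2\rangle}$ from Definition \ref{def-Lie-salg}(e).

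\textbf{The main obstacle} is the bookkeeping of Koszul signs. All the subtlety of the statement is concentrated in the order of the odd scalars: the appearance of $\eta''\eta'$ (rather than $\eta'\eta'$) in (e) and (g), and of $\eta''\eta'\,[Y',Y'']$ in (c), is precisely the sign $(-1)^{|Y||Y'|}$ produced when an odd scalar is moved past an odd tangent direction in the convolution --- equivalently, the super-sign in the product of $\, A \otimes \End(V) \,$ in the linear model, where $(\eta'\otimes Y)(\eta''\otimes Y) = (-1)^{|Y||\eta''|}\,\eta'\eta''\otimes Y^2$. Keeping these signs consistent, and matching them against the conventions for $[\,\cdot\,,\cdot\,]$ and $(\,\cdot\,)^{\langle 2\rangle}$ fixed in Definition \ref{def-Lie-salg} and Example \ref{def-End(V)}, is the only delicate step; once the convolution square and supercommutator are correctly identified with $Y^{\langle 2\rangle}$ and $[Y,Y']$, each identity reduces to comparing two superalgebra maps $\cO(G) \to A$.
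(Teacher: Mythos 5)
Your proposal is correct and follows essentially the same route as the paper: both realize $1+\eta\,Y$ and $1+c\,X$ as elements of the convolution algebra $\Hom_{{\text{(smod)}}_\bk}\big(\cO(G),A\big)$, prove (a) from the $\epsilon$--derivation property of $a\,Z$ together with $a^2=0$, and reduce (b)--(g) to convolution expansions governed by $Y \star Y = Y^{\langle 2\rangle}$, the super-commutator giving $[\,\cdot\,,\cdot\,]$, and $g_0^{-1}\,Y\,g_0 = \Ad\big(g_0^{-1}\big)(Y)$, with (g) as a formal consequence of (c), (d), (e). The only (immaterial) variation is in (a): the paper deduces the derivation property via the identification $\cL_\fg(A) \cong \Lie(G)(A) := \Ker\big({G(p)}_A\big)$ inside $G\big(A[\varepsilon]\big)$, whereas you take the point-superderivation description of $\fg = {\omega_e(G)}^*$ from Proposition \ref{Lie-funct_Lie(G)} as the starting point and check multiplicativity directly.
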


\begin{proof}
 Recall that  $ \, G(A) = \Hom_{\salg_\bk}\big(\cO(G)\,,A\big) \subseteq \Hom_{{\text{(smod)}}_\bk}\big(\cO(G)\,,A\big) \, $,  the latter being the  $ \bk $--supermodule  of all morphisms between  $ \cO(G) $  and  $ A $  in the category of  $ \bk $--supermodules;  the sum in the formulas then is just the sum in the  $ \bk $--supermodule  $ \, \Hom_{{\text{(smod)}}_\bk}\big(\cO(G)\,,A\big) \, $.  Also, in those formulas the  ``$ 1 $''  stands for the unit element in  $ \, G(A) = \Hom_{\salg_\bk}\big(\cO(G)\,,A\big) \, $,  which is the map given by composition  $ \, 1 := 1_{\scriptscriptstyle G(A)} : \cO(G) \;{\buildrel {\epsilon_{\scriptscriptstyle \cO(G)}} \over {\relbar\joinrel\relbar\joinrel\relbar\joinrel\longrightarrow}}\; \bk \,{\buildrel {u_{\scriptscriptstyle A}} \over {\relbar\joinrel\relbar\joinrel\longrightarrow}}\, A \, $.
                                                               \par
   Once this is fixed, everything follows easily by straightforward calculations and from the identities  $ \, \big( 1 + \, \eta \, \eta' [Y,Y'] \big) \! = \exp\!\big( \eta \, \eta' [Y,Y'] \big) \, $  (see \cite{ccf}, \S 11.5,  for details),  $ \, Y^2 \! = \! Y^{\langle 2 \rangle} \, $  and  $ \, g_0^{-1} \, Y \, g_0 = \Ad\big(g_0^{-1}\big)(Y) \, $,  which do hold in any representation of  $ \fg \, $.  In particular, claim  {\it (g)\/}  directly follows from the identities in  {\it (c)},   {\it (d)\/}  and  {\it (e)}.
 \vskip4pt
   It is possibly worth adding some details for claim  {\it (a)},  which holds by extending to the present super-context a standard trick for group-schemes.
                                                               \par
   Let  $ \, (a,Z) \, $  be  $ \, (c,X) \, $  or  $ \, (\eta,Y) \, $.  From  $ \, Z \in \fg \, $  we have
  $$  a\,Z \, := \, a \otimes Z \in {\big( A_\bk \otimes \fg \big)}_\zero \, =: \, \fg(A)  \;\; .  $$
 By the standard identification of  $ \, \fg(A) = \cL_\fg(A) \, $  with  $ \, \Lie\,(G)(A) := \Ker\big( {G(p)}_A \big) \, $   --- see  \S \ref{Lie(G)}  and references therein ---   we have that  $ \; \big(\, 1 + \, \varepsilon \, a \, Z \,\big) \in G\big(A[\varepsilon]\big) \; $.  But now, as  $ \, \varepsilon^2 = 0 \, $,  the fact that  $ \, \big(\, 1 + \varepsilon \, a \, Z \,\big) \in G\big(A[\varepsilon]\big) = \Hom_{\salg_\bk}\big(\cO(G)\,,A[\varepsilon]\big) \, $  be multiplication preserving is equivalent to the fact that  $ \, a \, Z \in \Hom_{{\text{(smod)}}_\bk}\big(\cO(G)\,,A\big) \, $  be an  $ A $--valued  $ \epsilon_{\scriptscriptstyle \cO(G)} $--derivation,  i.e.
  $$  a \, Z \big( f' \cdot f'' \big)  \,\; = \;\,  a \, Z \big(f'\big) \cdot \epsilon_{\scriptscriptstyle \cO(G)}\big(f''\big) \, + \, \epsilon_{\scriptscriptstyle \cO(G)}\big(f'\big) \cdot a \, Z \big(f''\big)   \eqno \forall\;\, f' , f'' \in \cO(G)  \quad  $$
But then, in turn, as  $ \, a \in \{c,\eta\} \, $  also satisfies  $ \, a^2 = 0 \, $,  we have that  $ \, \big(\, 1 + \, a \, Z \,\big) \, $  is multiplication preserving too, so that  $ \; \big(\, 1 + a \, Z \,\big) \, \in \, \Hom_{\salg_\bk}\big(\cO(G)\,,A\big) =: G(A) \; $,  \, q.e.d.
\end{proof}

\medskip

   This lemma is the key to prove the next relevant result:

\medskip

\begin{proposition}  \label{fact-G_P}
 For any  $ \, A \in {\text{\rm (salg)}}_\bk \, $,  there exist group-theoretic factorizations
  $$  G_{{}_{\!\mathcal{P}}}(A) \; = \; G_+(A) \cdot G_-^{\scriptscriptstyle \,<}(A)  \quad ,  \qquad  G_{{}_{\!\mathcal{P}}}(A) \; = \; G_-^{\scriptscriptstyle \,<}(A) \cdot G_+(A)  $$
   Moreover, the group  $ G_{{}_{\!\mathcal{P}}}(A) $  is independent of the choice of an ordered\/  $ \bk $--basis  $ {\big\{ Y_i \big\}}_{i \in I_{\phantom{|}}} \! $  of\/  $ \fg_{\mathbf{1}} $  used for its definition; the same holds true for the whole functor  $ G_{{}_{\!\cP}} \, $.  Similarly, the sets  $ \, G_+\big(A_{\mathbf{1}}^{(2)}\big) \, G_-^{\scriptscriptstyle \,<}(A) \, $  and  $ \, G_-^{\scriptscriptstyle \,<}(A) \, G_+\big(A_{\mathbf{1}}^{(2)}\big) \, $   --- cf.\ \S 2.1.1 ---   both coincide with the subgroup of\/  $ G_{{}_{\!\mathcal{P}}}(A) $  generated by  $ G_+\big(A_{\mathbf{1}}^{(2)}\big) $  and  $ G_-^{\scriptscriptstyle \,<}(A) \, $,  and they are independent of the choice of an ordered\/  $ \bk $--basis  of\/  $ \fg_{\mathbf{1}} \, $.
\end{proposition}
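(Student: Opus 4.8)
The plan is to prove the first factorization by showing that the subset $\,\mathcal{G} := G_+(A)\cdot G_-^{\scriptscriptstyle \,<}(A)\,$ of $\rGL(V)(A)$ is in fact a subgroup. Since $\,1_{{}_V}\in G_+(A)\,$ and $\,1_{{}_V}\in G_-^{\scriptscriptstyle \,<}(A)\,$, the set $\mathcal{G}$ contains both $G_+(A)$ and $G_-^{\scriptscriptstyle \,<}(A)$, while by construction $\,\mathcal{G}\subseteq G_{\mathcal{P}}(A)=\big\langle G_+(A)\cup G_-^{\scriptscriptstyle \,<}(A)\big\rangle\,$; hence, once $\mathcal{G}$ is shown to be a subgroup, it must coincide with $G_{\mathcal{P}}(A)$. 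The second factorization $\,G_{\mathcal{P}}(A)=G_-^{\scriptscriptstyle \,<}(A)\cdot G_+(A)\,$ then follows by a symmetric argument (or by passing to inverses).

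The heart of the matter is a \emph{normal-form} (reordering) step, for which Lemma \ref{tang-group} is exactly tailored. First I would record that each factor $\,(1_{{}_V}+\eta\,Y_i)\,$ is invertible with inverse $\,(1_{{}_V}-\eta\,Y_i)\in 1_{{}_V}+A_\uno Y_i\,$, because $\,(\eta\,Y_i)^2=0\,$. Next, given $\,g_0\in G_+(A)\,$ and $\,u=\prod_i^{\rightarrow}(1_{{}_V}+\eta_i Y_i)\in G_-^{\scriptscriptstyle \,<}(A)\,$, I would use Lemma \ref{tang-group}(b) to push $g_0$ to the left through $u$, producing $\,g_0\cdot\prod_i^{\rightarrow}\big(1_{{}_V}+\eta_i\,\Ad(g_0^{-1})(Y_i)\big)\,$; since $\Ad$ preserves the $\Z_2$--grading, each $\,\Ad(g_0^{-1})(Y_i)=\sum_j a_{ij}Y_j\,$ with $\,a_{ij}\in A_\zero\,$, and because all coefficients $\eta_i a_{ij}$ share the square-zero factor $\eta_i$, relation (d) rewrites each conjugated factor as a product $\,\prod_j(1_{{}_V}+\eta_i a_{ij}Y_j)\,$. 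It then remains to sort an arbitrary word in the factors $(1_{{}_V}+\mu\,Y_i)$ into canonical ordered form: relations (c) and (e) swap and merge adjacent factors at the cost of an even factor $\,(1_{{}_V}+c\,Z)\,$ with $\,c\in A_\uno^{[2]}\subseteq A_\uno^{(2)}\,$ and $\,Z\in[\fg_\uno,\fg_\uno]+\fg_\uno^{\langle 2\rangle}\subseteq\fg_\zero\,$, while relation (f) (together with (b)) transports all such even factors to the far left. By Lemma \ref{tang-group}(a) applied to the classical group $G_+$, each even factor lies in $G_+(A_\uno^{(2)})\subseteq G_+(A)$, and the surviving ordered block lies in $G_-^{\scriptscriptstyle \,<}(A)$. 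This yields the key facts $\,u\,u'\in G_+(A_\uno^{(2)})\cdot G_-^{\scriptscriptstyle \,<}(A)\,$, $\,u^{-1}\in G_+(A_\uno^{(2)})\cdot G_-^{\scriptscriptstyle \,<}(A)\,$ and $\,u\,g_0\in G_+(A)\cdot G_-^{\scriptscriptstyle \,<}(A)\,$, from which closure of $\mathcal{G}$ under products and inverses is immediate.

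For the independence of $G_{\mathcal{P}}$ from the chosen ordered basis, I would compare two ordered bases $\{Y_i\}$ and $\{Z_j\}$ of $\fg_\uno$. Writing $\,Z_j=\sum_i b_{ji}Y_i\,$ with $\,b_{ji}\in\bk\,$ and using relation (d) as above, each generator $\,(1_{{}_V}+\eta\,Z_j)\,$ equals $\,\prod_i(1_{{}_V}+\eta\,b_{ji}Y_i)\,$, which already lies in the $Y$--version of $G_-^{\scriptscriptstyle \,<}(A)$; hence the $Z$--version of $G_-^{\scriptscriptstyle \,<}(A)$ is contained in the $Y$--version of $G_{\mathcal{P}}(A)$, and by symmetry the two generated groups coincide. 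Naturality in $A$ of all maps involved upgrades this to an equality of functors.

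Finally, for the subgroup $\,\big\langle G_-^{\scriptscriptstyle \,<}(A)\big\rangle\,$ generated by $G_-^{\scriptscriptstyle \,<}(A)$, the same normal-form procedure shows that any word in the odd generators reduces to $\,G_+(A_\uno^{(2)})\cdot G_-^{\scriptscriptstyle \,<}(A)\,$ — the even corrections now being tracked to lie in $G_+(A_\uno^{(2)})$ precisely because their coefficients belong to $A_\uno^{[2]}\subseteq A_\uno^{(2)}$ — and that this set is itself a subgroup (rerunning the closure argument with $G_+(A_\uno^{(2)})$ in place of $G_+(A)$, noting that $\Ad$ by an $A_\uno^{(2)}$--point keeps the coefficients $a_{ij}$ inside $A_\uno^{(2)}$). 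The reverse inclusion rests on Lemma \ref{tang-group}(g): the commutators of odd generators realize the even correction factors $\,(1_{{}_V}+c\,[Y,Y'])\,$ inside $\big\langle G_-^{\scriptscriptstyle \,<}(A)\big\rangle$, feeding the $G_+(A_\uno^{(2)})$--factor back into the generated subgroup. I expect this last identification — matching exactly the subgroup of even corrections with $G_+(A_\uno^{(2)})$ — to be the main obstacle, since it demands careful bookkeeping of which even elements the reordering actually produces (governed by $A_\uno^{[2]}$ and the brackets $[\fg_\uno,\fg_\uno]$); the whole argument hinges on the normal-form step being performed consistently across all three assertions.
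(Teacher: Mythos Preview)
Your overall strategy matches the paper's: reduce everything to a reordering (``normal-form'') claim, using Lemma~\ref{tang-group}(b) and (d) to commute $G_+$--elements past odd factors, and (c), (e), (f) to sort the odd factors. However, you have glossed over the one genuinely delicate point, namely \emph{termination} of the reordering procedure, and this is where the paper invests most of its effort.

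When you invoke relation (f) to ``transport an even factor to the far left'' through an odd factor $(1+\eta\,Y)$, you do not simply commute them. Relation (f) reads
\[
(1+\eta\,Y)\,(1+\eta'\eta''X)\;=\;(1+\eta'\eta''X)\,\big(1+\eta\,\eta'\eta''\,[Y,X]\big)\,(1+\eta\,Y),
\]
so a \emph{new odd factor} $\big(1+\eta\,\eta'\eta''\,[Y,X]\big)$ appears, with $[Y,X]\in\fg_\uno$. Expanding $[Y,X]$ in the basis $\{Y_i\}$ via (d) produces several fresh odd generators, which in general create new inversions. Thus each pass through the sorting routine can spawn more work, and there is no a~priori reason the process halts. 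Your sketch treats the sort as a single finite bubble-sort with side effects that can be collected afterwards; that picture is incorrect.

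The paper's fix is to introduce the ideal $\,\mathfrak{a}\trianglelefteq A\,$ generated by the finitely many odd coefficients $\eta_k$ occurring in the original word, and to track the $\mathfrak{a}$--adic order of all coefficients. A swap via (c) or (e) produces an even factor with coefficient in $\mathfrak{a}^{2}$; pushing it left via (f) produces odd factors with coefficients in $\mathfrak{a}^{3}$. One first eliminates all inversions among the \emph{original} factors, then repeats the procedure on the newly created factors, whose coefficients now lie in $\mathfrak{a}^{3}$, then $\mathfrak{a}^{5}$, and so on. Since $\mathfrak{a}$ is generated by finitely many odd (hence square-zero) elements, $\mathfrak{a}^{\,n}=\{0\}$ for $n\gg 0$, and the iteration terminates. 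You need to insert this filtration-and-nilpotency argument explicitly; without it the ``normal-form step'' is not a proof. Incidentally, the obstacle you flag at the end --- matching the even corrections with $G_+\big(A_\uno^{(2)}\big)$ --- is comparatively minor once termination is in hand, since the same bookkeeping shows every even factor produced has coefficient in $A_\uno^{[2]}\subseteq A_\uno^{(2)}$.
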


\begin{proof}
 First of all, as we need it later on, we notice that the even part of  $ G_{{}_{\!\cP}} $  is, directly from definitions, nothing but  $ G_+ \, $,  i.e.~$ \, {\big( G_{{}_{\!\cP}} \big)}_\zero = G_+ \, $.  Definitions imply also that the inverse of any element  $ \, (1 + \eta_i \, Y_i) \in G_-^{\scriptscriptstyle \,<}(A) \, $  is nothing but  $ \, {(1 + \eta_i \, Y_i)}^{-1} = (1 - \eta_i \, Y_i) \in G_-^{\scriptscriptstyle \,<}(A) \, $.  Taking this into account, our goal amounts to showing that
  $$  g'_+ \, {\textstyle \prod\limits_{i \in I}^\rightarrow} \big( 1 + \eta'_i \, Y_i \big) \cdot g''_+ \, {\textstyle \prod\limits_{i \in I}^\rightarrow} \big( 1 + \eta''_i \, Y_i \big)  \; \in \;  G_+(A) \cdot G_-^{\scriptscriptstyle \,<}(A)   \eqno (4.1)  $$
for all  $ \, g'_+ \, , \, g''_+ \in G_+(A) \, $  and  $ \, \eta'_i \, , \, \eta''_i \! \in \! A_\uno \, $,  i.e.~we can re-write  $ \; g'_+ \, {\textstyle \prod\limits_{i \in I}^\rightarrow} \! \big( 1 + \eta'_i \, Y_i \big) \cdot g''_+ \, {\textstyle \prod\limits_{i \in I}^\rightarrow} \! \big( 1 + \eta''_i \, Y_i \big) \; $  as the product of an element in  $ G_+(A) $  times an ordered product of factors of type  $ \, (1 + \eta_\ell \, Y_\ell) \, $.
                                               \par
   First of all, claim  {\it (b)\/}  of  Lemma \ref{tang-group}  gives  (for all  $ \, i \in I \, $)
  $$  (1 + \eta'_i \, Y_i) \, g''_+ \; = \; g''_+ \, \big( 1 + \eta'_i \, \Ad\big({(g''_+)}^{-1}\big)(Y_i) \big) \; = \; g''_+ \, \big( 1 + \eta'_i \, {\textstyle \sum_{j \in I}} \, c_{i,j} \, Y_j \,\big)  $$
for some  $ \, c_{i,j} \in \bk \, $  ($ \, j \in I \, $).  But now the special case of claim  {\it (d)\/}  in  Lemma \ref{tang-group}  implies
  $$  \Big( 1 + \eta'_i \, {\textstyle \sum_{j \in I}} \, c_{i,j} \, Y_j \,\Big)  \, = \,  {\textstyle \prod\limits_{j \in I}} \big( 1 + \eta'_i \, c_{i,j} \, Y_j \,\big)  \, = \,  {\textstyle \prod\limits_{j \in I}^\rightarrow} \big( 1 + \eta'_i \, c_{i,j} \, Y_j \,\big)  \,\; \in \;\,  G_-^{\scriptscriptstyle \,<}(A) \;\;   \eqno (4.2)  $$
as in particular the factors in the product(s) do commute among themselves.
                                              \par
   Applying all this to  $ \; g'_+ \, {\textstyle \prod\limits_{i \in I}^\rightarrow} \! \big( 1 + \eta'_i \, Y_i \big) \cdot g''_+ \, {\textstyle \prod\limits_{i \in I}^\rightarrow} \! \big( 1 + \eta''_i \, Y_i \big) \; $  we eventually find
  $$  g'_+ \, {\textstyle \prod\limits_{i \in I}^\rightarrow} \! \big( 1 + \eta'_i \, Y_i \big) \cdot g''_+ \, {\textstyle \prod\limits_{i \in I}^\rightarrow} \! \big( 1 + \eta''_i \, Y_i \big)  \,\; = \;\,  \big( g'_+ \, g''_+ \big) \cdot \, {\textstyle \prod\limits_{i \in I}^\rightarrow} \, {\textstyle \prod\limits_{j \in I}^\rightarrow} \big( 1 + \eta'_i \, c_{i,j} \, Y_j \,\big) \, {\textstyle \prod\limits_{i \in I}^\rightarrow} \big( 1 + \eta''_i \, Y_i \big)  $$
in which the first factor  $ \, \big( g'_+ \, g''_+ \big) \, $  of the right-hand side does belong to  $ G_+(A) \, $.  Therefore, in order to prove (4.1) we are left to show that the following holds:

\vskip11pt

   {\sl  $ \underline{\text{\it Claim}} $:  Any (possibly unordered) product of the form  $ \, {\textstyle \prod_{k=1}^N} \big( 1 + \eta_k \, Y_{i_k} \big) \, $  can be ``re-ordered'', i.e.~it can be re-written as an element of  $ \, G_+\big(A_\uno^{(2)}\big) \cdot G_-^{\scriptscriptstyle \,<}(A) \, $.}

\vskip11pt

   In order to prove the  {\it Claim},  let  $ \fa $  be the (two-sided) ideal of  $ A $  generated by the  $ \eta_k $'s,  and denote by  $ \, \fa^n \, $  its  $ n $--th  power, for any  $ \, n \in \N \, $.  As the  $ \eta_k $'s  are finitely many  {\sl odd\/}  elements, we have  $ \, \fa^n = \{0\} \, $,  for all  $ \, n > N \, $.
                                                     \par
   Looking at the product  $ \, {\textstyle \prod_{k=1}^N} \big( 1 + \eta_k \, Y_{i_k} \big) \, $,  we define its  {\sl inversion number\/}  as being the number of occurrences of two consecutive indices  $ k_s $  and  $ k_{s+1} $  such that  $ \, i_{k_s} \npreceq i_{k_{s+1}} \, $:  the product itself then is  {\sl ordered\/}  iff its inversion number is zero.
                                                     \par
   Now assume the product  $ \, g := {\textstyle \prod_{k=1}^N} \big( 1 + \eta_k \, Y_{i_k} \big) \, $  is unordered: then there exists at least an inversion, say  $ \, i_{k_s} \npreceq i_{k_{s+1}} \, $,  i.e.~either  $ \, i_{k_s} \succ i_{k_{s+1}} \, $  or  $ \, i_{k_s} = i_{k_{s+1}} \, $.  Using claim  {\it (c)\/}  or  {\it (e)},  respectively, of  Lemma \ref{tang-group}  we can re-write the product  $ \; \big( 1 + \eta_{k_s} Y_{i_{k_s}} \big) \, \big( 1 + \eta_{k_{s+1}} Y_{i_{k_{s+1}}} \big) \; $  as
   $$  \displaylines{
    \big( 1 + \eta_{k_s} Y_{i_{k_s}} \big) \, \big( 1 + \eta_{k_{s+1}} Y_{i_{k_{s+1}}} \big)  \; = \;  \big( 1 + \eta_{k_{s+1}} \, \eta_{k_s} \, \big[Y_{i_{k_{s+1}}},Y_{i_{k_s}}\big] \big) \, \big( 1 + \eta_{k_{s+1}} Y_{i_{k_{s+1}}} \big) \, \big( 1 + \eta_{k_s} Y_{i_{k_s}} \big)  \cr
    \big( 1 + \eta_{k_s} Y_{i_{k_s}} \big) \, \big( 1 + \eta_{k_{s+1}} Y_{i_{k_s}} \big)  \,\; = \;  \Big( 1 + \eta_{k_{s+1}} \, \eta_{k_s} Y_{i_{k_s}}^{\,\langle 2 \rangle} \Big) \, \big( 1 + (\eta_{k_s} \! + \eta_{k_{s+1}}) \, Y_{i_{k_s}} \big)  }  $$
   Thus, re-writing in this way the product of the  $ k_s $--th  and the  $ k_{s+1} $--th  factor in the original product  $ \, g := {\textstyle \prod_{k=1}^N} \big( 1 + \eta_k \, Y_{i_k} \big) \, $,  we find another product expression in which we did eliminate one inversion, but we payed the price of inserting a  {\sl new factor}.  However, in both cases this new factor is of the form  $ \, \big( 1 + a \, X \big) \, $  for some $ \, X \in \fg_\zero \in G_+(A) \, $  and  $ \, a \in \fa^2 \, $.
                                                     \par
   By repeated use of  Lemma \ref{tang-group}{\it (f)\/}  we can shift this new factor  $ \, \big( 1 + a \, X \big) \, $  to the leftmost position in  $ g $  (now re-written once more in yet a different product form) up to paying the price of inserting several  {\sl new factors of the form\/}  $ \, \big( 1 + b_t \, Z_t \big) \, $  for some $ \, Z_t \in \fg_\uno \, $  and  $ \, b_t \in \fa^3 \, $.  Moreover, by  Lemma \ref{tang-group}{\it (d)\/}  each one of these new factors can be written as a product of factors of the form  $ \, \big( 1 + \eta'_h \, Y_{i'_h} \big) \, $  where  $ \, \eta'_h \in A_\uno \, $  is a multiple of some  $ \, b_t \, $,  hence  $ \, \eta'_h \in \fa^3 \, $  too.
                                                     \par
   Eventually, we find a new factorization of the original element  $ \, g := {\textstyle \prod_{k=1}^N} \big( 1 + \eta_k \, Y_{i_k} \big) \, $  in the new form  $ \, g := g'_0 \cdot {\textstyle \prod_{h=1}^{N'}} \big( 1 + \eta'_h \, Y_{i'_h} \big) \, $  where  $ \, g'_0 \in G_+(A) \, $  and the factors  $ \, \big( 1 + \eta'_h \, Y_{i'_h} \big) \, $  satisfy the following conditions:
 \vskip5pt
   {\it --- (a)}\,  each new factor  $ \, \big( 1 + \eta'_h \, Y_{i'_h} \big) \, $  is either one of the old factors  $ \, \big( 1 + \eta_k \, Y_{i_k} \big) \, $  or a truely new one;
 \vskip4pt
   {\it --- (b)}\,  for every (truely) new factor  $ \, \big( 1 + \eta'_h \, Y_{i'_h} \big) \, $  one has  $ \; \eta'_h \in \fa^3 \; $;
 \vskip4pt
   {\it --- (c)}\,  the number of inversions among factors  $ \, \big( 1 + \eta'_h \, Y_{i'_h} \big) = \big( 1 + \eta_k \, Y_{i_k} \big) \, $  of the old type is one less than before.
 \vskip7pt
   Iterating this procedure, after finitely many steps we can achieve a new factorization of the original element  $ \, g := {\textstyle \prod_{k=1}^N} \big( 1 + \eta_k \, Y_{i_k} \big) \, $  as a new product  $ \, g = g''_0 \cdot {\textstyle \prod_{h=1}^{N''}} \big( 1 + \eta''_h \, Y_{i''_h} \big) \, $  where  $ \, g''_0 \in G_+(A) \, $  and the factors  $ \, \big( 1 + \eta''_h \, Y_{i''_h} \big) \, $  enjoy properties  {\it (a)\/}  and  {\it (b)\/}  above plus the ``optimal version'' of  {\it (c)},  namely
 \vskip4pt
   {\it --- (c+)}\,  the number of inversions among factors
 of the old type is zero.
 \vskip7pt
   Now we apply the same ``reordering operation'' to the product  $ \, {\textstyle \prod_{h=1}^{N''}} \big( 1 + \eta''_h \, Y_{i''_h} \big) \, $.  By assumption, now an inversion can occur only among two factors of new type or among an old and a new factor.  But then, the two coefficients  $ \eta''_h $  involved by the inversion belong to  $ \fa $  and at least one of them belong to  $ \fa^3 \, $.  It follows that when one performs the ``reordering operation'' onto the pair of factors involved in the inversion the new factor which pops up necessarily involves a coefficient in  $ \fa^4 \, $.  As this applies for any possible inversion, at the end of the day we shall find a new factorization of  $ g $  of the form
  $$  g  \; = \;  g''_0 \cdot \widehat{g}_0 \cdot {\textstyle \prod_{t=1}^{\widehat{N}}} \big( 1 + \widehat{\eta}_t \, Y_{\widehat{i}_t} \big)  $$
in which  $ \, \widehat{g}_0 \in G_+(A) \, $  and the factors  $ \, \big( 1 + \widehat{\eta}_t \, Y_{\widehat{i}_t} \big) \, $  are either old factors  $ \, \big( 1 + \eta_k \, Y_{i_k} \big) \, $,  {\sl with no inversions among them},  or new factors for which  $ \, \widehat{\eta}_t \in \fa^5 \, $.
                                             \par
   The end of the story is clear.  We can iterate at will this procedure, and then   --- since  $ \, \fa^n = \{0\} \, $  for  $ \, n > N \, $  ---   after finitely many steps we have no longer any new factor popping out; thus, we eventually find a last factorization of  $ g $  of the form
  $$  g  \,\; = \;\,  \widetilde{g}_0 \cdot {\textstyle \prod_{\ell=1}^{\widetilde{N}}} \big( 1 + \widetilde{\eta}_\ell \, Y_{\widetilde{i}_\ell} \big)  \,\; = \;\,  \widetilde{g}_0 \cdot {\textstyle \prod\limits^\rightarrow} {\textstyle {}_{\ell=1}^{\widetilde{N}}} \big( 1 + \widetilde{\eta}_\ell \, Y_{\widetilde{i}_\ell} \big)  $$
in which  $ \, \widetilde{g}_0 \in G_+(A) \, $  and  $ \,\; {\textstyle \prod_{\ell=1}^{\widetilde{N}}} \big( 1 + \widetilde{\eta}_\ell \, Y_{\widetilde{i}_\ell} \big) \, = \, {\textstyle \prod\limits^\rightarrow} {}_{\ell=1}^{\widetilde{N}} \big( 1 + \widetilde{\eta}_\ell \, Y_{\widetilde{i}_\ell} \big) \; \in \; G_-^{\scriptscriptstyle \,<}(A) \;\, $  {\sl  is an ordered product},  as required, so that  $ \; g \, \in \, G_+(A) \cdot G_-^{\scriptscriptstyle \,<}(A) \, $,  \, q.e.d.

\vskip7pt

   For the last part of the main statement, let  $ \, {\big\{ Y_i \big\}}_{i \in I} $  and  $ \, {\big\{ Z_i \big\}}_{i \in I} $  be two (finite)  $ \bk $--bases  of  $ \fg_\uno \, $;  then  $ \, Z_j = \sum_{i \in I} c_i \, Y_i \, $  (with  $  \, c_i  \in I \, $)  for each  $ \, j \in I \, $.  The same argument proving (4.2) also yields
 \vskip-5pt
  $$  1 + \psi_j \, Z_j  \; = \;  1 + \psi_j \, {\textstyle \sum\limits_{i \in I}} \, c_i \, Y_i  \; = \;
{\textstyle \prod\limits_{i \in I}^\rightarrow} \big( 1 + \psi_j \, c_i \, Y_i \,\big)  \,\; \in \;\,  G_-^{\scriptscriptstyle \,<, Y}(A)   \eqno (4.3)  $$
 \vskip-2pt
\noindent
 where  $ G_-^{\scriptscriptstyle \,<, Y} $  is relative to the group  $ G_{{}_{\!\cP}}^{\,\scriptscriptstyle Y} $  defined as in  Definition \ref{def G_- / G_P - lin}  making use of the basis of the  $ \, {\big\{ Y_i \big\}}_{i \in I} \, $.  Letting  $ G_-^{\scriptscriptstyle \,<, Z} $  be the similar group defined via the basis of the  $ Z_j $'s,  formula (4.3) proves that  $ \, G_{{}_{\!\cP}}^{\,\scriptscriptstyle \,<, Z}(A) \subseteq G_+\big(A_{\mathbf{1}}^{(2)}\big) \, G_{{}_{\!\cP}}^{\,\scriptscriptstyle \,<, Y}(A) \, $,  \, so by symmetry we eventually get  $ \, G_+\big(A_{\mathbf{1}}^{(2)}\big) \, G_{{}_{\!\cP}}^{\,\scriptscriptstyle \,<, Z}(A) = G_+\big(A_{\mathbf{1}}^{(2)}\big) \, G_{{}_{\!\cP}}^{\,\scriptscriptstyle \,<, Y}(A) \, $.
\end{proof}

\medskip

   To improve the previous result, we need a couple of additional lemmas.

\medskip

\begin{lemma}  \label{lemma-prod}
 Let  $ \, A \in \salg_\bk \, $,  let $ \, \hat{\eta}_i \, , \check{\eta}_i \in A_\uno \, $  and let  $ \, \mathfrak{q} \, $  be an ideal of  $ A \, $  such that  $ \, \hat{\eta}_i , \check{\eta}_i \in \mathfrak{q} \, $  and  $ \, \alpha_i := \hat{\eta}_i - \check{\eta}_i \in \mathfrak{q}^n \, $  ($ \, i \in I \, $)  for some  $ \, n \in \N \, $. Then
 \vskip-5pt
  $$  {\textstyle \prod\limits_{i \in I}^\rightarrow} \big( 1 + {[ \hat{\eta}_i ]}_{n+1} Y_i \,\big) \cdot {\textstyle \prod\limits_{i \in I}^\leftarrow} \big( 1 - {[\check{\eta}_i]}_{n+1} Y_i \,\big)  \,\; = \;\,  {\textstyle \prod\limits_{i \in I}^\rightarrow} \big( 1 + {[\alpha_i]}_{n+1} Y_i \,\big)  \;\; \in \;\;  G_{{}_{\cP}}\big( A \big/ \mathfrak{q}^{n+1} \big)  $$
 \vskip-2pt
\noindent
 where  $ \prod\limits_{i \in I}^\rightarrow $  and  $ \prod\limits_{i \in I}^\leftarrow $  respectively denote an ordered and a reversely-ordered product (w.r.~to the given order in  $ I \, $)  and  $ \, {[a]}_{n+1} \in A \big/ \mathfrak{q}^{\,n+1} \, $  stands for the coset modulo  $ \mathfrak{q}^{n+1} $  of any  $ \, a \in A \, $.
\end{lemma}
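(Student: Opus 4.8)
The plan is to carry out the entire computation inside the group $G_{{}_{\!\cP}}\big(A\big/\mathfrak{q}^{n+1}\big)$, applying the identities of Lemma \ref{tang-group} (which are legitimate here, since $G_{{}_{\!\cP}} \le \rGL(V)$ and all the $Y_i$ lie in $\fg_\uno$), and to exploit two elementary arithmetic facts about the coefficients. First, because each $\check{\eta}_i$ is odd we have $\check{\eta}_i^{\,2}=0$, whence
$$\check{\eta}_i\,\hat{\eta}_i \; = \; \check{\eta}_i\,\big(\check{\eta}_i+\alpha_i\big) \; = \; \check{\eta}_i\,\alpha_i \; \in \; \mathfrak{q}\cdot\mathfrak{q}^{\,n} \; = \; \mathfrak{q}^{\,n+1} \; .$$
This is the decisive (and at first glance surprising) point, since naively $\check{\eta}_i\,\hat{\eta}_i$ lies only in $\mathfrak{q}^2$. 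Second, any product of one of the $\alpha_i\in\mathfrak{q}^{\,n}$ with an element of $\mathfrak{q}$ already lies in $\mathfrak{q}^{\,n+1}$, hence vanishes modulo $\mathfrak{q}^{\,n+1}$.

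Granting these, I would first dispose of the single-index case. Applying Lemma \ref{tang-group}{\it (e)\/} with $\eta':=\hat{\eta}_i$, $\eta'':=-\check{\eta}_i$ and $Y:=Y_i$ gives
$$\big( 1 + \hat{\eta}_i\,Y_i \big)\,\big( 1 - \check{\eta}_i\,Y_i \big) \; = \; \big( 1 - \check{\eta}_i\,\hat{\eta}_i\,Y_i^{\langle 2 \rangle} \big)\,\big( 1 + \alpha_i\,Y_i \big) \; ,$$
and by the first arithmetic fact the correcting factor $\big( 1 - \check{\eta}_i\,\hat{\eta}_i\,Y_i^{\langle 2 \rangle} \big)$ is trivial modulo $\mathfrak{q}^{\,n+1}$, so the left-hand side collapses to $\big( 1 + \alpha_i\,Y_i \big)$. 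Next, using Lemma \ref{tang-group}{\it (c)\/} together with the second arithmetic fact, the commutator correction produced when one swaps $\big( 1 + \alpha_i\,Y_i \big)$ past any factor $\big( 1 \pm \eta_j\,Y_j \big)$ has coefficient in $\mathfrak{q}^{\,n}\cdot\mathfrak{q}=\mathfrak{q}^{\,n+1}$, so it vanishes: thus each $\big( 1 + \alpha_i\,Y_i \big)$ is \emph{central} in $G_{{}_{\!\cP}}\big(A\big/\mathfrak{q}^{n+1}\big)$.

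With these two observations in hand I would argue by induction on $|I|$. Writing $\, I=\{1<\cdots<m\}\,$, the left-hand side of the claim reads
$$\big( 1 + \hat{\eta}_1\,Y_1 \big)\cdots\big( 1 + \hat{\eta}_m\,Y_m \big)\,\big( 1 - \check{\eta}_m\,Y_m \big)\cdots\big( 1 - \check{\eta}_1\,Y_1 \big) \; .$$
Its two innermost (adjacent) factors carry the common index $m$, so by the single-index computation they combine to $\big( 1 + \alpha_m\,Y_m \big)$; being central, this factor may then be slid rightward past $\big( 1 - \check{\eta}_{m-1}\,Y_{m-1} \big)\cdots\big( 1 - \check{\eta}_1\,Y_1 \big)$ at no cost. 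What remains to its left is exactly the left-hand side of the claim for the index set $\{1<\cdots<m-1\}$, to which the inductive hypothesis applies, giving the ordered product of the $\big( 1 + \alpha_i\,Y_i \big)$ over $i<m$. Since $m$ is the largest index, appending the displaced factor $\big( 1 + \alpha_m\,Y_m \big)$ on the right produces precisely $\prod\limits_{i \in I}^\rightarrow\big( 1 + \alpha_i\,Y_i \big)$, as required; the base case $|I|\le 1$ is the single-index identity above.

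The main obstacle is bookkeeping rather than conceptual: one must verify that every correction term generated by Lemma \ref{tang-group}{\it (c),(e)\/} genuinely lands in $\mathfrak{q}^{\,n+1}$. The one non-automatic estimate is the vanishing $\check{\eta}_i^{\,2}=0$, which upgrades the a priori bound $\check{\eta}_i\,\hat{\eta}_i\in\mathfrak{q}^2$ to $\check{\eta}_i\,\hat{\eta}_i\in\mathfrak{q}^{\,n+1}$; without it the quadratic correction of Lemma \ref{tang-group}{\it (e)\/} would survive and the identity would fail. Once this is secured, the centrality of the $\big( 1 + \alpha_i\,Y_i \big)$ and the telescoping induction are entirely routine.
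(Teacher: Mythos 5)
Your proof is correct and is in substance the paper's own argument: the paper disposes of this lemma in one line, as ``an easy, straightforward consequence of claims \textit{(e)} and \textit{(f)} in Lemma \ref{tang-group}'', and your write-up supplies exactly the intended bookkeeping --- merging the same-index factors via \textit{(e)}, killing the quadratic correction through the key estimate $\check{\eta}_i\,\hat{\eta}_i=\check{\eta}_i\,(\check{\eta}_i+\alpha_i)=\check{\eta}_i\,\alpha_i\in\mathfrak{q}\cdot\mathfrak{q}^{\,n}=\mathfrak{q}^{\,n+1}$ (using $\check{\eta}_i^{\,2}=0$, valid by the paper's definition of supercommutativity), and then reordering the surviving factors, with your use of \textit{(c)} in place of \textit{(f)} being an equally valid choice since in the quotient $A\big/\mathfrak{q}^{\,n+1}$ the even correction factors are already trivial and need not be shuffled aside. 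One terminological slip, harmless here: $\big(1+{[\alpha_i]}_{n+1}Y_i\big)$ is \emph{not} central in $G_{{}_{\cP}}\big(A\big/\mathfrak{q}^{\,n+1}\big)$ --- conjugation by $g_+\in G_+$ acts through $\Ad$ by Lemma \ref{tang-group}\textit{(b)} --- but your induction only ever slides it past factors $\big(1-{[\check{\eta}_j]}_{n+1}Y_j\big)$ with $\check{\eta}_j\in\mathfrak{q}$, for which the commutator correction of Lemma \ref{tang-group}\textit{(c)} has coefficient in $\mathfrak{q}^{\,n}\cdot\mathfrak{q}=\mathfrak{q}^{\,n+1}$ and genuinely vanishes, so the argument stands as written.
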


\begin{proof}
 This is an easy, straightforward consequence of  claims  {\it (e)\/}  and  {\it (f)\/}  in  Lemma \ref{tang-group}.
\end{proof}

\medskip

\begin{lemma}  \label{lemma-triv_prod}
 For any given  $ \, A \in \salg_\bk \, $,  let  $ \, \zeta_i \in A_\uno \, $  ($ \, i \in I \, $)  be such that
                 \hfill\break
 $ \; g := {\textstyle \prod\limits_{i \in I}^\rightarrow} \big( 1 + \zeta_i \, Y_i \,\big) \, \in \, G_+(A\,) \bigcap G_-^{\scriptscriptstyle \,<}(A\,) \; $.  Then  $ \; \zeta_i = 0 \; $  for all  $ \, i \in I \, $.
\end{lemma}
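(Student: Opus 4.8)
The plan is to read the hypothesis through the block-matrix picture of $\rGL(V)$. Since $G_+$ is a \emph{classical} (totally even) group-scheme, the embedding $\boldsymbol{r}_{\!+}\colon G_+ \hookrightarrow \rGL(V)$ factors through the even part $\rGL(V)_{\text{\it ev}}$: indeed a morphism out of a totally even superscheme annihilates all odd coordinates, so it lands in $\overline{\cO(\rGL(V))} = \cO\big(\rGL(V)_\zero\big)$. Hence every element of $G_+(A) = G_+(A_\zero)$, viewed inside $\rGL(V)(A)$, is a \emph{block-diagonal} matrix $\bigl(\begin{smallmatrix} a & 0 \\ 0 & d \end{smallmatrix}\bigr)$ with entries in $A_\zero$ (cf.~the description in \S\ref{examples-linear_sgroups}). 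On the other hand, each generator $1_V + \zeta_i\,Y_i$ is the block matrix $\bigl(\begin{smallmatrix} I_p & \ast \\ \ast & I_q \end{smallmatrix}\bigr)$ whose off-diagonal blocks have entries in $A_\uno$, because $Y_i \in \fg_\uno \subseteq {\rgl(V)}_\uno$ is purely off-diagonal. So the entire content of the lemma reduces to the assertion: \emph{if an ordered product of such transvection-type factors happens to be block-diagonal, then all its coefficients vanish.}

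To exploit this I would introduce the ideal $\, \fa := \big( \zeta_i \mid i \in I \big) \,$ of $A$ generated by the finitely many odd elements $\zeta_i$. As each $\zeta_i$ is odd we have $\zeta_i^2 = 0$, so any product of more than $|I|$ of the generators repeats an index and vanishes by super-commutativity; thus $\fa$ is nilpotent, say $\, \fa^N = \{0\} \,$. Next I would linearize modulo $\fa^2$: any monomial $\zeta_{i_1} Y_{i_1} \cdots \zeta_{i_k} Y_{i_k}$ with $k \geq 2$ has matrix entries lying in $\fa^2$, so reduction in $\rGL(V)\big(A\big/\fa^2\big)$ gives $\, \bar{g} \equiv 1_V + \sum_{i \in I} \bar{\zeta}_i\, Y_i \,$ (bars denoting cosets mod $\fa^2$). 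The parity splitting of matrices — even $=$ block-diagonal, odd $=$ off-diagonal — shows that the off-diagonal part of $\bar{g}$ is \emph{exactly} $\sum_i \bar{\zeta}_i\, Y_i$; but $\bar{g}$ lies in $G_+\big(A\big/\fa^2\big)$ and is therefore block-diagonal, whence $\, \sum_{i \in I} \bar{\zeta}_i\, Y_i = 0 \,$ inside ${\rgl(V)}\big(A\big/\fa^2\big)$.

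The crucial step is then to deduce $\bar{\zeta}_i = 0$ for all $i$, i.e.\ $\zeta_i \in \fa^2$. This is precisely where the freeness built into the definition of a linear sHCp (Definition \ref{def-lsgrps-lsHCp}) enters: since $\, {\rgl(V)}_\uno = \fg_\uno \oplus \mathfrak{q} \,$ with both summands $\bk$--free, the chosen basis $\{Y_i\}_{i \in I}$ of $\fg_\uno$ extends to a $\bk$--basis of ${\rgl(V)}_\uno$, so that $\, \big(A\big/\fa^2\big)_\uno \otimes_\bk {\rgl(V)}_\uno \,$ splits as a direct sum over that basis. Reading off the $Y_i$--components of the relation $\sum_i \bar{\zeta}_i\, Y_i = 0$ forces $\bar{\zeta}_i = 0$, i.e.\ $\zeta_i \in \fa^2$ for every $i$. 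Consequently $\, \fa = (\zeta_i) \subseteq \fa^2 \subseteq \fa \,$, so $\fa = \fa^2$; iterating, $\, \fa = \fa^2 = \cdots = \fa^N = \{0\} \,$, and therefore all $\zeta_i = 0$, as claimed.

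The only genuinely delicate point is the implication $\sum_i \bar{\zeta}_i\, Y_i = 0 \Rightarrow \bar{\zeta}_i = 0$: it would fail if $\fg_\uno$ were merely a $\bk$--submodule of ${\rgl(V)}_\uno$ spanned by a linearly independent family, since over $A_\uno$ one cannot cancel odd coefficients without a genuine direct-summand splitting. Thus the freeness hypothesis of Definition \ref{def-lsgrps-lsHCp} is indispensable, and it is the one assumption I would want to keep in clear view; the remaining ingredients are a routine block-matrix computation together with the standard nilpotent-ideal self-improvement $\fa = \fa^2 \Rightarrow \fa = \{0\}$.
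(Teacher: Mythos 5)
Your proof is correct and follows essentially the same route as the paper's: expand the product inside $\End_{\,\bk}(V)(A)$, reduce modulo $\fa^2$ for $\fa := \big({\{\zeta_i\}}_{i\in I}\big)$ so that $g$ linearizes to $1 + \sum_i \bar{\zeta}_i\,Y_i\,$, use the parity constraint coming from $g \in G_+$ together with the linear independence of the $Y_i$'s in the free summand $\fg_\uno \subseteq {\rgl(V)}_\uno$ to get $\zeta_i \in \fa^2$, and conclude by nilpotency of $\fa\,$. Your block-diagonal phrasing of the parity step and the self-improvement $\fa = \fa^2 \Rightarrow \fa = \{0\}$ are only cosmetic variants of the paper's argument (which phrases parity as ``all entries of $[g]_2$ are even'' and iterates $\zeta_i \in \fa^2 \Rightarrow \zeta_i \in \fa^n$ directly).
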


\begin{proof}
 By our global assumptions we have  $ \, G_{{}_{\!\cP}}(A\,) \subseteq \rGL(V)(A\,) \subseteq \End_{\,\bk}(V)(A\,) \, $,  with  $ \End_{\,\bk}(V)(A\,) $  being a unital, associative  $ A_\zero $--algebra:  indeed, fixing a homogeneous  $ \bk $--bases  for  $ V $  we can read  $ \End_{\,\bk}(V)(A\,) $  as an algebra of block matrices, in which the diagonal blocks have entries in  $ A_\zero \, $  and the other ones have entries in  $ A_\uno \, $  (like in  Example \ref{exs-supvecs}{\it (b)\/}  and references therein).  Thus, inside  $ \, \End_{\,\bk}(V)(A\,) \, $  we can expand the product  $ \; g := {\textstyle \prod\limits_{i \in I}^\rightarrow} \big( 1 + \zeta_{{}_{\,\scriptstyle i\,}} Y_i \,\big) \, \in \, G_{{}_{\!\cP}}(A\,) \subseteq \End_{\,\bk}(V)(A\,) \; $  so to get
  $$  g  \; := \;  {\textstyle \prod\limits_{i \in I}^\rightarrow} \big( 1 + \zeta_{{}_{\,\scriptstyle i\,}} Y_i \,\big)  \; = \;  1 \, + {\textstyle \sum\limits_{n \in \N_+}} c_n\big(\, \underline{\zeta} \,\big)   \eqno (4.4)  $$
where each  $ c_n\big(\, \underline{\zeta} \,\big) $  denotes a (block) matrix in  $ \End_{\,\bk}(V)(A\,) $  whose entries are homogeneous polynomials in the  $ \zeta_{{}_{\,\scriptstyle i}} $'s  of degree  $ n \, $.
                                                    \par
   {\sl In particular we have  $ \, c_n\big(\, \underline{\zeta} \,\big) = 0 \, $  for all  $ \, n > |I| \, $},  and moreover  $ \; c_1\big(\, \underline{\zeta} \,\big) = \sum_{i \in I} \zeta_{{}_{\,\scriptstyle i\,}} Y_i \; $.
 \vskip5pt
   Now let  $ \, \mathfrak{a} := \big( {\big\{ \zeta_{{}_{\,\scriptstyle i}} \big\}}_{i \in I} \big) \, $  be the ideal of  $ A $  generated by all the  $ \zeta_{{}_{\,\scriptstyle i}} $'s.  For all  $ \, n \in \N \, $,  let  $ \; p_n : A \relbar\joinrel\relbar\joinrel\twoheadrightarrow A \,\big/ \mathfrak{a}^{\,n} =: {[A\,]}_n \; $  be the canonical quotient map, for which we write  $ \, {[a]}_n := p_n(a) \, $  for every  $ \, a \in A \, $.  Correspondingly, we let  $ \, G_{{}_{\!\cP}}(p_n) : G_{{}_{\!\cP}}(A\,) \relbar\joinrel\relbar\joinrel\twoheadrightarrow G_{{}_{\!\cP}}\big( A \,\big/ \mathfrak{a}^{\,n} \big) =: G_{{}_{\!\cP}}\big( {[A\,]}_n \big) \, $  the associated group morphism and we write  $ \, {[y]}_n := G_{{}_{\!\cP}}(p_n)(y) \, $  for every  $ \, y \in G_{{}_{\!\cP}}(A\,) \, $.
                                                    \par
   Applying this to (4.4) above we find
  $$  {[g]}_2  \; := \;  {\big[\, 1 \, + \, c_1\big(\, \underline{\zeta} \,\big) \,\big]}_2  \; = \;  1 \, + \, {\textstyle \sum\limits_{i \in I}} \, {\big[ \zeta_{{}_{\,\scriptstyle i\,}} \big]}_2 \, Y_i  \,\; \in \;\, G_{{}_{\!\cP}}\big({[A\,]}_2\big)   \eqno (4.5)  $$
On the other hand, the assumption  $ \, g \in G_+(A\,) \bigcap G_-^{\scriptscriptstyle \,<}(A\,) \, $  implies also
$ \, {[g]}_2 \in G_+\big({[A\,]}_2\big) \, $,  which means that all entries   --- belonging to  $ {[A\,]}_2 $  ---   of the matrix  $ {[g]}_2 $  actually belong to the  {\sl even\/} part of  $ {[A\,]}_2 \, $.  This together with (4.5) forces  $ \; {\textstyle \sum\limits_{i \in I}} \, {\big[ \zeta_{{}_{\,\scriptstyle i\,}} \big]}_2 \, Y_i \, = \, 0 \; $.  In turn, by the linear independence of the  $ Y_i $'s   --- inside  $ \, \fg_\uno \, $,  hence inside  $ \, A_\uno \cdot \fg_\uno \, \subseteq \, {\big( A \otimes_\bk \fg \big)}_\zero \, \subseteq \, \End(V)(A) \, $  ---   this implies  $ \, {\big[ \zeta_{{}_{\,\scriptstyle i\,}} \big]}_2 = {\big[ 0 \big]}_2 \in \, {[A\,]}_2 := A \,\big/ \mathfrak{a}^{\,2} \, $,  hence  $ \, \zeta_{{}_{\,\scriptstyle i\,}} \! \in \mathfrak{a}^{\,2} \, $  for all  $ \, i \in I \, $.
                                                    \par
   Now,  $ \, {\big\{ \zeta_{{}_{\,\scriptstyle i}} \big\}}_{i \in I} \! \subseteq \mathfrak{a}^{\,2} = {\big( {\big\{ \zeta_{{}_{\,\scriptstyle i}} \big\}}_{i \in I} \big)}^2 \, $  automatically entails  $ \, {\big\{ \zeta_{{}_{\,\scriptstyle i}} \big\}}_{i \in I} \! \subseteq \mathfrak{a}^{\,n} \, $  for all  $ \, n \in \N_+ \, $.  As  $ \, \mathfrak{a}^{\,n} = \{0\} \, $  for  $ \, n \gg 0 \, $,  we end up with  $ \, \zeta_{{}_{\,\scriptstyle i\,}} \! = 0 \; $  for all  $ \, i \in I \, $,  \, q.e.d.
\end{proof}

\medskip

\begin{remark}  \label{alt-proof}
 An alternative argument to finish the previous proof is the following.  Once we have found that  $ \, \zeta_{{}_{\,\scriptstyle i\,}} \! \in \mathfrak{a}^{\,2} \, $  for all  $ \, i \in I \, $,  we remark that this implies  $ \, c_n\big(\, \underline{\zeta} \,\big) \in \mathfrak{a}^{\,2n} \, $  for all  $ \, n \in \N_+ \, $.  Then (4.4) yields the analogue of (4.5), namely
  $$  {[g]}_4  \; :=  {\big[\, 1 \, + \, c_1\big(\, \underline{\zeta} \,\big) \,\big]}_4  \; = \;  1 \, + \, {\textstyle \sum\limits_{i \in I}} \, {\big[ \zeta_{{}_{\,\scriptstyle i\,}} \big]}_4 \, Y_i  \,\; \in \;\, G_{{}_{\!\cP}}\big({[A\,]}_4\big)  $$
and again, acting like above, by a parity argument in  $ \, G_{{}_{\!\cP}}\big({[A\,]}_4\big) \, $  along with the linear independence of the  $ Y_i $'s  we get  $ \, \zeta_{{}_{\,\scriptstyle i\,}} \! \in \mathfrak{a}^{\,4} \, $  for all  $ \, i \in I \, $.
                                                    \par
   We can now  {\sl iterate\/}  this procedure, thus finding  $ \, \zeta_{{}_{\,\scriptstyle i\,}} \! \in \mathfrak{a}^{\,2\,n} \, $  (for  $ \, i \in I \, $)  for all  $ \, n \in \N_+ \, $.  As  $ \, \mathfrak{a}^{\,2\,n} = \{0\} \, $  for  $ \, n \gg 0 \, $,  we end up with  $ \, \zeta_{{}_{\,\scriptstyle i\,}} \! = 0 \, $  for all  $ \, i \in I \, $,  \, q.e.d.
\end{remark}

\medskip

   Thanks to the previous lemmas, we can improve  Proposition \ref{fact-G_P}  as follows:

\medskip

\begin{proposition}  \label{dir-prod-fact-G_P - lin}  {\ }
 \vskip7pt
   {\it (a)} \,  The restriction of group multiplication in  $ G_{{}_{\!\cP}} $  provides  $ \bk $--superscheme  isomorphisms
  $$  G_+ \times G_-^{\scriptscriptstyle \,<} \; \cong \; G_{{}_{\!\cP}}  \quad ,  \qquad  G_-^{\scriptscriptstyle \,<} \times G_+ \; \cong \; G_{{}_{\!\cP}}  $$
 \vskip0pt
   {\it (b)} \,  There exist\/  $ \bk $--superscheme  isomorphisms  $ \, G_-^{\scriptscriptstyle \,<} \cong \mathop{\times}\limits_{i \in I} \mathbb{A}_\bk^{0|1} \cong \mathbb{A}_\bk^{0|d_-} \, $  with  $ \, d_- := |I| \; $.
\end{proposition}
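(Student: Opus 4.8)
The plan is to establish (b) first and then read off (a), since a concrete parametrization of $G_-^{\scriptscriptstyle \,<}$ is exactly what makes the factorization transparent and also supplies the representability needed to speak of superscheme isomorphisms. For (b) I would fix, for each $\, A \in \salg_\bk \,$, the natural map $\; \nu_A : \mathbb{A}_\bk^{0|d_-}(A) = {\big( A_\uno \big)}^{d_-} \longrightarrow G_-^{\scriptscriptstyle \,<}(A) \;$, $\; {(\eta_i)}_{i \in I} \mapsto \prod\limits_{i \in I}^\rightarrow \big( 1_{{}_V} + \eta_i \, Y_i \big) \;$, which is surjective straight from Definition \ref{def G_- / G_P - lin} and is patently natural in $A$. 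The content is its injectivity, which I would obtain by a filtration argument: given $\, \nu_A(\eta) = \nu_A(\eta') \,$, set $\, \alpha_i := \eta_i - \eta_i' \,$ and let $\fa$ be the ideal generated by the finitely many odd elements $\eta_i, \eta_i'$, which is nilpotent. By induction on $n$ one shows $\, \alpha_i \in \fa^{\,n} \,$ for all $n$: assuming $\, \alpha_i \in \fa^{\,n} \,$ (for $n \geq 1$), Lemma \ref{lemma-prod} (with $\, \mathfrak{q} := \fa \,$) rewrites $\, \nu_A(\eta) \, \nu_A(\eta')^{-1} \,$ modulo $\fa^{\,n+1}$ as $\prod_{i \in I}^\rightarrow \big( 1 + {[\alpha_i]}_{n+1} Y_i \big)$, and since the cross terms lie in $\, \fa^{\,2n} \subseteq \fa^{\,n+1} \,$ this equals $\, 1 + \sum_{i} {[\alpha_i]}_{n+1} Y_i \,$; as the left-hand side is $1$, the $\bk$--linear independence of the $Y_i$ forces $\, \alpha_i \in \fa^{\,n+1} \,$. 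Nilpotency of $\fa$ then yields $\, \alpha_i = 0 \,$, so $\nu_A$ is bijective and $\, G_-^{\scriptscriptstyle \,<} \cong \mathbb{A}_\bk^{0|d_-} \cong \times_{i \in I} \mathbb{A}_\bk^{0|1} \,$ as affine superschemes.

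For (a) the surjectivity of the multiplication map $\; \mu_A : G_+(A) \times G_-^{\scriptscriptstyle \,<}(A) \to G_{{}_{\!\cP}}(A) \;$ is precisely Proposition \ref{fact-G_P}. To prove injectivity, suppose $\, g_+ \, g_- = g_+' \, g_-' \,$ and put $\, h := {(g_+')}^{-1} g_+ = g_-' \, g_-^{-1} \in G_+(A) \,$, writing $\, g_- = \prod^\rightarrow (1 + \check\eta_i Y_i) \,$, $\, g_-' = \prod^\rightarrow (1 + \hat\eta_i Y_i) \,$ and $\, \alpha_i := \hat\eta_i - \check\eta_i \,$. With $\fa$ the nilpotent ideal generated by the $\, \check\eta_i, \hat\eta_i \,$, I would run the same induction: once $\, \alpha_i \in \fa^{\,n} \,$, Lemma \ref{lemma-prod} gives $\, {[h]}_{n+1} = 1 + \sum_i {[\alpha_i]}_{n+1} Y_i \,$ inside $\, G_{{}_{\!\cP}}\big( A \big/ \fa^{\,n+1} \big) \,$; but $\, h \in G_+(A) \,$, so reading $h$ inside $\End(V)$ as in the proof of Lemma \ref{lemma-triv_prod} its odd (off-diagonal-block) entries vanish, forcing the odd term $\sum_i {[\alpha_i]}_{n+1} Y_i$ to vanish and hence $\, \alpha_i \in \fa^{\,n+1} \,$. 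As before $\, \alpha_i = 0 \,$, so $\, g_- = g_-' \,$ and $\, g_+ = g_+' \,$. Thus $\mu_A$ is a bijection, natural in $A$; combined with the representability furnished by (b) (so that $\, G_+ \times G_-^{\scriptscriptstyle \,<} \,$ is affine and $G_{{}_{\!\cP}}$ becomes representable through $\mu$), this makes $\mu$ an isomorphism of affine superschemes $\, G_+ \times G_-^{\scriptscriptstyle \,<} \cong G_{{}_{\!\cP}} \,$, and the reversed factorization $\, G_-^{\scriptscriptstyle \,<} \times G_+ \cong G_{{}_{\!\cP}} \,$ follows symmetrically from the second factorization in Proposition \ref{fact-G_P}.

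The main obstacle is the injectivity in both parts, i.e.\ the filtration/induction engine. Two points require care: first, that the ideal generated by the finitely many odd parameters be nilpotent, so that the induction terminates (this is where the finiteness of $I$ and the odd parity are used); and second, that at each stage one can cleanly isolate and kill the degree-$n$ odd contribution $\sum_i {[\alpha_i]}_{n+1} Y_i$, which needs both the membership $\, h \in G_+ \,$ (vanishing of odd blocks) and the $\bk$--linear independence of the basis $\, {\{Y_i\}}_{i \in I} \,$ of $\fg_\uno$ inside $\rgl(V)$. Lemmas \ref{lemma-prod} and \ref{lemma-triv_prod} are tailored to deliver exactly these inputs, so once they are in hand the proof is essentially a matter of assembling them; no genuinely new computation beyond the reordering identities of Lemma \ref{tang-group} is expected.
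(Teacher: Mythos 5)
Your proposal is correct and is essentially the paper's own argument: the identical engine of reduction modulo successive powers of the nilpotent ideal $\fa$ generated by the odd coefficients, with Lemma \ref{lemma-prod} collapsing $\hat{g}_-\,\check{g}_-^{-1}$ to $\prod^\rightarrow_{i}\big(1+{[\alpha_i]}_{n+1}Y_i\big)$ at each stage and the parity-plus-linear-independence mechanism of Lemma \ref{lemma-triv_prod} forcing $\alpha_i \in \fa^{\,n+1}$, terminating by nilpotency. The only (immaterial) differences are that the paper proves {\it (a)} first and gets the injectivity in {\it (b)} by replaying that argument with $\hat{g}_+ = 1 = \check{g}_+$, whereas you do {\it (b)} first and inline the expansion-and-independence step rather than citing Lemma \ref{lemma-triv_prod} as a black box.
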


\begin{proof}
 {\it (a)} \,  It is enough to prove the first identity, which amounts to showing the following: for any  $ \, A \in \salg_\bk \, $,  if  $ \; \hat{g}_+ \, \hat{g}_- = \check{g}_+ \, \check{g}_- \; $  for  $ \, \hat{g}_\pm \, , \check{g}_\pm \in G_\pm(A) \, $,  then  $ \, \hat{g}_+ = \check{g}_+ \, $  and  $ \, \hat{g}_- = \check{g}_- \; $.
                                                                   \par
   From the assumption  $ \; \hat{g}_+ \, \hat{g}_- \, = \, \check{g}_+ \, \check{g}_- \; $  we get  $ \; g \, := \, \hat{g}_- \, \check{g}_-^{-1} \, = \, \hat{g}_+^{-1} \, \check{g}_+ \, \in \, G_+(A) \, $,  as  $ G_+(A) $  is a subgroup in  $ G(A) \, $.  Now  $ \, \hat{g}_- \in \, G_-^{\scriptscriptstyle \,<}(A) \, $  has the form  $ \, \hat{g}_- = {\textstyle \prod\limits^\rightarrow}_{i \in I} \big(\, 1 + \hat{\eta}_i \, Y_i \,\big) \, $  and similarly  $ \, \check{g}_- = {\textstyle \prod\limits^\rightarrow}_{i \in I} \big( 1 + \check{\eta}_i Y_i \big) \, $  so that  $ \, \check{g}_-^{\,-1} = {\textstyle \prod\limits^\leftarrow}_{i \in I} \big( 1 - \check{\eta}_i Y_i \big) \, $,  where once more  $ {\textstyle \prod\limits^\rightarrow} $  and  $ {\textstyle \prod\limits^\leftarrow} $  respectively denote an ordered and a reversely-ordered product.  Therefore we have
  $$  g  \,\; := \;\,  \hat{g}_- \, \check{g}_-^{-1}  \,\; = \;\,  {\textstyle \prod\limits_{i \in I}^\rightarrow} \big( 1 + \hat{\eta}_i Y_i \big) \, {\textstyle \prod\limits_{i \in I}^\leftarrow} \big( 1 - \check{\eta}_i Y_i \big)  \;\; \in \;\;  G_+(A) \; \subseteq \; G_{{}_{\!\cP}}(A)   \eqno (4.6)  $$
   \indent   We define  $ \, \fa := \big( {\big\{\, \hat{\eta}_{{}_{\,\scriptstyle i}} , \check{\eta}_{{}_{\,\scriptstyle i}} \,\big\}}_{i \in I} \big) \, $  the ideal of  $ A $  generated by all the  $ \hat{\eta}_{{}_{\,\scriptstyle i}} $'s  and the  $ \check{\eta}_{{}_{\,\scriptstyle i}} $'s.
   Like in the proof of  Lemma \ref{lemma-triv_prod},  for  $ \, n \in \N \, $  we write  $ \; p_n : A \relbar\joinrel\relbar\joinrel\twoheadrightarrow A \,\big/ \fa^{\,n} =: {[A]}_n \; $  for the canonical quotient map and  $ \, {[a\,]}_n := p_n(a) \, $  for every  $ \, a \in A \, $,  and then also, correspondingly,  $ \, G_{{}_{\!\cP}}(p_n) : G_{{}_{\!\cP}}(A) \relbar\joinrel\relbar\joinrel\twoheadrightarrow G_{{}_{\!\cP}}\big( A \,\big/ \fa^{\,n} \big) =: G_{{}_{\!\cP}}\big( {[A]}_n \big) \, $  for the associated group morphism and  $ \, {[y]}_n := G_{{}_{\!\cP}}(p_n)(y) \, $  for every  $ \, y \in G_{{}_{\!\cP}}(A) \, $.  Now (4.6) along with  Lemma \ref{lemma-prod}  gives
  $$  {[g]}_2  \; = \;  {\textstyle \prod\limits_{i \in I}^\rightarrow} \big( 1 + {[\hat{\eta}_i]}_2 \, Y_i \big) \, {\textstyle \prod\limits_{i \in I}^\leftarrow} \big( 1 - {[\check{\eta}_i]}_2 \, Y_i \big)  \; = \;  {\textstyle \prod\limits_{i \in I}^\rightarrow} \big( 1 + {[\alpha_i]}_2 \, Y_i \big)  \;\; \in \;\;  G_{{}_{\!\cP}}\big({[A]}_2\big)  $$
with  $ \; \alpha_i := \hat{\eta}_i - \check{\eta}_i \, \in \fa \; $  for all  $ i \, $.  Since it is also  $ \; {[g]}_2 \, \in \, G_+\big({[A]}_2\big) \, \bigcap \, G_-^{\scriptscriptstyle \,<}\big({[A]}_2\big) \, $,  \, we can apply  Lemma \ref{lemma-triv_prod},  with  $ {[A]}_2 $  playing the r{\^o}le of  $ A \, $,  \, giving  $ \, {[\alpha_i]}_2 = {[0]}_2 \in {[A]}_2 \, $,  that is  $ \, \alpha_i \in \fa^{\,2} \, $,  for all  $ \, i \in I \, $.  But now  Lemma \ref{lemma-triv_prod}  applies again, with  $ {[A]}_3 $  playing the r{\^o}le of  $ A \, $, yielding  $ \, {[\alpha_i]}_3 = {[0]}_3 \in {[A]}_3 \, $  hence  $ \, \alpha_i \in \fa^{\,3} \, $,  for all  $ \, i \in I \, $.  {\sl Then we iterate},  finding by induction that  $ \, \alpha_i \in \fa^{\,n} \, $  (for  $ \, i \in I \, $)  for all  $ \, n \in \N_+ \, $;  as  $ \, \fa^{\,n} = \{0\} \, $  for  $ \, n \gg 0 \, $  we end up with  $ \, \hat{\eta}_i - \check{\eta}_i =: \alpha_i = 0 \, $,  i.e.~$ \, \hat{\eta}_i = \check{\eta}_i \, $,  for all  $ \, i \in I \, $.  This yields  $ \, \hat{g}_- = \check{g}_- \, $,  and from this we get also  $ \, \hat{g}_+ = \check{g}_+ \, $,  \, q.e.d.
%
%
%
 \vskip9pt
   {\it (b)} \,  By definition there exists a  $ \bk $--superscheme  epimorphism  $ \, \Theta : \mathbb{A}_\bk^{0|d_-} \!\!\relbar\joinrel\longrightarrow G_-^{\scriptscriptstyle \,<} \, $  which is given on every single  $ \, A \in \salg_\bk \, $  by
  $$  \Theta_{\!A} \, : \, \mathbb{A}_\bk^{0|d_-}(A) := A_\uno^{\,\times d_-} \!\! \relbar\joinrel\longrightarrow\, G_-^{\scriptscriptstyle \,<}(A) \;\; ,  \quad  {\big( \eta_i \big)}_{i \in I} \, \mapsto \, \Theta_{\!A}\big(\! {\big( \eta_i \big)}_{i \in I} \big) := {\textstyle \prod\limits_{i \in I}^\rightarrow} \big(\, 1 + \eta_i \, Y_i \,\big)  $$
We prove now that all these  $ \Theta_{\!A} $'s  are injective, so that  $ \Theta $  is indeed an isomorphism.
 \vskip7pt
   Let  $ \, {\big( \hat{\eta}_i \big)}_{i \in I} \, , {\big( \check{\eta}_i \big)}_{i \in I} \in A_\uno^{\,\times d_-} \, $  be such that  $ \, \Theta_{\!A}\big(\! {\big( \hat{\eta}_i \big)}_{i \in I} \big) = \Theta_{\!A}\big(\! {\big( \check{\eta}_i \big)}_{i \in I} \big) \; $,  that is  $ \; {\textstyle \prod\limits_{i \in I}^\rightarrow} \big(\, 1 + \hat{\eta}_i \, Y_i \,\big) = {\textstyle \prod\limits_{i \in I}^\rightarrow} \big(\, 1 + \check{\eta}_i \, Y_i \,\big)  \; $.  Then  {\sl we can replay the proof of claim  {\it (a)}},  now taking  $ \; \hat{g}_+ := 1 =: \check{g}_+ \; $:  the outcome will be again  $ \, \hat{\eta}_i = \check{\eta}_i \, $  for all  $ \, i \in I \, $,  i.e.~$ \, {\big( \hat{\eta}_i \big)}_{i \in I} = {\big( \check{\eta}_i \big)}_{i \in I} \; $.
\end{proof}

\medskip

   The first key consequence of the previous results is the following

\medskip

\begin{corollary}
 The supergroup  $ \bk $--functor  $ G_{{}_{\!\cP}} \! $  considered above is representable, hence it is an affine $ \bk $--supergroup.
\end{corollary}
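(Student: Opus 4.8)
The plan is to read off representability from the superscheme factorization already established in Proposition \ref{dir-prod-fact-G_P - lin}, reducing everything to the (known) representability of the two factors. First I would recall that part \textit{(a)} of that proposition provides an isomorphism of $\bk$-superschemes (that is, of set-valued functors) $\, G_+ \times G_-^{\scriptscriptstyle \,<} \,{\buildrel \cong \over {\relbar\joinrel\longrightarrow}}\, G_{{}_{\!\cP}} \,$, realized by restriction of the group multiplication. Since, by Definition \ref{aff-spec}, representability of a supergroup $\bk$-functor is exactly representability of its underlying superset functor (the ambient group structure then automatically upgrading the representing superalgebra to a commutative Hopf superalgebra), it suffices to show that $\, G_+ \times G_-^{\scriptscriptstyle \,<} \,$ is representable.

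Next I would handle the two factors in turn. On the one hand, by the definition of a linear sHCp the functor $G_+$ is an affine $\bk$-group-scheme, here regarded as a functor on $\salg_\bk$ through the abuse of notation $\, A \mapsto G_+(A_\zero) \,$; as computed in the proof of Proposition \ref{ass-class-scheme}\textit{(a)} one has $\, \Hom_{\salg_\bk}\big(\cO(G_+),A\big) = \Hom_{\alg_\bk}\big(\cO(G_+),A_\zero\big) \,$ because $\cO(G_+)$ is totally even, so this functor is represented by $\, \cO(G_+) \in \alg_\bk \subseteq \salg_\bk \,$. On the other hand, Proposition \ref{dir-prod-fact-G_P - lin}\textit{(b)} gives $\, G_-^{\scriptscriptstyle \,<} \cong \mathbb{A}_\bk^{0|d_-} \,$ with $\, d_- = |I| = \text{\sl rk}(\fg_\uno) \,$, and the totally odd affine superspace $\mathbb{A}_\bk^{0|d_-}$ is represented by the Grassmann superalgebra $\, \wedge\big(\fg_\uno^{\,*}\big) \,$, which lies in $\salg_\bk$ as it is supercommutative.

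Finally I would invoke the standard fact that a finite product of representable functors is representable, with representing object the $\bk$-tensor product of the representing objects. This yields that $\, G_+ \times G_-^{\scriptscriptstyle \,<} \,$, hence $\, G_{{}_{\!\cP}} \,$, is represented by $\; \cO(G_+) \otimes_\bk \wedge\big(\fg_\uno^{\,*}\big) \in \salg_\bk \;$. Thus $G_{{}_{\!\cP}}$ is an affine $\bk$-superscheme, and being group-valued its representing superalgebra is a commutative Hopf superalgebra; therefore $G_{{}_{\!\cP}}$ is an affine $\bk$-supergroup, as claimed. (As a byproduct the representing Hopf superalgebra is a tensor product of $\cO(G_+)$ with a Grassmann algebra, so $G_{{}_{\!\cP}}$ will be gs-split, which is what one wants for the sequel.)

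I do not expect a serious obstacle here, since all the hard analysis was already carried out in Proposition \ref{dir-prod-fact-G_P - lin}. The one point requiring care is that the factorization isomorphism $\, G_+ \times G_-^{\scriptscriptstyle \,<} \cong G_{{}_{\!\cP}} \,$ is asserted only at the level of superschemes, not of group functors; one must therefore be explicit that representability is a condition on the underlying superset functor, and that it is the pre-existing group structure on $G_{{}_{\!\cP}}$ (not the isomorphism) that endows the representing superalgebra with its Hopf structure.
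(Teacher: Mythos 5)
Your proof is correct and follows essentially the same route as the paper's: both deduce representability from the superscheme factorization $G_+ \times G_-^{\scriptscriptstyle \,<} \cong G_{{}_{\!\cP}}$ of Proposition \ref{dir-prod-fact-G_P - lin}, together with the representability of $G_+$ (given by assumption) and of $G_-^{\scriptscriptstyle \,<} \cong \mathbb{A}_\bk^{0|d_-}$. Your added care about the distinction between representability of the underlying superset functor and the Hopf structure coming from the pre-existing group law is a sound elaboration of what the paper leaves implicit (only note that your parenthetical claim that gs-splitness follows ``as a byproduct'' is slightly premature, since that property as defined in Definition \ref{gl-str-split_sgroup-Def} requires the splitting to be one of pointed left $G_\zero$--superschemes, which the paper establishes separately in Proposition \ref{G_P functor - lin}).
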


\begin{proof}
 By  Proposition \ref {dir-prod-fact-G_P - lin}  one has an isomorphism  $ \, G_{{}_{\!\cP}} \cong G_+ \times G_-^{\scriptscriptstyle \,<} \, $  as functors, hence as  $ \bk $--superschemes.  As  $ \, G_+ \, $  is representable by assumption, and  $ \, G_-^{\scriptscriptstyle \,<} \cong \mathbb{A}_\bk^{0|d_-} \, $  is representable too (by  Proposition \ref{dir-prod-fact-G_P - lin}{\it (b)\/}  above), we get that  $ \, G_{{}_{\!\cP}} \cong G_+ \times G_-^{\scriptscriptstyle \,<} \, $  is representable as well.
\end{proof}

\medskip

   With next step we fix some further details, so to see that the assignment  $ \, \cP \mapsto G_{{}_{\!\cP}} \, $  eventually yields a functor of the type we are looking for.

\medskip

\begin{proposition}  \label{G_P functor - lin}
 For every  $ \, \cP \in \lsHCp_\bk \, $,  let  $ G_{{}_{\!\cP}} \! $  be defined as above.  Then:
 \vskip3pt
   {\it (a)} \,  $ G_{{}_{\!\cP}} \! $  is globally strongly split;
 \vskip3pt
   {\it (b)} \,  the defining embedding of\/  $ G_{{}_{\!\cP}} \! $  inside  $ \rGL(V) $  is  {\sl closed},  so that  $ \, G_{{}_{\!\cP}} \! $  identifies with a closed subgroup of\/  $ \rGL(V) \, $;
 \vskip3pt
   {\it (c)} \,  the above construction of  $ \, G_{{}_{\!\cP}} \! $  naturally extends to morphisms, so to provide a functor  $ \; \Psi_\ell : \lsHCp_\bk \!\longrightarrow \lgssfsgrps_\bk \; $.
\end{proposition}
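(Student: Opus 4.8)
The plan is to read off (a) directly from Proposition \ref{dir-prod-fact-G_P - lin}, to reduce (b) to surjectivity of the induced coordinate–ring map and handle even and odd generators separately, and to obtain (c) by checking that a morphism of linear sHCp's carries the chosen generating sets into each other. For (a), recall from the proof of Proposition \ref{fact-G_P} that $\,{(G_{{}_{\!\cP}})}_\zero = G_+\,$, so I would set $\,G_\zero := G_+\,$ and $\,G_\uno := G_-^{\scriptscriptstyle \,<}\,$ and verify the three conditions of Definition \ref{gl-str-split_sgroup-Def}. First, $\,1_{{}_V}=\prod_{i}(1_{{}_V}+0\cdot Y_i)\in G_-^{\scriptscriptstyle \,<}\,$, so $G_-^{\scriptscriptstyle \,<}$ is pointed, and it is closed in $\,G_{{}_{\!\cP}}\cong G_+\times G_-^{\scriptscriptstyle \,<}\,$ as the fibre $\{e\}\times G_-^{\scriptscriptstyle \,<}$ over the identity section of $G_+$. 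Condition (b) is exactly the isomorphism $\,G_+\times G_-^{\scriptscriptstyle \,<}\cong G_{{}_{\!\cP}}\,$ of Proposition \ref{dir-prod-fact-G_P - lin}(a); since multiplication intertwines left translation on the first factor with left translation on $G_{{}_{\!\cP}}$ and sends $(e,e)$ to $e$, it is an isomorphism of pointed left $G_+$–superschemes. Condition (c) is the isomorphism $\,G_-^{\scriptscriptstyle \,<}\cong\mathbb{A}_\bk^{0|d_-}\,$ of Proposition \ref{dir-prod-fact-G_P - lin}(b), which is pointed because $\Theta$ sends the origin to $1_{{}_V}$. Hence $G_{{}_{\!\cP}}$ is gs-split.

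For (b), since $G_{{}_{\!\cP}}$ is representable (by the preceding Corollary) the inclusion $\,\iota:G_{{}_{\!\cP}}\hookrightarrow\rGL(V)\,$ corresponds to a Hopf–superalgebra map $\,\iota^{*}:\cO(\rGL(V))\to\cO(G_{{}_{\!\cP}})\,$, and closedness is equivalent to surjectivity of $\iota^{*}$. Both $\cO(\rGL(V))$ (by Theorem \ref{gl-str-split_GL(V)}) and $\cO(G_{{}_{\!\cP}})$ (by (a) and Theorem \ref{gl-str-split_sgroup-Th}) are strongly split, so it suffices to check surjectivity on the even quotient $\overline{(\ )}$ and on the module of odd generators $W^{\cO(G_{{}_{\!\cP}})}=\fg_\uno^{*}$. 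On even quotients $\overline{\iota^{*}}$ is the map $\cO(\rGL(V)_{\text{\it ev}})\to\cO(G_+)$ induced by $\,{(G_{{}_{\!\cP}})}_\zero=G_+\hookrightarrow\rGL(V)\,$, which is surjective because $G_+$ is closed in $\rGL(V)$ (Proposition \ref{ass-class-grscheme}); on the $W$'s the induced map $W^{\cO(\rGL(V))}=\rgl(V)_\uno^{*}\to\fg_\uno^{*}$ is dual to the inclusion $\,\fg_\uno\hookrightarrow\rgl(V)_\uno\,$, which by the very definition of a linear sHCp is a split injection ($\rgl(V)_\uno=\fg_\uno\oplus\mathfrak{q}$ with $\mathfrak{q}$ free) and hence surjective. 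Lifting the even and odd generators into $\mathrm{im}(\iota^{*})$ modulo the odd ideal $J=(\cO(G_{{}_{\!\cP}})_\uno)$ and inducting along the filtration by powers of $J$ — which is finite because $\fg_\uno$ has finite rank, so $J^{\,d_-+1}=0$ — then yields $\,\mathrm{im}(\iota^{*})=\cO(G_{{}_{\!\cP}})\,$.

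For (c), a morphism $\,(\phi_g,\Phi_v):\cP'\to\cP''\,$ in $\lsHCp_\bk$ supplies in particular $\,\Phi_v:\rGL(V')\to\rGL(V'')\,$; I would set $\Psi_\ell(\phi_g,\Phi_v):=\big(\Phi_v|_{G_{{}_{\!\cP'}}},\Phi_v\big)$ and check that $\Phi_v$ carries $G_{{}_{\!\cP'}}$ into $G_{{}_{\!\cP''}}$. It suffices to see this on generators: writing $\phi_g=(\Omega_+,\omega)$, compatibility gives that $\Phi_v$ restricts to $\Omega_+$ on $G'_+$, so $\Phi_v(G'_+(A))\subseteq G''_+(A)$, while for an odd generator $\,\Phi_v(1_{{}_{V'}}+\eta\,Y'_i)=1_{{}_{V''}}+\eta\,\omega(Y'_i)\,$ (because $\eta^{2}=0$ forces $\Phi_v$ to act through $d\Phi_v|_{\fg'_\uno}=\omega$), and since $\omega(Y'_i)\in\fg''_\uno$ is a $\bk$–combination of the chosen basis, Lemma \ref{tang-group}(d) together with the basis–independence in Proposition \ref{fact-G_P} place $\,1_{{}_{V''}}+\eta\,\omega(Y'_i)\,$ in $G_{{}_{\!\cP''}}(A)$. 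These assignments are natural in $A$ and preserve the generating subsets, so $\Phi_v$ restricts to a supergroup morphism $\,G_{{}_{\!\cP'}}\to G_{{}_{\!\cP''}}\,$, and functoriality (identities and composites) is inherited from that of $\Phi_v$. Finally each object lands in $\lgssfsgrps_\bk$: $G_{{}_{\!\cP}}$ is gs-split by (a), closed in $\rGL(V)$ by (b), and fine because $\,\Lie(G_{{}_{\!\cP}})=\cL_\fg\,$ (read off from Lemma \ref{tang-group}(a) and the factorization) with $\fg_\uno$ finite free, while $\rgl(V)_\uno/\fg_\uno$ is free by hypothesis.

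The main obstacle is (b): the product decomposition $\,G_{{}_{\!\cP}}=G_+\cdot G_-^{\scriptscriptstyle \,<}\,$ does \emph{not} match the canonical gs-splitting of $\rGL(V)$ itself — already $\,(1_{{}_V}+\eta_1Y_1)(1_{{}_V}+\eta_2Y_2)\,$ acquires an even diagonal correction $\eta_1\eta_2\,Y_1Y_2$, so that $G_-^{\scriptscriptstyle \,<}\not\subseteq\rGL(V)_{\text{\it odd}}$ — whence one cannot simply restrict the splitting of $\rGL(V)$. The surjectivity of $\iota^{*}$ must instead be extracted generator by generator from the strongly-split structure, and the direct–summand hypothesis $\rgl(V)_\uno=\fg_\uno\oplus\mathfrak{q}$ built into the definition of a linear sHCp is precisely what makes the odd part of the argument go through.
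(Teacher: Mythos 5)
Parts \textit{(a)} and \textit{(c)} of your proposal are essentially sound. Your \textit{(a)} is the paper's own argument: $\,{\big(G_{{}_{\!\cP}}\big)}_\zero = G_+\,$ together with Proposition \ref{dir-prod-fact-G_P - lin} (the paper only adds, via Lemma \ref{tang-group}\textit{(b)}, that $G_-^{\scriptscriptstyle\,<}$ is stable under the adjoint $G_+$--action). Your \textit{(c)} packages things a little differently from the paper, which \emph{defines} $\Psi_\ell$ on a morphism by the formula $\, g'_+\prod_i(1+\eta'_i\,Y_i)\mapsto \Omega_+(g'_+)\prod_i(1+\eta'_i\,\omega(Y_i))\,$ through the unique factorization and then asserts multiplicativity as bookkeeping; you instead restrict $\Phi_v\,$. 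The two agree because $\,\Phi_v(1+\eta\,Y)=1+\eta\,\omega(Y)\,$ (by Yoneda, a pointed morphism $\mathbb{A}_\bk^{0|1}\to\rGL(V'')$ is necessarily of the form $\,\eta\mapsto 1+\eta\,M_1\,$ for a fixed odd matrix $M_1\,$, which the compatibility constraint identifies with $\omega(Y)$), and your packaging has the merit that being a group morphism comes for free.

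Part \textit{(b)}, however, contains a genuine gap, exactly at the step you substitute for the paper's computation. The reduction of closedness to surjectivity of $\iota^*$ is correct, and so are your two partial surjectivities (onto $\,\overline{\cO(G_{{}_{\!\cP}})}=\cO(G_+)\,$ and onto $\,W^{\cO(G_{{}_{\!\cP}})}=\fg_\uno^{\,*}\,$). But these do \emph{not} imply surjectivity of $\iota^*$ by ``inducting along the filtration by powers of $J$'': a preimage of an odd generator $\,w\in W^H\,$ only produces an element of $\,\mathrm{im}(\iota^*)\,$ congruent to $w$ modulo $\,H_\zero^+H_\uno=\overline{H}^{\,+}H_\uno+H_\uno^{\,[3]}\,$, and $\,\overline{H}^{\,+}H_\uno\,$ is \emph{not} contained in $J^2\,$: the error terms live in the same $J$--adic degree, with coefficients in the non-nilpotent algebra $\overline{H}\,$, so the finite filtration never absorbs them. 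Indeed the algebra-level claim you are implicitly using is false: in $\,H=\bk[x]\otimes_\bk\bigwedge(\xi_1,\xi_2)\,$ the subalgebra $B$ generated by $\,x\,$, $\,\xi_1+x\,\xi_2\,$ and $\,\xi_2+x\,\xi_1\,$ surjects onto $\,\overline{H}=\bk[x]\,$ and onto $\,W^H=\bk\,\xi_1\oplus\bk\,\xi_2\,$, yet $\,\xi_1\notin B\,$ (solving for it would require $\,1-x^2\,$ to be invertible). To rescue your route you would have to exploit in an essential way that $\,\mathrm{im}(\iota^*)\,$ is a Hopf (or left $\overline{H}$--comodule) subalgebra --- and over an arbitrary ring $\bk\,$, where monomorphisms of affine (super)group schemes need not be closed immersions, that is precisely the nontrivial content, not a formality. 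The paper avoids the issue by brute force: $G_+$ is closed by the very definition of a lsHCp, and $G_-^{\scriptscriptstyle\,<}$ is shown to be closed in $\,\End_{\,\bk}(V)\,$ by expanding the ordered product $\,\prod^{\rightarrow}_{i}(1+\eta_i\,Y_i)\,$ in coordinates, observing that the coefficient of each $Y_i$ is $\,\eta_i+\cO_i(3)\,$, inverting this polynomial change of odd coordinates (nilpotency makes it an automorphism of $\mathbb{A}_\bk^{0|d_-}$), and then expressing every remaining matrix coordinate as a polynomial in these, i.e.\ exhibiting explicit defining equations; the direct-product factorization of Proposition \ref{dir-prod-fact-G_P - lin} then yields closedness of $\,G_{{}_{\!\cP}}\,$ itself.
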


\begin{proof}
 {\it (a)} \, We already noticed that, by the very construction, one has  $ \, {\big( G_{{}_{\!\cP}} \big)}_\zero = G_+ \, $;  this together with  $ \, G_-^{\scriptscriptstyle \,<} \cong \mathbb{A}_\bk^{0|d_-} \, $  yields  $ \, G_{{}_{\!\cP}} \cong G_+ \times G_-^{\scriptscriptstyle \,<} \cong G_+ \times\mathbb{A}_\bk^{0|d_-} \, $  (see above).  Furthermore, by construction and  Lemma \ref{tang-group}{\it (b)\/}  we have that  $ \, G_-^{\scriptscriptstyle \,<} \cong \mathbb{A}_\bk^{0|d_-} \, $  is stable by the adjoint action of  $ \, {\big( G_{{}_{\!\cP}} \big)}_\zero = G_+ \, $.  Eventually, all this means that that  $ G_{{}_{\!\cP}} $  is globally strongly split, q.e.d.

\vskip5pt

   {\it (b)} \,  Due to   Proposition \ref{fact-G_P}  and  Proposition \ref{dir-prod-fact-G_P - lin},  it is enough to prove that both  $ G_+ $  and  $ G_-^{\scriptscriptstyle \,<} $  are closed subsuperschemes in  $ \rGL(V) \, $.  The first property holds by the definition of a lsHCp, so we are left to cope with the second.

\vskip3pt

   In the proof of  Proposition \ref{dir-prod-fact-G_P - lin}{\it (b)\/}  we saw that there exists a  $ \bk $--superscheme  isomorphism  $ \, \Theta : \mathbb{A}_\bk^{0|d_-} \!\!\relbar\joinrel\longrightarrow G_-^{\scriptscriptstyle \,<} \, $  given by
  $$  {\big( \eta_i \big)}_{i \in I} \, \mapsto \, \Theta_{\!A}\big(\! {\big( \eta_i \big)}_{i \in I} \big) \, := \, {\textstyle \prod_{i \in I}} \big( 1 + \eta_i Y_i \big) \, =: \, \mathcal{Y}  \qquad  \hbox{for any  $ \, A \in \salg_\bk \, $.}  $$
Expanding the last product   --- inside  $ \End_{\,\bk}(V)(A) \, $,  say ---   yields
  $$  \mathcal{Y}  \,\; := \;\,  \Theta_{\!A}\big(\! {\big( \eta_i \big)}_{i \in I} \big)  \,\; = \;\,  1 \, + \, {\textstyle \sum_{k=1}^{d_-} \prod_{i_1 < \cdots < i_k}} \, \eta_{i_1} \cdots \eta_{i_k} \, Y_{i_1} \cdots Y_{i_k}  \,\; = \;\,  \mathcal{Y}'_0 \, + \mathcal{Y}'_1  $$
where we set  $ \; \mathcal{Y}'_0 := \! \sum\limits_{k \, \text{\sl even}} \hskip-9pt {\phantom{\big|}}_{k=1}^{d_-} {(-1)}^{k \choose 2} \prod\limits_{i_1 < \cdots < i_k} \hskip-3pt \eta_{i_1} \cdots \eta_{i_k} \, Y_{i_1} \cdots Y_{i_k} \, \in \, {\End_{\,\bk}(V)}_\zero(A) \; $  and similarly  $ \; \mathcal{Y}'_1  := \! \sum\limits_{\,k \ \text{\sl odd}^{\phantom{|}}} \hskip-8pt {\phantom{\big|}}_{\hskip-4pt k=1}^{\hskip-4pt d_-} {(-1)}^{{k-1} \choose 2} \prod\limits_{i_1 < \cdots < i_k} \hskip-3pt \eta_{i_1} \cdots \eta_{i_k} \, Y_{i_1} \cdots Y_{i_k} \, \in \, {\End_{\,\bk}(V)}_\uno(A) \; $.
 \vskip5pt
   Now recall that  $ \rgl(V) $  is  $ \bk $--free  and  $ \, {\rgl(V)}_\uno = \fg_\uno \oplus \mathfrak{q} \, $  with both  $ \fg_\uno $  and  $ \mathfrak{q} $  being  $ \bk $--free.  Then the given expansions of  $ \mathcal{Y}'_0 $  and  $ \mathcal{Y}'_1 $  prove that, with respect to some  $ \bk $--basis  $ B $  of  $ \, \End_{\,\bk}(V) = \rgl(V) \, $  extending that of  $ \fg_\uno $  given by the  $ Y_i $'s,  the coefficients  $ c_b $  ($ \, b \! \in \! B \, $)  of both  $ \mathcal{Y}'_0 $  and  $ \mathcal{Y}'_1 $  are polynomials in the  $ \eta_j $'s.  In particular, the coefficients in  $ \mathcal{Y}'_1 $  of each basis element  $ Y_i $  is of the form  $ \, \eta_i + \cO_i(3) \, $,  where  $ \cO_i(3) $  is some polynomial in the  $ \eta_j $'s  in which only monomials of degree odd and at least 3 can occur.  These polynomials yield a  $ \bk $--superscheme  endomorphism  $ \, \Lambda \, $  of  $ \mathbb{A}_\bk^{0|d_-} $  given on  $ A $--points by  $ \; \Lambda_A : {\big( \eta_i \big)}_{i=1,\dots,d_-} \!\! \mapsto {\big(\, \eta_i + \cO_i(3) \big)}_{i=1,\dots,d_-} \; $
%
%
 which is automa\-tically an isomorphism (exploiting the fact that the variables  $ \eta_i $ are nilpotent).  Setting  $ \, \widetilde{\eta}_i := \Lambda_A^{-1}(\eta_i) \, $,  all this implies that the previously mentioned coefficients  $ c_b $  ($ \, b \! \in \! B \, $)  are also polynomials in the  $ \widetilde{\eta}_j $'s,  say  $ \, c_b = P_b\big( {\big\{ \widetilde{\eta}_j \big\}}_{j=1,\dots,d_-} \big) \, $;  in particular when  $ \, b = Y_i \, $  (for some  $ i \, $)  we have  $ \, c_i := c_{{}_{Y_i}} = \widetilde{\eta}_i \, $.  Therefore, our  $ G_-^{\scriptscriptstyle \,<} \, $  is the set of zeroes (in the superscheme-theoretical sense) of the ideal  $ \, \Big( {\big\{\, c_b - P_b\big( {\big\{ c_i \big\}}_{i=1,\dots,d_-} \big) \,\big\}}_{b \in B\,} \Big) \, $  of the  $ \bk $--algebra  $ \, \cO\big( \End_{\,\bk}(V) \big) \, $:  here we think of the  $ c_b $'s  as being elements of  $ \cO\big( \End_{\,\bk}(V) \big) \, $,  which clearly makes sense in that they are defined as ``coordinate functions''.  Therefore  $ G_-^{\scriptscriptstyle \,<} $  is closed in  $ \End_{\,\bk}(V) \, $,  hence also in  $ \rGL(V) \, $,  so that it is a closed  $ \bk $--subsuperscheme of  $ \rGL(V) \, $,  q.e.d.

\vskip5pt

   {\it (c)} \,  The previous claims ensure that  $ G_{{}_{\!\cP}} $  is a  $ \bk $--supergroup,  actually a  {\sl linear\/}  one; moreover, we also remarked that  $ \, {\big( G_{{}_{\!\cP}} \big)}_\zero = G_+ \, $.  In addition, again by the very construction and by  Proposition \ref{dir-prod-fact-G_P - lin}  we find that  $ \, \Lie\,\big(G_{{}_{\!\cP}}\!\big) =\fg \, $:  in particular, by the assumptions on  $ \fg $  this implies that the supergroup  $ G_{{}_{\!\cP}} $  is  {\sl fine}.  Overall this means that  $ \, G_{{}_{\!\cP}} \! \in \lgssfsgrps_\bk \, $.
                                                         \par
   In order to have a functor  $ \, \Psi_\ell : \lsHCp_\bk \longrightarrow \lgssfsgrps_\bk \, $  we still need to define  $ \Psi_\ell $  on morphisms of  $ \lsHCp_\bk \, $.  Letting  $ \; (\Omega_+,\omega) : \cP' := \big( G'_+ \, , \, \fg' \big) \longrightarrow \big( G''_+ \, , \, \fg'' \big) =: \cP'' \; $  be a morphism in  $ \lsHCp_\bk \, $,  we define  $ \, \Psi_\ell\big( (\Omega_+,\omega) \big) \, $  on  $ A $--points   --- for any  $ \, A \in \salg_\bk $  ---   as follows.  Given  $ \, g' \in \Psi_\ell\big( \cP' \big) \, $,  let  $ \, g' = g'_+ \cdot \prod_{i \in I'} \big( 1 + \eta'_i Y_i \big) \, $  be its unique factorization after the factorization  $ \, G_{\!{}_{\cP'}} = G'_+ \times G'_- \, $  of  $ \, G_{\!{}_{\cP'}} := \Psi_\ell\big( \cP' \big) \, $  as in  Proposition \ref{dir-prod-fact-G_P - lin}{\it (a)\/}:  then set
  $$  {\Psi_\ell\big( (\Omega_+,\omega) \big)}_A\big(g'\big) \, := \, \Omega_+\big(g'_+\big) \cdot {\textstyle \prod_{i \in I'}} \big( 1 + \eta'_i \, \omega(Y_i) \big)  $$
It is then a bookkeeping matter to check that this map is actually a group morphism, and that all properties required for that to yield a functor, as desired, are indeed satisfied.
\end{proof}

\bigskip

   In the end, our main result is that the  $ \Psi_\ell $  above is a quasi-inverse such as we were looking for:

\medskip

\begin{theorem}  \label{Psi_ell-inverse_Phi_ell}
 The functor  $ \, \Psi_\ell : \text{\rm (lsHCp)}_\bk \! \relbar\joinrel\relbar\joinrel\longrightarrow \text{\rm (lgss-fsgroups)}_\bk \, $  is inverse, up to a natural isomorphism, to the functor  $ \, \Phi_\ell : \text{\rm (lgss-fsgroups)}_\bk \! \relbar\joinrel\relbar\joinrel\longrightarrow \text{\rm (lsHCp)}_\bk \; $.  In other words, these two are category equivalences, quasi-inverse to each other.
\end{theorem}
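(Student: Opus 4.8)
The plan is to produce natural isomorphisms $\Phi_\ell\circ\Psi_\ell\cong\mathrm{Id}$ and $\Psi_\ell\circ\Phi_\ell\cong\mathrm{Id}$ on $\lsHCp_\bk$ and $\lgssfsgrps_\bk$ respectively, handling the two composites separately; the functors themselves are already in hand from Proposition \ref{lsgrps-->lsHCp} and Proposition \ref{G_P functor - lin}.

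For the composite $\Phi_\ell\circ\Psi_\ell$ I would simply read off the output. Starting from $\cP=\big(\!\big(G_+,\fg\big),V\big)\in\lsHCp_\bk$, the functor gives $\Psi_\ell(\cP)=\big(G_{\cP},V\big)$, and Proposition \ref{G_P functor - lin} already records $\big(G_{\cP}\big)_\zero=G_+$ together with $\Lie\big(G_{\cP}\big)=\fg$, so that $\Phi_\ell\big(\Psi_\ell(\cP)\big)=\big(\!\big(G_+,\fg\big),V\big)=\cP$ on the nose. The only thing to check is that the Lie superalgebra structure (bracket and $2$--operation) and the adjoint $G_+$--action on the reconstructed $\Lie\big(G_{\cP}\big)$ agree with those prescribed by $\cP$; but $G_{\cP}$ and $\fg$ sit inside $\rGL(V)$ and $\rgl(V)$ compatibly with the defining inclusions of the linear sHCp $\cP$, and there all these operations are computed through the ambient (super)matrix product, so they coincide automatically. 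This identification is patently natural in $\cP$.

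The composite $\Psi_\ell\circ\Phi_\ell$ is the substantial one. Given $(G,V)\in\lgssfsgrps_\bk$, I would set $\cP:=\Phi_\ell(G,V)=\big(\!\big(G_\zero,\Lie(G)\big),V\big)$ and build $G_{\cP}$ using a basis $\{Y_i\}_{i\in I}$ of $\fg_\uno=\Lie(G)_\uno$ inside $\rgl(V)$. First I would establish $G_{\cP}\subseteq G$ as closed subsupergroups of $\rGL(V)$: the even generators $G_\zero=G_{\text{\it ev}}$ lie in $G$ by definition, while each factor $1_{{}_V}+\eta\,Y_i$ lies in $G(A)$ by Lemma \ref{tang-group}(a), precisely because $Y_i\in\Lie(G)_\uno$. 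Hence every generator of $G_{\cP}(A)$ is an $A$--point of $G$, giving the inclusion for all $A$; and since $G_{\cP}$ is closed in $\rGL(V)$ by Proposition \ref{G_P functor - lin}(b), it is closed in $G$. It then remains to show this inclusion is an isomorphism, and this is where I expect the real work. Both $G$ and $G_{\cP}$ are globally strongly split — $G$ by hypothesis, $G_{\cP}$ by Proposition \ref{G_P functor - lin}(a) — with the \emph{same} even part $G_\zero$ and the same odd dimension $d_-=\mathrm{rk}(\fg_\uno)=|I|$, and the inclusion restricts to the identity on $G_\zero$ and induces the identity $\fg_\uno\to\fg_\uno$ on tangent spaces.

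To convert this coincidence of data into an isomorphism I would pass to coordinate Hopf superalgebras. The closed inclusion yields a surjection $\cO(G)\twoheadrightarrow\cO(G_{\cP})$ of commutative Hopf superalgebras; by Theorem \ref{gl-str-split_sgroup-Th} both are strongly split, i.e.\ isomorphic as counital $\overline{(\ )}$--comodule algebras to $\cO(G_\zero)\otimes_\bk\bigwedge W$ with $W$ free of rank $d_-$. This surjection induces the identity on the even quotient $\overline{\cO(G)}=\cO(G_\zero)=\overline{\cO(G_{\cP})}$ and, dually to the identity on $\fg_\uno$ (recall ${(W^A)}^*=\fg_\uno$), an isomorphism $W^{\cO(G)}\xrightarrow{\,\cong\,}W^{\cO(G_{\cP})}$; being a surjection of free $\cO(G_\zero)$--modules of the same finite rank $2^{d_-}$ that is an isomorphism on the generating $W$, it is bijective. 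Equivalently, at the superscheme level the splittings reduce matters to the odd factor, a pointed closed embedding $\mathbb{A}_\bk^{0|d_-}\hookrightarrow\mathbb{A}_\bk^{0|d_-}$ which is the identity on tangent spaces and hence an isomorphism. Thus $G_{\cP}\cong G$ compatibly with the embedding into $\rGL(V)$, i.e.\ in $\lgssfsgrps_\bk$. The hardest part is exactly this rigidity step — upgrading the matching of even part and tangent superalgebra to a genuine isomorphism — together with the routine but lengthy verification that both families of isomorphisms are natural and compatible with the morphism-level definitions of $\Phi_\ell$ and $\Psi_\ell$, which then establishes that the two functors are mutually quasi-inverse equivalences.
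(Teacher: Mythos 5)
Your treatment of $\Phi_\ell\circ\Psi_\ell$ and of the inclusion $G_{{}_{\!\cP}}\subseteq G$ matches the paper's proof step by step (same reading of Proposition \ref{G_P functor - lin}, same use of Lemma \ref{tang-group}(a) to place the generators $1+\eta\,Y_i$ inside $G(A)$), and you correctly locate the crux in the rigidity step. But your execution of that step has a genuine gap. After passing to the surjection $\pi:\cO(G)\twoheadrightarrow\cO\big(G_{{}_{\!\cP}}\big)$, you treat it as ``a surjection of free $\cO(G_\zero)$--modules of the same finite rank $2^{d_-}$''. The $\cO(G_\zero)$--module structures on the two sides are, however, not canonical: each depends on a choice of splitting (an algebra section of the projection onto the even quotient), and these are chosen independently for $G$ and for $G_{{}_{\!\cP}}\,$. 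Nothing you have proved guarantees that $\pi$ carries the chosen copy of $\cO(G_\zero)$ inside $\cO(G)$ onto the chosen copy inside $\cO\big(G_{{}_{\!\cP}}\big)$, i.e.\ that $\pi$ is ``diagonal'', of the form $\pi_\zero\otimes\pi_\uno$ with respect to the two tensor decompositions --- only then is it a map of free $\cO(G_\zero)$--modules, and only then do your equal-rank count and your reduction ``to the odd factor'' (a pointed closed embedding of $\mathbb{A}_\bk^{0|d_-}$ into itself) make sense. The composite of $\pi$ with the section for $G$ is \emph{some} counital algebra section of $\cO\big(G_{{}_{\!\cP}}\big)\twoheadrightarrow\cO(G_\zero)$, but there is no a priori reason it is the one occurring in the strong splitting of $\cO\big(G_{{}_{\!\cP}}\big)$, nor that $\cO\big(G_{{}_{\!\cP}}\big)$ is free of the expected rank for the module structure it induces.

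This compatibility is exactly what the paper invests a dedicated result in: Theorem \ref{cons-splittings} (Gavarini--Masuoka), which for a closed embedding $H\subseteq K$ of gs-split fine supergroups with $\fk_\uno\big/\fh_\uno$ free (here it is zero, since $\Lie\,\big(G_{{}_{\!\cP}}\big)=\fg=\Lie\,(G)$) produces splittings of the two supergroups \emph{consistent} with the embedding; its proof is not formal, resting on Masuoka's Lemma \ref{split-isom-alg} (from Masuoka--Shibata) and the choice of an ordered basis of $\fk_\uno$ extending one of $\fh_\uno\,$. With that theorem in hand the paper gets $\pi=\pi_\zero\otimes\pi_\uno$ and concludes as you do, since $\pi_\uno$ is dual to the identity of $\fg_\uno\,$. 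So your proposal must either invoke Theorem \ref{cons-splittings} explicitly at this point, or replace the free-module count by a choice-free substitute --- for instance, filter $\cO(G)$ and $\cO\big(G_{{}_{\!\cP}}\big)$ by powers of the canonical ideals $J=\big(\cO(G)_\uno\big)$ and $J'=\big(\cO(G_{{}_{\!\cP}})_\uno\big)$, note that the graded pieces $J^k\big/J^{k+1}\cong\cO(G_\zero)\otimes_\bk\bigwedge^{k\,}W$ are \emph{canonically} free $\cO(G_\zero)$--modules of rank $\binom{d_-}{k}$, that surjectivity of $\pi$ (being parity-preserving) passes to each graded piece, and apply the equal-rank argument degree by degree. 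As written, the decisive step is not justified.
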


\begin{proof}
 The previous results altogether show that, for any  $ \, \cP \in \lsHCp_\bk \, $,  the sHCp associated with  $ \, \Psi_\ell(\cP) := G_{{}_{\!\cP}} \, $  is nothing but  $ \cP $  itself, up to isomorphism: in other words, we have  $ \, \Phi_\ell\,\big( \Psi_\ell\,(\cP) \big) = \Phi_\ell\,\big( G_{{}_{\!\cP}} \big) \, \cong \, \cP \, $.  Moreover, tracking the whole construction one realizes at once that it is  {\sl natural},  i.e.~all these isomorphisms match together as to give  $ \, \Phi_\ell \circ \Psi_\ell \cong \text{\sl Id}_{{}_{\lsHCp_\bk}} \, $.
  \vskip4pt
   As to the composition  $ \, \Psi_\ell \, \circ \, \Phi_\ell \, $,  \, let  $ \, G \in \text{\rm (lgss-fsgroups)}_\bk \, $  and  $ \, \cP := \Phi_\ell\,(G\,) = \big( G_\zero \, , \fg \big) \, \in \, \text{\rm (lsHCp)}_\bk \, $   --- with  $ \, \fg = \Lie\,(G) \, $  ---   and  $ \, G_{{}_{\!\cP}} := \Psi_\ell\,(\cP) = \Psi_\ell\,\big( \Phi_\ell\,(G\,) \big) \, $:  clearly, everything amounts to proving that  $ \, G_{{}_{\!\cP}} = G \, $   --- as closed subgroups inside  $ \rGL(V) \, $,  with  $ V $  as in  Definition \ref{def-lsgrps-lsHCp}{\it (a)\/}  above.  Note that the inclusion  $ \, G_{{}_{\!\cP}} \subseteq G \, $  holds true by construction, since all generators of  $ G_{{}_{\!\cP}}(A) $  belong to  $ G(A) \, $.
 \vskip2pt
   First, by  Proposition \ref{G_P functor - lin}{\it (b)\/}  we have that  $ G_{{}_{\!\cP}} $  is a  {\sl closed\/}  subgroup of  $ \rGL(V) \, $;  the same holds for  $ G \, $,  by assumption.  As  $ \, G_{{}_{\!\cP}} \subseteq G \, $,  we can argue that  $ G_{{}_{\!\cP}} $  is closed also inside  $ G \, $.
 \vskip2pt
   Then we apply  Theorem \ref{cons-splittings}  to  $ \, H := G_{{}_{\!\cP}} \, $  and  $ \, K := G \, $,  finding global splittings  $ \; {\big(G_{{}_{\!\cP}}\big)}_\zero \times {\big(G_{{}_{\!\cP}}\big)}_\uno \cong G_{{}_{\!\cP}} \; $  and  $ \; G_\zero \times G_\uno \cong G \; $  which are consistent with each other, as in the cited Theorem, in particular  $ \, {\big(G_{{}_{\!\cP}}\big)}_\zero \subseteq G_\zero \, $  and  $ \, {\big(G_{{}_{\!\cP}}\big)}_\uno \subseteq G_\uno \, $.
 \vskip2pt
   Third, the inclusion  $ \, {\big(G_{{}_{\!\cP}}\big)}_\zero \subseteq G_\zero \, $  is an identity by the very construction of  $ G_{{}_{\!\cP}} \, $.
%
%
 So we are only left to prove that the inclusion  $ \, {\big(G_{{}_{\!\cP}}\big)}_\uno \subseteq G_\uno \, $,  provided in the second step above, is an equality too.
                                                                     \par
   Now, the fact that the splittings of  $ G_{{}_{\!\cP}} $  and  $ G $  are compatible is equivalent to the fact that the projection
 \vskip-2pt
   \centerline{ $ \, \pi : \, \cO(G) \, = \, \cO\big(G_\zero\big) \otimes \cO\big(G_\uno\big) \, \relbar\joinrel\relbar\joinrel\twoheadrightarrow \, \cO\big({\big( G_{{}_{\!\cP}} \big)}_{\zero\,}\big) \otimes \cO\big({\big( G_{{}_{\!\cP}} \big)}_{\uno\,}\big) \, = \, \cO\big(G_{{}_{\!\cP}}\big) \, $ }
 \vskip2pt
\noindent
 is of the form  $ \, \pi = \pi_\zero \otimes \pi_\uno \, $,  where  $ \, \pi_\zero : \cO\big(G_{\zero\,}\big) := \cO\big(G_{\text{\it ev}\,}\big) \relbar\joinrel\relbar\joinrel\twoheadrightarrow \cO\big({\big( G_{{}_{\!\cP}} \big)}_{\!\text{\it ev}\,}\big) =: \cO\big({\big( G_{{}_{\!\cP}} \big)}_{\zero\,}\big) \, $  is canonically defined and  $ \, \pi_\uno : \cO\big(G_\uno\big) \relbar\joinrel\relbar\joinrel\twoheadrightarrow \cO\big(\! {\big( G_{{}_{\!\cP}} \big)}_{\uno\,} \big) \, $  is a suitable morphism.  Now, by construction  $ \, \cO\big(G_\uno\big) \, $  is a Grassmann algebra, namely  $ \, \cO\big(G_\uno\big) = \bigwedge {\text{\sl Lie\/}\big(G_\uno\big)}^* \, $,  and  $ \, \cO\big(G_\uno\big) = \bigwedge {\text{\sl Lie\/}\big(\! {\big( G_{{}_{\!\cP}} \big)}_{\uno\,} \big)}^* \, $  by similar reasons.  But still by construction we have  $ \, \text{\sl Lie\/}\big(G_\uno\big) \! = \fg_\uno \! = \text{\sl Lie\/}\big(\! {\big( G_{{}_{\!\cP}} \big)}_\uno \big) \, $  inside  $ \, \text{\sl Lie}\big( \rGL(V) \big) \! = \rgl(V) \, $  so the inclusion map  $ \; \text{\sl Lie\/}\big( {\big( G_{{}_{\!\cP}} \big)}_\uno \big) = \bigwedge \fg_\uno \lhook\joinrel\longrightarrow \bigwedge \fg_\uno = \text{\sl Lie\/}\big(G_\uno\big) \; $  is just the identity; hence its dual, namely the projection map  $ \, \pi_\uno : \cO\big(G_\uno\big) \relbar\joinrel\relbar\joinrel\twoheadrightarrow \cO\big({\big( G_{{}_{\!\cP}} \big)}_\uno \big) \, $,  is the identity from  $ \, \cO\big(G_\uno\big) = \bigwedge \fg_\uno^{\,*} \, $  to  $ \, \cO\big( {\big( G_{{}_{\!\cP}} \big)}_\uno \big) = \bigwedge \fg_\uno^{\,*} \, $.  In turn, the inclusion  $ \, {\big( G_{{}_{\!\cP}} \big)}_\uno \subseteq G_\uno \, $  is necessarily the identity too.
\end{proof}

\medskip

 \subsection{The converse functor: general case}  \label{conv-funct-gen} {\ }

\smallskip

   We shall now face the task of providing a quasi-inverse to the functor  $ \, \Phi : \fsgrps_\bk \! \longrightarrow \sHCp_\bk \, $  in greater generality.  In the end, it will turn out that this will be successful only if we bound ourselves to deal with fine supergroups which are globally strongly split: in other words, a fine supergroup  $ G $  can be ``reconstructed'' starting from its associated sHCp if and only if it is globally strongly split, i.e.~only if  $ \, G \in \gssfsgrps_\bk \, $    --- notation of  Definition \ref{gl-str-split_sgroup-Def}.  Therefore, for sheer notational purposes we introduce the following

\smallskip

\begin{definition}  \label{gss-sgrps_cat}
 We denote by  $ \, \Phi_g : \gssfsgrps_\bk \!\longrightarrow\! \sHCp_\bk \, $  the restriction to the subcatego\-ry  $ \gssfsgrps_\bk $  of the functor  $ \, \Phi : \fsgrps_\bk \!\longrightarrow\! \sHCp_\bk \, $  considered in  Proposition \ref{sgrps-->sHCp}.   \hfill   $ \diamondsuit $
\end{definition}

\smallskip

   By the way, note that by  Remark \ref{Lie-repr_->_(gs-split_->_fine)}  if  $ \Lie\,(G) $  is representable for a supergroup  $ G \, $,  then asking  $ G $  to be fine and gs-split actually amounts to asking that  $ G $  be gs-split only.

\smallskip

   We are ready to go and construct a quasi-inverse functor to  $ \Phi_g \, $.  As we shall presently see, the very construction is modeled on that of  $ \Phi_\ell \, $,  and also many arguments used in the proofs are essentially the same, up to technical modifications.  The key difference with the linear case is the following.  Roughly speaking, in that setup having an embedding of  $ \cP $  inside  $ \rgl(V) $  allowed us to construct  $ G_{{}_{\!\cP}} $  as a subsupergroup of  $ \rGL(V) \, $.  Also, we could investigate the properties of such a group, hence proving all our results, just exploiting this ``native'' embedding of  $ G_{{}_{\!\cP}} $  into  $ \rGL(V) $  and then into  $ \End_{\,\bk}(V) $  too.  In the general case such a linearization is not available: nevertheless, we can achieve a ``partial linearization'', which will still be enough for our purposes.
                                                              \par
   Indeed, first we construct our candidate for  $ G_{{}_{\!\cP}} $  by bare hands, in the form of a  $ \bk $--supergroup  functor.  Then we find a suitable representation of  $ \cP \, $,  and we show that this naturally ``integrate'' to a representation of  $ G_{{}_{\!\cP}} \, $:  this representation, though not faithful, is still ``faithful enough'' to make it possible to apply again the arguments we used in the linear case.  Thus we can replicate,  {\it mutatis mutandis},  the process we followed in that case, and eventually find that our candidate for  $ G_{{}_{\!\cP}} $  actually does the job, namely  $ \, \cP \mapsto G_{{}_{\!\cP}} \, $  yields the converse functor we were looking for.

\smallskip

   As a first step, we start with the definition of  $ G_{{}_{\!\cP}} \, $:

\medskip

\begin{definition}  \label{def G_- / G_P - gen}
 Let  $ \, \cP := \big( G_+ \, , \fg \big) \in \sHCp_\bk \, $  be a sHCp over  $ \bk \, $.  We fix in  $ \, \fg_\uno \, $,  which is  $ \bk $--free,  a  $ \bk $--basis  $ \, {\big\{ Y_i \big\}}_{i \in I} \, $  (for some index set  $ I \, $)  and a total order in  $ I \, $.
 \vskip7pt
   {\it (a)}\,  We introduce a  $ \bk $--supergroup  functor  $ \; G_{{}_{\!\cP}} : \salg_\bk \!\relbar\joinrel\longrightarrow \grps \; $  as follows.  For any given  $ \, A \in \salg_\bk \, $,  consider a formal element  $ \, \big( 1 + \eta_i \, Y_i \big) \, $  for each pair  $ \, (i,\eta_i) \in I \! \times \! A_\uno \, $.
                                                                      \par
   {\sl We define  $ G_{{}_{\!\cP}}(A) $  by generators and relations\/}:  the set of  {\it generators\/}  is
  $$  \Gamma_{\!A}  \,\; := \;\,  \big\{\, g_+ \, , \big( 1 + \eta_i \, Y_i \big) \,\big|\, g_+ \in G_+(A) \, , \, (i,\eta_i) \in I \times A_\uno \,\big\}  \,\; = \;\,  G_+(A) \,{\textstyle \bigcup}\, {\big\{ (1 + \eta_i \, Y_i) \big\}}_{(i,\eta_i\!) \, \in \, I \times A_\uno}  $$
(where  $ \, G_+(A) := G_+(A_0) \, $,  by abuse of notation) and the set of  {\it relations\/}  is
 \vskip-13pt
  $$  \displaylines{
   \qquad \qquad   g'_+ \cdot\, g''_+  \,\; = \;\,  g'_+ \,\cdot_{\!\!\!{}_{G_+}} g''_+   \phantom{{}_{\big|}}  \hfill  \forall \;\; g'_+ \, , g''_+ \in G_+(A)  \qquad  \cr
   \quad   \big( 1 + \eta_i \, Y_i \big) \cdot g_+  \,\; = \;\,  g_+ \cdot \big( 1 + c_{j_1} \eta_i \, Y_{j_1} \big)  \cdot \cdots \cdot \big( 1 + c_{j_k} \eta_i \, Y_{j_k} \big)   \hfill  \cr
   \hfill   \forall \;\; (i,\eta_i) \in I \! \times \! A_\uno \, , \; g_+ \in G_+(A) \, ,  \;\quad  \text{with}  \quad  \Ad\big(g_+^{-1}\big)(Y_i) \, = \, c_{j_1} \, Y_{j_1} + \cdots + c_{j_k} \, Y_{j_k}   \phantom{{}_{\big|}}  \cr
   \qquad   \big( 1 + \eta'_i \, Y_i \big) \cdot \big( 1 + \eta''_i \, Y_i \big)  \; = \;  \Big( 1_{{}_{G_+}} \!\! + \, \eta''_i \, \eta'_i \, Y_i^{\langle 2 \rangle} \Big)_{\!\!{}_{G_+}} \!\!\cdot \big( 1 + \big( \eta'_i + \eta''_i \big) \, Y_i \big) \phantom{{}_{\big|}}   \hfill \hskip15pt   \forall \;\; i \in I  \qquad  \cr
   \hskip2pt   \big( 1 + \eta_j \, Y_j \big) \cdot \big( 1 + \eta_i \, Y_i \big)  \; = \;  \Big( 1_{{}_{G_+}} \!\! + \, \eta_i \, \eta_j \, [Y_i,Y_j] \Big)_{\!\!{}_{G_+}} \!\!\cdot \big( 1 + \eta_i \, Y_i \big) \cdot \big( 1 + \eta_j \, Y_j \big)   \phantom{{}_{.}}   \hfill \hskip6pt   \forall \; j > i \;\, (\in I)  \;\;  \cr
   \qquad \qquad \qquad \qquad   \big( 1 + 0_{\scriptscriptstyle A} \, Y_i \big)  \,\; = \;\,  1   \phantom{{}_{\big|}}  \hfill  \forall \;\; i \in I  \quad \qquad \qquad  }  $$
 \vskip-1pt
\noindent
 where the first line just means that for generators chosen in  $ G_+(A) $  their product   --- denoted with  ``$ \, \cdot \, $''  ---   inside  $ G_{{}_{\!\cP}}(A) $  is the same as in  $ G_+(A) $   --- where it is denoted with  ``$ \,\; \cdot_{\!\!\!{}_{G_+}} $'';  moreover, notation like  $ \, \Big(\, 1_{{}_{G_+}} \! + \, \eta''_i \, \eta'_i \, Y_i^{\langle 2 \rangle} \Big)_{\!\!{}_{G_+}} \, $  and  $ \, \Big(\, 1_{{}_{G_+}} \! + \, \eta_i \, \eta_j \, [Y_i,Y_j] \Big)_{\!\!{}_{G_+}} \, $  denotes two well-defined elements in  $ G_+(A) $   --- see the proof of  Lemma \ref{tang-group}  for a reminder ---   that in the sequel we shall denote more simply as  $ \, \Big(\, 1 + \eta''_i \, \eta'_i \, Y_i^{\langle 2 \rangle} \Big) \, $  and  $ \, \Big(\, 1 + \eta_i \, \eta_j \, [Y_i,Y_j] \Big) \; $.
                                                             \par
   This yields the
 functor  $ G_{{}_{\!\cP}} $  on objects, and one then defines it on morphisms in the obvious way.
 \vskip5pt
   {\it (b)}\,  We define a  $ \bk $--functor  $ \; G_-^{\scriptscriptstyle \,<} : \salg_\bk \!\relbar\joinrel\longrightarrow \sets \; $  as follows.  For  $ \, A \in \salg_\bk \, $  we set
 \vskip-5pt
  $$  G_-^{\scriptscriptstyle \,<}(A)  \; := \;  \bigg\{\, {\textstyle \prod\limits_{i \in I}^\rightarrow} \big( 1 + \eta_i \, Y_i \big) \;\bigg|\; \eta_i \in A_\uno \;\, \forall \; i \in I \,\bigg\}  \qquad  \big(\, \subseteq G_{{}_{\!\cP}}(A) \,\big)  $$
 \vskip-7pt
\noindent
 where  $ \, \prod\limits_{i \in I}^\rightarrow \, $  denotes an  {\sl ordered product\/}  (with respect to the fixed total order in  $ I \, $).  This defines the functor  $ G_-^{\scriptscriptstyle \,<} $  on objects, and its definition on morphism is then the obvious one.
 \hfill   $ \diamondsuit $
\end{definition}

\smallskip

\begin{remarks}  \label{remarks_post-def G_- / G_P - gen}  {\ }
 \vskip3pt
   {\it (a)}\,  By its very definition  $ G_-^{\scriptscriptstyle \,<} $  can be thought of as a subfunctor of  $ G_{{}_{\!\cP}} \, $.
 \vskip3pt
   {\it (b)}\,  By definition both  $ G_-^{\scriptscriptstyle \,<} $  and  $ G_{{}_{\!\cP}} $  depend on the choice of the ordered  $ \bk $--basis  $ \, {\big\{ Y_i \big\}}_{i \in I} \, $  of  $ \fg_\uno \, $;  nevertheless, basing on remark  {\it (c)\/}  here below one can easily show --- by the same arguments used for  Proposition \ref{fact-G_P}  ---   that  {\sl  $ G_{{}_{\!\cP}} $  is actually independent of this choice}.
 \vskip3pt
   {\it (c)}\,  Alternatively, one can modify the very definition of  $ G_{{}_{\!\cP}} \, $,  giving a different presentation of it which is intrinsically independent of any choice of basis of  $ \fg_\uno \, $,  as it  {\sl does not\/}  make use of any  $ \bk $--basis  $ \, {\big\{ Y_i \big\}}_{i \in I} \, $  of  $ \fg_\uno \, $.   Indeed, for each  $ \, A \in \salg_\bk \, $  one takes the group  $ \, G^\bullet_{{}_{\!\cP}}(A) \, $  with the (larger) set of generators
 \vskip-13pt
  $$  \Gamma_{\!A}^{\,\bullet}  \,\; := \;\,  \big\{\, g_+ \, , \big( 1 + \eta \, Y \big) \,\big|\, g_+ \in G_+(A) \, , \, (Y,\eta) \, \in \, \fg_\uno \!\times\! A_\uno \,\big\}  \,\; = \;\,  G_+(A) \,{\textstyle \bigcup}\, {\big\{ (1 + \eta \, Y) \big\}}_{(Y,\eta) \, \in \, \fg_\uno \!\times\! A_\uno}  $$
and (larger) set of relations is (for  $ \, g'_+ \, , g''_+ \in G_+(A) \, $,  $ \, \eta \, , \eta' \, , \eta'' \in A_\uno \, $,  $ \, Y \, , Y' \, , Y'' \in \fg_\uno $)
 \vskip-13pt
  $$  \displaylines{
   g'_+ \cdot\, g''_+  \,\; = \;\,  g'_+ \,\cdot_{\!\!\!{}_{G_+}} g''_+  \quad  ,
 \qquad \qquad  \big( 1 + \eta \, Y \big) \cdot g_+  \,\; = \;\,  g_+ \cdot \big( 1 + \eta \, \text{\sl Ad}\big(g_+^{-1}\big)(Y) \big)  \cr
   \big( 1 + \eta' \, Y \big) \cdot \big( 1 + \eta'' \, Y \big)  \; = \;  \Big( 1_{{}_{G_+}} \!\! + \, \eta'' \, \eta' \, Y^{\langle 2 \rangle} \Big)_{\!\!{}_{G_+}} \!\!\cdot \big(\, 1 + \big( \eta' + \eta'' \big) \, Y \big) \phantom{{}_{\big|}}  \cr
   \big( 1 + \eta'' \, Y'' \big) \cdot \big( 1 + \eta' \, Y' \big)  \; = \;  \Big( 1_{{}_{G_+}} \!\! + \, \eta' \, \eta'' \, \big[Y',Y''\big] \Big)_{\!\!{}_{G_+}} \!\!\cdot \big( 1 + \eta' \, Y' \big) \cdot \big( 1 + \eta'' \, Y'' \big)  \cr
   \big( 1 + \eta \, Y' \big) \cdot \big( 1 + \eta \, Y'' \big)  \; = \;  \big( 1 + \eta \, \big( Y' + Y'' \big) \big)  \cr
   \big( 1 + \eta \; 0_{\fg_\uno} \big)  \,\; = \;\,  1  \quad ,
 \qquad   \big( 1 + 0_{\scriptscriptstyle A} \, Y \big)  \,\; = \;\,  1  }  $$
 \vskip-1pt
\noindent
 Here almost all relations are sheer generalizations of those in  Definition \ref{def G_- / G_P - gen}{\it (a)},  the exceptions being  $ \, \big( 1 + \eta \; 0_{\fg_\uno} \big) = 1 \, $  and  $ \; \big( 1 + \eta \, Y' \big) \cdot \big( 1 + \eta \, Y'' \big) \, = \, \big( 1 + \eta \, \big( Y' + Y'' \big) \big) \; $.  In particular, the latter together with  $ \; \big( 1 + \eta \, Y \big) \cdot g_+ \, = \, g_+ \cdot \big( 1 + \eta \, \text{\sl Ad}\big(g_+^{-1}\big)(Y) \big) \; $  yields
 \vskip2pt
   \centerline{ $ \; \big( 1 + \eta_i \, Y_i \big) \cdot g_+ \, = \, g_+ \cdot \big( 1 + c_{j_1} \eta_i \, Y_{j_1} \big)  \cdot \cdots \cdot \big( 1 + c_{j_k} \eta_i \, Y_{j_k} \big) \; $ }
 \vskip2pt
\noindent
 when  $ \, \Ad\big(g_+^{-1}\big)(Y_i) = c_{j_1} \, Y_{j_1} + \cdots + c_{j_k} \, Y_{j_k} \, $.  Furthermore, again the relations of type  $ \; \big( 1 + \eta \, Y' \big) \cdot \big( 1 + \eta \, Y'' \big) \, = \, \big( 1 + \eta \, \big( Y' + Y'' \big) \big) \; $  imply that for  $ \, Y = \sum_{s=1}^k c_{j_s} Y_{j_s} \, $  we have
 \vskip2pt
   \centerline{ $ \; \big( 1 + \eta \, Y \big) \, = \, \Big( 1 + \eta \, \sum_{s=1}^k c_{j_s} Y_{j_s} \Big) \, = \, \prod_{s=1}^k \big( 1 + c_{j_s} \eta \, Y_{j_s} \big) \; $ }
 \vskip2pt
\noindent
 where the product actually can be done in any order, as the factors in it mutually commute; thus each generator  $ \, \big( 1 + \eta \, Y \big) \, $  can be obtained via the  $ \, \big( 1 + \eta_i \, Y_i \big) \, $'s.  This easily implies that mapping  $ \, g_+ \, $  and  $ \, \big( 1 + \eta_i Y_i \big) \, $  in  $ G_{{}_{\!\cP}}(A) $  respectively to  $ \, g_+ \, $  and  $ \, \big( 1 + \eta_i Y_i \big) \, $  in  $ G^\bullet_{{}_{\!\cP}}(A) $  yields a well defined epimorphism  $ \, \phi_{\!{}_{\scriptstyle A}} : G_{{}_{\!\cP}}(A) \relbar\joinrel\twoheadrightarrow G^\bullet_{{}_{\!\cP}}(A) \, $.  Conversely, considering inside  $ G_{{}_{\!\cP}}(A) $  the elements  $ \, \big( 1 + \eta \, Y \big) := \prod_{s=1}^k \big( 1 + \, c_{j_s} \eta \, Y_{j_s} \big) \, $   --- for each  $ \, Y = \sum_{s=1}^k c_{j_s} Y_{j_s} \in \fg_\uno \, $  ---   one easily sees that all relations considered above to define  $ G^\bullet_{{}_{\!\cP}}(A) $  also hold true inside  $ G_{{}_{\!\cP}}(A) \, $,  as they follow from the defining relations of the latter group.  This implies that there exists also an epimorphism  $ \, \psi_{\!{}_{\scriptstyle A}} : G^\bullet_{{}_{\!\cP}}(A) \relbar\joinrel\twoheadrightarrow G_{{}_{\!\cP}}(A) \, $  which is the inverse of  $ \phi_{\!{}_{\scriptstyle A}} $  above.  The construction of  $ \phi_{\!{}_{\scriptstyle A}} $  and  $ \psi_{\!{}_{\scriptstyle A}} $  is natural in  $ A \, $,  so in the end  $ G_{{}_{\!\cP}} $  and  $ G^\bullet_{{}_{\!\cP}} $  are isomorphic as group functors.
\end{remarks}

\medskip

   Our goal is to show that assigning to each  $ \cP $  its corresponding  $ G_{{}_{\!\cP}} $  one eventually gets a functor  $ \, \Psi_g : \sHCp_\bk \! \relbar\joinrel\longrightarrow \gssfsgrps_\bk \, $  and also that such a functor is an equivalence, quasi-inverse to  $ \, \Phi_g : \gssfsgrps_\bk \! \relbar\joinrel\longrightarrow \sHCp_\bk \, $.  We shall achieve this result in several steps.
 \vskip5pt

\medskip

\begin{free text}  \label{G_P-module V}
 {\bf The representation  $ \, G_{{}_{\!\cP}} \!\relbar\joinrel\longrightarrow \rGL(V) \, $.}  Let  $ \, \fg = \fg_\zero \oplus \fg_\uno \, $  be our given Lie superalgebra, for which  $ \fg_\uno $  is  $ \bk $--free  of finite rank (see  Definition \ref{def-sHCp}),  hence we can fix a  $ \bk $--basis  $ \, {\big\{ Y_i \big\}}_{i \in I} $  of it, where  $ I $  is some finite index set in which we fix some total order.  Recall that the  {\sl universal enveloping algebra\/}  $ U(\fg) $  is given by  $ \; U(\fg) \, := \, T(\fg) \big/ J \; $  where  $ T(\fg) $  is the tensor algebra of  $ \fg $  and  $ J $  is the two-sided ideal in  $ T(\fg) $  generated by the set
  $$  \Big\{\, x \, y - {(-1)}^{|x|\,|y|} \, y \, x - [x,y] \; , \; z^2 - z^{\langle 2 \rangle} \,\;\Big|\; x, y \in \fg_\zero \cup \fg_\uno \, , \, z \in \fg_\uno \,\Big\}  $$
It is known then   --- see for instance  \cite{vsv},  \S 7.2, with the few, obvious changes needed to take into account the relations of type  $ \, z^2 - z^{\langle 2 \rangle} = 0 \, $  (that are superfluous in the setting therein) ---   that one has splitting(s) of  $ \bk $--supermodules  (actually, even of  $ \bk $--supercoalgebras)
  $$  U(\fg)  \,\; = \;\,  U(\fg_\zero) \otimes_\bk {\textstyle \bigwedge} \, \fg_\uno  \,\; \cong \;\,  {\textstyle \bigwedge} \, \fg_\uno \otimes_\bk U(\fg_\zero)   \eqno (4.7)  $$
   \indent   In addition, by the freeness assumption on  $ \fg_\uno $  and our choice of a basis for it we have that  $ {\textstyle \bigwedge} \, \fg_\uno $  is  $ \bk $--free  too, with  $ \bk $--basis  $ \, \big\{\, Y_{i_1} Y_{i_2} \cdots Y_{i_s} \,\big|\, s \leq |I| \, , \, i_1 \! < \! i_2 \! < \! \cdots \! < \! i_s \,\big\} \, $   --- hereafter, we drop the sign  ``$ \wedge $''  to denote the product in  $ \; {\textstyle \bigwedge} \, \fg_\uno \; $.
 \vskip5pt
   Now let  $ \Uuno $  be the (one-dimensional)  {\sl trivial representation\/}  of  $ \fg_\zero \, $.  Then by the standard process of  {\sl induction\/}  from  $ \fg_\zero $  to  $ \fg $   --- the former being thought of as a Lie subsuperalgebra of the latter ---   we can consider the  {\sl induced representation\/}  $ \, V := \text{\sl Ind}_{\fg_\zero}^{\,\fg}(\Uuno\,) \, $.  Looking at  $ \Uuno $  and  $ V $  respectively as a module over  $ U(\fg_\zero) $  and over  $ U(\fg) \, $,  taking (4.7) into account we get
  $$  V  \; := \;  \text{\sl Ind}_{\fg_\zero}^{\,\fg}(\Uuno\,)  \; = \;  U(\fg) \! \mathop{\otimes}\limits_{U(\fg_\zero)} \! \Uuno  \; = \;  {\textstyle \bigwedge}\, \fg_\uno \mathop{\otimes}\limits_\bk \Uuno  \,\; \cong \;  {\textstyle \bigwedge}\, \fg_\uno   \eqno (4.8)  $$
The last one above is a natural  $ \bk $--module  isomorphism, uniquely determined once a specific element  $ \, \underline{b} \in \Uuno \, $  is fixed that forms a  $ \bk $--basis  of  $ \Uuno $  itself: the isomorphism is  $ \, \omega \otimes \underline{b} \mapsto \omega \, $  for all  $ \, \omega \in \bigwedge \fg_\uno \, $.
%
%
                                                                       \par
  This representation-theoretical construction and its outcome clearly give rise to similar functorial counterparts, for the Lie algebra valued  $ \bk $--superfunctors  $ \cL_{\fg_\zero} $  and  $ \cL_\fg \, $,  as well as for the  $ \bk $--superfunctors  associated with  $ U(\fg_\zero) $  and  $ U(\fg) \, $,  in the standard way.
                                                                       \par
   On the other hand, recall that  $ \, \fg_\zero = \Lie\,(G_+) \, $,  and clearly  $ \Uuno $  is also the trivial representation for  $ G_+ \, $,  as a classical, affine  $ \bk $--group  scheme.  Then, by construction and by (4.8), it is clear that the representation of  $ \fg $  on the space  $ V $ also induces a representation of the sHCp  $ \, \cP = (G_+,\fg) \, $  on the same  $ V $,  in other words  $ V $  itself bears also a structure of  $ (G_+,\fg) $--module,  in the sense of  Definition \ref{def-lsgrps-lsHCp}{\it (b)}   --- just drop the faithfulness requirement.  For later use, we denote by  $ \, (\boldsymbol{r}_{\!+},\rho) : (G_+,\fg) \longrightarrow \End_{\,\bk}(V) \, $  the pair of representation maps  $ \, \boldsymbol{r}_{\!+} : G_+ \!\longrightarrow \rGL(V) \, $  and  $ \, \rho : \fg \longrightarrow \rgl(V) \, $ which encode this  $ (G_+,\fg) $--module  structure on  $ V $.  Moreover, we shall also use again  $ \rho $  to denote the representation map  $ \, \rho : U(\fg) \longrightarrow \End_{\,\bk}(V) \, $  describing the  $ U(\fg) $--module  structure on  $ V $.
 \vskip7pt
   Our key step now is to remark that the above  $ (G_+,\fg) $--module  structure on  $ V $  actually ``integrate'' to a  $ G_{{}_{\!\cP}} $--module  structure, in a natural way.
\end{free text}

\smallskip

\begin{proposition}  \label{G_P-action on V}
 Retain notation as above for the  $ (G_+,\fg) $--module  $ V $.  There exists a unique structure of (left)  $ G_{{}_{\!\cP}} $--module  onto  $ V $  which satisfies the following conditions: for every  $ \, A \in \salg_\bk \, $,  the representation map  $ \, \boldsymbol{r}_{{}_{\!\cP,A}} \! : G_{{}_{\!\cP}}(A) \longrightarrow \rGL(V)(A) \, $  is given on generators of\/  $ G_{{}_{\!\cP}}(A) $   --- namely, all  $ \, g_+ \in G_+(A) \, $  and  $ \, (1 + \eta_i \, Y_i) \, $  for  $ \, i \in I \, $,  $ \, \eta_i \in A_\uno $  ---   by
  $$  \boldsymbol{r}_{{}_{\!\cP,A}}(g_+) \, := \, \boldsymbol{r}_{\!+}(g_+) \quad ,  \qquad  \boldsymbol{r}_{{}_{\!\cP,A}}(1 + \eta_i \, Y_i) \, := \, \rho(1 + \eta_i \, Y_i) \, = \, \text{\sl id}_{{}_V\!} + \eta_i \, \rho(Y_i)  $$
or, in other words,  $ \; g_+.v \, := \, \boldsymbol{r}_{\!+}(g_+)(v) \; $  and  $ \; (1 + \eta_i \, Y_i).v \, := \, \rho(1 + \eta_i \, Y_i)(v)  \, = \, v + \eta_i \, \rho(Y_i)(v) \; $  for all  $ \, v \in V(A) \, $.  In particular, this yields a morphism a  $ \bk $--supergroup  functors  $ \; \boldsymbol{r}_{{}_{\!\cP}} \! : G_{{}_{\!\cP}} \!\longrightarrow \rGL(V) \; $.
\end{proposition}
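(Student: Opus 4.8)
The plan is to exploit the fact that $G_{{}_{\!\cP}}(A)$ is defined by generators and relations (Definition \ref{def G_- / G_P - gen}(a)): to produce the group homomorphism $\boldsymbol{r}_{{}_{\!\cP,A}} : G_{{}_{\!\cP}}(A) \longrightarrow \rGL(V)(A)$ it suffices to assign to each generator the element prescribed in the statement and to check that these images satisfy the images of all the defining relations. Since a homomorphism out of a group presented by generators and relations is determined by its values on the generators, uniqueness is then automatic, and naturality in $A$ will give the morphism of $\bk$-supergroup functors $\boldsymbol{r}_{{}_{\!\cP}}$. First I would record that the proposed images do lie in $\rGL(V)(A)$: the elements $\boldsymbol{r}_{\!+}(g_+)$ are invertible since $\boldsymbol{r}_{\!+}$ is a group morphism, while ${\text{\sl id}}_V + \eta_i\,\rho(Y_i)$ is invertible with inverse ${\text{\sl id}}_V - \eta_i\,\rho(Y_i)$ because $\eta_i^2 = 0$.

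Next I would check the defining relations one at a time, working inside the associative superalgebra $\End_{\,\bk}(V)(A)$ and writing simply $Y$ for $\rho(Y)$. The relation $g'_+ \cdot g''_+ = g'_+ \cdot_{G_+} g''_+$ holds because $\boldsymbol{r}_{\!+}$ is a homomorphism, and the relation $(1 + 0_{\scriptscriptstyle A}\,Y_i) = 1$ is trivial. The remaining two ``purely odd'' relations are exactly the identities established in Lemma \ref{tang-group}(e) and (c), noted there to hold in any representation of $\fg$: here they follow because $V$ is a $U(\fg)$-module and $\rho$ extends to an algebra map $U(\fg) \to \End_{\,\bk}(V)$, so that $\rho(Y_i)^2 = \rho\big(Y_i^{\langle 2 \rangle}\big)$ (the relation $z^2 = z^{\langle 2 \rangle}$ holds in $U(\fg)$) and $\rho(Y_j)\,\rho(Y_i) + \rho(Y_i)\,\rho(Y_j) = \rho\big([Y_j,Y_i]\big)$. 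Expanding ${\big({\text{\sl id}}_V + \eta'_i Y_i\big)\big({\text{\sl id}}_V + \eta''_i Y_i\big)}$ and ${\big({\text{\sl id}}_V + \eta_j Y_j\big)\big({\text{\sl id}}_V + \eta_i Y_i\big)}$ and using $\eta'\eta'' = -\eta''\eta'$ then reproduces the right-hand sides prescribed by the presentation.

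The main point, and the only one requiring the genuine sHCp structure, is the mixed relation $(1 + \eta_i\,Y_i)\cdot g_+ = g_+ \cdot \prod_{s}\big(1 + c_{j_s}\eta_i\,Y_{j_s}\big)$, where $\Ad\big(g_+^{-1}\big)(Y_i) = \sum_s c_{j_s} Y_{j_s}$. Applying the assignment, the factors on the right all share the odd scalar $\eta_i$, hence commute and collapse (using $\eta_i^2 = 0$) to ${\text{\sl id}}_V + \eta_i\,\rho\big(\sum_s c_{j_s} Y_{j_s}\big) = {\text{\sl id}}_V + \eta_i\,\rho\big(\Ad(g_+^{-1})(Y_i)\big)$; so the relation reduces to $\boldsymbol{r}_{\!+}(g_+)^{-1}\,\rho(Y_i)\,\boldsymbol{r}_{\!+}(g_+) = \rho\big(\Ad(g_+^{-1})(Y_i)\big)$. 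This is precisely the equivariance identity $\Ad\big(\boldsymbol{r}_{\!+}(g)\big)\circ\rho = \rho\circ\Ad(g)$ built into the $(G_+,\fg)$-module structure of $V$ (Definition \ref{def-lsgrps-lsHCp}(b), with faithfulness dropped), evaluated at $g = g_+^{-1}$.

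I expect this mixed relation to be the real substance of the proof: it is the only place where the compatibility between $\boldsymbol{r}_{\!+}$ and $\rho$ — as opposed to mere formal manipulation of odd nilpotents — is needed, and one must be careful that the coefficients $c_{j_s}$ are even scalars so that the collapse of the ordered product is legitimate. With all relations verified, the universal property of the presentation yields the unique $\boldsymbol{r}_{{}_{\!\cP,A}}$ with the prescribed values on generators, and its evident compatibility with morphisms in $\salg_\bk$ delivers the morphism of supergroup functors $\boldsymbol{r}_{{}_{\!\cP}} : G_{{}_{\!\cP}} \to \rGL(V)$.
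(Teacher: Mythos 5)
Your proposal is correct and takes essentially the same route as the paper: the paper's own proof likewise appeals to the universal property of the presentation of $ G_{{}_{\!\cP}}(A) \, $, noting that by the very definition of a representation of the sHCp $ \cP $ the prescribed operators satisfy all the defining relations of Definition \ref{def G_- / G_P - gen}, and then concludes by naturality in $ A \, $. The only difference is that you spell out the relation-by-relation verification --- invertibility of $ \, \text{\sl id}_V + \eta_i\,\rho(Y_i) \, $, the purely odd relations via $ \rho(Y_i)^2 = \rho\big(Y_i^{\langle 2 \rangle}\big) $ and anticommutation in the $ U(\fg) $--module $ V \, $, and the mixed relation via the equivariance $ \Ad\big(\boldsymbol{r}_{\!+}(g)\big) \circ \rho = \rho \circ \Ad(g) \, $ --- which the paper leaves implicit as ``a straightforward consequence of the whole construction''.
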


\begin{proof}
 This is, essentially, a straightforward consequence of the whole construction, and of the very definition of  $ G_{{}_{\!\cP}} \, $.  Indeed, by definition of representation for the sHCp  $ \cP $  we see that the operators  $ \boldsymbol{r}_{{}_{\!\cP,A}}(g_+) $  and  $ \boldsymbol{r}_{{}_{\!\cP,A}}(1 + \eta_i \, Y_i) $  on  $ V $   --- associated with the generators of  $ G_{{}_{\!\cP}}(A) $  ---   do satisfy all relations which, by  Definition \ref{def G_- / G_P - gen},  are satisfied by the generators themselves.  Thus they uniquely provide a well-defined a group morphism  $ \, \boldsymbol{r}_{{}_{\!\cP,A}} \! : G_{{}_{\!\cP}}(A) \longrightarrow \rGL(V)(A) \, $  as required.  The construction is clearly functorial in  $ A \, $,  whence the claim.
\end{proof}

\medskip

   The representation  $ \boldsymbol{r}_{{}_{\!\cP}} $  of  $ G_{{}_{\!\cP}} $  on  $ V $  will play the role which in the linear case was played by the ``intrinsic'' representation  $ V $  yielding the embedding of  $ G_{{}_{\!\cP}} $  into  $ \rGL(V) \, $.  In that case the representation was  {\sl faithful},  by assumption; in the general setup it is not the case any more.  Nevertheless, next result ensures that this representation is still ``faithful enough'' to allow us, in a sense, to adapt to the general setup the arguments used for the linear one.
%
  \eject

\begin{lemma}  \label{semi-faithful} {\ }
                                                         \par
 Let  $ \, V $  be as above,  and  $ A \! \in \! \salg_\bk \, $.  For any  $ \; \hat{g}_- := \prod\limits^\rightarrow\!{}_{i \in I} (1 + \hat{\eta}_i \, Y_i) \, \in \, \rGL(V)(A) \; $  and  $ \; \check{g}_- := \prod\limits^\rightarrow\!{}_{i \in I} (1 + \check{\eta}_i \, Y_i) \, \in \, \rGL(V)(A) \; $,  \, the following are equivalent:
 \vskip5pt
   (a) \quad  $ \hat{\eta}_i \, = \, \check{\eta}_i \;\; $  for all  $ \; i \in I \;\, $;
 \vskip3pt
   (b) \qquad  $ \hat{g}_- \, = \, \check{g}_- $  \quad ;
 \vskip4pt
   (c) \quad\,  $ \hat{g}_-.v \, = \, \check{g}_-.v \;\; $  for all  $ \; v \in V \;\, $;
 \vskip4pt
   (d) \quad  $ \hat{g}_-.\,\underline{b} \, = \, \check{g}_-.\,\underline{b} $  \quad ,  \;\quad  where  $ \, \underline{b} \in \Uuno \, $  form a  $ \bk $--basis  of\/  $ \Uuno $   --- see the remark after (4.8).
\end{lemma}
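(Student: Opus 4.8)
The plan is to prove the four conditions equivalent by establishing the cyclic chain of implications (a) $\Rightarrow$ (b) $\Rightarrow$ (c) $\Rightarrow$ (d) $\Rightarrow$ (a). Three of these are essentially formal, so I would dispatch them first. Indeed, (a) $\Rightarrow$ (b) is immediate, since if $\hat\eta_i = \check\eta_i$ for all $i$ then $\hat g_-$ and $\check g_-$ are literally the same ordered product in $G_-^{\scriptscriptstyle \,<}(A)$; (b) $\Rightarrow$ (c) is the tautology that equal elements of $G_{{}_{\!\cP}}(A)$ act identically on $V$ through the representation of Proposition \ref{G_P-action on V}; and (c) $\Rightarrow$ (d) is just the specialization $v = \underline{b}$ (viewed inside $V$ via the identification (4.8), under which $\underline{b} \mapsto 1 \in \bigwedge \fg_\uno$). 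Hence the whole content of the lemma is the remaining implication (d) $\Rightarrow$ (a), on which I would concentrate.

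For (d) $\Rightarrow$ (a) the idea is to exploit the $\bk$-module grading $\bigwedge \fg_\uno = \bigoplus_{d \geq 0} \bigwedge^d \fg_\uno$, transported to $V$ through (4.8). The first key step is to check that each operator $\rho(Y_i)$ is homogeneous of degree $+1$ for this grading and that $\rho(Y_i)(\underline{b}) = Y_i$. This should follow from the PBW-type splitting (4.7): writing $Y_i\,Y_S$ in ordered form inside $U(\fg)$ by repeated use of the defining relations $Y_j Y_k + Y_k Y_j = [Y_j,Y_k]$ and $Y_j^2 = Y_j^{\langle 2 \rangle}$, one obtains $Y_i \wedge Y_S$ plus correction terms each containing a factor from the augmentation ideal of $U(\fg_\zero)$; since $\Uuno$ is the trivial $\fg_\zero$-module, these corrections are annihilated upon evaluation on $\Uuno$, so that $\rho(Y_i)$ maps $\bigwedge^d \fg_\uno$ into $\bigwedge^{d+1}\fg_\uno$ and sends $1$ to $Y_i$.

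Granting this, I would expand
$$ \hat g_-.\,\underline{b} \; = \; \Big( {\textstyle \prod\limits_{i \in I}^{\rightarrow}} \big( \mathrm{id}_V + \hat\eta_i\,\rho(Y_i) \big) \Big)(\underline{b}) $$
and read off its component in $A_\uno \otimes_\bk \bigwedge^1 \fg_\uno = A_\uno \otimes_\bk \fg_\uno$. Because each $\rho(Y_i)$ strictly raises the wedge-degree, a degree-one term can arise only by activating exactly one factor $\hat\eta_i\,\rho(Y_i)$ and taking $\mathrm{id}_V$ from all the others; so that component equals $\sum_{i \in I} \hat\eta_i \otimes Y_i$, and likewise the degree-one component of $\check g_-.\underline{b}$ equals $\sum_{i \in I} \check\eta_i \otimes Y_i$. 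Assumption (d) then forces $\sum_{i} \hat\eta_i \otimes Y_i = \sum_{i} \check\eta_i \otimes Y_i$ in $A_\uno \otimes_\bk \fg_\uno$; as $\fg_\uno$ is $\bk$-free on the $Y_i$, this module is $A_\uno$-free on the $1 \otimes Y_i$, whence $\hat\eta_i = \check\eta_i$ for every $i \in I$, i.e.\ (a). This closes the cycle.

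The main obstacle I anticipate is precisely the first step of the penultimate paragraph: proving cleanly that $\rho(Y_i)$ raises the wedge-degree by exactly one and that \emph{all} the $\fg_\zero$-valued corrections produced by the straightening relations vanish on $\underline{b}$. Once this homogeneity is in hand, the extraction of the degree-one part and the concluding freeness argument are entirely routine, and the three formal implications are mere bookkeeping.
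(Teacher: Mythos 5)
Your architecture is the same as the paper's (the three implications (a)$\Rightarrow$(b)$\Rightarrow$(c)$\Rightarrow$(d) are dismissed as formal there too, with all the weight on (d)$\Rightarrow$(a)), but the step you yourself flag as the obstacle is genuinely false as stated: the operator $\rho(Y_i)$ is \emph{not} homogeneous of degree $+1$ on $V \cong \bigwedge \fg_\uno \, $. The flaw in your justification is that the $U(\fg_\zero)$--factors produced by straightening do not appear on the right, where $\underline{b}$ would kill them: they appear in the middle, and pushing a factor $\, [Y_j,Y_k] \in \fg_\zero \, $ rightward past a remaining odd generator $Y_\ell$ spawns the \emph{odd} element $\, \big[[Y_j,Y_k],Y_\ell\big] \in \fg_\uno \, $, which contributes in wedge-degree $d-1$ instead of the expected $d+1$ and does \emph{not} vanish on $\underline{b} \, $. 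Concretely, take $\, \fg = \mathfrak{osp}(1|2) \, $ with $\, \fg_\uno = \bk\,Y_+ \oplus \bk\,Y_- \, $ and $\, Y_+^{\langle 2 \rangle} = e \, $: then $\; Y_+.\big(\bar{Y}_+ \bar{Y}_-\big) = \big( Y_+^{\,2\,} Y_- \big).\,\underline{b} = \big( Y_- \, e + [e,Y_-] \big).\,\underline{b} = \overline{[e,Y_-]} \, $, a nonzero multiple of $\bar{Y}_+ \, $, so $\rho(Y_+)$ maps $\bigwedge^2 \fg_\uno$ into $\bigwedge^1 \fg_\uno \, $. Consequently, for a general product the coefficient of $\bar{Y}_i$ in $\, \hat{g}_-.\,\underline{b} \, $ is not $\hat{\eta}_i$ but $\, \hat{\eta}_i + P_i\big(\,\underline{\hat{\eta}}\,\big) \, $ with $P_i$ of odd order at least $3$ in the $\hat{\eta}_j$'s, and your one-line comparison of degree-one components does not close by itself.

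The conclusion you want is nevertheless salvageable for the specific elements at hand, by a sharper observation replacing the false homogeneity: in the expansion of the \emph{ordered} product $\, \prod^{\rightarrow}_{i \in I} (1 + \hat{\eta}_i \, Y_i) \, $ each index occurs in exactly one factor, so the only monomials arising are $\, \pm\, \hat{\eta}_{i_1} \!\cdots \hat{\eta}_{i_k} \, Y_{i_1} \!\cdots Y_{i_k} \, $ with $\, i_1 < \cdots < i_k \, $; these are \emph{already ordered} for the PBW splitting (4.7), so no straightening ever occurs, and each acts on $\underline{b}$ as the pure basis element $\, \bar{Y}_{i_1} \!\cdots \bar{Y}_{i_k} \in \bigwedge^k \fg_\uno \, $. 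With that substitute, your degree-one extraction and the freeness of $\fg_\uno$ do finish (d)$\Rightarrow$(a), and arguably more cleanly than the paper: the paper makes no grading argument at all, expanding $\, \hat{g}_-.\,\underline{b} = 1 + \sum_i \hat{\eta}_i \, \bar{Y}_i + \bar{\cO}(2) \, $ with $\bar{\cO}(2)$ controlled only in its order in the $\eta_i$'s, and then disposing of the corrections by the iterative reduction modulo powers of the nilpotent ideal generated by the $\eta$'s, as in the proof of Proposition \ref{dir-prod-fact-G_P - lin}. That filtration argument is the robust one (it survives in situations where straightening, hence your degree-dropping terms, genuinely occurs, cf.\ the proof of Proposition \ref{dir-prod-fact-G_strspl}); your repaired argument is shorter but rests on the ordered form of $\hat{g}_-$ and $\check{g}_- \, $, so you should state that dependence explicitly.
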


\begin{proof}
 Clearly  $ \; \text{\it (a)\/} \Longrightarrow \!\text{\it (b)\/} \Longrightarrow \!\text{\it (c)\/} \Longrightarrow \!\text{\it (d)} \; $,  \, thus we only need to prove that  $ \; \text{\it (d)\/} \Longrightarrow \!\text{\it (a)} \; $.
                                                            \par
   To avoid confusion, let us fix some additional notation.  When we are describing  $ V $  as  $ \, V = \bigwedge \fg_\uno.\,\underline{b} \, \cong \bigwedge \fg_\uno \, $,  we write the elements of the  $ \bk $--basis  $ {\{Y_i\}}_{i \in I} $  of  $ \fg_\uno $  as  $ \bar{Y}_i $  instead of  $ Y_i \, $:  thus the  $ \bk $--linear  isomorphism  $ \, \bigwedge \fg_\uno.\,\underline{b} \, \cong \bigwedge \fg_\uno \, $  is given by  $ \, (Y_{i_1} Y_{i_2} \cdots Y_{i_s}).\,\underline{b} \mapsto \bar{Y}_{i_1} \bar{Y}_{i_2} \cdots \bar{Y}_{i_s} \, $   --- for all  $ \, i_1 < i_2 < \cdots < i_s \, $.  Now, in terms of these  $ \bk $--bases  the element  $ \; \hat{g}_-.\,\underline{b} \, \in \, V = \bigwedge \fg_\uno.\underline{b} \; $  can be rewritten (by construction) as
  $$  \hat{g}_-.\,\underline{b}  \,\; = \;  {\textstyle \prod\limits^\rightarrow_{i \in I}} (1 + \hat{\eta}_i \, Y_i).\,\underline{b}  \,\; = \;  \Big( 1 \, + \, {\textstyle \sum\limits^\rightarrow_{i \in I}} \; \hat{\eta}_i \, Y_i \, + \, \cO(2) \Big).\,\underline{b}  \,\; = \;  1 \, + \, {\textstyle \sum\limits^\rightarrow_{i \in I}} \; \hat{\eta}_i \, \bar{Y}_i \, + \, \bar{\cO}(2)  $$
here above by  $ \, \Big( 1 \, + \, {\textstyle \sum\limits^\rightarrow_{i \in I}} \; \hat{\eta}_i \, Y_i \, + \, \cO(2) \Big) \, $  we denote the expansion of the product  $ \, {\textstyle \prod\limits^\rightarrow_{i \in I}} (1 + \hat{\eta}_i \, Y_i) \, $  as an element of  $ U(\fg) \, $,  with  $ \, \cO(2) \, $  which represents further summands {\sl of order at least 2 in the  $ \eta_i $'s},  using the  $ Y_i $'s  as basis elements of  $ \fg_\uno \, $.  Then of course  $ \, \Big( 1 \, + \, {\textstyle \sum\limits^\rightarrow_{i \in I}} \; \hat{\eta}_i \, \bar{Y}_i \, + \, \bar{\cO}(2) \Big) \, $  is the analogous object written in terms of the  $ \bar{Y}_i $'s.  Similarly, taking  $ \check{g}_- $  instead of  $ \hat{g}_- $  we find
  $$  \check{g}_-.\,\underline{b}  \,\; = \;  {\textstyle \prod\limits^\rightarrow_{i \in I}} (1 + \check{\eta}_i \, Y_i).\,\underline{b}  \,\; = \;  \Big( 1 \, + \, {\textstyle \sum\limits^\rightarrow_{i \in I}} \; \check{\eta}_i \, Y_i \, + \, \cO(2) \Big).\,\underline{b}  \,\; = \;  1 \, + \, {\textstyle \sum\limits^\rightarrow_{i \in I}} \; \check{\eta}_i \, \bar{Y}_i \, + \, \bar{\cO}(2)  $$
Then the identity  $ \; \hat{g}_-.\,\underline{b} = \check{g}_-.\,\underline{b} \; $  yields
 $ \; 1 \, + \, {\textstyle \sum\limits^\rightarrow_{i \in I}} \; \hat{\eta}_i \, \bar{Y}_i \, + \, \bar{\cO}(2) \, = \, 1 \, + \, {\textstyle \sum\limits^\rightarrow_{i \in I}} \; \check{\eta}_i \, \bar{Y}_i \, + \, \bar{\cO}(2) \; $,
\, an identity in  $ \, A \otimes \big( \bigwedge \fg_\uno \big) \, $,  which in turn implies  $ \, \hat{\eta}_i = \check{\eta}_i \, $  for all  $ \, i \in I \, $,  like in the proof of  Proposition \ref{dir-prod-fact-G_P - lin}.
\end{proof}

\medskip

   Roughly speaking, the equivalence between claims  {\it (b)\/}  and  {\it (c)\/}  in the above lemma is sort of a ``partial faithfulness'' of the  $ G_{{}_{\!\cP}} $--module  $ V $.  This is what we need for our next result.

\medskip

\begin{proposition}  \label{dir-prod-fact-G_P - gen}  {\ }
 \vskip5pt
   {\it (a)} \,  The restriction of group multiplication in  $ G_{{}_{\!\mathcal{P}}} $  provides superscheme isomorphisms
 \vskip-7pt
  $$  G_+ \times G_-^{\scriptscriptstyle \,<} \; \cong \; G_{{}_{\!\mathcal{P}}}  \quad ,  \qquad  G_-^{\scriptscriptstyle \,<} \times G_+ \; \cong \; G_{{}_{\!\mathcal{P}}}  $$
Moreover, the group  $ G_{{}_{\!\mathcal{P}}}(A) $  is independent of the choice of an ordered\/  $ \bk $--basis  $ {\big\{ Y_i \big\}}_{i \in I_{\phantom{|}}} \! $  of\/  $ \fg_{\mathbf{1}} $  used for its definition; the same holds true for the whole functor  $ G_{{}_{\!\cP}} \, $.  Similarly, the sets  $ \, G_+\big(A_{\mathbf{1}}^{(2)}\big) \, G_-^{\scriptscriptstyle \,<}(A) \, $  and  $ \, G_-^{\scriptscriptstyle \,<}(A) \, G_+\big(A_{\mathbf{1}}^{(2)}\big) \, $   --- cf. Section 2.1.1 ---   both coincide with the subgroup of\/  $ G_{{}_{\!\mathcal{P}}}(A) $  generated by  $ G_+\big(A_{\mathbf{1}}^{(2)}\big) $  and  $ G_-^{\scriptscriptstyle \,<}(A) \, $,  and they are independent of the choice of an ordered\/  $ \bk $--basis  of\/  $ \fg_{\mathbf{1}} \, $.
 \vskip5pt
   {\it (b)} \,  There exists a\/  $ \bk $--superscheme  isomorphism  $ \, \mathbb{A}_\bk^{0|d_-} \! \cong G_-^{\scriptscriptstyle \,<} \, $,  with  $ \, d_- := |I| = \text{\it dim}_{\,\bk}\big(\fg_\uno\big) \, $,  given on  $ A $--points  by
 $ \,\; \mathbb{A}_\bk^{0|d_-}\!(A) = A_\uno^{\,d_-} \!\!\longrightarrow
G_-^{\scriptscriptstyle \,<}(A) \, , \; {\big(\eta_i\big)}_{i \in I} \mapsto
\prod\limits_{i \in I}^\rightarrow (1 + \eta_i \, Y_i) \; $.
\end{proposition}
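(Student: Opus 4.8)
The plan is to mirror, step by step, the proof of Proposition \ref{dir-prod-fact-G_P - lin} from the linear case, the only genuine change being that the native embedding of $G_{{}_{\!\cP}}$ into $\rGL(V)$ — which there carried all the explicit matrix computations — is now replaced by the ``partially faithful'' representation $\boldsymbol{r}_{{}_{\!\cP}} : G_{{}_{\!\cP}} \longrightarrow \rGL(V)$ of Proposition \ref{G_P-action on V}, whose separating power on $G_-^{\scriptscriptstyle \,<}$ is exactly the content of Lemma \ref{semi-faithful}. I will split the argument into the surjectivity of multiplication, its injectivity, and part (b).

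First, for the surjectivity half of (a), i.e.\ the factorizations $G_{{}_{\!\cP}}(A) = G_+(A)\cdot G_-^{\scriptscriptstyle \,<}(A) = G_-^{\scriptscriptstyle \,<}(A)\cdot G_+(A)$: since $G_{{}_{\!\cP}}(A)$ is presented by generators and relations, every element is a word in the $g_+$'s and the $(1+\eta_i\,Y_i)$'s, and the defining relations of Definition \ref{def G_- / G_P - gen}{\it (a)\/} are precisely the identities of Lemma \ref{tang-group}{\it (b),(c),(e),(f)}. Hence the reordering procedure of Proposition \ref{fact-G_P} applies verbatim: one moves every $G_+$-factor to the left and reorders the remaining factors into an ordered product, the correction factors produced at each step carrying coefficients in successively higher powers of the ideal $\fa$ spanned by the $\eta$'s; since $\fa$ is finitely generated by odd elements it is nilpotent, so the process terminates. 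Independence of $G_{{}_{\!\cP}}$ from the chosen ordered basis of $\fg_\uno$ follows, as in the last part of Proposition \ref{fact-G_P}, from the change-of-basis identity for the factors $(1+\eta\,Y)$ — this is already asserted in Remark \ref{remarks_post-def G_- / G_P - gen}{\it (b)}; and the claim about $G_+\big(A_\uno^{(2)}\big)\,G_-^{\scriptscriptstyle \,<}(A)$ follows by noting, again as there, that all correction factors have even coefficients lying in $A_\uno^{[2]}$.

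Next comes the injectivity half of (a), the heart of the matter: from $\hat g_+\hat g_- = \check g_+\check g_-$ I want $\hat g_+ = \check g_+$ and $\hat g_- = \check g_-$. I set $g := \hat g_-\,\check g_-^{-1} = \hat g_+^{-1}\check g_+ \in G_+(A)$ and run the nilpotent-ideal induction of the linear proof: with $\fa$ the ideal generated by all the $\hat\eta_i,\check\eta_i$, Lemma \ref{lemma-prod} (which transfers literally, being a consequence of the defining relations) gives $[g]_2 = \prod_{i}^\rightarrow\!\big(1+[\alpha_i]_2\,Y_i\big)$ with $\alpha_i := \hat\eta_i-\check\eta_i$, and then the general-case analogue of Lemma \ref{lemma-triv_prod} forces $[\alpha_i]_2 = 0$; iterating over the quotients $A/\fa^{\,n}$ yields $\alpha_i\in\fa^{\,n}$ for all $n$, hence $\alpha_i=0$ since $\fa^{\,n}=\{0\}$ for $n\gg 0$. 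The one ingredient not transferring verbatim is Lemma \ref{lemma-triv_prod}, whose original proof used the matrix expansion inside $\End_\bk(V)(A)$; I re-prove it through $\boldsymbol{r}_{{}_{\!\cP}}$. For $g=\prod_i^\rightarrow\!\big(1+\zeta_i\,Y_i\big)\in G_+(A)\cap G_-^{\scriptscriptstyle \,<}(A)$, acting on the cyclic generator $\underline b$ gives $g.\underline b = 1 + \sum_i\zeta_i\,\bar Y_i + \bar\cO(2)$ in $V\cong\bigwedge\fg_\uno$ (as computed in Lemma \ref{semi-faithful}), whereas membership of $g$ in $G_+(A)$ forces $g.\underline b = \underline b = 1$, because $G_+$ acts on $V\cong\bigwedge\fg_\uno$ through the exterior extension of $\Ad$, which fixes $\bigwedge^0\fg_\uno=\bk$; comparing the $\bigwedge^1$-components and using the $A$-linear independence of the $\bar Y_i$ gives $\zeta_i=0$ at once. (The grading of $\bigwedge\fg_\uno$ thus makes the conclusion even more direct than the iterated parity argument of the linear case.)

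Finally, part (b): the map $\Theta_A:(\eta_i)_{i\in I}\mapsto\prod_i^\rightarrow\!\big(1+\eta_i\,Y_i\big)$ is surjective onto $G_-^{\scriptscriptstyle \,<}(A)$ by the very definition of $G_-^{\scriptscriptstyle \,<}$, and its injectivity is immediate from the equivalence {\it (a)}$\Leftrightarrow${\it (b)} of Lemma \ref{semi-faithful}: if two ordered products agree then so do their actions on $\underline b$, whence the $\eta_i$'s coincide. Naturality in $A$ upgrades these bijections to the asserted $\bk$-superscheme isomorphism $\mathbb{A}_\bk^{0|d_-}\cong G_-^{\scriptscriptstyle \,<}$, and together with (a) this gives $G_+\times G_-^{\scriptscriptstyle \,<}\cong G_{{}_{\!\cP}}\cong G_-^{\scriptscriptstyle \,<}\times G_+$. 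The main obstacle throughout is precisely the absence of a native faithful embedding: everything hinges on verifying that $\boldsymbol{r}_{{}_{\!\cP}}$ together with Lemma \ref{semi-faithful} detects $G_-^{\scriptscriptstyle \,<}$ finely enough to stand in for the matrix computations of the linear case, and on confirming that the purely formal reordering behind surjectivity closes up using only the presented relations.
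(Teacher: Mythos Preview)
Your proof is correct. For the surjectivity half of (a), the basis-independence claims, and part (b), it coincides with the paper's argument. For the injectivity in (a), however, you take a somewhat different route. Working with the factorization $G_+ \times G_-^{\scriptscriptstyle\,<}$, you form $g=\hat g_-\,\check g_-^{-1}\in G_+(A)$ and then run the full iterative machinery of the linear case --- Lemma~\ref{lemma-prod} plus a re-derived analogue of Lemma~\ref{lemma-triv_prod}, the latter proved by letting $g$ act on $\underline b$ and reading off the $\bigwedge^1$-component. The paper instead exploits the \emph{other} factorization $G_-^{\scriptscriptstyle\,<}\times G_+$: from $\hat g_-\hat g_+=\check g_-\check g_+$ it lets both sides act on $\underline b$, uses that $g_+.\underline b=\underline b$ (since $G_+$ fixes $\bigwedge^0\fg_\uno$) to obtain $\hat g_-.\underline b=\check g_-.\underline b$ directly, and then a \emph{single} invocation of Lemma~\ref{semi-faithful} (implication $(d)\Rightarrow(a)$) gives $\hat\eta_i=\check\eta_i$ --- no rewriting of $\hat g_-\check g_-^{-1}$ as an ordered product, and no iteration over $\fa^{\,n}$, is needed. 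Your approach buys a closer parallel with the linear-case proof and an explicit, self-contained replacement for Lemma~\ref{lemma-triv_prod}; the paper's buys brevity, because placing the $G_-^{\scriptscriptstyle\,<}$ factor on the left makes Lemma~\ref{semi-faithful} applicable immediately to the two ordered products themselves.
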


\begin{proof}
 {\it (a)}\,  The proof follows by the same arguments we used for  Proposition \ref{fact-G_P}  and  Proposition \ref{dir-prod-fact-G_P - lin}{\it (a)}.  Indeed, acting exactly like in the proof of  Proposition \ref{fact-G_P}  we see   --- working on  $ A $--points,  for each  $ \, A \in \salg_\bk \, $  ---   that  $ \; G_+ \cdot\, G_-^{\scriptscriptstyle \,<} = \, G_{{}_{\!\cP}} \; $,  \, i.e.~the multiplication in  $ G_{{}_{\!\cP}} $  maps  $ \, G_+ \times G_-^{\scriptscriptstyle \,<} \, $  {\sl onto\/}  $ \, G_{{}_{\!\cP}} $  itself.  Indeed, the point is that the arguments in the proof of  Proposition \ref{fact-G_P}  actually only make use of some commutation formulas among elements of  $ G_+(A) $  and elements of the form  $ (1 + \eta_i \, Y_i) \, $:  but exactly the same formulas do hold again in the  {\sl present\/}  $ G_{{}_{\!\cP}} $  we are dealing it now, by its very construction (see  Definition \ref{def G_- / G_P - gen}),  hence we can succesfully replicate the same procedure.  The same strategy of course also proves that  $ \; G_-^{\scriptscriptstyle \,<} \cdot\, G_+ = \, G_{{}_{\!\cP}} \; $,  \, i.e.~the multiplication map from  $ \, G_-^{\scriptscriptstyle \,<} \times G_+ \, $  to  $ \, G_{{}_{\!\cP}} $  is  {\sl onto\/}  again.
 \vskip5pt
  After this, we can adapt the arguments used for  Proposition \ref{dir-prod-fact-G_P - lin}{\it (a)\/}  to show that the multiplication map from  $ \, G_-^{\scriptscriptstyle \,<}(A) \times G_+(A) \, $  onto  $ \, G_{{}_{\!\cP}}(A) $  is also  {\sl injective},  for each  $ \, A \in \salg_\bk \, $,  so to prove the claim about  $ \; G_-^{\scriptscriptstyle \,<} \times G_+ \cong\, G_{{}_{\!\cP}} \; $;  similarly for  $ \; G_+ \times G_-^{\scriptscriptstyle \,<} \cong\, G_{{}_{\!\cP}} \; $.  In this case the ``adaptation'' consists in applying  Lemma \ref{semi-faithful}.
                                                                \par
   Our goal amounts to showing the following: for any  $ \, A \in \salg_\bk \, $,  if  $ \; \hat{g}_- \, \hat{g}_+ = \check{g}_- \, \check{g}_+ \; $  for  $ \; \hat{g}_- \, , \, \check{g}_- \in G_-^{\scriptscriptstyle \,<}(A) \, $,  $ \; \hat{g}_+ \, , \, \check{g}_+ \in G_+(A) \, $,  then  $ \, \hat{g}_- = \check{g}_- \, $  and  $ \, \hat{g}_+ = \check{g}_+ \; $.  Actually, the first identity implies the second one, thus we cope only with the former.  From  $ \; \hat{g}_- \, \hat{g}_+ = \check{g}_- \, \check{g}_+ \; $  we get  $ \; \big( \hat{g}_- \, \hat{g}_+ \big).v = \big( \check{g}_- \, \check{g}_+ \big).v \; $  for every  $ \, v \in V(A) \, $. But definitions yield  $ \; \big( \hat{g}_- \, \hat{g}_+ \big).v = \hat{g}_-.\big(\hat{g}_+.v\big) = \hat{g}_-.v \; $  and  $ \; \big( \check{g}_- \, \check{g}_+ \big).v = \check{g}_-.\big(\check{g}_+.v\big) = \check{g}_-.v \; $,  \, hence  $ \; \big( \hat{g}_- \, \hat{g}_+ \big).v = \big( \check{g}_- \, \check{g}_+ \big).v \; $  reads also  $ \; \hat{g}_-.v = \check{g}_-.v \; $.  Writing  $ \, \hat{g}_- = \prod\limits^\rightarrow\!{}_{i \in I} (1 + \hat{\eta}_i \, Y_i) \, $  and  $ \, \check{g}_- = \prod\limits^\rightarrow\!{}_{i \in I} (1 + \check{\eta}_i \, Y_i) \, $,  so by Lemma \ref{semi-faithful}  we find  $ \, \hat{g}_- = \check{g}_- \, $.
                                                                 \par
   Moreover,  $ G_{{}_{\!\cP}} $  is independent of the choice of basis of  $ \fg_\uno $  because of  Remarks \ref{remarks_post-def G_- / G_P - gen}{\it (b)--(c)}.
                                                                 \par
   As to the last part of claim  {\it (a)},  it is proved again like in  Proposition \ref{fact-G_P}.
 \vskip7pt
 {\it (b)}\,  By construction there exists a morphism  $ \, \mathbb{A}_\bk^{0|d_-} \! \cong G_-^{\scriptscriptstyle \,<} \, $  of  $ \bk $--superschemes  given on  $ A $--points  by
 $ \,\; \mathbb{A}_\bk^{0|d_-}\!(A) = A_\uno^{\,d_-} \!\!\longrightarrow G_-^{\scriptscriptstyle \,<}(A) \, , \; {\big(\eta_i\big)}_{i \in I} \mapsto \prod\limits_{i \in I}^\rightarrow (1 + \eta_i \, Y_i) \; $.
 By the very definition of  $ G_-^{\scriptscriptstyle \,<} $  this is even onto.  On the other hand, it is an isomorphism because on  $ A $--points  it is  {\sl injective\/}  too: indeed, this follows directly from  Lemma \ref{semi-faithful},  namely by the equivalence of claims  {\it (a)\/}  and  {\it (b)\/}  therein.
\end{proof}

\medskip

   Like in the linear case, the previous result yields the following, direct consequence:

\medskip

\begin{corollary}  \label{repres-G_P}
 For every super Harish-Chandra pair  $ \, \cP \in \sHCp_\bk \, $,  the supergroup functor  $ G_{{}_{\!\cP}} $  given by  Definition \ref{def G_- / G_P - gen}  is representable, hence it is a(n affine)  $ \bk $--supergroup  indeed.  More precisely,  $ G_{{}_{\!\cP}} $  is represented by a  $ \bk $--superalgebra  $ \cO(G_{{}_{\!\cP}}) \, $,  with  $ \bk $--algebra  isomorphisms
  $$  \cO(G_{{}_{\!\cP}})  \,\; \cong \;\,  \cO\big(G_+\big) \otimes_\bk \cO\big(G_-^{\scriptscriptstyle \,<}\big)  \,\; \cong \;\,  \cO\big(G_+\big) \otimes_\bk \bk\big[ {\{ \xi_i \}}_{i \in I} \big]  $$
   \indent   Indeed,  $ \cO(G_{{}_{\!\cP}}) $  is a Hopf\/  $ \bk $--superalgebra,  and the above are isomorphisms of super counital left\/  $ \cO\big(G_+\big) $-–comodule  algebras.
\end{corollary}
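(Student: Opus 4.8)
The plan is to deduce everything from the factorization results of Proposition \ref{dir-prod-fact-G_P - gen}, exactly as was done in the linear case. First I would recall the (formal) fact that a product of two representable superscheme functors is again representable: if $\, X = h_R \,$ and $\, Y = h_S \,$, then for every $\, A \in \salg_\bk \,$ one has $\, (X \times Y)(A) = \Hom_{\salg_\bk}(R,A) \times \Hom_{\salg_\bk}(S,A) = \Hom_{\salg_\bk}(R \otimes_\bk S, A) \,$ by the universal property of the tensor product (= coproduct) in $\salg_\bk \,$, so that $\, X \times Y \cong h_{R \otimes_\bk S} \,$. I would then apply this to the isomorphism $\, G_+ \times G_-^{\scriptscriptstyle \,<} \cong G_{{}_{\!\cP}} \,$ of Proposition \ref{dir-prod-fact-G_P - gen}{\it (a)}, combined with the representability of $\, G_+ \,$ (which holds by hypothesis, $G_+$ being an affine $\bk$-group scheme inside the sHCp $\cP$) and the representability of $\, G_-^{\scriptscriptstyle \,<} \,$ furnished by Proposition \ref{dir-prod-fact-G_P - gen}{\it (b)}, which identifies $\, G_-^{\scriptscriptstyle \,<} \cong \mathbb{A}_\bk^{0|d_-} \,$ with $\, d_- = \dim_\bk(\fg_\uno) \,$. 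This yields that $G_{{}_{\!\cP}}$ is representable, with $\, \cO(G_{{}_{\!\cP}}) \cong \cO(G_+) \otimes_\bk \cO(G_-^{\scriptscriptstyle \,<}) \,$; and since $\, \cO(\mathbb{A}_\bk^{0|d_-}) \,$ is by definition the Grassmann algebra on $d_-$ odd generators, the latter factor is $\, \bk[\{\xi_i\}_{i\in I}] \,$, giving the second displayed isomorphism.

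Next I would observe that $G_{{}_{\!\cP}}$ is, by its very construction in Definition \ref{def G_- / G_P - gen}, a group-valued functor; being now known to be representable, the correspondence recalled in Definition \ref{aff-spec} between representable group $\bk$-functors and commutative Hopf $\bk$-superalgebras shows automatically that $\, \cO(G_{{}_{\!\cP}}) \,$ carries a Hopf $\bk$-superalgebra structure. So far no real work is needed beyond citing Proposition \ref{dir-prod-fact-G_P - gen}.

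The only genuinely new point, and the place where I expect the (still modest) main obstacle to lie, is upgrading the bare algebra isomorphisms to isomorphisms of \emph{super counital left $\cO(G_+)$-comodule algebras}. The idea is that the closed embedding $\, G_+ \hookrightarrow G_{{}_{\!\cP}} \,$ makes $G_{{}_{\!\cP}}$ a left $G_+$-scheme via left multiplication $\, (g_+, x) \mapsto g_+ \cdot x \,$, whose dual is precisely a left $\cO(G_+)$-coaction on $\, \cO(G_{{}_{\!\cP}}) \,$. The key remark is that the factorization $\, x = x_+ \cdot x_- \,$ (with $\, x_+ \in G_+(A) \,$, $\, x_- \in G_-^{\scriptscriptstyle \,<}(A) \,$) of Proposition \ref{dir-prod-fact-G_P - gen}{\it (a)} is $G_+$-equivariant, in that left multiplication by $g_+$ sends $\, x_+ \cdot x_- \,$ to $\, (g_+ x_+) \cdot x_- \,$ — already in factored form, acting only on the $G_+$ component and trivially on $G_-^{\scriptscriptstyle \,<} \,$; no refactorization is required. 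Translating this through the identification $\, \cO(G_{{}_{\!\cP}}) \cong \cO(G_+) \otimes_\bk \cO(G_-^{\scriptscriptstyle \,<}) \,$, the induced coaction becomes $\, \Delta_{\cO(G_+)} \otimes \mathrm{id} \,$, which is exactly the standard left $\cO(G_+)$-comodule algebra structure on the tensor product. Finally, since the unit factors as $\, 1_{G_{{}_{\!\cP}}} = 1_{G_+} \cdot 1_{G_-^{\scriptscriptstyle \,<}} \,$, the isomorphism respects the special (identity) points, hence is counit-preserving. The effort here is purely the careful bookkeeping of these equivariance and counitality assertions at the level of $A$-points, uniformly in $\, A \in \salg_\bk \,$, together with checking naturality; representability itself is essentially formal once Proposition \ref{dir-prod-fact-G_P - gen} is available.
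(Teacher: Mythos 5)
Your proposal is correct and follows essentially the same route as the paper's own proof: representability via the superscheme isomorphism $ G_+ \times G_-^{\scriptscriptstyle \,<} \cong G_{{}_{\!\cP}} $ of Proposition \ref{dir-prod-fact-G_P - gen} together with $ G_-^{\scriptscriptstyle \,<} \cong \mathbb{A}_\bk^{0|d_-} $, counitality from the factorization of the unit, and the $ \cO(G_+) $--comodule claim from the observation that left multiplication by $ G_+ $ acts only on the left-hand factor of $ \, x = x_+ \cdot x_- \, $. You merely make explicit two points the paper leaves implicit (the tensor-product-as-coproduct argument for representability of a product, and the uniqueness-of-factorization reason for $ G_+ $--equivariance), which is harmless.
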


\begin{proof}
 By  Proposition \ref{dir-prod-fact-G_P - gen}  the  $ \bk $--functor  $ G_{{}_{\!\cP}} $  is the direct product of the two  $ \bk $--superschemes  $ G_+ $  and  $ \, G_-^{\scriptscriptstyle \,<} \cong \mathbb{A}_\bk^{0|d_-} $, which both are representable as functors.  Then  $ G_{{}_{\!\cP}} $  is representable as well, namely it is represented by
  $$  \cO(G_{{}_{\!\cP}}) \; \cong \; \cO\big(G_+\big) \otimes_\bk \cO\big(G_-^{\scriptscriptstyle \,<}\big) \; \cong \; \cO\big(G_+\big) \otimes_\bk \bk\big[ {\{ \xi_i \}}_{i \in I} \big]  $$
                                                              \par
   Moreover, the unit element of  $ G_{{}_{\!\cP}} $  is the product of the unit in  $ G_+ $  and the unit in  $ G_-^{\scriptscriptstyle \,<} $   --- in both factorizations  $ \, G_{{}_{\!\cP}} = G_+ \cdot G_-^{\scriptscriptstyle \,<} \, $  and  $ \, G_{{}_{\!\cP}} = G_-^{\scriptscriptstyle \,<} \cdot G_+ \, $,  so the above isomorphisms are counit-preserving. Finally, using the factorization  $ \, G_{{}_{\!\cP}} = G_+ \cdot G_-^{\scriptscriptstyle \,<} \, $  the left multiplication restricted to  $ G_+ $  corresponds to left multiplication in the left-hand factor  $ G_+ \, $,  whence the above isomorphisms also preserve the left  $ \cO\big(G_+\big) $--coaction.  The claim follows.
\end{proof}

\medskip

   We still need to fix some details to see that the recipe  $ \, \cP \mapsto G_{{}_{\!\cP}} \, $  in the end does provide a functor of the type we are looking for.  This is the outcome of next step.

\medskip

\begin{proposition}  \label{G_P functor - gen}  {\ }
 For every  $ \, \cP \in \sHCp_\bk \, $,  let  $ G_{{}_{\!\cP}} \! $  be defined as above.  Then:
 \vskip3pt
   {\it (a)} \,  $ G_{{}_{\!\cP}} \! $  is fine and globally strongly split, in short  $ \; G_{{}_{\!\cP}} \! \in \gssfsgrps_\bk \; $;
 \vskip3pt
   {\it (b)} \,  the above construction of  $ \, G_{{}_{\!\cP}} \! $  naturally extends to morphisms in  $ \sHCp_\bk \, $,  so it yields a unique functor  $ \; \Psi_g : \sHCp_\bk \!\longrightarrow \gssfsgrps_\bk \; $  given on objects by  $ \, \Psi_g(\cP) := G_{{}_{\!\cP}} \, $.
\end{proposition}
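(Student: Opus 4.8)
The plan is to leverage the structural results already in hand---chiefly Proposition \ref{dir-prod-fact-G_P - gen} and Corollary \ref{repres-G_P}---so that claim \textit{(a)} reduces to matching the established factorization of $G_{{}_{\!\cP}}$ against the defining conditions of Definitions \ref{gl-str-split_sgroup-Def} and \ref{def_fine-sgroups}, while claim \textit{(b)} is a bookkeeping verification most transparently carried out through the basis-free presentation $G^\bullet_{{}_{\!\cP}}$ of Remark \ref{remarks_post-def G_- / G_P - gen}\textit{(c)}.

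For the global strong splitting in \textit{(a)}, I would take $G_\uno := G_-^{\scriptscriptstyle \,<}$. By construction $ {\big( G_{{}_{\!\cP}} \big)}_\zero = G_+ $, the identity $ \prod_{i \in I}^\rightarrow \big( 1 + 0_{\scriptscriptstyle A} \, Y_i \big) = 1 $ shows that $ 1_{{}_{G_{{}_{\!\cP}}}} \! \in G_-^{\scriptscriptstyle \,<} $ so $G_-^{\scriptscriptstyle \,<}$ is pointed, and Corollary \ref{repres-G_P} identifies $ \cO\big(G_-^{\scriptscriptstyle \,<}\big) = \bigwedge \fg_\uno^{\,*} $ with the quotient of $\cO(G_{{}_{\!\cP}})$ by the Hopf ideal generated by $\cO(G_+)^+$, whence $G_-^{\scriptscriptstyle \,<}$ is a closed subsuperscheme. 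Proposition \ref{dir-prod-fact-G_P - gen}\textit{(b)} gives $ G_-^{\scriptscriptstyle \,<} \cong \mathbb{A}_\bk^{0|d_-} $, and Proposition \ref{dir-prod-fact-G_P - gen}\textit{(a)}, read together with the comodule-algebra statement of Corollary \ref{repres-G_P}, shows that group multiplication restricts to an isomorphism $ G_+ \times G_-^{\scriptscriptstyle \,<} \,{\buildrel \cong \over {\longrightarrow}}\, G_{{}_{\!\cP}} $ of pointed left $G_+$--superschemes. These are exactly conditions \textit{(a)-(b)-(c)} of Definition \ref{gl-str-split_sgroup-Def}. Alternatively, and more slickly, Corollary \ref{repres-G_P} already exhibits $ \cO(G_{{}_{\!\cP}}) \cong \overline{\cO(G_{{}_{\!\cP}})} \otimes_\bk \bigwedge W^{\cO(G_{{}_{\!\cP}})} $ as super counital left $\overline{\cO(G_{{}_{\!\cP}})}$--comodule algebras with $ W^{\cO(G_{{}_{\!\cP}})} = \fg_\uno^{\,*} $ free, i.e.\ $\cO(G_{{}_{\!\cP}})$ is a strongly split Hopf superalgebra, so that Theorem \ref{gl-str-split_sgroup-Th} yields gs-splitness at once.

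Still for \textit{(a)}, I would then check fineness. Since $ G_{{}_{\!\cP}} \cong G_+ \times \mathbb{A}_\bk^{0|d_-} $ as pointed superschemes, the cotangent space splits as $ \omega_e(G_{{}_{\!\cP}}) \cong \omega_e(G_+) \oplus \fg_\uno^{\,*} $, whose even summand $\omega_e(G_+)$ is finitely generated projective---by axiom \textit{(b)} of Definition \ref{def-sHCp} together with Proposition \ref{Lie-funct_Lie(G)}\textit{(c)}---and whose odd summand $\fg_\uno^{\,*}$ is free of finite rank. Hence $\omega_e(G_{{}_{\!\cP}})$ is finitely generated projective, so Proposition \ref{Lie-funct_Lie(G)}\textit{(c)} makes $\Lie(G_{{}_{\!\cP}})$ representable---a fortiori quasi-representable---with odd part $ \cong \fg_\uno $ free of finite rank; thus $G_{{}_{\!\cP}}$ is fine, and combined with the previous paragraph this gives $ G_{{}_{\!\cP}} \! \in \gssfsgrps_\bk $. (Tracking the defining relations of $G_{{}_{\!\cP}}$ through Lemma \ref{tang-group} moreover identifies $ \Lie(G_{{}_{\!\cP}}) = \cL_\fg $ as Lie superalgebra valued functors, which I would record for use in the next subsection.)

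For \textit{(b)}, given a morphism $ (\Omega_+,\omega) : \big( G'_+ , \fg' \big) \longrightarrow \big( G''_+ , \fg'' \big) $ in $\sHCp_\bk$, I would define $ \Psi_g\big( (\Omega_+,\omega) \big) $ on $A$--points by sending a generator $ g'_+ \in G'_+(A) $ to $ \Omega_+\big(g'_+\big) $ and a generator $ \big( 1 + \eta \, Y \big) $ (with $ Y \in \fg'_\uno $, $ \eta \in A_\uno $) to $ \big( 1 + \eta \, \omega(Y) \big) $, working with the presentation $G^\bullet_{{}_{\!\cP}}$ of Remark \ref{remarks_post-def G_- / G_P - gen}\textit{(c)} so that no choice of basis of $\fg'_\uno$ intervenes. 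Well-definedness amounts to verifying that these images satisfy in $G^\bullet_{{}_{\!\cP''}}(A)$ all the relations imposed on the generators of $G^\bullet_{{}_{\!\cP'}}(A)$: this is precisely where the sHCp-morphism constraints enter, the adjoint relation being preserved because $ \Ad\big(\Omega_+(g)\big) \circ \omega = \omega \circ \Ad(g) $, the relation involving $ Y^{\langle 2 \rangle} $ because $\omega$ preserves the $2$--operation, and the bracket relation because $\omega$ preserves the Lie superbracket (the latter two since $\omega$ is a morphism in $\slie_\bk$). This produces a well-defined group homomorphism $ G^\bullet_{{}_{\!\cP'}}(A) \longrightarrow G^\bullet_{{}_{\!\cP''}}(A) $, natural in $A$, hence a morphism of $\bk$--supergroup functors $ \Psi_g\big( (\Omega_+,\omega) \big) : G_{{}_{\!\cP'}} \longrightarrow G_{{}_{\!\cP''}} $; preservation of identities and of composition is immediate from the defining formula, so $\Psi_g$ is indeed a functor $ \sHCp_\bk \longrightarrow \gssfsgrps_\bk $. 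The only genuine obstacle throughout is this well-definedness check in \textit{(b)}---confirming that every defining relation is respected---rather than any deep difficulty, since the structural heavy lifting has already been accomplished in Proposition \ref{dir-prod-fact-G_P - gen} and Corollary \ref{repres-G_P}.
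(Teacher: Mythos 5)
Your proof is correct, and its skeleton coincides with the paper's: claim \textit{(a)} is extracted from the factorization of Proposition \ref{dir-prod-fact-G_P - gen} together with Corollary \ref{repres-G_P} (your ``slicker'' alternative via Theorem \ref{gl-str-split_sgroup-Th} is also fine, since Corollary \ref{repres-G_P} does exhibit $\cO\big(G_{{}_{\!\cP}}\big)$ as a strongly split Hopf superalgebra with $\overline{\cO(G_{{}_{\!\cP}})} = \cO(G_+)$ and $W^{\cO(G_{{}_{\!\cP}})} \cong \fg_\uno^{\,*}$ free), and claim \textit{(b)} by checking relations on generators. You diverge from the paper in two spots, both harmlessly. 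For fineness, the paper simply computes $\; \Lie\big(G_{{}_{\!\cP}}\big) = \Lie(G_+) \oplus T_e\big(G_-^{\scriptscriptstyle \,<}\big) = \cL_{\fg_\zero} \oplus \cL_{\fg_\uno} = \cL_\fg \;$ by hand from the factorization, which gives quasi-representability at once together with the identification $\Lie\big(G_{{}_{\!\cP}}\big) = \fg$ that is needed verbatim later, in the proof of Theorem \ref{Psi_g-inverse_Phi_g}. Your detour through $\, \omega_e\big(G_{{}_{\!\cP}}\big) \cong \omega_e(G_+) \oplus \fg_\uno^{\,*} \,$ and Proposition \ref{Lie-funct_Lie(G)}\textit{(c)} is valid --- axiom \textit{(b)} of Definition \ref{def-sHCp} combined with Proposition \ref{Lie-funct_Lie(G)}\textit{(c)} does make $\omega_e(G_+)$ finitely generated projective, and the odd summand $\fg_\uno^{\,*}$ lets you identify the odd part of the representing superalgebra with $\fg_\uno$ --- but it is a roundabout way to a conclusion your parenthetical remark reaches directly, and that direct identification is what the equivalence theorem actually consumes, so the paper's computation subsumes your argument. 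For \textit{(b)}, the paper dismisses the extension to morphisms as ``trivial, directly from definitions''; your spelled-out verification through the basis-free presentation $G^\bullet_{{}_{\!\cP}}$ of Remark \ref{remarks_post-def G_- / G_P - gen}\textit{(c)} is precisely the content being waved at, and you correctly locate where each sHCp-morphism axiom enters --- only note explicitly that matching the even elements $\big( 1 + \eta''\,\eta'\,Y^{\langle 2 \rangle} \big)$ and $\big( 1 + \eta'\,\eta''\,[Y',Y''] \big)$ with their counterparts in $G''_+(A)$ uses the condition $\, \omega{\big|}_{\fg_\zero} = d\Omega_+ \,$ in addition to $\omega$ preserving the bracket and the $2$--operation, a point your phrase ``the sHCp-morphism constraints enter'' covers only implicitly.
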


\begin{proof}
 {\it (a)}\,  Directly from definitions one has that  $ \, {\big( G_{{}_{\!\cP}} \big)}_\zero := {\big( G_{{}_{\!\cP}} \big)}_{\!\text{\it ev}} \, $  coincides with  $ G_+ \, $.  Toge\-ther with  Proposition \ref{dir-prod-fact-G_P - gen}  and  Corollary \ref{repres-G_P},  this implies that  $ G_{{}_{\!\cP}} $  is globally strongly split, a global splitting being the factorization  $ \; G_+ \times G_-^{\scriptscriptstyle \,<} \, \cong \, G_{{}_{\!\cP}} \; $  given in  Proposition \ref{dir-prod-fact-G_P - gen}.
                                                             \par
   In addition, from this factorization one sees   --- by bare hands computation, following the very definition of  $ \Lie\,(G) $  given in  Definition \ref{tangent_Lie_superalgebra}  ---   that
  $$  \Lie\,\big( G_{{}_{\!\cP}} \big)  \; = \;  \Lie\,\big( G_+ \times G_-^{\scriptscriptstyle \,<} \,\big)  \; = \;  \Lie\,\big( G_+ \big) \oplus T_e\big( G_-^{\scriptscriptstyle \,<} \,\big)  \; = \;  \cL_{\fg_\zero} \oplus \cL_{\fg_\uno}  \; = \;  \cL_\fg  $$
that is (identifying  $ \cL_\fg $  with  $ \fg $  as usual) simply  $ \, \Lie\,\big( G_{{}_{\!\cP}} \big) = \fg \, $,  this being an identification as Lie  $ \bk $--superalgebras.  As  $ \fg_\uno $  is  $ \bk $--free  of finite rank, by assumption (see  Definition \ref{def-sHCp},  we conclude that  $ G_{{}_{\!\cP}} $  is fine, as required.
 \vskip5pt
   {\it (b)}\,  This is trivial, directly from definitions.
\end{proof}

\medskip

   We have now available a functor  $ \; \Psi_g : \sHCp_\bk \!\relbar\joinrel\longrightarrow \gssfsgrps_\bk \; $  which is our candidate to be
%
%
 a quasi-inverse to
 $ \; \Phi_g : \gssfsgrps_\bk \!\relbar\joinrel\longrightarrow \sHCp_\bk \; $.
                                                       \par
   We first need to establish some additional results.  The first one is technical:

\medskip

\begin{lemma}  \label{Delta-phi}
 Let  $ \, H = \overline{H} \otimes_\bk \bigwedge \! W^H \, $  be a strongly split Hopf\/  $ \bk $--superalgebra.  Identify  $ \overline{H} $  with  $ \, \overline{H} \otimes_\bk 1 \, $  and  $ \, \bigwedge \! W^H $  with  $ \, 1 \otimes_\bk \bigwedge \! W^H \, $;  also, for  $ \, K \in \big\{ H \, , \overline{H} \, , \, \bigwedge \! W^{\! H} \,\big\} \, $  let  $ \, K^+ := \Ker\,\big( \epsilon_{{}_H}\big|_{\scriptscriptstyle K} \big) \, $.  Then for each  $ \, \phi \in {\big( \bigwedge \! W^H \big)}_\uno^+ = {\big( \bigwedge \! W^H \big)}_\uno \, $  we have (using Sweedler's like notation)
  $$  \Delta(\phi)  \;\; = \;  \phi \otimes 1 \; + \; 1 \otimes \phi \; + \; {\textstyle \sum_{(\phi)^+}} \; \phi_{(1)}^+ \otimes \phi_{(2)}^+  $$
where  $ \; \big( \phi_{(1)}^+ \, , \phi_{(2)}^+ \big) \in \big( H_\uno^{\,[2]} \! \times \! H_\uno \big) \cup \big( H_\uno \! \times \! \overline{H}^{\,+} \big) \cup \big( H_\uno \! \times \! H_\uno^{\,[2]} \big) \; $   --- with notation as in  \S \ref{superalgebras}.
\end{lemma}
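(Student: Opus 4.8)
The plan is to exploit the two structure maps of a strongly split Hopf superalgebra — the projection $\pi\colon H\twoheadrightarrow\overline{H}$ and the counit $\epsilon$ — together with a careful reading of the exterior grading, and to avoid coassociativity entirely. First I would fix the $\bk$-module decomposition coming from $H=\overline{H}\otimes_\bk\bigwedge W^H$: since $\overline{H}$ is totally even one has $H_\uno=\overline{H}\otimes\bigwedge^{\mathrm{odd}}W^H$ and $H_\uno^{\,[2]}=\overline{H}\otimes\bigwedge^{\mathrm{even},\,\ge 2}W^H$ (every even monomial of exterior degree $\ge 2$ factors as a product of two odd elements, and conversely such products stay in even degree $\ge 2$), whence $H_\zero=\overline{H}\oplus H_\uno^{\,[2]}$ and finally the direct sum $H=\bk\,1\oplus\overline{H}^{\,+}\oplus H_\uno^{\,[2]}\oplus H_\uno$. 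I would also record the shape of $\pi=\mathrm{id}_{\overline{H}}\otimes\epsilon_{\bigwedge W^H}$ and of $\epsilon$ on these summands: $\pi$ is the identity on $\bk\,1\oplus\overline{H}^{\,+}=\overline{H}$ and kills both $H_\uno^{\,[2]}$ and $H_\uno$, while $\epsilon$ is nonzero only on $\bk\,1$.

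The crux is the left-comodule covariance. By Definition \ref{stsplit-Hsalg} the stated identifications make the identity a left $\overline{H}$-comodule algebra isomorphism, so the coaction $(\pi\otimes\mathrm{id})\Delta$ on $H$ coincides with the standard coaction $\Delta_{\overline{H}}\otimes\mathrm{id}$ on $\overline{H}\otimes\bigwedge W^H$ (if one prefers, replace $H$ by $H_\sigma$ as in Remark \ref{barH-coaction_W}). Evaluating the standard coaction on $1\otimes\phi$ then yields $(\pi\otimes\mathrm{id})\Delta(\phi)=1\otimes\phi$ for every odd $\phi\in\bigwedge W^H$ — uniformly in the exterior degree, so that no induction is needed. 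Next I would decompose $\Delta(\phi)\in H\otimes H$ along the summands above; since $\Delta(\phi)$ has total odd parity, the only possible summands are $\bk\,1\otimes H_\uno$, $\overline{H}^{\,+}\otimes H_\uno$, $H_\uno^{\,[2]}\otimes H_\uno$, $H_\uno\otimes\bk\,1$, $H_\uno\otimes\overline{H}^{\,+}$ and $H_\uno\otimes H_\uno^{\,[2]}$. Applying $(\pi\otimes\mathrm{id})$, which is injective on the first two summands and zero on the remaining four, forces the $\overline{H}^{\,+}\otimes H_\uno$ component to vanish and the $\bk\,1\otimes H_\uno$ component to be exactly $1\otimes\phi$.

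Finally I would apply the right counit axiom $(\mathrm{id}\otimes\epsilon)\Delta(\phi)=\phi$. Among the surviving summands $\epsilon$ annihilates every second factor except $\bk\,1$, so this identity pins down the $H_\uno\otimes\bk\,1$ component as $\phi\otimes 1$. Collecting the pieces leaves
$$\Delta(\phi)=\phi\otimes 1+1\otimes\phi+R_1+R_2,\qquad R_1\in H_\uno^{\,[2]}\otimes H_\uno,\quad R_2\in H_\uno\otimes\overline{H}^{\,+}\,\oplus\,H_\uno\otimes H_\uno^{\,[2]},$$
which is precisely the asserted expansion, with $(\phi_{(1)}^+,\phi_{(2)}^+)$ ranging over $\big(H_\uno^{\,[2]}\times H_\uno\big)\cup\big(H_\uno\times\overline{H}^{\,+}\big)\cup\big(H_\uno\times H_\uno^{\,[2]}\big)$.

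As for the main difficulty: there is no deep obstacle here, the whole argument being a careful bookkeeping of parities and of the exterior grading. The one point that must be handled with care is the identification $H_\uno^{\,[2]}=\overline{H}\otimes\bigwedge^{\mathrm{even},\,\ge 2}W^H$, so that the ``even, positive exterior degree'' part of $\Ker(\pi)$ is exactly $H_\uno^{\,[2]}$, together with reading the left $\overline{H}$-covariance as the single clean identity $(\pi\otimes\mathrm{id})\Delta(\phi)=1\otimes\phi$. Once these are in place, the two projections via $\pi$ and $\epsilon$ do all the remaining work.
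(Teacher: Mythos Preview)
Your proposal is correct and follows essentially the same route as the paper: both use the left $\overline{H}$-comodule covariance (stated in the paper as $\Delta(\phi)\equiv 1\otimes\phi \bmod (J_H\otimes_\bk H)$, equivalently your $(\pi\otimes\mathrm{id})\Delta(\phi)=1\otimes\phi$), then the counit axiom, then a parity case-split against the decomposition $H^+=\overline{H}^{\,+}\oplus H_\uno^{\,[2]}\oplus H_\uno$. Your presentation is slightly more systematic --- you enumerate the six parity-admissible summands of $H\otimes H$ up front and then project --- but the substance is identical.
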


\begin{proof}
 Let us start with  $ \, n = 1 \, $.  Since  $ \, \epsilon(\phi) = 0 \, $,  we can always write  $ \Delta(\phi) $  in the form  $ \, \Delta(\phi) = \phi \otimes 1 + 1 \otimes \phi + \sum_{(\phi)} \phi_{(1)}^+ \otimes \phi_{(2)}^+ \, $  with  $ \; \phi_{(1)}^+ \, , \, \phi_{(2)}^+ \in H^+ \; $.  After that, recall that the ``strong splitting''  $ \, H = \overline{H} \otimes_\bk \bigwedge \! W^H \, $  is an isomorphism as augmented algebras  {\sl with a left  $ \overline{H} $--action}.  By the way these  $ \overline{H} $--actions  are defined (see  \S \ref{aug/spl/Hsalg})  we see that this means that  $ \; \Delta(\phi) \equiv 1 \otimes \phi \, \mod \big( J_{\!H} \otimes_\bk \! H \big) \, $;  in turn, this implies that we can write
  $$  \Delta(\phi)  \,\; = \;\,  \phi \otimes 1 \, + \, 1 \otimes \phi \, + \, {\textstyle \sum_{(\phi)^+}} \, \phi_{(1)}^+ \otimes \phi_{(2)}^+   \eqno  \text{with}  \quad  \phi_{(1)}^+ \in J_{\!H} \, , \; \phi_{(2)}^+ \in H^+  \quad  $$
By the way, note that  $ \, J_{\!H} = H_\uno^{\,[2]} \oplus H_\uno \subseteq H^+ \, $  and  $ \, H^+ = \overline{H}^{\,+} \! \oplus J_{\!H} = \overline{H}^{\,+} \! \oplus H_\uno^{\,[2]} \oplus H_\uno \, $.
 \vskip5pt
   Finally, as  $ \Delta $  is parity-preserving one has  $ \, \big| \Delta(\phi) \big| = |\phi| = \uno \, $,  thus  $ \, \big| \phi_{(1)}^+ \big| + \big| \phi_{(2)}^+ \big| = \uno \, $  too.
 \vskip5pt
   First assume  $ \, \big| \phi_{(1)}^+ \big| = \zero \, $;  then we have  $ \, \big| \phi_{(2)}^+ \big| = \uno \, $,  which means  $ \, \phi_{(2)}^+ \in \big( H_\uno \cap H^+ \big) = H_\uno \, $.  In addition,  $ \, \big| \phi_{(1)}^+ \big| = \zero \, $  means  $ \, \phi_{(1)}^+ \in H_\zero \, $,  so  $ \, \phi_{(1)}^+ \in \big( H_\zero \cap J_{\!H} \big) = \big( H_\zero \cap\big( H_\uno^{\,[2]} \oplus H_\uno \big) \big) = H_\uno^{\,[2]} \, $.
 \vskip5pt
   Second, let  $ \, \big| \phi_{(1)}^+ \big| = \uno \, $;  then  $ \, \big| \phi_{(2)}^+ \big| = \zero \, $,  which means  $ \, \phi_{(2)}^+ \in \big( H_\zero \cap H^+ \big) \, $,  hence from the above remark  $ \, \phi_{(2)}^+ \in \big( H_\zero \cap H^+ \big) = \big( H_\zero \cap \big( \overline{H}^{\,+} \! \oplus H_\uno^{\,[2]} \oplus H_\uno \big) \big) = \overline{H}^{\,+} \! \oplus H_\uno^{\,[2]} \, $.  Eventually, we can split  $ \, \phi_{(2)}^+ \in \overline{H}^{\,+} \!\! \oplus H_\uno^{\,[2]} \, $  into the sum of a term in  $ \overline{H}^{\,+} \! $  plus another in  $ H_\uno^{\,[2]} $,  getting a result as claimed.
\end{proof}

\medskip

   Next three results concern a finer analysis of a gs-split fine supergroup.

\medskip

\begin{proposition}  \label{supergr_gen_G0+1-par_odd}
 Given  $ \, G \in \gssfsgrps_\bk \, $,  a\/  $ \bk $--basis  $ {\{Y_i\}}_{i \in I} $  of  $ \fg_\uno $  and  $ \, A \in \salg_\bk \, $,  consider in  $ G(A) $  the elements  $ \, (1 + \eta_i \, Y_i) \, $  for all  $ \, \eta_i \in A_\uno \, $,  $ \, i \in I \, $  (as recalled in the proof of Lemma \ref{tang-group}).  Then  $ G(A) $  is generated by
 $ \; G_\zero(A) \cup \big\{ (1 + \eta_i \, Y_i) \,\big|\; (i,\eta_i) \in I \!\times\! A_\uno \,\big\} \; $.
\end{proposition}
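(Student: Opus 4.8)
Write $H := \cO(G)$. Since $G$ is gs-split, Theorem \ref{gl-str-split_sgroup-Th} shows that $H$ is strongly split, say $H \cong \overline{H}\otimes_\bk\bigwedge W^H$ with $\overline{H} = \cO(G_\zero)$ and $\mathrm{rk}_\bk\,W^H = |I| =: d_-$; its proof moreover provides, for every $A$, the set-theoretic factorization $G(A) = G_\zero(A)\cdot G_\uno(A)$, where $G_\uno(A) = \Hom_{\salg_\bk}\!\big(\bigwedge W^H,A\big)$, together with the group retraction $r:G(A)\twoheadrightarrow G_\zero(A)$, $g\mapsto g_0$, whose kernel is exactly $G_\uno(A)$. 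Set $G_-^{\scriptscriptstyle \,<}(A) := \big\{ \prod_{i\in I}^\rightarrow(1+\eta_i\,Y_i)\,\big|\,\eta_i\in A_\uno\big\}$; these ordered products lie in $G(A)$ by Lemma \ref{tang-group}{\it (a)}, hence in the subgroup $N(A)$ generated by $G_\zero(A)$ and the $(1+\eta_i\,Y_i)$. Because $G_\zero(A)\subseteq N(A)$ and $G(A) = G_\zero(A)\cdot G_\uno(A)$, the whole statement reduces to proving the single equality $G(A) = G_\zero(A)\cdot G_-^{\scriptscriptstyle \,<}(A)$: indeed this immediately forces $N(A)=G(A)$. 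So the plan is to show that the multiplication map sends $G_\zero(A)\times G_-^{\scriptscriptstyle \,<}(A)$ onto $G(A)$, which by the factorization amounts to surjectivity of the induced map $A_\uno^{\,d_-}\to G_\zero(A)\backslash G(A)$, $(\eta_i)\mapsto G_\zero(A)\cdot\prod_{i\in I}^\rightarrow(1+\eta_i\,Y_i)$.

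I would carry this out by a coordinate computation, exactly in the spirit of Proposition \ref{G_P functor - lin}{\it (b)}. First I fix the $\bk$-basis $\{w_j\}_{j\in I}$ of $W^H$ dual to $\{Y_i\}_{i\in I}$ and use it to identify $G_\uno(A)$ with $A_\uno^{\,d_-}$ via $\phi\mapsto\big(\phi(1\otimes w_j)\big)_{j\in I}$; the retraction $r$ then identifies $G_\zero(A)\backslash G(A)$ with $A_\uno^{\,d_-}$ as well. Reading $Y_i$ as the counit-derivation it represents, one has $(1+\eta_i\,Y_i)(1\otimes w_j) = \eta_i\,\langle Y_i,w_j\rangle = \eta_i\,\delta_{i,j}$. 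The key step is to evaluate an ordered product $\prod_{i\in I}^\rightarrow(1+\eta_i\,Y_i)$ on the generators $1\otimes w_j$ using the convolution formula (3.1) together with the coproduct description of Lemma \ref{Delta-phi} (which guarantees that, apart from its primitive part $w_j\otimes 1 + 1\otimes w_j$, the coproduct of $1\otimes w_j$ involves only tensor factors lying in $H_\uno^{\,[2]}\oplus H_\uno$), and then applying $r$, which annihilates both $H_\uno$ and $H_\uno^{\,[2]}$. The outcome I expect is that the $j$-th coordinate of the coset of $\prod_{i\in I}^\rightarrow(1+\eta_i\,Y_i)$ equals $\eta_j + \mathcal{R}_j$, where each correction $\mathcal{R}_j$ is a polynomial in the $\eta_i$'s all of whose monomials have degree at least $2$.

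Granting this, the coordinate map $\Theta:(\eta_i)_{i\in I}\mapsto\big(\eta_j+\mathcal{R}_j\big)_{j\in I}$ is the identity modulo higher-order terms. Since the $\eta_i$ are finitely many odd elements, the ideal $\fa := \big(\{\eta_i\}_{i\in I}\big)$ they generate is nilpotent, with $\fa^{\,d_-+1} = \{0\}$; hence $\Theta$ is bijective by the same successive-approximation argument already used for the map $\Lambda$ in the proof of Proposition \ref{G_P functor - lin}{\it (b)}. In particular $\Theta$ is onto, so every coset $G_\zero(A)\,g$ meets $G_-^{\scriptscriptstyle \,<}(A)$, giving $G(A) = G_\zero(A)\cdot G_-^{\scriptscriptstyle \,<}(A)$ and therefore $N(A) = G(A)$, as wanted; naturality in $A$ then upgrades this to the functorial statement. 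I expect the main obstacle to be the bookkeeping of the second paragraph: one must verify that, after applying the retraction $r$, \emph{all} the non-leading contributions coming from the mixed terms of the iterated coproduct of the $w_j$'s genuinely fall into $\fa^{\,\ge 2}$, since it is precisely this that makes $\Theta$ unipotent and hence invertible.
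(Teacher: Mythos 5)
Your proposal is correct, and while it runs on the same computational engine as the paper's proof --- the convolution formula (3.1), Lemma \ref{Delta-phi}, and nilpotency of the ideal generated by finitely many odd elements --- it assembles these pieces in a genuinely different order. The paper proves generation by a finite iteration inside the group: given $ g \, $,  it sets  $ \, \eta_i := g(\xi_i) \, $,  forms  $ \, \gamma_g := \prod_i (1+\eta_i\,Y_i) \, $,  shows that the residual  $ \, g_1 := g\,\gamma_g^{-1} \, $  restricted to  $ \, 1 \otimes_\bk \cO(G_\uno) \, $  takes values in the subalgebra generated by  $ \fa_\uno^{\,[3]} \, $,  and iterates until the residual lands in  $ G_\zero(A) \, $  (characterized by restricting to the counit, the paper's (4.9)); at no earlier point does it know the exact factorization. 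You instead prove in one shot that the coordinate map  $ \, \Theta : (\eta_i)_i \mapsto \big(\eta_j + \mathcal{R}_j\big)_j \, $  is a unipotent polynomial self-map of  $ A_\uno^{\,d_-} $,  hence bijective, which yields directly the stronger statement  $ \, G(A) = G_\zero(A)\cdot G_-^{\scriptscriptstyle \,<}(A) \, $  --- a fact the paper only establishes afterwards, via Corollary \ref{fact-G_strspl} and Proposition \ref{dir-prod-fact-G_strspl}, whose proof contains precisely your computation  $ \, \hat{g}_1(\xi_j) = \hat{\eta}_j + \alpha_j\big(\underline{\hat{\eta}}\big) \, $  and your automorphism-of-$\mathbb{A}_\bk^{0|d_-}$ trick. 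What your route buys: the proposition and the surjective half of the later factorization results come simultaneously, and you bypass entirely the reordering/commutator machinery of Proposition \ref{fact-G_P} that the paper needs for Corollary \ref{fact-G_strspl}; what it costs is a heavier reliance on the full strength of Theorem \ref{gl-str-split_sgroup-Th} (the multiplicative splitting  $ G_\zero \times G_\uno \cong G \, $  as pointed superschemes), where the paper's own proof of the proposition uses only the characterization (4.9). Two small repairs are needed in your setup. First, since the section  $ \sigma $  is only a counital comodule \emph{algebra} map and not a coalgebra map, the retraction  $ \, r : g \mapsto g \circ \sigma \, $  is not known to be a group morphism, so speaking of its ``kernel'' is unwarranted --- and unnecessary: the identification  $ \, G_\zero(A)\backslash G(A) \cong A_\uno^{\,d_-} \, $  should be made via the  $ G_\uno $--component of the unique factorization  $ \, g = \hat{g}_0\, g_1 \, $,  equivalently via evaluation  $ \, g \mapsto {\big(g(1\otimes\xi_j)\big)}_j \, $,  which is legitimate because  $ \, \Delta(\xi_j) \equiv 1 \otimes \xi_j \; \mathrm{mod} \; \big(J_H \otimes_\bk H\big) \, $  and  $ \hat{g}_0 $  kills  $ J_H \, $,  so that  $ \, g(1\otimes\xi_j) = g_1(\xi_j) \, $  (exactly the computation (4.10) in the paper). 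Second, by parity the corrections  $ \mathcal{R}_j $  are odd elements, so all their monomials have degree at least $3$, not merely $2$ --- harmless for unipotence, but it makes them literally coincide with the paper's  $ \alpha_j \, $.
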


\begin{proof}
 As  $ G $  is globally strongly split the  $ \bk $--superalgebra  $ \cO(G) $  identifies, up to isomorphism, with  $ \, \overline{\cO(G)} \otimes_\bk \bigwedge \! W^{\cO(G)} = \cO\big(G_\zero\big) \otimes_\bk \cO(G_\uno) \, $,  where  $ \, \bigwedge W^{\cO(G)} = \cO(G_\uno) \, $  in turn identifies with  $ \, \bk\big[ {\{ \xi_i \}}_{i \in I} \big] \, $   --- for some finite set  $ I $  such that  $ \, |I| = \text{\sl rk}_{\,\bk}(\fg_\uno) \, $  ---   $ W^H $  with  $ \, \text{\sl Span}_{\,\bk}\big( {\{ \xi_i \}}_{i \in I} \big) \, $  and  $ \fg_\uno $  with  $ \, {\big( W^H \big)}^* := \Hom_{\,\bk}\big(W^H,\bk\big) \, $.
 Moreover, by definition the subgroup  $ G_\zero $  of  $ G $  can be characterized as follows: for any  $ \, A \in \salg_\bk \, $  one has
  $$  G_\zero(A)  \, = \,  \Big\{\, g \in \Hom_{\salg_\bk}\big( \cO(G_\zero) \otimes_\bk \cO(G_\uno) , A \big) \,\Big|\; g{\big|}_{ 1 \otimes_\bk \cO(G_\uno)} = \epsilon{\big|}_{ 1 \otimes_\bk \cO(G_\uno)} \Big\} \;   \eqno (4.9)  $$
   \indent   Recall   --- see the proof of  Lemma \ref{tang-group}  ---   that if  $ \, Z \in \fg \, $  and $ \, e \in A \, $  are homogeneous of the same degree and  $ \, e^2 = 0 \, $,  then  $ \; ( 1 + e \, Z \,) \in G(A) \; $;  in particular this applies for any  $ \, Y := Z \in \fg_\uno \, $  and  $ \, \eta \in A_\uno \, $,  so that  $ \, ( 1 + \eta \, Y \,) \in G(A) \, $.  Hereafter  $ \; \eta \, Y \in A_\uno \fg_\uno \subseteq \fg(A) \; $  is thought of as the unique  $ A $--valued  $ \epsilon_{{}_{\cO(G_\zero)}} $--\,derivation  of  $ \bk\big[ {\{ \xi_i \}}_{i \in I} \big] $  which maps every  $ \xi_i $  to  $ \, \eta\,Y(\xi_i) \, $   --- through the  $ \bk $--module  identification  $ \, \fg_\uno = {\big( W^H \big)}^* := \Hom_{\,\bk}(W^H,\bk) = \Hom_{\,\bk}\big( \text{\sl Span}_{\,\bk}\big( {\{ \xi_i \}}_{i \in I} \big) , \bk \big) \, $ mentioned above ---   and acts  onto  $ \cO(G_\zero) $  as the counit map  $ \epsilon_{{}_{\cO(G_\zero)}} \, $.
                                                                     \par
   Note that, since  $ \, \eta^2 = 0 \, $,  each morphism  $ ( 1 + \eta \, Y \,) $  vanishes on  $ \, {\big( {\cO(G)}^+ \big)}^2 \, $,  hence in particular on  $ \, {\big( {\cO(G)}_\uno \big)}^2 \, $.
%
%
 Moreover, such a  $ ( 1 + \eta \, Y \,) $  also vanishes on  $ \, {\cO(G_\zero)}^+ \, $,  by construction.
                                                                     \par
   Let us now fix a (finite)  $ \bk $--basis  $ {\{ Y_i \}}_{i \in I} $  of  $ \fg_\uno \, $:  namely, we take the unique one for which  $ \, Y_i(\xi_j) = \delta_{i,j} \, $  for all  $ i $  and  $ j \, $;  also, we set  $ \, d_- := |I| \, $  and we fix a total order in  $ I $  by numbering its elements, so that  $ \, I = \{ i_1 , \dots , i_{d_-} \} \, $.
                                                                     \par
   Given  $ \, g \in G(A) \, $,  set  $ \, \eta_i := g(\xi_i) \in A_\uno \, $,  for every  $ \, i \in I \, $,  and  $ \, \gamma_g := \prod\limits_{i=1}^{d_-} ( 1 + \eta_i \, Y_i) \in G(A) \, $.
                                                                               \par
   First consider  $ \, g_1 := g \cdot \gamma_g^{-1} = g \cdot \! \prod\limits_{i=\,d_-}^1 \!\! {( 1 + \eta_i \, Y_i)}^{-1} = g \cdot \! \prod\limits_{i=\,d_-}^1 \!\! ( 1 - \eta_i \, Y_i) \, $   --- the product now being in reversed order.  Second, as the product in  $ \, G(A) = \Hom_{\salg_\bk}\big(\cO(G),A\big) \, $  is given by convolution, for any  $ \, \phi \in {\big( \bigwedge \! W^H \big)}_\uno = \, {\bk\big[ {\{\xi_i\}}_{i \in I} \big]}_\uno \, $  we have
  $$  \displaylines{
   \;\;   g_1(\phi)  \,\; = \;  \bigg(\, g \cdot \! {\textstyle \prod\limits_{i=\,d_-}^1} \! ( 1 - \eta_i Y_i \,) \!\bigg)(\phi)  \,\; = \;\,  m_{\!{}_A} \bigg(\! \bigg(\, g \otimes \bigg(\, {\textstyle \mathop{\otimes}\limits_{i=\,d_-}^1} ( 1 - \eta_i Y_i \,) \!\bigg) \!\bigg) \big( \Delta^{d_-}(\phi) \big) \!\bigg)  \; =   \hfill  \cr
   \hfill   = \;\;\,  {\textstyle \sum\limits_{(\phi)}} \;\; g\big( \phi_{(1)} \big) \cdot {\textstyle \prod\limits_{i=\,d_-}^1} ( 1 - \eta_i Y_i)\big( \phi_{(i+1)} \big)  \quad  }  $$
where  $ \, \sum_{(\phi)} \phi_{(1)} \otimes \phi_{(2)} \otimes \cdots \otimes \phi_{(d_- + 1)} = \Delta^{d_-}(\phi) \, $  as usual.  Now, by repeated applications of  Lemma \ref{Delta-phi}  to  $ \, H = \cO(G) \, $,  we can achieve such an expansion in the form
  $$  \Delta^{d_-}(\phi)  \,\; = \;\,  {\textstyle \sum_{r+s\,=\,d_-}} 1^{\otimes r} \! \otimes \phi \otimes 1^{\otimes s} \, + \, {\textstyle \sum_{(\phi)^+}} \, \phi_{(1)} \otimes \phi_{(2)} \otimes \cdots \otimes \phi_{(d_- + \,1)}  $$
where each monomial  $ \, \phi_{(1)} \otimes \cdots \otimes \phi_{(d_- + 1)} \, $  satisfies either one of three possible properties, namely:
 \vskip7pt
   {\it (I)} \qquad\!\!  $ \phi_{(1)} \otimes \phi_{(2)} \otimes \cdots \otimes \phi_{(d_- + 1)} \, = \, 1^{\otimes r} \! \otimes \phi \otimes 1^{\otimes s} $  \!\!\quad  for any possible  $ (r,s) $  giving  $ \, r + s = d_- \, $   --- and any such possibility actually occurs;
 \vskip5pt
   {\it (II)} \qquad  $ \phi_{(\ell)} \in \Big(\, {\overline{\cO(G)}}^{\,+} \! \cup {\cO(G)}_\uno^{\,[2]} \Big) = \Big( {\cO\big(G_\zero\big)}^+ \cup {\cO(G)}_\uno^{\,[2]} \Big) \; $  \quad  for some  $ \,\; \ell > 1 \;\; $;
 \vskip5pt
   {\it (III)} \qquad  $ \phi_{(1)} \otimes \phi_{(2)} \otimes \cdots \otimes \phi_{(d_- + 1)} \; \in \; \bigotimes_{s=1}^{d_- + \, 1} \! {\cO(G)}_\uno^{\,[e_s]} $  \quad  with  \;\;  $ \sum_{s=1}^{d_- + \, 1} \! e_s \, \geq \, 3 \;\; $.
 \vskip9pt
   When  {\sl case (I)\/}  occurs, the contribution to
  $$  g_1(\phi) \; = \; {\textstyle \sum_{(\phi)}} g\big( \phi_{(1)} \big) \cdot {\textstyle \prod_{i=\,d_-}^1} ( 1 - \, \eta_i Y_i)\big( \phi_{(i+1)} \big)  $$
is  $ \; g(\phi) \; $  for  $ \, r = 0 \, $  and   $ \; ( 1 - \eta_\ell \, Y_\ell)(\phi) \; $  for all  $ \, r > 1 \, $;  \, then summing over all values of  $ \, r \in \{0,1,\dots,d_-\} \, $  yields the total contribution  $ \; g(\phi) + \sum_{r=1}^{d_-} ( 1 - \eta_r \, Y_r)(\phi) \; $.
                                                        \par
   When  {\sl case (II)\/}  occurs, the product  $ \, \prod_{i=\,d_-}^1 \, ( 1 - \eta_i \, Y_i)\big( \phi_{(i+1)} \big) \, $  vanishes: indeed, it contains the  {\sl zero\/}  factor  $ \, ( 1 - \eta_\ell \, Y_\ell)\big( \phi_{(\ell+1)} \big) = 0 \, $,  because we saw above that  $ \, ( 1 - \eta_\ell \, Y_\ell) \, $  vanishes on  $ \, {\cO(G_\zero)}^+ \cup {\cO(G)}_\uno^{\,[2]} \; $.  So the contribution to  $ g_1(\phi) $  is  $ \; g\big( \phi_{(1)} \big) \cdot \prod_{i=\,d_-}^1 ( 1 - \, \eta_i Y_i)\big( \phi_{(i+1)} \big) = 0 \; $.
                                                        \par
   In the end, when  {\sl case (III)\/}  occurs the product  $ \; g\big( \phi_{(1)} \big) \cdot \prod_{i=\,d_-}^1 ( 1 - \eta_i \, Y_i)\big( \phi_{(i+1)} \big) \; $  belongs to  $ \, \fa_\uno^{\big[ \sum_{s=1}^{d_- + \, 1} \! e_s \big]} \! \subseteq \fa_\uno^{\,[3]} \, $,  with  $ \, \fa := \big( {\big\{ \eta_i := g(\xi_i) \big\}}_{i \in I} \big) \, $  the ideal of  $ A $  generated by all the  $ \eta_i $'s.
 \vskip5pt
   Eventually, the outcome is that, for every  $ \, \phi \in {\big( \bigwedge \! W^H \big)}_\uno = \, {\bk\big[ {\{\xi_i\}}_{i \in I} \big]}_\uno \, $,  one has
 \vskip-5pt
  $$  g_1(\phi)  \; = \;  g(\phi) \, + \, {\textstyle \sum_{\ell=1}^{\,d_-}} \, ( 1 - \eta_\ell \, Y_\ell)(\phi) \, + \, \alpha_3   \eqno \text{for some}  \quad  \alpha_3 \, \in \, \fa_\uno^{\,[3]}  \qquad  $$
 \vskip1pt
\noindent
   We apply this result to  $ \, \phi = \xi_j \, $  with  $ \, j = 1, \dots, d_- \; $:  this yields
 \vskip-13pt
  $$  g_1\big(\xi_j\big)  \; = \;  g\big(\xi_j\big) \, + \, {\textstyle \sum_{\ell=1}^{\,d_-}} \, ( 1 - \eta_\ell \, Y_\ell)\big(\xi_j\big) \, + \, \alpha_3  \; = \;  g\big(\xi_j\big) \, + \, {\textstyle \sum_{\ell=1}^{\,d_-}} \, (-\eta_\ell \, \delta_{\ell,j}\big) \, + \, \alpha_3  \; = \; \alpha_3  \,\; \in \;\, \fa_\uno^{\,[3]}  $$
 \vskip-1pt
\noindent
 since  $ \, \eta_j := g\big(\xi_j\big) \, $  by construction.  Therefore, we have proved the following
 \vskip5pt
   {\it  $ \underline{\text{Claim}} \, $:} \,  {\sl  $ g_1 := g \cdot \gamma_g^{-1} \, $  when restricted to  $ \, 1 \otimes_\bk \cO(G_\uno) = 1 \otimes_\bk \bk\big[ {\{\xi_i\}}_{i \in I} \big] \, $  takes its values in  $ \fa_\uno^{\,(3)} $,  the unital  $ \bk $--subalgebra  of  $ A $  generated by  $ \fa_\uno^{\,[3]} \, $}.
 \vskip7pt
   We can repeat the procedure with  $ \, g_1 \, $  replacing  $ \, g \, $.  Then we consider the corresponding  $ \, \gamma_{g_1} \, $,  which we use to define  $ \, g_2 := g_1 \cdot \gamma_{g_1}^{-1} = g \cdot \gamma_g^{-1} \cdot \gamma_{g_1}^{-1} \, $.  The same arguments   --- or, more directly, the claim above applied to  $ g_1 $  instead of  $ g $  ---   prove that  {\sl  $ \, g_2 \, $  when restricted to  $ \, 1 \otimes_\bk \cO(G_\uno) \, $  takes its values in  $ \fa_\uno^{\,(3^2)} $,  the unital  $ \bk $--subalgebra  of  $ A $  generated by  $ \fa_\uno^{\,[3^2\,]} \, $}.
                                                                        \par
   Iterating the process, we construct elements  $ \, g_s := g_{s-1} \cdot \gamma_{g_{s-1}}^{-1} = g \cdot \gamma_g^{-1} \cdot \gamma_{g_1}^{-1} \cdot \gamma_{g_2}^{-1} \cdots \gamma_{g_{s-1}}^{-1} \, $  for increasing  $ s $  by recursion; their remarkable property is that  {\sl each  $ g_s $  when restricted to  $ \, 1 \otimes_\bk \cO(G_\uno) \, $  takes its values in  $ \fa_\uno^{\,(3^s)} $,  the unital  $ \bk $--subalgebra  of  $ A $  generated by  $ \fa_\uno^{\,[3^s]} \, $}.
                                                                        \par
   Now, as  $ \fa $  is an ideal generated by finitely many  {\sl odd\/}  elements, we have  $ \, \fa_\uno^{\,[n]} \, $  for  $ \, n \gg 0 \, $.  Thus there exists an  $ \, \bar{s} \in \N_+ \, $  such that  $ g_{\bar{s}} $  when restricted to  $ \, 1 \otimes_\bk \cO(G_\uno) \, $  takes its values in  $ \bk \, $,  which means that the restriction of  $ g_{\bar{s}} $  to  $ \, 1 \otimes_\bk \cO(G_\uno) \, $  coincides with the counit map of $ \cO(G) $  followed by the unit map of  $ A \, $.  But this means that  $ \, g_{\bar{s}} \in G_\zero(A) \, $,  thanks to (4.9).
                                                                        \par
   Finally, from  $ \, g_{\bar{s}} = g \cdot \gamma_g^{-1} \! \cdot \gamma_{g_1}^{-1} \! \cdot \gamma_{g_2}^{-1} \! \cdots \gamma_{g_{{\bar{s}}-1}}^{-1} \, $  we get  $ \, g = g_{\bar{s}} \cdot \gamma_{g_{{\bar{s}}-1}} \!\! \cdots \gamma_{g_2} \cdot \gamma_{g_1} \cdot \gamma_g \, $,  which shows that  $ g $  belongs to the subgroup of  $ G(A) $  generated by  $ G_\zero(A) $  and all the  $ (1 + \eta_i \, Y_i) $'s.
\end{proof}

\medskip

\begin{corollary}  \label{fact-G_strspl}
 Keep notation as in  Proposition \ref{supergr_gen_G0+1-par_odd}  above.  Fix a total order in  $ I \, $,  and for any  $ \, A \in \salg_\bk \, $  let
 $ \,\; G_\uno(A) := \prod\limits^\rightarrow\hskip-5pt\phantom{\Big|}_{i \in I}
\big( 1 \! + A_\uno \, Y_i \big) = \Big\{\,
\prod\limits^\rightarrow\hskip-5pt\phantom{\Big|}_{i \in I}
\big( 1 \! + \eta_i \, Y_i \big) \;\Big|\; (i,\eta_i) \in I \!\times\! A_\uno \,\Big\} \; $
 where  $ \, \prod\limits^\rightarrow\hskip-5pt\phantom{\Big|}_{i \in I} \, $  denotes an  {\sl ordered product}.  Then  there exist group-theoretic factorizations
 \vskip-5pt
  $$  G(A) \; = \; G_\zero(A) \cdot G_\uno(A)  \quad ,  \qquad  G(A) \; = \; G_\uno(A) \cdot G_\zero(A)  $$
\end{corollary}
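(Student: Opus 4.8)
The plan is to mimic, in the present general setting, the reordering procedure carried out in the proof of Proposition \ref{fact-G_P}, now replacing the abstractly presented group $G_{{}_{\!\cP}}$ by our gs-split supergroup $G$. The two ingredients that make this possible are already at hand: on one side, Proposition \ref{supergr_gen_G0+1-par_odd} guarantees that $G(A)$ is generated by $G_\zero(A)$ together with the elementary factors $(1+\eta_i\,Y_i)$; on the other side, since $G$ is fine we have $\Lie\,(G)=\cL_\fg$, so all the commutation identities \textit{(b)}--\textit{(f)} of Lemma \ref{tang-group} hold verbatim inside $G(A)$.

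First I would prove $G(A)=G_\zero(A)\cdot G_\uno(A)$. By Proposition \ref{supergr_gen_G0+1-par_odd} --- indeed by the explicit formula obtained at the very end of its proof --- any $g\in G(A)$ can be written as $g=g_0\cdot\prod_{k=1}^N(1+\eta_k\,Y_{i_k})$ with $g_0\in G_\zero(A)$ and a (generally unordered) string of elementary factors. It then remains to re-express this string as a \emph{single} ordered product, possibly at the expense of creating new even factors that get absorbed into the left-hand $G_\zero(A)$-part. This is exactly the content of the ``Claim'' inside the proof of Proposition \ref{fact-G_P}: using \textit{(c)} and \textit{(e)} of Lemma \ref{tang-group} one removes inversions one at a time, each swap producing a new factor $(1+a\,X)$ with $X\in\fg_\zero$ and $a$ lying in a strictly higher power of the ideal $\fa:=\big(\{\eta_k\}\big)$; by \textit{(f)} such even factors are pushed to the left, at the cost of inserting odd factors whose coefficients again lie in higher powers of $\fa$; and by \textit{(d)} every newly created factor is itself a product of elementary factors. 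Since the $\eta_k$ are finitely many odd elements, $\fa^n=\{0\}$ for $n\gg0$, so after finitely many iterations no new factor can appear, and one arrives at $g=\widetilde{g}_0\cdot\prod^\rightarrow_{i\in I}(1+\widetilde\eta_i\,Y_i)$ with $\widetilde g_0\in G_\zero(A)$ and the product ordered. This is precisely the asserted membership $g\in G_\zero(A)\cdot G_\uno(A)$, and the reverse inclusion is trivial.

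For the second factorization I would run the mirror image of the same argument, collecting the $G_\zero(A)$-factors on the \emph{right} rather than on the left: one uses \textit{(b)} of Lemma \ref{tang-group} in the form $g_0\,(1+\eta\,Y)=\big(1+\eta\,\Ad(g_0)(Y)\big)\,g_0$ to shuttle even factors rightward, and the analogous reordering (again terminating by nilpotency of $\fa$) delivers $g=\prod^\rightarrow_{i\in I}(1+\eta_i\,Y_i)\cdot g_0$, i.e.\ $g\in G_\uno(A)\cdot G_\zero(A)$. Alternatively one may simply apply the first factorization to $g^{-1}$ and reorder the resulting reversely-ordered product of inverse elementary factors. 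The one point deserving care --- and the only genuine obstacle --- is the termination of the reordering: it hinges entirely on the nilpotency of $\fa$, which is why the argument is confined to finitely many odd parameters, exactly as in Proposition \ref{fact-G_P}.
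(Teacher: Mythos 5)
Your proposal is correct and coincides with the paper's own proof, which simply invokes the reordering argument of Proposition \ref{fact-G_P} verbatim, noting that it only uses the identities of Lemma \ref{tang-group}, valid in $G(A)$ since $\Lie\,(G)=\cL_\fg$; the paper leaves implicit what you make explicit, namely that Proposition \ref{supergr_gen_G0+1-par_odd} supplies the initial (unordered) factorization and that the nilpotency of the ideal generated by the finitely many odd coefficients guarantees termination. Your handling of the second factorization (mirroring the argument, or applying the first one to $g^{-1}$) is likewise exactly how the symmetric statement is obtained in Proposition \ref{fact-G_P}.
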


\begin{proof}
 We apply again, almost  {\it verbatim},  the proof of  Proposition \ref{fact-G_P}.  Indeed, the arguments therein only used the relations mentioned in  Lemma \ref{tang-group},  which do hold in  $ G \, $.
\end{proof}

\medskip

   The previous result can be improved as follows:

\medskip
 \vskip3pt

\begin{proposition}  \label{dir-prod-fact-G_strspl}
 The factorizations in  Corollary \ref{fact-G_strspl}  above correspond to  $ \bk $--superscheme  isomorphisms: namely, the multiplication in  $ G $  provides  $ \bk $--supersche\-me  isomorphisms
  $$  G_\zero \times G_\uno \; \cong \; G  \quad ,  \qquad  G_\uno \times G_\zero \; \cong \; G  $$
 \vskip-3pt
   Moreover, there exists a  $ \bk $--superscheme  isomorphism  $ \; \mathbb{A}_\bk^{0|d_-} \! \cong G_\uno \; $  with  $ \, d_- := |I| \; $,
given on  $ A $--points  by
 $ \,\; \mathbb{A}_\bk^{0|d_-}\!(A) = A_\uno^{\,d_-} \!\!\longrightarrow G_\uno(A) \, , \;\; {\big(\eta_i\big)}_{i \in I} \mapsto \prod\limits_{i \in I}^\rightarrow (1 + \eta_i \, Y_i) \; $.
\end{proposition}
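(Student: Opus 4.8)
The plan is to prove the two factorization isomorphisms $G_\zero \times G_\uno \cong G$ and $G_\uno \times G_\zero \cong G$ by upgrading the group-theoretic factorizations of Corollary \ref{fact-G_strspl} to genuine isomorphisms of $\bk$--superschemes, and separately to establish the superspace isomorphism $\mathbb{A}_\bk^{0|d_-} \cong G_\uno$. Since all three statements concern functors, I would work pointwise, i.e.\ on $A$--points for an arbitrary $A \in \salg_\bk$, and then observe that everything is natural in $A$. Having the surjectivity of multiplication already in hand from Corollary \ref{fact-G_strspl}, the heart of the matter is \emph{injectivity}: for each $A$ I must show that the map $G_\zero(A) \times G_\uno(A) \longrightarrow G(A)$ given by $(g_0, g_1) \mapsto g_0 \cdot g_1$ is injective, and similarly for the reversed product.

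For the injectivity I would imitate, essentially verbatim, the argument used in the proof of Proposition \ref{dir-prod-fact-G_P - lin}{\it (a)}. Concretely, suppose $\hat{g}_0 \, \hat{g}_1 = \check{g}_0 \, \check{g}_1$ with $\hat{g}_0, \check{g}_0 \in G_\zero(A)$ and $\hat{g}_1, \check{g}_1 \in G_\uno(A)$. Writing $\hat{g}_1 = \prod\limits_{i \in I}^\rightarrow (1 + \hat{\eta}_i \, Y_i)$ and $\check{g}_1 = \prod\limits_{i \in I}^\rightarrow (1 + \check{\eta}_i \, Y_i)$, I set $g := \hat{g}_1 \, \check{g}_1^{-1} = \hat{g}_0^{-1} \, \check{g}_0 \in G_\zero(A)$, so $g$ lies in the intersection of the subgroup generated by $G_\uno(A)$ with $G_\zero(A)$. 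Then I introduce the ideal $\fa$ of $A$ generated by all the $\hat{\eta}_i$ and $\check{\eta}_i$, pass to the quotients ${[A]}_n := A/\fa^n$, and apply Lemma \ref{lemma-prod} to compute ${[g]}_2 = \prod\limits_{i \in I}^\rightarrow \big(1 + {[\alpha_i]}_2 \, Y_i\big)$ with $\alpha_i := \hat{\eta}_i - \check{\eta}_i$. The even-parity constraint forcing ${[g]}_2 \in G_\zero\big({[A]}_2\big)$ then yields $\alpha_i \in \fa^2$; iterating with ${[A]}_{n+1}$ playing the role of $A$ gives $\alpha_i \in \fa^n$ for all $n$, and nilpotence of $\fa$ (the $\hat\eta_i, \check\eta_i$ being finitely many odd elements) forces $\alpha_i = 0$, whence $\hat{g}_1 = \check{g}_1$ and then $\hat{g}_0 = \check{g}_0$. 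The one genuine novelty relative to the linear case is that here there is no native faithful representation; so in place of the matrix-expansion parity argument of Lemma \ref{lemma-triv_prod} I would instead invoke Lemma \ref{semi-faithful}, whose equivalence of conditions {\it (a)} and {\it (b)} supplies exactly the injectivity of the assignment ${\big(\eta_i\big)}_{i \in I} \mapsto \prod\limits_{i \in I}^\rightarrow (1 + \eta_i \, Y_i)$ that underlies the reordering step.

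The final assertion, the isomorphism $\mathbb{A}_\bk^{0|d_-} \cong G_\uno$, follows along the same lines as Proposition \ref{dir-prod-fact-G_P - gen}{\it (b)}: the indicated map on $A$--points is surjective straight from the definition of $G_\uno(A)$, and it is injective precisely by the equivalence of {\it (a)} and {\it (b)} in Lemma \ref{semi-faithful}. Since both maps are natural in $A$, they assemble into the desired $\bk$--superscheme isomorphism. I expect the main obstacle to be making rigorous the transfer of the reordering/nilpotence induction from the linear setting, where one could read off everything inside $\End_{\,\bk}(V)(A)$, to the present abstract group $G(A)$: the key point to verify carefully is that the commutation relations of Lemma \ref{tang-group} genuinely hold in $G$ (they do, as noted in the proof of Corollary \ref{fact-G_strspl}) and that Lemma \ref{semi-faithful} furnishes an adequate substitute for the ``intrinsic'' faithful representation. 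Once that substitution is accepted, the remaining computations are routine bookkeeping and the naturality in $A$ is immediate.
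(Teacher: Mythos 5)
There is a genuine gap, and it is exactly the point the paper flags when it says the key technical device ``must now be re-conceived in yet another way, tailored for the present context.'' Your proposed substitute for the injectivity step --- invoking Lemma \ref{semi-faithful} --- is not available here. That lemma is a statement about the auxiliary group  $ G_{{}_{\!\cP}} $  built by generators and relations, whose elements act on the induced module  $ \, V = \text{\sl Ind}_{\fg_\zero}^{\,\fg}(\Uuno) \cong \bigwedge \fg_\uno \, $  via the representation constructed in Proposition \ref{G_P-action on V}; that representation exists precisely because  $ G_{{}_{\!\cP}} $  is presented by generators and relations. In the present Proposition, by contrast,  $ G $  is an \emph{arbitrary} gs-split fine supergroup, and its  $ A $--points  are algebra morphisms  $ \, \cO(G) \to A \, $:  there is no given action of  $ G(A) $  on  $ V $,  since integrating the  $ \fg $--action  on  $ V $  to a  $ G $--action  is essentially the content of the sHCp equivalence (Theorem \ref{Psi_g-inverse_Phi_g}), which is proved \emph{afterwards} and uses the present Proposition in its proof --- so your route is circular. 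Even the (a)\,$\Leftrightarrow$\,(b) equivalence of Lemma \ref{semi-faithful} asserts only that distinct tuples  $ {(\eta_i)}_{i \in I} $  give distinct elements of  $ G_{{}_{\!\cP}}(A) $,  which says nothing about distinctness of the corresponding elements  $ \prod^\rightarrow_{i\in I}(1+\eta_i\,Y_i) $  of  $ G(A) $:  a priori these could collapse under the natural map  $ G_{{}_{\!\cP}}(A) \to G(A) $.  Relatedly, your intermediate step ``the even-parity constraint forcing  $ {[g]}_2 \in G_\zero\big({[A]}_2\big) $  yields  $ \alpha_i \in \fa^{\,2} $''  is unjustified for abstract  $ G $:  in the linear case (Lemma \ref{lemma-triv_prod}) it came from comparing matrix entries inside  $ \End_\bk(V)(A) $,  and you have supplied no mechanism to extract the coefficients  $ \alpha_i $  from an abstract point of  $ G $.  (Your use of Lemma \ref{lemma-prod} is fine, as it rests only on the identities of Lemma \ref{tang-group}, which do hold in  $ G $.)

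What the paper does instead is intrinsically Hopf-algebraic, exploiting the hypothesis that  $ \, H := \cO(G) \cong \cO(G_\zero) \otimes_\bk \bigwedge W^H \, $  is strongly split with  $ \bigwedge W^H = \bk\big[{\{\xi_i\}}_{i\in I}\big] $.  One evaluates the points directly on the split coordinates: first, the comodule property gives  $ \, \big(\hat{g}_0\,\hat{g}_1\big)(\phi) = \hat{g}_1(\phi) \, $  for all odd  $ \phi $  in the Grassmann factor; then, repeated use of Lemma \ref{Delta-phi} (controlling the shape of  $ \Delta(\phi) $)  yields  $ \, \hat{g}_1(\xi_j) = \hat{\eta}_j + \alpha_j\big(\,\underline{\hat{\eta}}\,\big) \, $  and  $ \, \check{g}_1(\xi_j) = \check{\eta}_j + \alpha_j\big(\,\underline{\check{\eta}}\,\big) \, $  with the \emph{same} universal polynomials  $ \alpha_j $  involving only odd monomials of degree  $ \geq 3 \, $.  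Since  $ \, \eta_j \mapsto \eta_j + \alpha_j(\,\underline{\eta}\,) \, $  defines a superscheme automorphism of  $ \mathbb{A}_\bk^{0|d_-} $,  equality of the evaluations forces  $ \hat{\eta}_j = \check{\eta}_j $  outright --- note that this coordinate argument also replaces your proposed induction modulo powers of  $ \fa \, $,  which becomes unnecessary. If you want to repair your write-up, the fix is to replace the appeal to Lemma \ref{semi-faithful} by this evaluation-on-$\xi_j$ device (or some equivalent pairing argument with  $ U(\fg)^* $),  which is the only step of the proof that genuinely cannot be imported from the linear or the  $ G_{{}_{\!\cP}} $  settings.
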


\begin{proof}
 The statement is a strict analogue of  Proposition \ref{dir-prod-fact-G_P - lin}  and  Proposition \ref{dir-prod-fact-G_P - gen}, and can be proved along the same lines.  However, the main technical device   --- which previously was provided by  Lemma \ref{lemma-triv_prod}  and Lemma \ref{semi-faithful}  respectively ---  must now be re-conceived in yet another way, tailored for the present context.
                                                                     \par
   Given  $ \, A \in \salg_\bk \, $,  assume that one has  $ \; \hat{g}_0 \, \hat{g}_1 = \check{g}_0 \, \check{g}_1 \; $  for some  $ \, \hat{g}_0 \, , \, \check{g}_0 \in G_\zero(A) \, $  and  $ \, \hat{g}_1 \, , \, \check{g}_1 \in G_\uno(A) \, $;  we number the elements of  $ I $  following their order, so that we can write
  $$  \hat{g}_1 = {\textstyle \prod\limits_{i \in I}^\rightarrow} \big( 1 + \hat{\eta}_i \, Y_i \big) \, = {\textstyle \prod\limits_{i=1}^{d_-}} \big( 1 + \hat{\eta}_i \, Y_i \big) \; ,  \,\;\; \check{g}_1 = {\textstyle \prod\limits_{i \in I}^\rightarrow} \big( 1 + \check{\eta}_i \, Y_i \big) \, = {\textstyle \prod\limits_{i=1}^{d_-}} \big( 1 + \check{\eta}_i \, Y_i \big)  \;\quad  \text{for some \ } \hat{\eta}_i , \check{\eta}_i \in A_\uno \; .  $$
                                                                     \par
   We shall prove now that  $ \, \hat{\eta}_i = \check{\eta}_i \, $  for all  $ \, i \in I \, $.  In particular, assuming  $ \, \hat{g}_0 = \check{g}_0 \, $  this is enough to prove the last part of the statement: namely, this proves the injectivity of the superscheme morphism therein, whose surjectivity is automatic.  In addition, this also implies that  $ \, \hat{g}_1 = \check{g}_1 \, $,  whence (as  $ \; \hat{g}_0 \, \hat{g}_1 = \check{g}_0 \, \check{g}_1 \; $  by assumption) it follows  $ \, \hat{g}_0 = \check{g}_0 \, $  too.
                                                                     \par
   Letting  $ \, H := \cO(G) \, $,  we act much like in the proof of  Proposition \ref{supergr_gen_G0+1-par_odd}.  Therefore, for any  $ \, \phi \in {\big( \bigwedge W^H \big)}_\uno = {\cO(G_\uno)}_\uno \, $  we find
  $$  \big( \hat{g}_0 \, \hat{g}_1 \big)(\phi)  \,\; = \;\,  \hat{g}_0\big(\phi_{(1)}\big) \, \hat{g}_1\big(\phi_{(2)}\big)  \,\; = \;\,  \hat{g}_0(\phi) + \hat{g}_1(\phi) \, + \,
 \hat{g}_0\big(\phi^+_{(1)}\big) \, \hat{g}_1\big(\phi^+_{(2)}\big)  \,\; = \;\,  \hat{g}_1(\phi)  $$
because  $ \, \Delta(\phi) = \phi^+_{(1)} \otimes \phi^+_{(2)} \equiv 1 \otimes \phi  \mod\, \big( J_{\!H} \otimes_\bk H \big) \, $,  by the fact that  $ \, H = \cO(G) \, $  is strongly split (see  Theorem \ref{gl-str-split_sgroup-Th}),  and  $ \, g_0(J_{\!H}) = \{0\} \, $.  The same occurs of course for  ``$ \, \check{g} \, $''  replacing  ``$ \, \hat{g} \, $''  everywhere: hence in the end we have
   $$  \big( \hat{g}_0 \, \hat{g}_1 \big)(\phi) \, = \, \hat{g}_1(\phi) \;\; ,  \quad  \big( \check{g}_0 \, \check{g}_1 \big)(\phi) \, = \, \check{g}_1(\phi)  \qquad \qquad  \forall \;\; \phi \in {\cO(G_\uno)}_\uno   \eqno (4.10)  $$
   \indent   On the other hand we have
  $$  \displaylines{
   \quad   \hat{g}_1(\phi)  \,\; = \;  \bigg(\, {\textstyle \prod\limits_{i=1}^{d_-}} \big( 1 + \hat{\eta}_i \, Y_i \big) \!\bigg)(\phi)  \,\; = \;\,  {\textstyle \sum\limits_{(\phi)}} \;\; {\textstyle \prod\limits_{i=1}^{d_-}} \big( 1 + \hat{\eta}_i \, Y_i \big)\big( \phi_{(i)} \big)  \quad  }  $$
where  $ \, \sum_{(\phi)} \phi_{(1)} \otimes \phi_{(2)} \otimes \cdots \otimes \phi_{(d_-)} = \Delta^{d_- - 1}(\phi) \, $  as usual.  Now, like for  Proposition \ref{supergr_gen_G0+1-par_odd},  by repeatedly applying  Lemma \ref{Delta-phi}  we eventually find, for every  $ \, \phi \in {\big( \bigwedge \! W^H \big)}_\uno = \, {\bk\big[ {\{\xi_i\}}_{i \in I} \big]}_\uno \, $,
  $$  \hat{g}_1(\phi)  \; = \;  {\textstyle \sum\limits_{\ell=1}^{d_-}} \, ( 1 - \eta_\ell \, Y_\ell)(\phi) \, + \, \alpha_\phi\big(\, \underline{\hat{\eta}} \,\big)  $$
where  $ \, \alpha_\phi = \alpha_\phi\big(\, \underline{\eta} \,\big) = \alpha_\phi\big(\eta_1 , \dots , \eta_{d_-} \!\big) \, $  is some polynomial (depending on  $ \phi $)  in the variables  $ \, \eta_1 $,  $ \dots $,  $ \eta_{d_-} \in A_\uno \, $  in which only monomials may occur whose degree is odd and at least three.  Applying all this  $ \, \phi = \xi_j \, $  ($ \, j = 1, \dots, d_- \, $)  and  writing  $ \, \alpha_j := \alpha_{\,\xi_j} \, $  for each  $ j \, $,  we get
  $$  \hat{g}_1\big(\xi_j\big)  \; = \;  {\textstyle \sum\limits_{\ell=1}^{d_-}} \, ( 1 - \hat{\eta}_\ell \, Y_\ell)\big(\xi_j\big) \, + \, \alpha_j\big(\, \underline{\hat{\eta}} \,\big)  \; = \;  {\textstyle \sum\limits_{\ell=1}^{d_-}} \, \hat{\eta}_\ell \, \delta_{\ell,j} \, + \, \alpha_j\big(\, \underline{\hat{\eta}} \,\big)  \; = \;  \hat{\eta}_j \, + \, \alpha_j\big(\, \underline{\hat{\eta}} \,\big)  $$
Clearly, the parallel result holds for  $ \check{g}_1 $,  hence in the end we have (with the same  $ \alpha_j $  twice!)
  $$  \hat{g}_1\big(\xi_j\big)  \; = \;  \hat{\eta}_j \, + \, \alpha_j\big(\, \underline{\hat{\eta}} \,\big)  \quad  ,  \qquad  \check{g}_1\big(\xi_j\big)  \; = \;  \check{\eta}_j \, + \, \alpha_j\big(\, \underline{\check{\eta}} \,\big)   \qquad \qquad  \forall \;\; j \in I   \eqno (4.11)  $$
As  $ \; \hat{g}_0 \, \hat{g}_1 \, = \, \check{g}_0 \, \check{g}_1 \; $,  from (4.10) and (4.11) we get  $ \; \hat{\eta}_j \, + \, \alpha_j\big(\, \underline{\hat{\eta}} \,\big) \, = \, \check{\eta}_j \, + \, \alpha_j\big(\, \underline{\check{\eta}} \,\big) \; $  for all  $ \, j \in I \, $.  As a last remark, we notice that
 $ \; \Big(\, \eta_j \mapsto \eta_j + \alpha_j\big(\, \underline{\eta} \,\big) \; , \;\, \forall \; j \! \in \! I \,\Big) \; $
 defines (the value on  $ A $--points  of) a  $ \bk $--superscheme  automorphism of  $ \mathbb{A}_\bk^{0|d_-} \, $:  therefore, from  $ \; \hat{\eta}_j \, + \, \alpha_j\big(\, \underline{\hat{\eta}} \,\big) \, = \, \check{\eta}_j \, + \, \alpha_j\big(\, \underline{\check{\eta}} \,\big) \; $  for all  $ \, j \in I \, $  we get eventually  $ \, \hat{\eta}_j = \check{\eta}_j \; $  for all  $ \, j \in I \, $,  q.e.d.
\end{proof}

\medskip

   We are ready for next result, the main one of the present section, which extends  Theorem \ref{Psi_ell-inverse_Phi_ell}:

\medskip

\begin{theorem}  \label{Psi_g-inverse_Phi_g}
 The functor  $ \; \Psi_g : \sHCp_\bk \! \relbar\joinrel\longrightarrow \gssfsgrps_\bk \; $  is inverse, up to a natural isomorphism, to the functor  $ \; \Phi_g : \gssfsgrps_\bk \! \relbar\joinrel\longrightarrow \sHCp_\bk \; $.  In other words, the two of them are category equivalences, quasi-inverse to each other.
\end{theorem}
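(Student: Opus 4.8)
The plan is to follow the blueprint of the linear case (Theorem~\ref{Psi_ell-inverse_Phi_ell}), exhibiting natural isomorphisms $\Phi_g \circ \Psi_g \cong \text{\sl Id}_{\sHCp_\bk}$ and $\Psi_g \circ \Phi_g \cong \text{\sl Id}_{\gssfsgrps_\bk}$. I would begin with the easier composite $\Phi_g \circ \Psi_g \, $. Fix $\, \cP = (G_+,\fg) \in \sHCp_\bk \, $ and set $\, G_{{}_{\!\cP}} := \Psi_g(\cP) \, $. By  Proposition~\ref{G_P functor - gen}{\it (a)\/}  (and its proof) one has canonical identifications $\, \big(G_{{}_{\!\cP}}\big)_\zero = G_+ \, $ and $\, \Lie\,\big(G_{{}_{\!\cP}}\big) = \fg \, $ as a Lie $ \bk $--superalgebra; hence $\, \Phi_g\big(\Psi_g(\cP)\big) = \big(\, \big(G_{{}_{\!\cP}}\big)_\zero \, , \Lie\,(G_{{}_{\!\cP}}) \big) = (G_+,\fg) = \cP \, $, save for the adjoint-action datum. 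For the latter I would check that the adjoint action of $\, \big(G_{{}_{\!\cP}}\big)_\zero = G_+ \, $ on $\, \Lie\,(G_{{}_{\!\cP}}) = \fg \, $ is precisely the given $ \Ad $: this is read off directly from the conjugation relation $\, \big( 1 + \eta_i\,Y_i \big) \cdot g_+ = g_+ \cdot \big( 1 + \eta_i\,\Ad(g_+^{-1})(Y_i) \big) \, $ built into the presentation of $ G_{{}_{\!\cP}} $ (Definition~\ref{def G_- / G_P - gen}), which says exactly that conjugation by $ g_+ $ acts on tangent vectors via $ \Ad $. Tracking the whole construction shows these identifications are natural in $ \cP $, so $\, \Phi_g \circ \Psi_g \cong \text{\sl Id} \, $.

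For the other composite, let $\, G \in \gssfsgrps_\bk \, $, put $\, \cP := \Phi_g(G) = (G_\zero,\fg) \, $ with $\, \fg = \Lie\,(G) \, $, and set $\, G_{{}_{\!\cP}} := \Psi_g(\cP) \, $. The goal is a natural isomorphism $\, G_{{}_{\!\cP}} \cong G \, $. First I would build a comparison morphism $\; \pi : G_{{}_{\!\cP}} \longrightarrow G \;$ by exploiting the presentation of $ G_{{}_{\!\cP}}(A) $ by generators and relations (Definition~\ref{def G_- / G_P - gen}). For every $\, A \in \salg_\bk \, $ the generators of $ G_{{}_{\!\cP}}(A) $ --- namely the $\, g_+ \in G_+(A) = G_\zero(A) \, $ and the formal symbols $\, \big(1 + \eta_i\,Y_i\big) \, $ --- all have genuine counterparts inside $ G(A) $: the former via the closed embedding $\, G_\zero \lhook\joinrel\longrightarrow G \, $ (so that $ G_\zero(A) \to G(A) $ is injective), the latter because $\, \big(1 + \eta_i\,Y_i\big) \in G(A) \, $ by Lemma~\ref{tang-group}{\it (a)}. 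Moreover every defining relation of $ G_{{}_{\!\cP}}(A) $ is, by Lemma~\ref{tang-group}, an identity already holding among these counterparts in $ G(A) $. Hence sending each generator to its counterpart determines a group homomorphism $\, \pi_A : G_{{}_{\!\cP}}(A) \to G(A) \, $, manifestly natural in $ A $, i.e.\ a morphism of $ \bk $--supergroups.

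It then remains to see that $ \pi $ is an isomorphism, which I would deduce by comparing the two direct-product factorizations. On the one hand  Proposition~\ref{dir-prod-fact-G_P - gen}  gives $\, G_{{}_{\!\cP}} \cong \big(G_{{}_{\!\cP}}\big)_\zero \times G_-^{\scriptscriptstyle \,<} \, $ with $\, \big(G_{{}_{\!\cP}}\big)_\zero = G_+ = G_\zero \, $ and $\, G_-^{\scriptscriptstyle \,<} \cong \mathbb{A}_\bk^{0|d_-} \, $ via $\, {\big(\eta_i\big)}_{i \in I} \mapsto \prod\limits_{i \in I}^\rightarrow (1 + \eta_i\,Y_i) \, $; on the other,  Proposition~\ref{dir-prod-fact-G_strspl}  gives $\, G \cong G_\zero \times G_\uno \, $ with $\, G_\uno \cong \mathbb{A}_\bk^{0|d_-} \, $ via the  {\sl same\/}  formula. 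By construction $ \pi $ carries $\, \big(G_{{}_{\!\cP}}\big)_\zero = G_\zero \, $ identically onto $ G_\zero \, $ and carries $ G_-^{\scriptscriptstyle \,<} $ into $ G_\uno $ by sending $ \prod\limits_{i \in I}^\rightarrow (1+\eta_i Y_i) $ to $ \prod\limits_{i \in I}^\rightarrow (1+\eta_i Y_i) $; under the two parametrizations by $ \mathbb{A}_\bk^{0|d_-} $ this is the identity. Since $ \pi $ is a group morphism, it is compatible with the two multiplication isomorphisms, so through the factorizations it becomes the product of an identity map with an isomorphism, whence $ \pi $ is itself an isomorphism of superschemes and  {\it a fortiori\/}  of supergroups. (Surjectivity alone follows independently from  Proposition~\ref{supergr_gen_G0+1-par_odd}, which guarantees that $ G(A) $ is generated by $ G_\zero(A) $ and the $ (1+\eta_i Y_i) $'s.) Naturality in $ G $ is then a bookkeeping check, completing the argument.

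The main obstacle I anticipate is the well-definedness of $ \pi $: one must verify that  {\sl all\/}  the relations listed in  Definition~\ref{def G_- / G_P - gen}  --- the conjugation rule, the $ 2 $--operation identity $\, \big( 1 + \eta'_i \, Y_i \big)\big( 1 + \eta''_i \, Y_i \big) = \big( 1 + \eta''_i \eta'_i \, Y_i^{\langle 2 \rangle} \big)\big( 1 + (\eta'_i+\eta''_i) \, Y_i \big) \, $, the commutator rule, and the normalizations --- are satisfied in $ G(A) $ by the chosen images. This is exactly the content of  Lemma~\ref{tang-group}, so no fresh computation is required; the only subtlety is the careful identification of the formal generators $ (1+\eta_i Y_i) $ with the corresponding genuine elements of $ G(A) $. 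Once $ \pi $ is known to be well defined and natural, the passage to an isomorphism is forced by the matching $ \mathbb{A}_\bk^{0|d_-} $--parametrizations of $ G_-^{\scriptscriptstyle \,<} $ and $ G_\uno $, exactly as in the final step of the linear case.
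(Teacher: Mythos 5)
Your proposal is correct and follows essentially the same route as the paper's own proof: your comparison morphism $\pi$ is exactly the paper's $\Omega_{{}_A}$, built on the presentation of $G_{{}_{\!\cP}}(A)$ with well-definedness supplied by Lemma~\ref{tang-group}, surjectivity by Proposition~\ref{supergr_gen_G0+1-par_odd}, and bijectivity forced by matching the two direct-product factorizations of Propositions~\ref{dir-prod-fact-G_P - gen} and~\ref{dir-prod-fact-G_strspl}, while the first composite $\Phi_g \circ \Psi_g \cong \text{\sl Id}$ is read off from Proposition~\ref{G_P functor - gen} just as in the paper. Your explicit check of the adjoint-action datum via the conjugation relation is a welcome detail the paper leaves implicit, but it does not change the argument.
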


\begin{proof}
 In the proof of  Proposition \ref{G_P functor - gen}  we saw that, for  $ \, \cP = (G_+,\fg) \in \sHCp_\bk \, $  and  $ \, G_{{}_{\!\cP}} := \Psi_g(\cP) \, $,  we have  $ \, {\big( G_{{}_{\!\cP}} \big)}_\zero = G_+ \, $  and  $ \, \Lie\,\big(G_{{}_{\!\cP}}\big) = \fg \, $,  thus  $ \, \Phi_g\big(\Psi_g(\cP)\big) = \cP \, $.  So in one direction we are done.
 \vskip3pt
   Conversely, let  $ \, G \in \gssfsgrps_\bk \, $,  and set  $ \, \fg := \Lie\,(G) \, $,  $ \, \cP := \Phi_g(G) = (G_\zero,\fg) \, $.  We look at the supergroup  $ \, \Psi_g\big(\Phi_g(G)\big) = \Psi_g(\cP) := G_{{}_{\!\cP}} \, $,  aiming to prove that it is naturally isomorphic to  $ G \, $.
                                                                          \par
   Given  $ \, A \in \salg_\bk \, $,  by abuse of notation we denote with the same symbols any element  $ \, g_0 \in G_\zero(A) \, $ as belonging to  $ G(A) $   --- via the embedding of  $ G_\zero(A) $  into  $ G(A) $  ---   and as an element of  $ G_{{}_{\!\cP}}(A) $   --- actually, one of the distinguished generators given from scratch.
 \vskip3pt
   With this convention, it is immediate to see that  Lemma \ref{tang-group}  yields the following:  {\sl there exists a unique group morphism  $ \; \Omega_{{}_A} \! : G_{{}_{\!\cP}}(A) \!\relbar\joinrel\relbar\joinrel\longrightarrow \! G(A) \; $  such that  $ \; \Omega_{{}_A}(g_0) = g_0 \; $  for all  $ \, g_0 \in G_\zero(A) \, $  and  $ \; \Omega_{{}_A}\big( (1 + \eta_i \, Y_i) \big) = (1 + \eta_i \, Y_i) \; $  for all  $ \, \eta_i \in A_\uno \, $,  $ \, i \in I \, $}.
 \vskip5pt
   By  Proposition \ref{supergr_gen_G0+1-par_odd}  above we have that  {\sl the morphism  $ \Omega_{{}_A} $  is actually surjective}.  On the other hand, the  {\sl direct product factorizations\/}  for  $ G_{{}_{\!\cP}} $  (see  Proposition \ref{dir-prod-fact-G_P - gen})  and for  $ G $  (see  Proposition \ref{dir-prod-fact-G_strspl})  easily imply that  {\sl the morphism  $ \Omega_{{}_A} $  is also injective},  hence it is a group isomorphism.  Finally, it is clear that the morphisms  $ \Omega_{{}_A} $'s  are natural in  $ A \, $,  thus overall they provide an isomorphism between  $ \, G_{{}_{\!\cP}} = \Psi_g\big(\Phi_g(G)\big) \, $  and  $ G \, $,  which ends the proof.
\end{proof}

\medskip

\begin{free text}  \label{constr-G_P}
 {\bf An alternative realization of  $ \mathbf{G_{{}_{\!\cP}}} \, $.}  Let  $ \, \cP = \big( G_+ \, , \fg \big) \in \sHCp_\bk \, $  be a super Harish-Chandra pair; we present now a different way of realizing the  $ \bk $--supergroup  $  G_{{}_{\!\cP}} $  introduced in  Definition \ref{def G_- / G_P - gen}{\it (a)}.  In the following, if  $ K $  is any group presented by generators and relations, we write  $ \, K = \big\langle \varGamma \,\big\rangle \Big/ \big( \cR \big) \, $  if  $ \varGamma $  is a set of free generators (of  $ K \, $),  $ \cR $  is a set of ``relations'' among generators and  $ \big( \cR \big) $  is the normal subgroup in  $ K $  generated by  $ \cR \, $.  As a matter of notation, given a presentation  $ \, K = \big\langle \varGamma \,\big\rangle \Big/ \big( \cR \big) = \big\langle \varGamma \,\big\rangle \Big/ \big( \cR_1 \cup \cR_2 \big) \, $  with  $ \, \cR = \cR_1 \cup \cR_2 \; $,  the Double Quotient Theorem gives us
 \vskip-17pt
  $$  K  \; = \;  \big\langle \varGamma \,\big\rangle \!\Big/\! \big( \cR \big)  \; = \;  \big\langle \varGamma \,\big\rangle \!\Big/\! \big( \cR_1 \cup \cR_2 \big)  \; = \;  \big\langle \varGamma \,\big\rangle \!\Big/\! \big( \cR_1 \big) \Bigg/\! \big( \cR_1 \cup \cR_2 \big) \Big/\! \big( \cR_1 \big)  \; = \;  \big\langle\, \overline{\varGamma} \,\big\rangle \!\Big/\! \big(\, \overline{\cR_2} \,\big)  \quad   \eqno (4.12)  $$
where  $ \overline{\varGamma} $  and  $ \overline{\cR_2} $  respectively denote the images of  $ \varGamma $  and of  $ \cR_2 $  in the quotient group  $ \, \big\langle \varGamma \,\big\rangle \Big/ \big( \cR_1 \big) \; $.
 \vskip3pt
   For a fixed  $ \, A \in \salg_\bk \, $,  we consider  $ G_+(A) $  and inside it the normal subgroup  $ G_\thickapprox(A) $  given by
%
  $$  G_\thickapprox(A)  \; := \;  \bigg\langle \Big\{\, g_+ \! \in G_+(A) \;\Big|\; g_+ = \big( 1 \! + \eta' \eta'' X \big) \, , \, \eta', \, \eta'' \in A_\uno \, , \, X \in [\fg_\uno,\fg_\uno] \,\cup\, \fg_\uno^{\langle 2 \rangle} \Big\} \bigg\rangle  $$
Then consider also the three sets
%
%
  $$  \Gamma_{\!A}^{\,+}  \; := \;  G_+(A)  \;\; ,  \quad
 \Gamma_{\!A}^{\,\thickapprox} \; := \; G_\thickapprox(A)  \;\; ,  \quad
 \Gamma_{\!A}^{\,-}  \; := \;  \Gamma_{\!A}^{\,\thickapprox} \,{\textstyle \bigcup}\, {\big\{ (1 \! + \eta \, Y) \big\}}_{(Y,\,\eta) \,\in\, \fg_\uno \!\times\! A_\uno}  $$
and the sets of relations (for  $ \, g_+ \, , g'_+ \, , g''_+ \in G_+(A) \, $,  $ \, g_\thickapprox \, , \, , g''_\thickapprox \in G_\thickapprox(A) \, $,  $ \, \eta \, , \eta' , \eta'' \in A_\uno \, $,  $ \, X \in [\fg_\uno,\fg_\uno] \cup \fg_\uno^{\langle 2 \rangle} $,  $ \, Y , Y' , Y'' \in \fg_\uno \, $,  with  $ \;\cdot_{\!\!\!{}_{G_+}} $  and  $ \;\cdot_{\!\!\!{}_{G_\thickapprox}} $  being the product in  $ G_+(A) $  and in  $ G_\thickapprox(A) $  respectively)
 \vskip-13pt
  $$  \displaylines{
   \mathcal{R}_{\!A}^+ \, : \quad  g'_+ \cdot\, g''_+  \,\; = \;\,  g'_+ \,\cdot_{\!\!\!{}_{G_+}} g''_+  \cr
   \mathcal{R}_{\!A}^- \, : \,
 \begin{cases}
   \qquad \qquad \qquad \qquad \quad  g'_\thickapprox \cdot\, g''_\thickapprox  \,\; = \;\,  g'_\thickapprox \,\cdot_{\!\!\!{}_{G_\thickapprox}} g''_\thickapprox  \cr
   \qquad \quad  \big( 1 + \eta \, Y \big) \cdot g_\thickapprox  \; = \;  g_\thickapprox \cdot \big( 1 + \eta \, Y \big) \cdot \big( 1 + \eta \, \text{\sl Ad}\big(g_\thickapprox^{-1}\big)(Y) \big)  \cr
    \quad  \big( 1 + \eta' \, Y \big) \cdot \big( 1 + \eta'' \, Y \big)  \; = \;  \Big( 1 \! + \, \eta'' \, \eta' \, Y^{\langle 2 \rangle} \Big) \cdot \big(\, 1 + \big( \eta' + \eta'' \big) \, Y \,\big) \phantom{{}_{\big|}}  \cr
   \,  \big( 1 + \eta'' Y'' \big) \cdot \big( 1 + \eta' Y' \big)  =  \Big( 1 \! + \, \eta' \, \eta'' \big[Y',Y''\big] \Big) \cdot \big( 1 + \eta' Y' \big) \cdot \big( 1 + \eta'' Y'' \big)  \cr
   \qquad \qquad \quad  \big( 1 + \eta \, Y' \big) \cdot \big( 1 + \eta \, Y'' \big)  \; = \;  \big( 1 + \eta \, \big( Y' + Y'' \big) \big)  \cr
   \qquad \qquad \qquad \;\;  \big( 1 + \eta \; 0_{\fg_\uno} \big)  \,\; = \;\,  1  \quad ,
 \qquad   \big( 1 + 0_{\scriptscriptstyle A} \, Y \big)  \,\; = \;\,  1
 \end{cases}   \cr
   \mathcal{R}_{\!A}^\ltimes \, : \quad  g_\thickapprox \cdot g_+  \, = \;  g_+ \cdot \big(\, g_+^{-1} \,\cdot_{\!\!\!{}_{G_+}} g_\thickapprox \,\cdot_{\!\!\!{}_{G_+}} g_+ \big) \;\; ,  \;\;\;
  \big( 1 + \eta \, Y \big) \cdot g_+  \, = \;  g_+ \cdot \big( 1 + \eta \, \text{\sl Ad}\big(g_+^{-1}\big)(Y) \big)   \phantom{\Big|^{\big|}}  \cr
   \mathcal{R}_{\!A}^\thickapprox \, : \quad  \big( g_\thickapprox \big)_{\!{}_{\Gamma_A^{\,+}}}  = \;\,  \big( g_\thickapprox \big)_{\!{}_{\Gamma_A^{\,\thickapprox}}}   \phantom{\big|^{|}}  \cr
   \mathcal{R}_{\!A}  \; := \;  \mathcal{R}_{\!A}^+ \,{\textstyle \bigcup}\, \mathcal{R}_{\!A}^- \,{\textstyle \bigcup}\, \mathcal{R}_{\!A}^\ltimes \,{\textstyle \bigcup}\, \mathcal{R}_{\!A}^\thickapprox   \phantom{\big|^{\big|}}  }  $$
and  {\sl define\/}  a new group, by generators and relations, as  $ \; G_-(A) := \big\langle\, \Gamma_{\!A}^- \,\big\rangle \Big/ \big(\, \mathcal{R}_{\!A}^- \,\big) \;\, $.
 \vskip5pt
   It follows from  Remarks \ref{remarks_post-def G_- / G_P - gen}{\it (c)}  that
  $$  \hskip3pt   G_{{}_{\!\cP}}(A)  \; = \; \big\langle\, \Gamma_{\!A}^+ \,{\textstyle \bigcup}\; \Gamma_{\!A}^- \,\big\rangle \Big/ \big( \mathcal{R}_{\!A} \big)  \; = \; \big\langle\, \Gamma_{\!A}^+ \,{\textstyle \bigcup}\; \Gamma_{\!A}^- \,\big\rangle \Big/ \big(\, \mathcal{R}_{\!A}^+ \,{\textstyle \bigcup}\; \mathcal{R}_{\!A}^- \,{\textstyle \bigcup}\; \mathcal{R}_{\!A}^\ltimes \,{\textstyle \bigcup}\; \mathcal{R}_{\!A}^\thickapprox \,\big)   \eqno (4.13)  $$
indeed, we are just taking larger sets of generators and of relations, with enough redundancies as to get in the end a different presentation of  {\sl the same\/}  group.
 \vskip5pt
   From this we find a neat description of  $ G_{{}_{\!\cP}}(A) $  by achieving the presentation (4.12) in a series of intermediate steps, namely adding only one bunch of relations at a time.  As a first step, we have
  $$  \big\langle\, \Gamma_{\!A}^+ \,{\textstyle \bigcup}\; \Gamma_{\!A}^- \,\big\rangle \Big/ \big(\, \mathcal{R}_{\!A}^+ \,{\textstyle \bigcup}\; \mathcal{R}_{\!A}^- \,\big)  \,\; = \;\,  \big\langle\, \Gamma_{\!A}^+ \,\big\rangle \Big/ \big(\, \mathcal{R}_{\!A}^+ \,\big) \,*\, \big\langle\, \Gamma_{\!A}^- \,\big\rangle \Big/ \big(\, \mathcal{R}_{\!A}^- \,\big)  \,\; \cong \;\,  G_+(A) \,*\, G_-(A)   \eqno (4.14)  $$
where  $ \; G_+(A) \, \cong \, \big\langle\, \Gamma_{\!A}^+ \,\big\rangle \Big/ \big(\, \mathcal{R}_{\!A}^+ \,\big) \; $  by construction and  $ \, * \, $  denotes the free product (of two groups).
 \vskip5pt
  For the next two steps we can follow two different lines of action.  On the one hand, one has
  $$  \big\langle\, \Gamma_{\!A}^+ \,{\textstyle \bigcup}\; \Gamma_{\!A}^- \,\big\rangle \Big/ \big(\, \mathcal{R}_{\!A}^+ \,{\textstyle \bigcup}\; \mathcal{R}_{\!A}^- \,{\textstyle \bigcup}\; \mathcal{R}_{\!A}^\ltimes \,\big)  \;\; \cong \;\;  \big( G_+(A) \,* G_-(A) \,\big) \Big/ \big(\, \overline{\mathcal{R}_{\!A}^\ltimes} \,\big)  \;\; \cong \;\;  G_+(A) \ltimes G_-(A)  $$
because of (4.12) and (4.14) together, where  $ \, G_+(A) \ltimes G_-(A) \, $  is the semidirect product of  $ \, G_+(A) \, $  with  $ \, G_-(A) \, $  with respect to the obvious (``adjoint'') action of the former on the latter.  Then
  $$  \displaylines{
   \qquad   \big\langle\, \Gamma_{\!A}^+ \,{\textstyle \bigcup}\; \Gamma_{\!A}^- \,\big\rangle \Big/ \big(\, \mathcal{R}_{\!A} \big)  \;\; \cong \;\;  \big\langle\, \Gamma_{\!A}^+ \,{\textstyle \bigcup}\; \Gamma_{\!A}^- \,\big\rangle \Big/ \big(\, \mathcal{R}_{\!A}^+ \,{\textstyle \bigcup}\; \mathcal{R}_{\!A}^- \,{\textstyle \bigcup}\; \mathcal{R}_{\!A}^\ltimes \,{\textstyle \bigcup}\; \mathcal{R}_{\!A}^\thickapprox \,\big)  \;\; \cong   \hfill  \cr
   \hfill   \cong \;\;  \big( G_+(A) \ltimes G_-(A) \big) \Big/ \big(\, \overline{\mathcal{R}_{\!A}^\thickapprox} \;\big)  \;\; \cong \;\;  \big( G_+(A) \ltimes G_-(A) \big) \Big/ N_\thickapprox(A)   \qquad  }  $$
where  $ \, N_\thickapprox(A) \, $  is the normal subgroup of  $ \, G_+(A) \ltimes G_-(A) \, $  generated by  $ \; \Big\{ \big(\, g_\thickapprox \, , g_\thickapprox^{-1} \big) \,\Big|\; g_\thickapprox \in G_\thickapprox(A) \Big\} \; $.
 \eject
\noindent
 This together with (4.13) eventually yields
  $$  G_{{}_{\!\cP}}(A)  \;\; = \;\;  \big( G_+(A) \ltimes G_-(A) \big) \Big/ N_\thickapprox(A)  $$
 \vskip3pt
   On the other hand, again from (4.12) and (4.14) together we get
  $$  \big\langle\, \Gamma_{\!A}^+ \,{\textstyle \bigcup}\; \Gamma_{\!A}^- \,\big\rangle \Big/ \big(\, \mathcal{R}_{\!A}^+ \,{\textstyle \bigcup}\; \mathcal{R}_{\!A}^- \,{\textstyle \bigcup}\; \mathcal{R}_{\!A}^\thickapprox \,\big)  \;\; \cong \;\;  G_+(A) \,*\, G_-(A) \Big/ \big(\, \overline{\mathcal{R}_{\!A}^\thickapprox} \,\big)  \;\; \cong \;\;  G_+(A) \!\mathop{*}_{G_\thickapprox(A)}\! G_-(A)  $$
where  $ \, G_+(A) \!\mathop{*}\limits_{G_\thickapprox(A)}\! G_-(A) \, $  is the amalgamated product of  $ \, G_+(A) \, $  and  $ \, G_-(A) \, $  over  $ \, G_\thickapprox(A) \, $  with respect to the obvious natural monomorphisms  $ \, G_\thickapprox(A) \longrightarrow G_+(A) \, $  and  $ \, G_\thickapprox(A) \longrightarrow G_-(A) \; $.  Then
  $$  \displaylines{
   \qquad   \big\langle\, \Gamma_{\!A}^+ \,{\textstyle \bigcup}\; \Gamma_{\!A}^- \,\big\rangle \Big/ \big(\, \mathcal{R}_{\!A} \big)  \;\; \cong \;\;  \big\langle\, \Gamma_{\!A}^+ \,{\textstyle \bigcup}\; \Gamma_{\!A}^- \,\big\rangle \Big/ \big(\, \mathcal{R}_{\!A}^+ \,{\textstyle \bigcup}\; \mathcal{R}_{\!A}^- \,{\textstyle \bigcup}\; \mathcal{R}_{\!A}^\thickapprox \,{\textstyle \bigcup}\; \mathcal{R}_{\!A}^\ltimes \,\big)  \;\; \cong   \hfill  \cr
   \hfill   \cong \;\;  \big( G_+(A) \!\mathop{*}\limits_{G_\thickapprox(A)}\! G_-(A) \big) \Big/ \big(\, \overline{\mathcal{R}_{\!A}^\ltimes} \;\big)  \;\; \cong \;\;  \big( G_+(A) \!\mathop{*}\limits_{G_\thickapprox(A)}\! G_-(A) \big) \Big/ N_\ltimes(A)   \qquad  }  $$
where  $ \, N_\ltimes(A) \, $  is the normal subgroup of  $ \, G_+(A) \!\mathop{*}\limits_{G_\thickapprox(A)}\! G_-(A) \, $  generated by
 \vskip5pt
   \centerline{ \quad  $ {\Big\{\, g_+ \cdot \big( 1 + \eta \, Y \big) \cdot g_+^{-1} \cdot {\big( 1 + \eta \, \text{\sl Ad}(g_+)(Y) \big)}^{-1} \,\Big\}}_{(Y,\eta) \,\in\, \fg_\uno \!\times\! A_\uno \, , \, g_+ \,\in\, G_+(A)}  \;\; {\textstyle \bigcup} $   \hfill }
 \vskip4pt
   \centerline{ \hfill   $ {\textstyle \bigcup} \;\;  {\Big\{\, g_+ \cdot g_\thickapprox \cdot g_+ \cdot {\big( g_+ \,\cdot_{\!\!\!{}_{G_+}}\! g_\thickapprox \;\cdot_{\!\!\!{}_{G_+}}\! g_+ \big)}^{-1} \,\Big\}}_{g_+ \in G_+(A) \, , \, g_\thickapprox \in G_\thickapprox(A)} $  \quad }
 \vskip7pt
All this along with (4.13) eventually gives
  $$  G_{{}_{\!\cP}}(A)  \;\; = \;\;  \big( G_+(A) \!\mathop{*}\limits_{G_\thickapprox(A)}\! G_-(A) \big) \Big/ N_\ltimes(A)  $$
%
%
for all  $ \, A \in \salg_\bk \, $.  In functorial terms this yields
  $$  G_{{}_{\!\cP}} \; = \; \big( G_+ \ltimes G_- \big) \Big/ N_\thickapprox  \qquad  \text{and}  \qquad G_{{}_{\!\cP}} \; = \; \big( G_+ \!\mathop{*}\limits_{G_\thickapprox}\! G_- \big) \Big/ N_\ltimes  \;\qquad  \text{or}  \qquad\;  G_{{}_{\!\cP}} \, = \; \big( G_+ \!\mathop{\ltimes}\limits_{G_\thickapprox}\! G_- \big)  $$
where the last, (hopefully) more suggestive notation  $ \; G_{{}_{\!\cP}} = \, \big( G_+ \!\mathop{\ltimes}\limits_{G_\thickapprox}\! G_- \big) \;\, $  tells us that  $ \, G_{{}_{\!\cP}} $  is the ``amalgamated semidirect product'' of  $ G_+ $  and  $ G_- $  over their common subgroup  $ G_\thickapprox \; $.
\end{free text}

\medskip

 \subsection{Examples, applications, generalizations}  \label{examples-appls-genrs} {\ }


   We shall now illustrate how the equivalence we established between (globally strongly split fine) affine supergroups and super Harish-Chandra pairs applies to specific examples.  In particular, we show that one recovers the construction of ``Chevalley supergroups'' as presented in  \cite{fg1,fg2,ga1,ga2}.
                                                                                   \par
   We also have applications to representation theory.  First, if  $ G $  and  $ (G_+,\fg) $  respectively are a supergroup and a sHCp which correspond to each other under the previously mentioned equivalence, then we shall find an equivalence between the category of (left or right)  $ G $--modules  and the category of  $ (G_+,\fg) $--modules.  Second, given a supergroup  $ \, G \in \gssfsgrps_\bk \, $  and any  $ G_\zero $--module  $ V $  we provide an explicit construction of the induced  $ G $--module  $ \text{\sl Ind}_{G_\zero}^{\,G}(V) \, $.
                                                                                   \par
   Finally, we discuss a bit the possibilities to extend our results to a more general setup.

\medskip

\begin{free text}  \label{first-ex_Chev-sgrps}
 {\bf The example of ``Chevalley supergroups''.}  Let  $ \fg $  be a simple Lie superalgebra over an algebraically closed field  $ \mathbb{K} $  of characteristic zero.  A complete classification of these objects was found by Kac (and others, see  e.g.~\cite{ka}),  who split them in two main (disjoint) families: those of ``classical'' type   --- still divided into ``basic'' and ``strange'' types ---   and those of ``Cartan'' type.
 \eject
   In a series of papers, Fioresi and Gavarini devised a systematic procedure to find affine  $ \Z $--supergroups  $ G $  having the given  $ \fg $  as tangent Lie superalgebra   --- see \cite{fg1, fg2, ga1}  for the classical type, and  \cite{ga2}  for the Cartan type.  Indeed, the outcome there is an explicit recipe to construct all supergroups of this type which in addition are  {\sl connected}.
   Their construction
 starts with a faithful, finite-dimensional  $ \fg $--module  $ V $,  and eventually realizes one model of the required  $ \Z $--supergroup  $ G $  as a closed  $ \Z $--subsupergroup  of  $ \rGL(V) \, $.  The procedure mimics and extends the classical one developed by Chevalley to construct (connected) algebraic groups associated with any simple Lie algebra over  $  \mathbb{K} \, $:  for this reason, the resulting supergroups are named ``Chevalley supergroups''.
                                                  \par
   On the other hand, if one revisits the work of Fioresi and Gavarini in the spirit of the present paper, one realizes the following:  {\it the construction of Fioresi and Gavarini is nothing but a special   --- and peculiar, for extra features occur, of course ---   instance of  Theorem \ref{Psi_ell-inverse_Phi_ell}}.  Indeed, once  $ \fg \, $,  a faithful   $ \fg $--module  $ V $,  and suitable  $ \Z $--forms  $ \fg_{{}_\Z} $  and  $ V_{{}_\Z} $  of them are fixed, one can consider the even part  $ \fg_\zero $  and realize, following Chevalley, a (classical) affine group-scheme  $ G_\zero $  over  $ \Z $  which is a (connected) closed subgroup-scheme of  $ \rGL(V) $  such that  $ \, \Lie\,\big(G_\zero\big) = \fg_\zero \, $.  Then  $ \, \cP := \big( (G_\zero , \fg) \, , V \,\big) \, $  is a linear super Harish-Chandra pair over  $ \Z \, $,  i.e.~$ \, \cP
 \in \lsHCp_\Z \, $.
                                                   \par
   Now, after  Theorem \ref{Psi_ell-inverse_Phi_ell}  it makes sense to consider the associated linear fine supergroup  $ \, G_{{}_{\!\cP}} := \Psi_\ell\,(\cP) \in \lgssfsgrps_\Z \, $  over  $ \Z \, $.  Then a direct comparison shows that  {\it the very definition of this  $ G_{{}_{\!\cP}} $  actually coincides with the definition of the  $ \Z $--supergroup  $ G_V $  provided by the recipe of Fioresi and Gavarini}.  Indeed, we can say that Fioresi and Gavarini's construction consists in ``composing'' Chevalley's classical construction   --- to produce a group scheme  $ G_+ $  out of a faithful  $ \fg_+ $--module  $ V_+ \, $,  if  $ \fg_+ $  is a Lie algebra (plus technicalities) ---   and (as a second step) the functor  $ \Psi_\ell \, $.  It follows then that all ``Chevalley supergroups'' (both of classical or of Cartan type) as provided by Fioresi and Gavarini are linear fine supergroups; in particular, they are globally strongly split.
\end{free text}

\vskip7pt

\begin{free text}  \label{representations}
 {\bf Representations 1: the equivalence [supergroup modules  $ \simeq $  sHCp-modules].}  An important byproduct of the equivalence between  $ \gssfsgrps_\bk $  and  $ \sHCp_\bk $  comes as an application to representation theory.  Indeed, let  $ \, G \in \gssfsgrps_\bk \, $  and  $ \, \cP \in \sHCp_\bk \, $  respectively be a supergroup and a sHCp which correspond to each other through the above mentioned equivalence   --- namely,  $ \, G = \Psi_g(\cP) \, $  and  $ \, \cP = \Phi_g(G) \, $.  Then we let  $ G $--Mod and  $ \cP $--Mod  respectively be the category of  $ G $--supermodules  and of  $ \cP $--modules;  in short, here we mean that a  $ G $--module  is the datum of a finite free  $ \bk $--supermodule  $ M' $  with a morphism  $ \, \varOmega : G \longrightarrow \rGL(M') \, $  of supergroups, i.e.~in  $ \sgrps_\bk \, $,  whereas a  $ \cP $--module  is the datum of a finite free  $ \bk $--supermodule  $ M'' $  with a morphism  $ \, (\Omega_+,\omega) : \cP \longrightarrow \big( {\rGL(M'')}_\zero \, , \rgl(M'') \big) \, $  of sHCp's, i.e.~in  $ \sHCp_\bk \, $.
 \vskip4pt
   Now assume  $ M' $  is a  $ G $--module.  Applying  $ \, \Phi_g : \gssfsgrps_\bk \!\longrightarrow \sHCp_\bk \, $  to the morphism  $ \, \varOmega : G \longrightarrow \rGL(M') \, $  we find a morphism  $ \, \Phi_g(\varOmega) : \Phi_g(G) \longrightarrow \Phi_g\big(\rGL(M')\big) \, $  between the corresponding objects in  $ \sHCp_\bk \, $.  But  $ \, \Phi_g(G) = \cP \, $  by assumption and  $ \, \Phi_g\big(\rGL(M')\big) = \big( {\rGL(M')}_\zero \, , \rgl(M') \big) \, $,  so what we have is a morphism  $ \, \Phi_g(\varOmega) \! : \cP \! \longrightarrow \!\big( {\rGL(M')}_\zero \, , \rgl(M') \big) \, $  making  $ M' $ into a  $ \cP $--module.
                                                                               \par
   Conversely, let  $ M'' $  be a  $ \cP $--module.  Applying the functor  $ \, \Psi_g \! : \! \sHCp_\bk \!\longrightarrow \! \gssfsgrps_\bk \, $  to the corresponding morphism  $ \, (\Omega_+,\omega) : \cP \!\rightarrow \big( {\rGL(M'')}_\zero \, , \rgl(M'') \big) \, $   we get a morphism  $ \, \Psi_g\big((\Omega_+,\omega)\big) : \Psi_g(\cP) \longrightarrow \Psi_g\big(\big( {\rGL(M'')}_\zero \, , \rgl(M'') \big)\big) \, $  between the corresponding supergroups.  As  $ \, \Psi_g(\cP) = G \, $  by our assumptions while  $ \, \Psi_g\big(\big( {\rGL(M'')}_\zero \, , \rgl(M'') \big)\big) = \rGL(M'') \, $,  we find a morphism  $ \, \Psi_g\big((\Omega_+,\omega)\big) \! : G \! \longrightarrow \! \rGL(M'') \, $  in  $ \gssfsgrps_\bk $  which makes  $ M'' $ into a  $ G $--module.
                                                                               \par
   The reader can easily check that the previous discussion has the following outcome:
\end{free text}

\vskip0pt

\begin{theorem}  \label{main-thm_sHCp-equiv}
 Let  $ \, G \in \gssfsgrps_\bk \, $  and  $ \, \cP \in \sHCp_\bk \, $  correspond to each other as above.
 \vskip2pt
 (a)\,  For any fixed finite free  $ \bk $--supermodule  $ M $,  the above constructions provide two bijections, inverse to each other, between  $ G $--module  structures and  $ \cP $--module  structures on  $ M \, $.
 \vskip2pt
 (b)\,  The whole construction above is natural in  $ M $,  in that the above bijections over two  $ \bk $--supermodules  $ \widehat{M} $  and  $ \widetilde{M} $ are compatible with  $ \bk $--supermodule  morphisms from  $ \widehat{M} $  to  $ \widetilde{M} \, $.  Thus, all the bijections mentioned in (a) --- for all different  $ M $'s  ---   do provide equivalences, quasi-inverse to each other, between the category of all  $ G $--modules  and the category of all  $ \cP $--modules.
\end{theorem}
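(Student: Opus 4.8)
The plan is to reduce part (a) to the full faithfulness of the equivalence $\Phi_g$ (with quasi-inverse $\Psi_g$) of Theorem \ref{Psi_g-inverse_Phi_g}, and to reduce part (b) to the concrete generation of $G(A)$ supplied by the global splitting. Two preliminary facts make this possible. First, for every finite free $\bk$-supermodule $M$ the supergroup $\rGL(M)$ belongs to $\gssfsgrps_\bk$: it is fine, since $\Lie(\rGL(M)) = \rgl(M)$ has $\bk$-free finite-rank odd part, and it is globally strongly split by Theorem \ref{gl-str-split_GL(V)}. Second, by the very definition of $\Phi$ one has the \emph{literal} equality $\Phi_g(\rGL(M)) = \big({\rGL(M)}_\zero\,,\,\rgl(M)\big)$. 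Consequently a $G$-module structure on $M$ is nothing but an element of $\Hom_{\gssfsgrps_\bk}(G,\rGL(M))$ — which coincides with $\Hom_{\sgrps_\bk}(G,\rGL(M))$, as $\gssfsgrps_\bk$ is a full subcategory — while a $\cP$-module structure on $M$ is nothing but an element of $\Hom_{\sHCp_\bk}\big(\cP\,,\,({\rGL(M)}_\zero,\rgl(M))\big)$.

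For part (a), since $\Phi_g$ is an equivalence it is fully faithful, hence it induces a bijection
$$ \Hom_{\gssfsgrps_\bk}\big(G,\rGL(M)\big) \,\xrightarrow{\ \sim\ }\, \Hom_{\sHCp_\bk}\big(\Phi_g(G),\Phi_g(\rGL(M))\big) \,=\, \Hom_{\sHCp_\bk}\big(\cP\,,\,({\rGL(M)}_\zero,\rgl(M))\big), $$
the last equality using $\Phi_g(G) = \cP$ together with the literal computation of $\Phi_g(\rGL(M))$. This is exactly the assignment $\varOmega \mapsto \Phi_g(\varOmega)$ described before the statement, and its inverse is the one induced by $\Psi_g$ (composed with the canonical isomorphism $\Psi_g\Phi_g(\rGL(M)) \cong \rGL(M)$ furnished by the equivalence); this proves (a).

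For part (b) I would match morphisms directly. Fix $G$-modules $M,N$ with corresponding $\cP$-modules, so that by definition of $\Phi_g$ the group $G_+ = G_\zero$ acts by restriction of the $G$-action, while $\fg$ acts via $\Lie(\varOmega)$, characterized on a basis $\{Y_i\}$ of $\fg_\uno$ by the rule that $(1+\eta_i\,Y_i)$ acts on $M(A)$ as $\mathrm{id}_M + \eta_i\,\Lie(\varOmega)(Y_i)$ (cf.\ Lemma \ref{tang-group} and Proposition \ref{G_P-action on V}). An even $\bk$-linear map $f : M \to N$ is a morphism of $G$-modules iff for every $A\in\salg_\bk$ it intertwines the $G(A)$-actions; but by Proposition \ref{supergr_gen_G0+1-par_odd} (equivalently Corollary \ref{fact-G_strspl}) the group $G(A)$ is generated by $G_\zero(A)$ together with the elements $(1+\eta_i\,Y_i)$, so it suffices that $f$ intertwine these generators. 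Intertwining all $g_0\in G_\zero(A)$, for all $A$, is precisely $G_+$-equivariance of $f$; and since $(1+\eta_i\,Y_i).v = v + \eta_i\,(Y_i.v)$, intertwining $(1+\eta_i\,Y_i)$ for all $\eta_i\in A_\uno$ and all $A$ is equivalent to $f\circ Y_i = Y_i\circ f$ for every $i$, i.e.\ to $\fg_\uno$-equivariance, whence $\fg$-equivariance once $G_+$-equivariance is granted. Thus $f$ is a morphism of $G$-modules iff it is a morphism of $\cP$-modules; this correspondence is manifestly compatible with composition and identities, so together with (a) it yields the two quasi-inverse equivalences of categories asserted.

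The step I expect to be the crux is precisely this morphism-matching in (b): unlike (a), which is formal, it rests on the generation result Proposition \ref{supergr_gen_G0+1-par_odd}, which is exactly where the global-strong-splitting hypothesis is used. The only mild technical care needed is the passage from ``$(1+\eta_i\,Y_i)$ is intertwined for every odd $\eta_i$'' to ``$Y_i$ commutes with $f$'', which one secures by evaluating on a superalgebra carrying a nonzero odd element that is not a zero-divisor on the relevant free module, such as the Grassmann algebra $\bk[\xi]$.
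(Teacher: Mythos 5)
Your proposal is correct and follows essentially the same route as the paper: the paper's ``proof'' of Theorem~\ref{main-thm_sHCp-equiv} is precisely the discussion in \S~\ref{representations} preceding the statement --- apply $\Phi_g$ and $\Psi_g$ to the structure morphisms $\varOmega$ and $(\Omega_+,\omega)$, using $\Phi_g\big(\rGL(M)\big) = \big({\rGL(M)}_\zero\,,\rgl(M)\big)$, $\Psi_g\big(\big({\rGL(M)}_\zero\,,\rgl(M)\big)\big) \cong \rGL(M)$ and the equivalence of Theorem~\ref{Psi_g-inverse_Phi_g} --- with the remaining verification left to the reader. Your part \emph{(a)} via full faithfulness of $\Phi_g$, and your part \emph{(b)} via the generation result of Proposition~\ref{supergr_gen_G0+1-par_odd} (morphisms need only be intertwined on the generators $G_\zero(A)$ and $(1+\eta_i\,Y_i)$, since the intertwined elements form a subgroup), is exactly that verification made explicit, and it is sound.

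One slip in your closing remark, however: in a \emph{commutative} superalgebra every odd element squares to zero (this is part of the paper's definition of commutativity in \S~\ref{superalgebras}), so a ``nonzero odd element that is not a zero-divisor'' does not exist; moreover $\bk[\xi]$ with a \emph{single} odd generator only detects the restriction of the odd map $f\circ\rho_{{}_M}(Y_i) - \rho_{{}_N}(Y_i)\circ f$ to $M_\zero\,$: indeed an element of $M(A)$ with nonzero odd component has the form $\xi\otimes m$ with $m\in M_\uno\,$, and the test with $\eta=\xi$ collapses because $\xi^2=0\,$. The fix is immediate: take $A=\bk[\xi_1,\xi_2]\,$, $\eta=\xi_1\,$, $v=\xi_2\otimes m$ with $m\in M_\uno\,$; then $\xi_1\,\xi_2\otimes n = 0$ forces $n=0$ by $\bk$--freeness of $N$, which gives vanishing on $M_\uno$ as well (vanishing on $M_\zero$ comes from $\bk[\xi]$ as you indicate). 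With this correction your argument is complete and coincides in substance with the paper's.
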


\vskip7pt

\begin{remarks}
 Professor Masuoka kindly shared with the author the following observations:
 \vskip2pt
   {\it (a)}\,  Here above we considered modules  $ M $  (over supergroups or over sHCp's) that are finite free as  $ \bk $--supermodules   --- which is consistent with our description of  $ \rGL(M) $  as given in  Examples \ref{exs-supvecs}{\it (b)}.  On the other hand, one can weaken this assumption, requiring only that  $ M $  be finite projective:  Theorem \ref{main-thm_sHCp-equiv}  above will then still hold true.
 \vskip2pt
   {\it (b)}\,  With the above mentioned projectivity assumption, our  Theorem \ref{main-thm_sHCp-equiv}  improves Proposition 5.4 and Theorem 5.8 in  \cite{mas-shi}.  Indeed, Proposition 5.4 holds true over any commutative ring, just assuming (with notation of  \cite{mas-shi})  that the map  $ \, O(G) \longrightarrow {\text{hy}(G)}^* \, $  be injective (geometrically, it means that  $ G $  is connected).
\end{remarks}

\smallskip

\begin{free text}  \label{appl_ind-funct}
 {\bf Representations 2   --- induction from  $ G_\zero $  to  $ G \, $.}  Let a supergroup  $ \, G \in \gssfsgrps_\bk \, $  be given, with associated classical subsupergroup  $ \, G_{\text{\it ev}} = G_\zero \, $.  Let  $ V $  be any  $ G_\zero $--module:  we shall now provide an explicit construction of the  {\it induced  $ G $--module}  $ \text{\sl Ind}_{G_\zero}^{\,G}\!(V) \, $.

\vskip3pt

   Being a  $ G_\zero $--module,  $ V $  is also, automatically, a  $ \fg_\zero $--module.  Then one does have the induced  $ \fg $--module  $ \text{\sl Ind}_{\fg_\zero}^{\,\fg}\!(V) \, $,  which can be realized as
 \vskip3pt
 \centerline{ $ \text{\sl Ind}_{\fg_\zero}^{\,\fg}\!(V)  \,\; = \;\,  \text{\sl Ind}_{U(\fg_\zero)}^{\,U(\fg)}(V)  \,\; = \;\,  U(\fg) \otimes_{U(\fg_\zero)} \! V $ }
 \vskip5pt
\noindent
 By construction, it is clear that this bears also a unique structure of  $ G_\zero $--module  which is compatible with the  $ \fg $--action  and coincides with the original  $ G_\zero $--action  on  $ \, \bk \otimes_{U(\fg_\zero)} \! V \cong V \, $  given from scratch.  Indeed, we can describe explicitly this  $ G_\zero $--action,  as follows.  First, by construction we have
 \vskip4pt
 \centerline{ $ \text{\sl Ind}_{\fg_\zero}^{\,\fg}\!(V)  \,\; = \;\, U(\fg) \otimes_{U(\fg_\zero)} \! V  \,\; = \;\,  {\textstyle \bigwedge} \, \fg_\uno \! \otimes_\bk \! V $ }
 \vskip6pt
\noindent
--- because  $ \, U(\fg) \cong {\textstyle \bigwedge} \, \fg_\uno \! \otimes_\bk \! U(\fg_\zero) \, $  as a  $ \bk $-module,  by the PBW theorem for Lie superalgebras, see (4.7) ---   with the  $ \fg_\zero $--action  given by  $ \; x.(y \otimes v) = \text{\sl ad}(x)(y) \otimes v + y \otimes (x.v) \; $  for  $ \, x \in \fg_\zero \, $,  $ \, y \in {\textstyle \bigwedge} \, \fg_\uno \, $,  $ \, v \in V \, $,  where by  $ \, \text{\sl ad} \, $  we denote the unique  $ \fg_\zero $--action  on  $ \, {\textstyle \bigwedge} \, \fg_\uno \, $  by algebra derivations induced by the adjoint  $ \fg_\zero $--action  on  $ \fg_\uno \, $.  Second, this action clearly integrates to a (unique)  $ G_\zero $--action  given by  $ \; g_0.(y \otimes v) \, := \, \text{\sl Ad}(g_0)(y) \otimes (g_0.v) \; $  for  $ \, g_0 \in G_\zero \, $,  $ \, y \in {\textstyle \bigwedge} \, \fg_\uno \, $,  $ \, v \in V \, $,  where we write  $ \, \text{\sl Ad} \, $  for the unique  $ G_\zero $--action  on  $ \, {\textstyle \bigwedge} \, \fg_\uno \, $  by algebra automorphisms induced by the adjoint  $ G_\zero $--action  on  $ \fg_\uno \, $.
                                                                      \par
   The key point is that the above  $ G_\zero $--action  and the built-in  $ \fg $--action on  $ \text{\sl Ind}_{\fg_\zero}^{\,\fg}\!(V) $  are actually compatible, in the sense that they make  $ \text{\sl Ind}_{\fg_\zero}^{\,\fg}\!(V) $  into a  $ (G_\zero\,,\fg) $--module,  i.e.~a module for the super Harish-Chandra pair  $ \, \cP := (G_\zero\,,\fg) \, $.  Since  $ \, \Psi_g\big((G_\zero\,,\fg)\big) = G \, $,  by  \S \ref{representations}  we have that  $ \text{\sl Ind}_{\fg_\zero}^{\,\fg}\!(V) $  bears a unique structure of  $ G $--module  which correspond to the previous  $ \cP $--action   --- i.e., it yields (by restriction and ``differentiation'') the previously found  $ G_\zero $--action  and  $ \fg $--action.
                                                                      \par
   Therefore,  {\sl we define as  $ \text{\sl Ind}_{G_\zero}^{\,G}\!(V) $  the space  $ \text{\sl Ind}_{\fg_\zero}^{\,\fg}\!(V) $  endowed with this  $ G $--action\/}:  one easily check that this construction is functorial in  $ V  $  and has the universal property which makes it into the adjoint of ``restriction'' (from  $ G $--modules  to  $ G_\zero $--modules),  so it has all rights to be called ``induction'' functor (from  $ G_\zero $--modules  to  $ G $--modules).
                                                                      \par
   In addition, if the original  $ G_\zero $--module  $ V $  is faithful then the induced  $ G $--module  $ \text{\sl Ind}_{G_\zero}^{\,G}\!(V) $  is faithful too: in particular, this means that if  $ G_\zero $  is linearizable, then  $ G $  is linearizable too; more precisely, from a linearization of  $ G_\zero $  one can construct (via induction) a linearization of  $ G $  as well.
\end{free text}

\smallskip

\begin{free text}  \label{furth-genrs}
 {\bf Further generalizations.}  Our construction of a quasi-inverse  $ \Psi_g $  to the functor  $ \Phi_g $  is flexible enough to apply to other contexts.  Hereafter we briefly discuss some possibilities.
 \vskip4pt
   {\sl  $ \underline{\text{The non-affine case\/}} $}.  We may deal with a more general notion of sHCp, modifying  Definition \ref{def-sHCp}  in one aspect: instead of taking as  $ G_+ $  any affine group-scheme over  $ \bk \, $,  we drop the ``affine'' assumption, and allow  $ G_+ $  to be any group-scheme over  $ \bk \, $.  Correspondingly, we consider also  $ \bk $--supergroup-schemes  which are not necessarily ``affine'', i.e.~they are not necessarily representable (as supergroup  $ \bk $--functors)  but still they are obtained by globally pasting together suitable affine  $ \bk $--superschemes  (see  \cite{ccf},  Ch.~11, \S 11.1, for a detailed definition).  In this more general setup, there still exists a natural functor  $ \Phi $  from the category of all (non necessarily affine)  $ \bk $--supergroup-schemes  to the category of ``super Harish-Chandra pairs'' in the present, new sense.
                                         \par
   The whole discussion in the present section can then be repeated: in particular, we can construct a functor  $ \Psi $  from sHCP's to supergroup-schemes, which takes values in the (full sub)category of those supergroup-schemes which are ``globally strongly split'', in the sense of  Definition \ref{gl-str-split_sgroup-Def}. The final outcome then will be that the restriction of  $ \Phi $  to the latter (sub)category and the functor  $ \Psi $  are quasi-inverse to each other:
%
%
 therefore, the category of globally strongly split supergroup-schemes (over  $ \bk \, $,  say) is equivalent to the category of sHCp's (over  $ \bk \, $).  In a nutshell,  Theorem \ref{Psi_g-inverse_Phi_g}  extends to this more general (non-affine) framework.
                                                        \par
   {\it Warning:}  there is just one specific step in the whole procedure, namely  Proposition \ref{dir-prod-fact-G_strspl},  where (in the proof) I concretely made use of the fact that a given supergroup  $ G $ under exam was  {\sl affine},  hence the classical subgroup  $ \, G_\zero = G_+ \, $  in its associated sHCp is affine too.  At this point one must definitely adopt some different argument to get the analogous result in the non-affine case.
 \vskip5pt
   {\sl  $ \underline{\text{Dropping finiteness assumptions\/}} $}.  Still keeping the assumption that  $ \, \Lie\,(G) = \cL_\fg \, $  is representable and  $ \, \fg_\uno $  is  $ \bk $--free,  one can drop the finiteness assumption on  $ \, \text{\sl rk}_{\,\bk}(\fg_\uno) \, $.  In this case, our construction of  $ \, G_{{}_{\!\cP}} $  still makes sense, yielding a supergroup which is automatically fine but is ``gs-split'' only in a modified sense: indeed, we have now  $ \, G_{{}_{\!\cP}} \cong G_+ \times {}^{\text{\it ind}}\mathbb{A}_\bk^{0|d_-} \, $  where  $ \, {}^{\text{\it ind}}\mathbb{A}_\bk^{0|d_-} \, $  is some  {\sl ind-affine, totally odd superspace},  and  $ d_- $  is now a possibly infinite cardinal number.  As to function algebras, we have  $ \; \cO\big(G_{{}_{\!\cP}}\big) \, \cong \, \overline{\cO(G_+)} \otimes_\bk \cO\Big( {}^{\text{\it ind}}\mathbb{A}_\bk^{0|d_-} \Big) \; $  where  $ \, \cO\Big( {}^{\text{\it ind}}\mathbb{A}_\bk^{0|d_-} \Big) \, $  is no longer (a priori) a Grassmann algebra.  Our main result   --  Theorem  \ref{Psi_g-inverse_Phi_g}  ---   about the equivalence between sHCp's and fine supergroups which are ``split'' (in a suitable sense) must then be modified accordingly.
                                                        \par
   On the other hand,  Theorem \ref{Masuoka}  is proved by Masuoka (see  \cite{ms1},  Theorem 4.5)  making no special finiteness assumption on commutative Hopf superalgebras: in our language, this means that when  $ \bk $  is a field (with  $ \, \text{\sl char}(\bk) \not= 2 \, $)  {\sl every affine  $ \bk $--supergroup  $ G $  is gs-split in the sense of  Definition \ref{gl-str-split_sgroup-Def}},  with no modifications whatsoever!  This ought to mean that one should be able to ``read'' our construction of  $ G_{{}_{\!\cP}} $  so as to achieve the same object  $ \, {}^{\text{\it ind}}\mathbb{A}_\bk^{0|d_-} \, $,  but now presented in such a way that one recognizes it as being a true affine (totally odd) superspace, with  $ \, \cO\Big( {}^{\text{\it ind}}\mathbb{A}_\bk^{0|d_-} \Big) \, $  now being recognized as a Grassmann algebra.  This clarification clearly needs a finer analysis, which goes beyond the goals of the present paper.
                                                        \par
   Finally, in this ``non-finite'' setup one can deal with non-affine supergroups: the remarks in the above paragraph (for the non-affine case) apply again, so one ends up with the same conclusions.
 \vskip5pt
   {\sl  $ \underline{\text{The real smooth and complex analytic cases\/}} $}.  In the differential setup one studies real Lie supergroups; similarly, in the analytic framework one deals with complex Lie supergroups.  In both cases, as it is customary to do, we assume that the super-dimension is finite.
                                                           \par
   In both cases, one can adopt a functorial language, which is strictly close to the one used in the algebro-geometric setup (as we did in the present paper).  With such a choice of language  --- and of technical tools to work with ---   one can then also reproduce the construction presented in this paper, in particular that of the functor  $ \Psi_g \, $.  The outcome then will be that  $ \Phi_g $  and  $ \Psi_g $  will be equivalences, quasi-inverse to each other, between the category of (real or complex) Lie supergroups and the category of (real or complex) sHCp's.  Note that here we do not need any ``globally strongly split'' assumption, since such a property always holds true for real or complex Lie supergroups (see  \cite{ccf},  Proposition 7.4.9,  and  \cite{vis}  respectively).
\end{free text}

\bigskip
\bigskip

\vskip15pt


\begin{thebibliography}{vsv}

\vskip9pt


\vskip6pt

\bibitem{be} F.~A.~Berezin,  {\it Introduction to superanalysis},  Edited by A.~A.~Kirillov. D.~Reidel
Publishing Company, Dordrecht (Holland), 1987.  With an Appendix by V.~I.~Ogievetsky. Translated from
the Russian by J.~Niederle and R.~Koteck\'y. Translation edited by Dimitri Le\u\i tes.

\vskip6pt

\bibitem{bmpz} Y.~A.~Bahturin, A.~A.~Mikhalev, V.~M.~Petrogradsky, M.~V.~Zaicev,  {\it Infinite-dimensional Lie superalgebras},  de Gruyter Expositions in Mathematics  {\bf 7},  Walter de Gruyter \& Co., Berlin, 1992.

\vskip6pt

\bibitem{bos1} H.~Boseck,  {\it Affine Lie supergroups},  Math.~Nachr.~{\bf 143}  (1989), 303-–327.

\vskip6pt

\bibitem{bos2} H.~Boseck,  {\it Lie superalgebras and Lie supergroups, I},  in:  {\sl Seminar Sophus Lie (Darmstadt, 1991)},  Sem.~Sophus~Lie  {\bf 1}  (1991), no.~2, 109-–122.

\vskip6pt

\bibitem{bos3} H.~Boseck,  {\it Lie superalgebras and Lie supergroups, II},  Sem.~Sophus~Lie  {\bf 2}  (1992), no.~1, 3-–9.

\vskip6pt

\bibitem{ccf} C.~Carmeli, L.~Caston, R.~Fioresi, {\it  Mathematical Foundations of Supersymmetry\/}  (with an appendix by I.~Dimitrov),  EMS Series of Lectures in Mathematics  {\bf 15},  European Mathematical Society, Z{\"u}rich, 2011.

\vskip6pt

\bibitem{cf} C.~Carmeli, R.~Fioresi, {\it  Super Distributions, Analytic and Algebraic Super Harish-Chandra pairs},  Pacific J.\ Math.\  {\bf 263}  (2013),  no.\ 1, 29--51.

\vskip6pt

\bibitem{dg} M.~Demazure, P.~Gabriel,  {\it Groupes Alg\'ebriques, Tome 1},  Mason$\&$Cie \'editeur, North-Holland Publishing Company, The Netherlands, 1970.

\vskip6pt

\bibitem{dm} P.~Deligne, J.~Morgan,  {\it Notes on supersymmetry (following J.~Bernstein)},  in:  {\sl ``Quantum fields and strings: a course for mathematicians''},  Vol.~1, 2 (Princeton, NJ, 1996/1997), 41--97, Amer.~Math.~Soc., Providence, RI, 1999.

\vskip6pt

\bibitem{fg1} R.~Fioresi, F.~Gavarini,  {\it Chevalley Supergroups},  Memoirs AMS  {\bf 215},  no.~1014 (2012).

\vskip6pt

\bibitem{fg2} R.~Fioresi, F.~Gavarini,  {\it On the construction of Chevalley supergroups},  in: S.~Ferrara, R.~Fio\-resi, V.~S.~Varadarajan (eds.),  {\sl ``Supersymmetry in Mathematics \& Physics''},  UCLA Los Angeles, U.S.A.~2010, Lecture Notes in Math.~{\bf 2027}, Springer-Verlag, Berlin-Heidelberg, 2011, pp.~101--123.

\vskip6pt

\bibitem{ga1} F.~Gavarini,  {\it Chevalley Supergroups of type  $ D(2,1;a) $},  Proc.~Edinb.~Math.~Soc.~(2)  {\bf 57}  (2014), no.\ 2, 465--491.

\vskip6pt

\bibitem{ga2} F.~Gavarini,  {\it Algebraic supergroups of Cartan type},  Forum Mathematicum  {\bf 26}  (2014), no.\ 5, 1473--1564.

\vskip6pt

\bibitem{ka} V.~G.~Kac,  {\it Lie superalgebras},  Adv.~in Math.~{\bf 26}  (1977), 8--96.

\vskip6pt

\bibitem{kostant}  B.~Kostant,  {\it Graded manifolds, graded Lie theory, and prequantization},
in: {\sl Differential geometrical methods in mathematical physics\/}
(Proc.~Sympos., Univ.~Bonn, Bonn, 1975), 177--306, Lecture Notes in Math.  {\bf 570},  Springer, Berlin, 1977.

\vskip6pt

\bibitem{koszul}  J.-L.~Koszul,  {\it Graded manifolds and graded Lie algebras},  Proceedings of the international meeting on geometry and physics (Florence, 1982), 71--84, Pitagora, Bologna, 1982.

\vskip6pt

\bibitem{ma} Y.~Manin,  {\it Gauge field theory and complex geometry},  Springer-Verlag, Berlin, 1988.

\vskip6pt

\bibitem{ms1} A.~Masuoka,  {\it The fundamental correspondences in super affine groups and super formal groups},  J.~Pure Appl.~Algebra  {\bf 202}  (2005), 284--312.

\vskip6pt

\bibitem{ms2} A.~Masuoka,  {\it Harish-Chandra pairs for algebraic affine supergroup schemes over an arbitrary field},  Transform.~Groups  {\bf 17}  (2012), no.~4, 1085--1121.

\vskip6pt

\bibitem{mas-shi} A.~Masuoka, T.~Shibata,  {\it Algebraic supergroups and Harish-Chandra pairs over a commutative ring},  Trans.~AMS (to appear)   --- electronic preprint  {\tt arXiv:1304.0531 [math.RT]}  (2013).

\vskip6pt

\bibitem{vis} E.~G.~Vishnyakova,  {\it On complex Lie supergroups and homogeneous split
supermanifolds},  Transform.~Groups  {\bf 16}  (2011), no.~1, 265--285.

\vskip6pt

\bibitem{vsv} V.~S.~Varadarajan,  {\it Supersymmetry for mathematicians: an introduction},  Courant Lecture Notes  {\bf 1},  AMS, 2004.

\end{thebibliography}
\end{document}